 \let\mathscr\relax
\tikzset{box/.style={draw, rectangle, rounded corners, thick, node 
		distance=7em, 
		text width=6em, text centered, minimum height=3.5em}}
\tikzset{every node/.style={font=\scriptsize}}
\definecolor {processblue}{cmyk}{0.96,0,0,0}
\DeclareMathOperator{\Div}{Div}
\DeclareMathOperator{\vol}{Vol}
\DeclareMathOperator{\re}{Re}
\DeclareMathOperator{\diam}{Diam}
\newtheorem*{rep@theorem}{\rep@title}
\newcommand{\newreptheorem}[2]{%
	\newenvironment{rep#1}[1]{%
		\def\rep@title{#2 \ref{##1}}%
		\begin{rep@theorem}}%
		{\end{rep@theorem}}}
\newtheorem{thm}{Theorem}[section]
\newtheorem{cor}[thm]{Corollary}
\newtheorem{prop}[thm]{Proposition}
\newtheorem{lem}[thm]{Lemma}
\newtheorem{conj}[thm]{Conjecture}
\newtheorem{claim}[thm]{Claim}
\theoremstyle{definition}
\newtheorem{defn}[thm]{Definition}
\newtheorem{exmp}[thm]{Example}
\theoremstyle{remark}
\newtheorem{rem}[thm]{Remark}
\numberwithin{equation}{section}
\newcommand{\thalf}{\tfrac{1}{2}}
\newcommand{\spc}{\\[0.3em]}
\newcommand{\ra}{\rightarrow}
\newcommand{\rootn}{\sqrt{n}}
\newcommand{\real}{\mathbb{R}}
\newcommand{\nat}{\mathbb{N}}
\newcommand{\ep}{\varepsilon}
\newcommand{\varep}{\varepsilon}
\newcommand{\lb}{\left(}
\newcommand{\rb}{\right)}
\newcommand{\col}{\mathscr{C}}
\newcommand{\N}{\mathcal{N}}
\newcommand{\tun}{\mathcal{T}}
\begin{document}
	\onehalfspacing
	\title[Effective lower bound]{Almost sharp lower bound for the nodal volume of harmonic functions.}
	
	\author{Alexander Logunov, Lakshmi Priya M.E., Andrea Sartori }
	
	\address[Andrea Sartori]{Departement of Mathematics, Tel Aviv University, Tel Aviv, Israel, IL}
	\email[Andrea Sartori]{sartori.andrea.math@gmail.com}
	\address[Lakshmi Priya]{Departement of Mathematics, Tel Aviv University, Tel Aviv, Israel, IL}
	\email[Lakshmi Priya]{lpriyame@gmail.com}
	\address[Alexander Logunov]{Université de Genève, Section de mathématiques, rue du Conseil-Général 7-9, 1205 Genève, Switzerland and Department of Mathematics, Princeton University, Princeton, NJ, USA}
	\email[Alexander Logunov]{log239@yandex.ru}
\begin{abstract}
This paper focuses on a relation between the growth of harmonic functions and the Hausdorff measure of their zero sets.		
	Let $u$ be a real-valued harmonic function in $\mathbb{R}^n$ with $u(0)=0$ and $n\geq 3$. We prove
	$$\mathcal{H}^{n-1}(\{u=0\} \cap B(0,2)) \gtrsim_{\varep} N^{1-\varep},$$
	where the doubling index $N$ is a notion of growth defined by
	$$ \sup_{B(0, 1)}|u| = 2^N \sup_{B(0,\frac{1}{2})}|u|.$$
	This gives an almost sharp lower bound for the Hausdorff measure of the zero set of $u$, which is conjectured to be linear in $N$. The new ingredients of the article are the notion
	of \emph{stable growth}, and a multi-scale induction technique for a lower bound for
	the distribution of the doubling index of harmonic functions. It gives a significant improvement over the previous best-known bound
	$\mathcal{H}^{n-1}\left(\{u=0\} \cap 2B\right)\geq \exp (c \log N/\log\log N )$,
	which implied Nadirashvili's conjecture.

\end{abstract}

	\maketitle 
	
	\section{Introduction}\label{sec1}

Given a non-constant harmonic function $u$ in $\mathbb{R}^n$, its zero set $\{u=0\}$ is non-empty and has Hausdorff dimension $(n-1)$. Locally the geometry and complexity of the zero set of $u$ is controlled in terms of its growth. In particular, one can bound the $(n-1)$ dimensional Hausdorff measure of  the set $\{u=0\}$  in every Euclidean ball $B$ in terms of growth of $u$ around $B$. 

\vspace{2mm}

One of the multiple ways to quantify the growth of a function is given by the notion of the doubling index. Given a positive number $k$ and an Euclidean ball $B=B(x,r)$ with center at $x$ and radius $r>0$, we denote by $kB$ the scaled ball $B(x,kr)$. 
For any function $h$ in a ball $2B\subset \mathbb{R}^n$, the \textit{doubling index} of $h$ in $B$ is defined by
$$\mathcal{N}_h(B):= \log_2 \frac{\sup_{2B}|h|}{\sup_B|h|}.$$
We often write $\mathcal{N}(x,r)$ instead of $\mathcal{N}_h(B(x,r))$ and often omit the dependence on $h$ in the notation and simply write 
$ \mathcal{N}(B).$	 

\begin{thm}[\cite{DF88}, \cite{H07}] \label{Thm.Han} 
	Let $B\subset \mathbb{R}^n$ be a unit ball. There exists a constant $C=C(n)>1$ such that 
	$$\mathcal{H}^{n-1}\left(\{u=0\} \cap B\right)\leq C \mathcal{N}\left(B\right),$$
	for all harmonic functions $u: 2B \rightarrow \mathbb{R}$.
\end{thm}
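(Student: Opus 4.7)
The plan is to follow the classical Donnelly--Fefferman--Lin--Han strategy: reduce the Hausdorff measure bound to a one--dimensional zero count via integral geometry, count zeros by Jensen's formula applied to a holomorphic extension along each line, and close the argument by averaging with the help of the subharmonicity of $\log|u|$.

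\textbf{Step 1 (integral-geometric reduction).} The nodal set of a non-constant harmonic function is $(n-1)$-rectifiable, with singular locus $\{u=0\}\cap\{\nabla u=0\}$ of Hausdorff dimension at most $n-2$. By the Cauchy--Crofton formula
\begin{equation*}
\mathcal{H}^{n-1}(\{u=0\}\cap B)\ \le\ c_n\int_{S^{n-1}}\!\!\int_{v^\perp}\#\bigl(\{u=0\}\cap(p+\mathbb{R}v)\cap B\bigr)\,dp\,d\sigma(v),
\end{equation*}
so it suffices to bound the inner integral by $C(n)\mathcal{N}(B)$ for each $v\in S^{n-1}$.

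\textbf{Step 2 (counting zeros on a single line).} Fix $v$ and a base point $p\in v^\perp$. The slice $f_p(t)=u(p+tv)$ is real analytic, and using the Poisson representation of $u$ on a ball slightly smaller than $2B$ one extends $f_p$ to a holomorphic function $F_p$ on a complex neighbourhood of $[-1,1]$ of fixed width, with the quantitative bound
\begin{equation*}
\sup_{|z-t_0|\le 1/2}|F_p(z)|\ \le\ C_n\sup_{2B}|u|\qquad(t_0\in[-1,1]).
\end{equation*}
Applying Jensen's formula on the disks $\{|z-t_0|\le 1/2\}$ and covering $[-1,1]$ by finitely many of them, I get that for any $(p,t_0)$ with $u(p+t_0v)\ne 0$,
\begin{equation*}
\#\bigl(\{u=0\}\cap(p+\mathbb{R}v)\cap B\bigr)\ \le\ C_n\Bigl(\mathcal{N}(B)+\log\sup_{B}|u|-\log|u(p+t_0v)|\Bigr).
\end{equation*}

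\textbf{Step 3 (averaging via subharmonicity).} Integrate the bound of Step 2 against Lebesgue measure in $(p,t_0)$ over a unit sub-ball $B^\star\subset 2B$. The constant part contributes $O(\mathcal{N}(B))$, while the ``defect'' $\int_{B^\star}(\log\sup_B|u|-\log|u|)$ is controlled using that $\log|u|$ is subharmonic on $2B$ (the nodal set is Lebesgue-null). Choosing $B^\star$ centred at a point $x^\star$ realising $|u(x^\star)|=\sup_B|u|\ge 2^{-\mathcal{N}(B)}\sup_{2B}|u|$, the sub-mean-value inequality gives
\begin{equation*}
\log|u(x^\star)|\ \le\ \frac{1}{|B^\star|}\int_{B^\star}\log|u|,
\end{equation*}
which combined with the trivial pointwise upper bound forces the defect to be $O(\mathcal{N}(B))$. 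Fubini in the direction $v$ converts this into the average line bound sought in Step 1 and completes the proof.

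\textbf{Main obstacle.} The sensitive step is Step 3: Jensen's pointwise bound is useless on the set where $|u|$ is exponentially small, which a priori can have large measure. The mechanism preventing this pathology is precisely the subharmonicity of $\log|u|$ together with the doubling hypothesis, which upgrades the pointwise doubling estimate at the maximiser $x^\star$ to an $L^1$ lower bound for $\log|u|$ and makes the averaging effective; getting the geometry of balls right (e.g.\ accommodating maximisers near $\partial B$ by working in a slightly shrunken Crofton integral) is where one has to be most careful.
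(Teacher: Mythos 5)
The paper does not prove Theorem~\ref{Thm.Han}; it is stated as a cited fact from \cite{DF88} and \cite{H07}. So the benchmark here is the literature, not an in-paper argument.

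Your proposal follows the classical Donnelly--Fefferman integral-geometry / complexification / Jensen route, which is indeed the mechanism underlying the cited results, and the skeleton (Crofton reduction, line-by-line zero counting on a holomorphic extension, averaging with the sub-mean-value property of $\log|u|$) is the right one. Two points, however, are genuinely unfinished rather than mere polishing. First, in Step~2 a single Jensen disk $\{|z-t_0|\le 1/2\}$ cannot see all real zeros on the full chord $\ell\cap B$, which has length up to $2$: the holomorphic extension one gets from the Poisson kernel (cf.\ Claim~\ref{claim: bound on complexification}) lives only on a strip of width a fixed small dimensional fraction of the radius of harmonicity, so the Jensen disks must be correspondingly small and chained. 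Consequently the one-line count necessarily involves $\log|u|$ at several offsets $t_1,\dots,t_m$ along the chord, not a single $t_0$, and the averaging in Step~3 has to be organised term by term via Fubini; the bound as you wrote it is not what Jensen delivers. Second, and more seriously, Step~3 centres $B^\star$ at a maximiser $x^\star$ of $|u|$ on $B$, which can sit on $\partial B$. Then lines that meet $B$ need not meet $B^\star$ (or its core) at all, so the averaging only controls $\mathcal{H}^{n-1}(\{u=0\}\cap B')$ on a strictly smaller ball $B'$. You acknowledge this (``slightly shrunken Crofton integral''), but to recover the statement on all of $B$ one must add a covering step: cover $B$ by finitely many balls $b_i$ of fixed comparable radius with $2b_i\subset 2B$, invoke almost-monotonicity of the doubling index (Lemma~\ref{lem: almost monotonicity}) to get $\mathcal{N}(2b_i)\lesssim_n\mathcal{N}(B)$, run the argument in each $b_i$, and sum. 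This covering also takes care of the fact that $\overline{B^\star}$ must lie strictly inside the domain of $u$ for the sub-mean-value inequality to apply. With these two gaps filled your argument closes and agrees in spirit with the cited proofs.
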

The main result of this paper states that there is also a lower bound in terms of growth.
\begin{thm}[Main theorem]
	\label{thm: main}
	Let $B\subset \mathbb{R}^n$ be a unit ball and $n\geq 3$. For every $\varepsilon>0$, there exists a constant $c=c(n,\varepsilon)>0$ such that for every harmonic function  $u: 4B \rightarrow \mathbb{R}$ with $u(0)=0$, we have 
	$$\mathcal{H}^{n-1}\left(\{u=0\} \cap 2B\right)\geq c \mathcal{N}\left(\frac{1}{2}B\right)^{1-\varepsilon}.$$
\end{thm}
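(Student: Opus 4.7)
I plan to argue by induction on the doubling index $N := \mathcal{N}_u(\tfrac12 B)$. The base case is $N = O_n(1)$: bounded growth combined with $u(0)=0$ forces $\mathcal{H}^{n-1}(\{u=0\} \cap 2B) \gtrsim 1$ by compactness and quantitative unique continuation; this is the Nadirashvili-conjecture regime, which is already known. The inductive step will distribute the growth across many sub-cubes and recurse on each.

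\textbf{Multi-scale decomposition and dyadic pigeonholing.} Fix a large integer $A = A(n,\varepsilon)$ and tile $\tfrac12 B$ by $\asymp A^n$ sub-cubes $q$ of side $A^{-1}$. The almost-monotonicity of the doubling index along nested balls (in the sense of Han--Lin / Logunov) yields
\[
	\sum_q \mathcal{N}_u(q, A^{-1}) \gtrsim N,
\]
up to terms depending only on $A$ and $n$. Hence the sub-cubes carry a total growth budget $\gtrsim N$, and by dyadic pigeonholing there exist a level $M \in [1, N]$ and a family $\mathcal{F}$ of sub-cubes with $\mathcal{N}_u(q, A^{-1}) \asymp M$ and $\#\mathcal{F} \cdot M \gtrsim N / \log N$.

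\textbf{Stable growth and the inductive step.} Following the notion advertised in the abstract, declare $q \in \mathcal{F}$ to be \emph{$(M,\delta)$-stable} if $\mathcal{N}_u(q', \rho) \in [(1-\delta)M, (1+\delta)M]$ for every intermediate dyadic scale $\rho \in [A^{-1}, 1]$ and every sub-cube $q' \subset q$ of side $\rho$. The combinatorial heart of the argument is a new multi-scale distributional inequality showing that a $1/\mathrm{polylog}(N)$ fraction of the members of $\mathcal{F}$ are stable at some level $M$. In each stable $q$, stability together with a three-ball / Harnack type inequality forces $u$ to change sign inside a fixed-proportion subregion of $q$; rescaling $q$ to unit size produces a fresh instance of the theorem with doubling index $\asymp M$ and a vanishing point, so the inductive hypothesis gives $\mathcal{H}^{n-1}(\{u=0\} \cap 2q) \gtrsim M^{1-\varepsilon} A^{-(n-1)}$.

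\textbf{Summation and main obstacle.} Summing over the bounded-overlap neighborhoods $\{2q : q \in \mathcal{F}, \text{ stable}\}$ and combining the previous two steps,
\[
	\mathcal{H}^{n-1}(\{u=0\} \cap 2B) \;\gtrsim\; \frac{\#\mathcal{F}}{\mathrm{polylog}(N)} \cdot M^{1-\varepsilon} A^{-(n-1)} \;\gtrsim\; \frac{N^{1-\varepsilon}}{\mathrm{polylog}(N)\cdot A^{n-1}},
\]
which dominates $c(n,\varepsilon')\, N^{1-\varepsilon'}$ for any $\varepsilon' > \varepsilon$, closing the induction after choosing $A$ in terms of $\varepsilon'$. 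The decisive step is the stability lemma: every earlier approach lost a constant factor per scale, compounding over $\log N$ scales into the $\exp(c \log N / \log \log N)$ bound, and the whole point of the argument is to replace this scale-by-scale loss by a single polylogarithmic loss via a distributional argument that handles all scales simultaneously. This is where I expect the main difficulty to lie.
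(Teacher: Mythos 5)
The overall shape you describe — chop, recurse on sub-cubes with smaller doubling index, sum — is the right one, and you correctly identify that the decisive step is a distributional statement about the doubling index across scales. But the specific mechanism you propose differs from the paper's in several places where the differences are not cosmetic, and at least two of them are genuine gaps.

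\textbf{The loss per iteration must be $\mathrm{polylog}(A)$, not $\mathrm{polylog}(N)$.} Your dyadic pigeonholing produces a level $M$ and a family $\mathcal F$ with $\#\mathcal F\cdot M\gtrsim N/\log N$, and you then lose another $\mathrm{polylog}(N)$ to restrict to ``stable'' sub-cubes. These are losses that grow with $N$. When the recursion is unwound through $\asymp\log N/\log A$ levels, a per-level loss depending on $N$ compounds to something of size $N^{\Theta(\log\log N/\log A)}$, which cannot be absorbed into $N^{\varepsilon}$ for fixed $A$. The paper's key new result (Proposition~\ref{prop: lemma 2}) is designed precisely so that the per-step loss is $(\log A\log\log A)^{n-1}$, i.e.\ a function of the fixed chopping parameter $A$ alone; the iteration in Lemma~\ref{lem: induction on scales} then accumulates a total loss of $\mathrm{polylog}(A)^{O(\log N_2/\log A)}\le N_2^{\varepsilon/2}$ by choosing $A=A(\varepsilon)$. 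This $A$-versus-$N$ distinction is exactly what lifts the bound from $\exp(c\log N/\log\log N)$ to $N^{1-\varepsilon}$, and it is absent from your sketch. A related but more minor issue: the induction as written is circular, since the step ``use $P(M,\varepsilon)$ for all sub-cubes, conclude $P(N,\varepsilon')$ for $\varepsilon'>\varepsilon$'' never closes at any fixed exponent.

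\textbf{Your stability notion and the claimed stability lemma are not what the argument needs.} You ask for sub-cubes in which the doubling index stays within a multiplicative $(1\pm\delta)$ window at \emph{every} intermediate scale, and you conjecture a $1/\mathrm{polylog}(N)$ fraction of the pigeonholed sub-cubes enjoy this. That is both much stronger than anything in the paper and, as stated, not what the paper proves (nor obviously true). In the paper, \emph{stable growth} is a property of the ambient ball only: it compares $\N(\frac12 B)$ with $\N(2B)$, and one obtains it essentially for free from the monotonicity of the frequency $\beta(0,t)$ by locating a spherical layer where $\beta$ is nearly constant (Claim~\ref{claim: existence of stable region} and Lemma~\ref{lem:stability_constant}). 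The sub-balls produced by Proposition~\ref{prop: lemma 2} are \emph{not} required to be stable at their own scale — what is required is that their doubling index has \emph{dropped} by a factor $\gtrsim A$, their total scaled doubling index is $\gtrsim N_1/\mathrm{polylog}(A)$, and each is centered at a zero of $u$. That is a qualitatively different statement, and it is established by the tunnel argument (Lemmas~\ref{lem: general lemma}, \ref{lem: multiplicative increment}, \ref{lem: good tunnels}), not by a frequency-stability selection among sub-cubes.

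\textbf{The inequality $\sum_q\N_u(q,A^{-1})\gtrsim N$ does not follow from almost-monotonicity.} Almost-monotonicity (Lemma~\ref{lem: almost monotonicity}) only gives \emph{upper} bounds $\N(b)\lesssim\N(B)$ for sub-balls; it says nothing about lower bounds for the sum. The lower bound comes from the Harnack chain argument: along a chain of cubes where $u$ grows by a factor $e^Z$, cubes where $u$ does not vanish nearby contribute only $O(1)$ to the growth by Harnack, so the zero-carrying cubes must pick up a total of $\gtrsim Z$ of the growth, which is then transferred to doubling indices (see Claim~\ref{claim: there are zeros}). This is not a purely formal monotonicity statement and it is one of the places where the geometry of the nodal set enters.

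\textbf{Zero-finding.} You assert that stability plus Harnack forces a sign change in a fixed-proportion subregion of a stable cube. The paper does not argue this way: zeros are found inside good tunnels (Lemma~\ref{lem: general lemma}) from the observation that a large multiplicative increment across a chain forces $u$ to vanish near many chain cubes, since otherwise Harnack caps the increment. The existence of many such zeros is a global statement about the tunnel, not a consequence of stability alone.

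In short, the two decisive ingredients the paper introduces — Proposition~\ref{prop: lemma 2}, whose losses are measured in the fixed parameter $A$, and the tunnel/Harnack machinery that simultaneously drops the doubling index, preserves the scaled doubling index up to $\mathrm{polylog}(A)$, and locates zeros — are exactly the two steps that your outline leaves as ``the main difficulty'' or replaces with a different and much stronger claim. Without them the argument does not close.
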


Let us make a few remarks. Given a unit ball $B\subset \mathbb{R}^n$, Harnack's inequality for harmonic functions implies that if $\mathcal{N}_u(\frac{1}{2}B)$ is sufficiently large,  then $2B$ must contain a zero point of $u$. If the harmonic function $u$ is zero at the center of the ball $B$, then 
\begin{align}
\nonumber	\mathcal{N}(B)\geq c_n
\end{align}
for some numerical constant $c_n > 0$ (see Claim \ref{claim: lower bound doubling index} in Appendix \ref{sec: appendix} for the proof). Hence, in Theorem \ref{thm: main} there is a uniform lower bound, which was conjectured by Nadirashvili.

\vspace{2mm}

\textbf{Nadirashvili's conjecture}. {\it If the harmonic function $u$ is zero at the center of the unit ball $B\subset \mathbb{R}^3$, then the area of the zero set of $u$ in $B$ is bounded from below by a positive numerical constant.}

\vspace{2mm}

Nadirashvili's conjecture was recently proved in \cite{Lproof}. The fact sounds elementary, but there is no simple proof known, and there are applications including the lower bound in Yau's conjecture for nodal sets of Laplace eigenfunctions. We refer to \cite{Lmreview,Ybook}  for the statement of Yau's conjecture and related results.

\vspace{2mm}

This article does not pursue the purpose of generality. We restrict our attention only to the Euclidean case.   There isn't a methodological obstacle preventing to extend the main result to the case of smooth Riemannian manifolds, though we don't do this job. We hope that this decision makes the article more accessible for the reader.

\vspace{2mm}

One can compare this paper to the approach for Nadirashvili's conjecture in \cite{Lproof}, which gave a slowly growing bound
$$\mathcal{H}^{n-1}\left(\{u=0\} \cap 2B\right)\geq \exp (c \log N/\log\log N ),$$
where $N= \mathcal{N}(\frac{1}{2}B)$ is assumed to be bigger than $10$ just to make the right hand side well-defined and $u$ vanishes at the center of $B$. Compared to \cite{Lproof}, this article introduces the new notion of \emph{stable growth} and a new multi-scale induction technique, which gives significantly improved lower bounds and allows to come closer to the following folklore conjecture.
\begin{conj}
	\label{Conj1}
	Given a unit ball $B\subset \mathbb{R}^n$, $n\geq 3$, and a harmonic function $u$ in $4B$, which is zero at the center of $B$, the following lower bound holds:
	\begin{align}
		\nonumber
		\mathcal{H}^{n-1}\left(\{u=0\} \cap 2B\right) \geq c \mathcal{N}\left(\frac{1}{2}B\right),
	\end{align}
	for some numerical constant $c=c(n)>0$. 
\end{conj}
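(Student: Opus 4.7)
The plan is to set up a multi-scale induction on the doubling index. Let $F(N)$ denote the infimum of $\mathcal{H}^{n-1}(\{u=0\}\cap 2B)$ over all harmonic $u:4B\to\mathbb{R}$ with $u(0)=0$ and $\mathcal{N}(\frac{1}{2}B)\geq N$; the goal is $F(N)\gtrsim N$. I would partition $\frac{1}{2}B$ into roughly $K^n$ sub-balls of radius $\sim 1/K$ for a parameter $K$ to be optimized, and then analyze the distribution of the restricted doubling indices $\mathcal{N}_u(Q)$ across these sub-balls. The inductive step should reduce the claim at scale $1$ with doubling index $N$ to the same claim at scale $1/K$ with a suitably smaller doubling index $\tilde N$, so that summing the local lower bounds against the geometric scaling factor $K^{-(n-1)}$ recovers the ambient bound.

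The heart of the matter is a sharp lower bound on the number of sub-balls where the doubling index is at least $\tilde N$. Almgren-type monotonicity of the frequency, together with a pigeonhole over dyadic scales, should isolate a \emph{stable} collection of sub-balls on which the doubling index persists across many consecutive scales; on any such stable sub-ball the induction applies cleanly and contributes $F(\tilde N)/K^{n-1}$ of scaled nodal measure. If one can show that such stable sub-balls number at least $c\,K^{n-1}(N/\tilde N)$, the recursion $F(N)\geq c(N/\tilde N)F(\tilde N)$ iterates to a linear bound in $N$. The upper bound of Theorem \ref{Thm.Han} can be used in a complementary direction, ruling out the degenerate scenario in which all the nodal mass concentrates in too few sub-balls, since there a single $\mathcal{N}_u(Q)$ would have to absorb essentially all of $N$, contradicting the behavior of the frequency under restriction.

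The principal obstacle is the sharpness of this combinatorial count. The techniques of \cite{Lproof} give only $K^{n-1-\delta}$ sub-balls with large doubling index for an unavoidable $\delta=\delta(K)>0$, producing after iteration the slowly-growing bound $\exp(c\log N/\log\log N)$ that already sufficed for Nadirashvili's conjecture. The stable-growth framework announced in the abstract should tighten the deficit to a multiplicative factor of $N^{\varepsilon}$, yielding $F(N)\gtrsim_{\varepsilon} N^{1-\varepsilon}$, which is precisely Theorem \ref{thm: main}. Closing the last $\varepsilon$ to reach the conjectured linear bound appears to demand either a completely loss-free distributional estimate for $\mathcal{N}_u$ on sub-balls at every scale, or a fundamentally different strategy that exploits the global $(n-1)$-dimensional structure of $\{u=0\}$ rather than reassembling it inductively from local pieces. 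This is the genuine source of difficulty, and the reason the linear conjecture remains open.
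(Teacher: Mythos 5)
You are right to treat this as a conjecture rather than a theorem: the paper does not prove Conjecture \ref{Conj1}, and neither does your proposal; the statement is labelled a conjecture and explicitly left open. What you describe — a multi-scale induction driven by the frequency/doubling-index, reducing the nodal-volume bound at scale $1$ to the same bound on sub-balls of radius $\sim 1/K$, and recursing — is essentially the paper's machinery for its actual result, Theorem \ref{thm: main}, i.e.\ the $N^{1-\varepsilon}$ bound. Concretely, the recursion is encoded in Proposition \ref{prop: lemma 2} (which replaces a ball of doubling index $N_2$ by sub-balls of doubling index $\lesssim N_2/A$ whose total scaled doubling index is $\gtrsim N_1/(\log A\log\log A)^{n-1}$) and is iterated in Lemma \ref{lem: induction on scales}, with the stable-growth reduction supplied by Claim \ref{claim: existence of stable region}.

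Your diagnosis of the bottleneck is in the right direction but can be made sharper. The obstruction is not, strictly speaking, the $A^{n-1-c}$ cardinality bound of Proposition \ref{prop: lemma 1} (that is a count of \emph{bad} cubes, used as an ingredient); it is the multiplicative loss $(\log A\log\log A)^{-(n-1)}$ in condition (2) of Proposition \ref{prop: lemma 2}. Over $K\asymp \log N_2/\log A$ inductive steps this compounds to a factor $(\log A\log\log A)^{-K(n-1)}\cdot c^K$, and optimizing over $A$ leaves exactly the $N^{-\varepsilon}$ deficit visible in Theorem \ref{thm: main}. To get linearity one would need either a genuinely loss-free version of Proposition \ref{prop: lemma 2} — i.e.\ the total scaled doubling index of the child balls should be $\gtrsim N_1$ with no $A$-dependent factor — or, as the paper itself suggests via Conjecture \ref{conj2} and Proposition \ref{lem: half of the doubling index}, a single-pass argument: if one could show $\gtrsim N^n$ well-separated zeros at scale $1/N$ under stable growth, then the Donnelly–Fefferman estimate (bounded doubling index in most $1/N$-cubes) plus Claim \ref{claim_sec2_2.4} would give $\gtrsim N$ directly, bypassing the iteration entirely. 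One small inaccuracy: the paper does not use Theorem \ref{Thm.Han} in the complementary direction you describe to exclude mass concentration; that theorem is invoked only for context and for Claim \ref{claim_sec2_2.4}.
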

Before discussing the conjecture let us mention a few properties of the doubling index.	The doubling index is scale invariant. 
A non-trival fact states that the doubling index for a harmonic function $u$ is almost monotone function of a set in the sense that $$\mathcal{N}_u(b) \leq C \mathcal{N}_u(B)$$
if a ball $B$ contains the ball $2b$. Here $C>1$ is a dimensional constant, but one can find a sharper statement in \S \ref{sec3} about the doubling index. The monotonicity property is a powerful tool often used in geometrical analysis. However,
besides almost monotonicity it is unclear how the doubling index behaves as a function of a set. Rescaling Conjecture \ref{Conj1} and Theorem \ref{Thm.Han}, one can
see that {\it scaled doubling index} 
$$\mathcal{SN}(B):=\mathcal N(B) \textup{radius}(B)^{n-1}$$ is comparable to the Hausdorff measure of the zero set of $u$ in the following sense.

\vspace{2mm}

\textbf{Rescaled Theorem \ref{Thm.Han}.}
\begin{align}
	\mathcal{H}^{n-1}\left(\{u=0\} \cap B\right)\leq C \cdot  \mathcal{SN}(B).
\end{align}	

\textbf{Rescaled Conjecture \ref{Conj1}}.
\begin{align}
	\label{conjecture additivity} 
	\mathcal{SN}\left(\frac{1}{2}B\right)\leq C \cdot  \mathcal{H}^{n-1}\left(\{u=0\} \cap 2B\right) + C \cdot \textup{radius}(B)^{n-1}.
\end{align}	
The extra additive term $C \cdot \textup{radius}(B)^{n-1}$ is required ($u$ can be positive and have no zeroes in $4B$), but as a rule it plays no role. 

\vspace{2mm}

A challenge in nodal geometry is to understand the distribution of the  doubling index as a function of a set. If Conjecture \ref{Conj1} is true and the scaled doubling index for harmonic functions is comparable to the Hausdorff measure of the zero set, then one can conclude that the scaled doubling index is almost additive as a function of a set. The main Theorem \ref{thm: main} is likely to have applications in nodal geometry as it gives a weak version of almost additivity for scaled doubling index with subpolynomial error term.

\vspace{2mm}

In this paper we introduce a new notion of \textit{stable growth}. At some moment in this paper we will choose a dimensional constant $C=C(n)>1$ (its choice will be presented in Remark \ref{rem: choice of constant}).  A harmonic function $u:\mathbb{R}^n\ra \real$ is said to have \textit{stable growth}  in a ball $B\subseteq \real^n$ if 
\begin{align}
	C^{-1}\N(\tfrac{1}{2}B) \leq \N(2B) \leq   C \N(\tfrac{1}{2}B).
\end{align}
We note that the first inequality holds due to almost monotonicity of the doubling index (see Lemma \ref{lem: almost monotonicity} below) and can be omitted in the definition of stable growth. The assumption of stable growth is usually earned for free or for a small price of a subpolynomial error term, but it simplifies many of the proofs.  

\vspace{2mm}

Conjecture \eqref{Conj1} together with Theorem \ref{Thm.Han} imply that if a harmonic function $u$ has stable growth  in a unit ball $B$ and $A$ is a constant such that  $\mathcal{N}(B) \gg A>1$, then on the scale $1/A$ the following property of the doubling index holds:
\begin{align}
	\label{eq: additivity}	\int_B \mathcal{N}\left(x, A^{-1}\right)dx \asymp \frac{\mathcal{N}(B)}{A}.
\end{align}
In other words, it is anticipated that the average doubling index drops proportionally to the scale factor.

\vspace{2mm}

\section*{Acknowledgment.}
The authors are indebted to Mikhail Sodin for his enormous patience and numerous fruitful discussions that motivated this work. The authors  are grateful to Tel Aviv University, Princeton University and University of Geneva, where this work was conducted. 
A.L. was supported in part by the Packard Fellowship, Swiss NSF grant 200020-200400 and by NCCR SwissMAP (Swiss NSF grant 205607).  L.P.M.E. was supported by ERC Advanced Grant 692616, ERC consolidator grant 101001124 (UniversalMap) and ISF grant 1294/19.  A.S. was supported by the ISF Grant 1903/18 and the BSF Start up Grant no. 20183.

\subsection{Notation}
As mentioned above, given a ball $B=B(x,r)\subset \mathbb{R}^n$ with center at $x\in \mathbb{R}^n$ and radius $r>0$, we denote by $\ell B$ the ball $\ell B= B(x,\ell\cdot r)$ for a given scaling parameter $\ell>0$.  Sometimes we will not need to explicate the center of the ball and we will simply write $B(x,r)= B(r)$. We also write $\overline{B}$ to denote a closed ball.  Similarly, given a cube $Q\subset \mathbb{R}^n$ we write $\ell Q$ for the cube which is the homothetic copy of $Q$ with same center and with homothety coefficient $\ell>0$. We define the \textit{maximal doubling index} of a cube $Q$ by 
\begin{align}\label{defn_di_cube}\mathcal{N}^{*}(Q) := \sup_{\substack{x \in Q \\ 0<\rho\leq \diam(Q) }} \mathcal{N}(B(x,\rho)).\end{align}
We denote by $c,c_1,c_2,\ldots$ (small) positive constants and by $C,C_1,C_2,\ldots $ (large) positive constants,  which may change from line to line. Given two quantities $A,B$, we write $A\lesssim B$ and $A\gtrsim B$ to designate the existence of two constants $c,C>0$ such that $A\leq CB$ and $A\geq cB$, respectively. Here the constants $c, C$ are allowed to depend on the dimension $n$. 
If these constants depend on some auxiliary parameter $\gamma$, we write $A\lesssim_{\gamma} B$ and $A\gtrsim_{\gamma} B$. If $A\lesssim B$ and $B\lesssim A$, we write $A\asymp B$ and $A\asymp_{\gamma} B$ if one (or both) constants implied in the notation depend on the auxiliary parameter $\gamma$.  Finally, we write $A\ll B$ to mean that, for a sufficiently small constant $c>0$, we have $A\leq cB$ and $A\gg B$ if, for a sufficiently large constant $C\geq1$, we have   $A\geq CB$. Again, we write $A\ll_{\gamma} B$ and $A\gg_{\gamma} B$ if the constants implied in the notation depend on the parameter $\gamma$. Sometimes we will just write $\asymp$, $\lesssim$ and $\ll$ instead of $\asymp_n$, $\lesssim_n$ and $\ll_n$.

Given some $x\in \mathbb{R}$, we write $\lfloor x\rfloor$ for the largest integer smaller than $x$ and $\lceil x \rceil$ for the smallest integer larger than $x$.

Given a finite set $F$, we denote by $\# F$ the number of elements in $F$. For a set $S \subset \real^n$ and $\ell >0$, define the $\ell$-neighbourhood of $S$ by
\begin{align*}
S_{+\ell} := \{x \in \real^n : x= s+b\text{, where }s\in S\text{ and }b\in B(0,\ell) \}. 
\end{align*}
  The zero sets of harmonic functions are often called \textit{nodal} sets. We will sometimes refer to the Hausdorff measure of the nodal set as \textit{nodal volume}. When we say that a harmonic function $u$ has \textit{bounded doubling index} in the ball $B$, we mean that there exists a numerical constant $C>0$ such that 
  $$\mathcal{N}_u(B)\leq C.$$

\section{First thoughts and observations}\label{sec_2}

This section discusses elementary ideas and some examples for building intuition about the zero sets and the doubling index of harmonic functions. 


\subsection{Bounded doubling index } \label{sec2.1}
The following simple observation gives a lower bound for the nodal volume of a harmonic function in terms of its doubling index. We note that this bound is useful only when the doubling index is not large. 
\begin{lem}
	\label{claim: bounded doubling index} 
	Let $B=B(0,1)\subset \mathbb{R}^n$ be the unit ball and let $u$  be a harmonic function in  $2B$. Suppose that $u(0)=0$, then  there exists a constant $c=c(n)>0$ such that 
	$$\mathcal{H}^{n-1}\left(\{u=0\} \cap 2B \right)\geq c \mathcal{N}(B)^{1-n}.$$
\end{lem}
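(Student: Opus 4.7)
My plan is to combine a Hadamard three-circles estimate (to locate a point of nearly maximal value of $u$ in an interior region where the gradient is still well-controlled) with the relative isoperimetric inequality in a ball. First, I would normalize $\sup_B|u|=1$, so that $\sup_{2B}|u|=2^N$. Since $|u|$ is subharmonic, $r\mapsto\log\sup_{B(0,r)}|u|$ is convex in $\log r$, and reading the slope off the values $M(1)=1$ and $M(2)=2^N$ gives $\sup_{B(0,r)}|u|\geq r^N$ for all $r\in(0,1]$. Evaluating at $r=1-1/N$ yields $\sup_{B(0,1-1/N)}|u|\geq(1-1/N)^N\geq 1/e$ (for $N\geq 2$; the finitely many small values of $N$ give a constant lower bound which is absorbed into $c(n)$). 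Replacing $u$ by $-u$ if necessary (this leaves $\{u=0\}$ unchanged), I may then fix $p_+\in\overline{B(0,1-1/N)}$ with $u(p_+)\geq (2e)^{-1}$.

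Next, I would produce a whole ball on which $u$ has a fixed positive sign. The standard interior gradient estimate yields $|\nabla u(x)|\leq C_n N\sup_{B(x,1/N)}|u|\leq C_n N$ for every $x\in B(0,1-1/N)$, since $B(x,1/N)\subset B$. Consequently $u\geq (4e)^{-1}$ on $B_+:=B(p_+,c/N)$ for a small dimensional constant $c$ (chosen so that also $B_+\subset B$), and in particular $|B_+|\gtrsim N^{-n}$. To produce a matching lower bound on the negative set, I would invoke the mean value property: $\int_B u\,dx=|B|\,u(0)=0$, hence $\int_B u^+=\int_B u^-$. Since $u\geq (4e)^{-1}$ on $B_+$, $\int_B u^+\gtrsim N^{-n}$; since $|u|\leq 1$ on $B$, $\int_B u^-\leq|\{u<0\}\cap B|$. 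Combining these, $|\{u<0\}\cap B|\gtrsim N^{-n}$.

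Finally, I would close the argument using the relative isoperimetric inequality in the ball $2B$ applied to $V:=\{u>0\}\cap 2B$. By the preceding step, both $|V|$ and $|2B\setminus V|$ are $\gtrsim N^{-n}$, so
\[ \mathcal{H}^{n-1}(\{u=0\}\cap 2B) \geq \mathcal{H}^{n-1}(\partial V\cap(2B)^\circ) \gtrsim \min(|V|,|2B\setminus V|)^{(n-1)/n} \gtrsim N^{1-n}, \]
using that $\partial V\cap(2B)^\circ$ is contained in the nodal set. The one step where I anticipate genuine care is the gradient bound: the restriction to the slightly smaller ball $B(0,1-1/N)$ is crucial, since only there does the interior estimate produce the polynomial factor $N$ (rather than the exponentially weaker $2^N$ one would get from using the full radius of $2B$). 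Once $p_+$ has been placed inside this safe region via the three-circles bound, the rest of the argument is a direct combination of the mean value property and the relative isoperimetric inequality.
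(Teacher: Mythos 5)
Your proof is correct and takes a genuinely different route from the paper's. The paper works in the spherical layer $\{4/3\leq|x|\leq 5/3\}$: using Harnack's inequality (Claim \ref{claim:harnack_consequence}) and a telescoping/pigeonhole count it finds $\gtrsim N$ sub-layers of width $\asymp 1/N$ in which the increments $m^{\pm}_{i+1}/m^{\pm}_i$ are bounded, then uses gradient estimates to produce a positive ball and a negative ball of radius $\asymp 1/N$ in each such sub-layer, and closes with the elementary projection argument (Claim \ref{euclidean_geom_claim}). You instead locate a single positive ball $B_+$ of radius $\asymp 1/N$ near the inner boundary of $B$, use the mean value identity $\int_B u=|B|\,u(0)=0$ to deduce $|\{u<0\}\cap B|\gtrsim N^{-n}$, and close with the relative isoperimetric inequality in $2B$ (using that the reduced boundary of $\{u>0\}$ lies in $\{u=0\}$ and has locally finite $\mathcal{H}^{n-1}$ measure because $u$ is real-analytic). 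Both routes yield $N^{1-n}$; the paper's upgrades almost for free to $N^{2-n}$ by summing over the $\gtrsim N$ good sub-layers (see the Remark after Lemma \ref{claim: bounded doubling index}), which your single-ball construction does not immediately give; on the other hand yours is shorter and only needs one sign.

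The one step that requires more justification is the three-spheres bound. Subharmonicity of $|u|$ alone does \emph{not} give that $r\mapsto\log\sup_{B(0,r)}|u|$ is convex in $\log r$; what it gives (for $n\geq 3$) is convexity of $\sup_{|x|=r}|u|$ as a function of $r^{2-n}$, and the corresponding linear interpolation between $M(1)=1$ and $M(2)=2^N$ at $r=1-1/N$ is of size $\asymp 2^N/N$, which is useless here. The logarithmic convexity of the sup norm in $\log r$ is a real theorem for \emph{harmonic} functions (Korevaar and Meyers, \emph{Logarithmic convexity for supremum norms of harmonic functions}, Bull.\ London Math.\ Soc.\ 26 (1994)); with that reference your deduction $M(r)\geq r^N$ is fine. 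Alternatively, you can avoid any exact convexity statement by importing the paper's spherical-layer pigeonhole together with Claim \ref{claim:harnack_consequence} to produce the point $p_+\in B(0,1-c/N)$ with $|u(p_+)|\gtrsim 1$. Two harmless slips while you repair this: $(1-1/N)^N$ is bounded \emph{above} by $1/e$ and \emph{below} by $1/4$ for $N\geq 2$, so the direction of that inequality should be reversed; and the ball $B_+=B(p_+,c/N)$ should be arranged to lie in the region where the gradient bound $|\nabla u|\lesssim_n N$ is available, e.g.\ by placing $p_+$ in $\overline{B(0,1-2/N)}$ and taking $c<1$.
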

 Before starting the proof, we formulate a consequence of Harnack's inequality. 
\begin{claim}\label{claim:harnack_consequence} Let $B=B(x,r) \subset \real^n$ be any ball. Then  for any non-constant harmonic function $u$ defined in $2B$ satisfying $u(x) \geq 0$, we have
	\begin{align}\label{eq:conclusion}
		\sup_{\tfrac{4}{3}B} u \gtrsim_{n} \sup_{B} |u|. 
	\end{align}
\end{claim}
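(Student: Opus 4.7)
The plan is to reduce the statement to a direct application of Harnack's inequality for a non-negative harmonic function obtained by subtracting $u$ from its supremum on $\tfrac{4}{3}B$. Write $x$ for the center of $B$, $r$ for its radius, and set
\[
v := \sup_{\tfrac{4}{3}B} u,
\]
which is finite because $u$ is continuous on $\overline{\tfrac{4}{3}B} \subset 2B$. The goal is to show $v \gtrsim_n \sup_B |u|$.

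First I would consider the auxiliary function $M := v - u$. By the definition of $v$, we have $M \geq 0$ on $\tfrac{4}{3}B$, and since $u$ is harmonic and $v$ is a constant, $M$ is also harmonic on $\tfrac{4}{3}B$. Next, I would apply Harnack's inequality on the pair of concentric balls $B \subset \tfrac{4}{3}B$: because the ratio of radii is $3/4 < 1$, there exists a dimensional constant $C_n > 0$ such that
\[
\sup_{B} M \leq C_n\, M(x).
\]
Using $u(x) \geq 0$, one has $M(x) = v - u(x) \leq v$, so $\sup_B M \leq C_n v$.

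Since $\sup_B M = v - \inf_B u$, this rearranges to $\inf_B u \geq v - C_n v$, that is,
\[
\sup_B (-u) = -\inf_B u \leq (C_n - 1)\, v.
\]
On the other hand, trivially $\sup_B u \leq \sup_{\tfrac{4}{3}B} u = v$. Combining both estimates gives
\[
\sup_B |u| \leq \max\bigl(1,\, C_n - 1\bigr)\, v,
\]
which is precisely the desired conclusion $v \gtrsim_n \sup_B |u|$.

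There is no real obstacle here: the only subtle point is to observe that subtracting $u$ from its sup on a ball strictly larger than $B$ produces a non-negative harmonic function on $\tfrac{4}{3}B$, which is what allows Harnack to be applied with a dimensional constant independent of $u$. The hypothesis $u(x) \geq 0$ is used solely to replace $v - u(x)$ by $v$ at the final step; without it, one would only get an estimate in terms of $v - u(x)$, which could be much smaller than $\sup_B |u|$.
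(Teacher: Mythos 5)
Your proof is correct and follows essentially the same route as the paper's: both apply Harnack's inequality to the non-negative harmonic function $\sup_{\tfrac{4}{3}B} u - u$ on $\tfrac{4}{3}B$, the only difference being which points you compare (you use the center $x$, the paper uses the two boundary extrema $x_m$ and $x_M$).
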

\begin{proof}[Proof of Claim \ref{claim:harnack_consequence}] Let $M$ and $m$ denote the supremum and the infimum of $u$ in $B$. If $m\geq 0$, then the conclusion \eqref{eq:conclusion} holds trivially. Let ${M'}$ denote the supremum of $u$ in $\tfrac{4}{3}B$. It suffices to consider the case $m <0$, and show that $-m \lesssim_n M'$. 
	
	 Let $x_m$, $x_M \in \partial B$ be such that $u(x_m) = m$ and $u(x_M) = M$. Consider the  positive harmonic function  $h := M' -u$  in $\tfrac{4}{3}B$. By Harnack's inequality, there is a constant $C=C(n) >1$ such that 
	$$(M' - m)	= h(x_m) \leq C h(x_M) = C(M' - M) \leq CM'.$$
	Hence,
	$$-m \lesssim_n M',$$	
	and the claim follows.
\end{proof}


\begin{proof}[Proof of Lemma \ref{claim: bounded doubling index}]
	We will estimate the Hausdorff measure of the zero set of $u$ in  the spherical layer $\{{4}/{3}\leq |x|\leq {5}/{3}\}$. Let $N:= {20}\lceil \mathcal{N}(B)\rceil$. Consider the behavior of $u$ in the concentric spheres
	\begin{align}
		&	S_i= \partial ( r_i B )= \{x\in 2B : |x|=r_i\},\text{ where } r_i= \frac{4}{3} + \frac{i}{3N} \nonumber
	\end{align}
	for $i=0,\ldots,N$ and define
	$$m_{i}^+ := \max_{S_i}u,  \text{ } \text{ ~and~ } \text{ } m_{i}^- := \min_{S_i}u.$$
We may assume that $u$ is not identically zero.	Since $u(0)=0$, the maximum principle  implies 
	\begin{align}
		m_{i}^+>0,  \hspace{7mm} m_{i}^-<0, \hspace{7mm} \nonumber m_i^+ <  m_{i+1}^+, \hspace{3.5mm}\text{ and } \hspace{3.5mm} |m_i^-| < |m_{i+1}^-|. 
	\end{align}
	We aim to show that there are two balls of radius $c_0 N^{-1}$ in $B$, for some $c_0=c_0(n)>0$, such that  $u$ is positive in one ball and negative in the other ball. The following geometrical fact shows that existence of two such balls guarantees a lower bound for the nodal volume.

	\begin{claim}\label{euclidean_geom_claim}
		Let $f:D \ra \real$ be a continuous function in a convex set $D\subset \mathbb{R}^n$.  Assume that  there are two balls $B^{+}$, $B^{-} \subset D$ both of radius $r>0$ such that $f>0$ in $B^{+}$ and $f<0$ in $B^{-}$. Then
		\begin{align*}
			\mathcal{H}^{n-1} (\{f=0\}) \gtrsim_n r^{n-1}. 
		\end{align*}
	\end{claim}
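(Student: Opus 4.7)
The plan is a projection argument. Write $p,q$ for the centers of $B^+$ and $B^-$, set $v := (q-p)/|q-p|$, and let $\pi : \real^n \to H$ denote orthogonal projection onto the hyperplane $H$ through $p$ perpendicular to $v$. Because $\pi(p)=\pi(q)$ and the two balls have the same radius $r$, the projections $\pi(B^+)$ and $\pi(B^-)$ coincide with the $(n-1)$-dimensional disk $D_0\subset H$ of radius $r$ centered at $\pi(p)$.

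For each $y \in D_0$, the line $\ell_y = \{y + tv : t\in\real\}$ meets both $B^+$ and $B^-$. Since $D$ is convex, the segment of $\ell_y$ joining a point of $B^+$ to a point of $B^-$ is contained in $D$, and along this segment $f$ is continuous and changes sign. The Intermediate Value Theorem then supplies a point of $\{f=0\}$ on $\ell_y$. Consequently $\pi(\{f=0\}) \supseteq D_0$.

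Finally, orthogonal projection onto $H$ is $1$-Lipschitz and therefore does not increase $(n-1)$-dimensional Hausdorff measure, so
\[
\mathcal{H}^{n-1}(\{f=0\}) \;\geq\; \mathcal{H}^{n-1}\bigl(\pi(\{f=0\})\bigr) \;\geq\; \mathcal{H}^{n-1}(D_0) \;\gtrsim_n\; r^{n-1},
\]
which is the claimed lower bound. I anticipate no real obstacle here: the only non-elementary input is the standard fact that $1$-Lipschitz maps contract Hausdorff measure, which is a textbook property of $\mathcal{H}^{n-1}$, and everything else is the elementary IVT together with the geometric observation that the projection along $v$ identifies $B^+$ and $B^-$ on $H$.
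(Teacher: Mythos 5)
Your argument is essentially identical to the paper's: both project the zero set along the direction joining the centers of $B^+$ and $B^-$, use the Intermediate Value Theorem on each line segment to see that the projected image contains the full $(n-1)$-dimensional disk of radius $r$, and conclude via the fact that orthogonal projection is $1$-Lipschitz and so does not increase $\mathcal{H}^{n-1}$. The proof is correct and matches the paper's proof step for step.
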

\begin{proof}[Proof of Claim \ref{euclidean_geom_claim}]
		 Consider line segments starting in $B^{+}$ and ending in $B^{-}$, which are parallel to the line connecting the centers of the balls $B^{+}$ and $B^{-}$. Since $f$ is positive at one end and negative at the other end, every such line segment contains a zero of $f$. Consider the orthogonal projection of the zero set $\{f=0\}$ onto a hyperplane orthogonal to these line segments. The zeros of $f$ in the line segments project onto a $(n-1)$ dimensional ball of radius $r$ in this hyperplane. Since the orthogonal projection does not increase the distances, the  Hausdorff measure of $\{f=0\}$ is at least the Hausdorff measure of its projection, which contains a $(n-1)$ dimensional ball of radius $r$.  This completes the proof of Claim \ref{euclidean_geom_claim}.
	\end{proof}
	\begin{figure}[H]
		\includegraphics[scale=0.8]{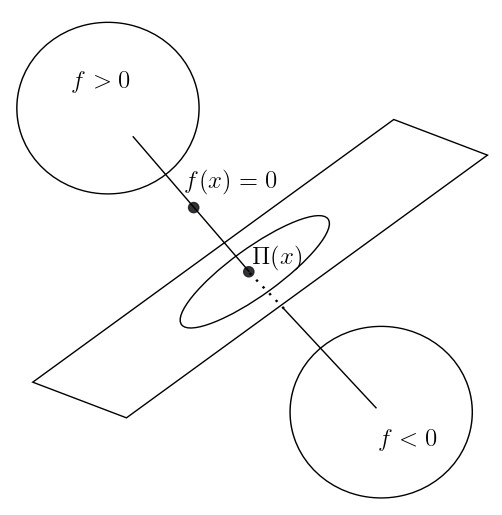}
		\centering
		\caption{Pictorial proof of Claim \ref{euclidean_geom_claim}}
	\end{figure}

	We are ready to proceed with the proof of Lemma \ref{claim: bounded doubling index}. 
	We will show that there are at least $N/10$ disjoint  spherical layers $\{x\in 2B: r_{i-1}\leq |x|\leq r_i\}$ such that in each of them, we can find two balls of radius $c_0 N^{-1}$, for some $c_0=c_0(n)>0$, so that $u$ is positive in one of the balls and negative in the other.

	The increment of $u$ from $\tfrac{4}{3}B$ to $\tfrac{5}{3}B$ can be written as a telescopic product
	$$\frac{m_{1}^+}{m_{0}^+} \cdot \frac{m_2^+}{m_1^+} \cdots \frac{m_{N}^+}{m_{N-1}^+} = \frac{\sup_{\frac{5}{3}B}u}{\sup_{\frac{4}{3} B}u}.$$
	 By Claim \ref{claim:harnack_consequence} and recalling that $N= {20}\lceil \mathcal{N}(B)\rceil$, we have  
	\begin{align*} \frac{\sup_{\frac{5}{3}B}u}{\sup_{\frac{4}{3} B}u} \lesssim_n \frac{\sup_{2B}|u|}{\sup_{B}|u|} =  2^{\mathcal{N}(B)} \leq e^{N}.
	\end{align*}
	 Similarly,
	\begin{align*}
		\frac{m_{1}^-}{m_{0}^-} \cdot \frac{m_2^-}{m_1^-} \cdots \frac{m_{N}^-}{m_{N-1}^-} \lesssim_n e^{ N}.
	\end{align*}
	Hence we conclude that there exists $C_1=C_1(n) >0$ such that $|m_{i+1}^-|\leq C_1 |m_i^-|$ holds for at least $\frac{3}{4}N$ values of $i$ (and similarly for $m_i^+$). Thus, we have
	\begin{align}
		\label{eq: bound needed}	m_{i+1}^+\leq C_1 m_i^+ \text{ } \text{ and } \text{ }  |m_{i+1}^-|\leq C_1 |m_i^-|,
	\end{align}
for at least $N/2$  values of $i$.		From now on, we will consider only $i$ satisfying \eqref{eq: bound needed}. 	Let $x_{i}^{+}$ and $x_{i}^{-} \in S_i$ be points where $u$ attains its maximum and minimum respectively, i.e., $u(x_i^+) = m_i^+$ and $u(x_i^-) = m_i^-$.  Define 
	\begin{align*}
		B_{i}^{+} := B(x_{i}^{+}, \tfrac{1}{10N})\text{ }\text{ and }\text{ }B_{i}^{-} := B(x_{i}^{-}, \tfrac{1}{10N}).
	\end{align*}
	We will now show that there is a small constant $c_0>0$ such that  $u >0$ in $c_0 B_{i}^{+}$, and $u <0$ in $c_0 B_{i}^{-}$, by estimating the gradient of $u$ as follows:
	\begin{align*}
		\sup_{B_{i}^{+}} |\nabla u| \lesssim_{n} N \sup_{2B_{i}^{+}} |u| \lesssim_n {N} \sup_{\tfrac{8}{3}B_{i}^{+}}u \leq {N} \sup_{S_{i+1}} u = {N} m_{i+1}^{+} \mathop{\leq}\limits_{\eqref{eq: bound needed}} C_2 {N}  m_{i}^{+},
	\end{align*}
	where the first inequality follows from the standard gradient estimates (Claim \ref{claim:grad_estimates}), the second inequality follows from  Claim \ref{claim:harnack_consequence}, and the third inequality follows from the inclusion $\tfrac{8}{3}B_{i}^{+} \subset r_{i+1}B$.
	All in all, we have  
	\begin{align*}
		u(x_{i}^{+}) = m_{i}^{+} \text{ } \text{ and } \text{ }\sup_{B_{i}^{+}}|\nabla u| \lesssim_n {N}m_{i}^{+}, \text{ } \text{ where } \text{ }	B_{i}^{+} := B(x_{i}^{+}, \tfrac{1}{10N}).
	\end{align*}
	We may conclude that there is a small constant $c_0 \in (0,1)$ such that $u >0$ in $c_0 B_{i}^{+}$. A similar argument implies that (for a small $c_0\in (0,1)$) we have $u <0$ in $c_0B_{i}^{-}$. Finally, the Euclidean Claim \ref{euclidean_geom_claim} implies the lower bound for the nodal volume. 
\end{proof}
\begin{rem} One can modify the argument in  Lemma 2.1 to prove a slightly better estimate for the nodal volume:
	\begin{align*}
		\mathcal{H}^{n-1}(\{u=0\}\cap 2B) \gtrsim \N(B)^{2-n},
	\end{align*}
	by showing that 
	\begin{align*}
		\mathcal{H}^{n-1}\left( \{u=0\} \cap \{x\in 2B: r_{i-1}\leq |x|\leq r_{i+1}\}\right)\gtrsim  \frac{1}{N^{n-1}} 
	\end{align*}
	for at least $N/2$ indices $i$. 
\end{rem}

We recall from \S\ref{sec1}  the known upper bound  and the conjectured lower bound for the nodal volume of a harmonic $u$ vanishing at the center of a ball $B \subset \real^n$
\begin{align*}
	\N(\tfrac{1}{2}B)r^{n-1} \stackrel{\text{Conj.\ref{Conj1}} }{\lesssim_{n}} \mathcal{H}^{n-1}(\{u=0\} \cap B) \stackrel{\text{Thm.\ref{Thm.Han}} }{\lesssim_{n}} \N(B) r^{n-1}. 
\end{align*}
In the case when $u$ has bounded doubling index in $B$ 
$$\mathcal{N}\left(\frac{1}{2}B\right)\asymp \mathcal{N}(B)\asymp 1,$$
Theorem \ref{Thm.Han} and Lemma \ref{claim: bounded doubling index} imply that the nodal volume of $u$ is comparable to the surface area of $B$. 

\begin{claim}\label{claim_sec2_2.4} 
	Let $B \subset \real^n$ be a ball and let $C>1$ be some constant. Then for all harmonic functions $u$ in $2B$, vanishing at the center of $B$, such that $\N_u(B) \leq C$, we have
	\begin{align*}
		\mathcal{H}^{n-1}(\{u=0\}\cap B) \asymp_{C,n} r^{n-1}.
	\end{align*}
	The nodal volume is also comparable to the scaled doubling index:
	\begin{align*}
	\mathcal{H}^{n-1}(\{u=0\}\cap B) \asymp_{C,n} \mathcal{SN}(B).
	\end{align*}
\end{claim}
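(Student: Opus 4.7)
The plan is to combine the (rescaled) upper bound from Theorem \ref{Thm.Han} with the (rescaled) lower bound from Lemma \ref{claim: bounded doubling index}, using the assumption $\mathcal{N}_u(B)\leq C$ and the almost monotonicity of the doubling index to convert both estimates into bounds of the form $r^{n-1}$ (up to constants depending on $C,n$). The identification with the scaled doubling index $\mathcal{SN}(B)$ will then follow because a universal lower bound on $\mathcal{N}(B)$ is already available (Claim \ref{claim: lower bound doubling index} in the appendix).

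For the \textbf{upper bound}, I would rescale Theorem \ref{Thm.Han} to a ball of radius $r$: this yields a dimensional constant $C_1(n)$ with
\begin{align*}
\mathcal{H}^{n-1}(\{u=0\}\cap B)\;\leq\; C_1(n)\,\mathcal{N}_u(B)\, r^{n-1}\;\leq\; C_1(n)\,C\, r^{n-1}.
\end{align*}

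For the \textbf{lower bound}, the idea is to apply Lemma \ref{claim: bounded doubling index} not to $B$ itself but to the half ball $\tfrac{1}{2}B$, so that $2(\tfrac{1}{2}B)=B$ coincides with the set in which we want to control the nodal volume. Since $u(0)=0$ is also the center of $\tfrac{1}{2}B$ and $u$ is harmonic in $B=2(\tfrac{1}{2}B)$, rescaling Lemma \ref{claim: bounded doubling index} gives
\begin{align*}
\mathcal{H}^{n-1}(\{u=0\}\cap B)\;\gtrsim_n\; \mathcal{N}_u(\tfrac{1}{2}B)^{1-n}\,(r/2)^{n-1}.
\end{align*}
Almost monotonicity of the doubling index (applied with the pair $b=\tfrac{1}{2}B$, $B=B$, so that $2b\subseteq B$) yields $\mathcal{N}_u(\tfrac{1}{2}B)\lesssim_n \mathcal{N}_u(B)\leq C$, hence $\mathcal{N}_u(\tfrac{1}{2}B)^{1-n}\gtrsim_{C,n} 1$, which combines with the previous display to give $\mathcal{H}^{n-1}(\{u=0\}\cap B)\gtrsim_{C,n} r^{n-1}$.

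The \textbf{comparison with $\mathcal{SN}(B)$} is then automatic: Claim \ref{claim: lower bound doubling index} in the appendix (applied to $u$ vanishing at the center of $B$) gives a dimensional lower bound $\mathcal{N}_u(B)\geq c_n>0$, and combined with the hypothesis $\mathcal{N}_u(B)\leq C$ this shows $\mathcal{N}_u(B)\asymp_{C,n} 1$. Consequently $\mathcal{SN}(B)=\mathcal{N}_u(B)\,r^{n-1}\asymp_{C,n} r^{n-1}$, and the nodal-volume estimate already proved implies $\mathcal{H}^{n-1}(\{u=0\}\cap B)\asymp_{C,n}\mathcal{SN}(B)$.

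There is no real obstacle here: the claim is essentially a bookkeeping corollary of three ingredients already established (Theorem \ref{Thm.Han}, Lemma \ref{claim: bounded doubling index}, and almost monotonicity). The only small subtlety to be careful about is to apply Lemma \ref{claim: bounded doubling index} on $\tfrac{1}{2}B$ rather than $B$, so that the conclusion concerns the nodal volume in $B$ (as required) and not in $2B$ (which would leave the interior of $2B\setminus B$ uncontrolled by our hypotheses).
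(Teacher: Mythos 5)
Your proof is correct and follows exactly the route the paper intends: the paper introduces Claim~\ref{claim_sec2_2.4} immediately after Theorem~\ref{Thm.Han} and Lemma~\ref{claim: bounded doubling index} and presents it as their combination, using the boundedness $\mathcal{N}(\tfrac{1}{2}B)\asymp\mathcal{N}(B)\asymp 1$. Your bookkeeping is also carried out cleanly — in particular, applying Lemma~\ref{claim: bounded doubling index} on the half-ball $\tfrac{1}{2}B$ rather than on $B$ is precisely the right rescaling to localize the nodal volume inside $B$, the appeal to the concentric almost-monotonicity (Claim~\ref{claim: lower bound small di}, which explicitly covers the borderline ratio $r=\tfrac{1}{2}$) legitimately gives $\mathcal{N}_u(\tfrac{1}{2}B)\lesssim_n C$, and the universal lower bound $\mathcal{N}_u(B)\gtrsim_n 1$ from Claim~\ref{claim: lower bound doubling index} is what promotes $r^{n-1}$ to $\mathcal{SN}(B)$.
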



\subsection{Chopping} \label{sec2.2}

Given a harmonic function $u$ vanishing at the center of a ball $B\subset \real^n$, how can we estimate the volume of the zero set  when the doubling index is large? 

\vspace{2mm}

When we start looking at the zero set at small scales, the doubling index in small balls around the zero drops down asymptotically in the following way:
\begin{align*}
	\lim_{\ep \ra 0} \N(B(x,\ep)) = \text{ order of vanishing of $u$ at $x$.}
\end{align*}
The latter limit is $1$ for all $x$ in the zero set except the singular set, that is the set where $u$ and $|\nabla u|$ vanish simultaneously. The singular set does not contribute to the nodal volume as it has Hausdorff dimension $(n-2)$, see \cite{HHL98,HHHN99}, and also \cite{NV17} for a quantitative estimate. The non-singular zero set can be covered by countably many balls  with bounded doubling index. 

\vspace{2mm}

In order to prove a lower bound for the nodal volume, a naive idea is to find small disjoint balls $B_k = B(x_k,r_k)$ in $B$ with doubling index $\N(B_k) \leq C$ such that the harmonic function has a zero at the center of each $B_k$. Then in each of the $B_k$ we have a lower bound for the nodal volume by Claim \ref{claim_sec2_2.4}, and therefore
\begin{align}\label{eq:chop.2}
	\mathcal{H}^{n-1}(\{u=0\}\cap B) \geq  \sum_{B_k} \mathcal{H}^{n-1}(\{u=0\}\cap B_k) \asymp \sum_{B_k} r_{k}^{n-1}.
\end{align}
To find balls with bounded doubling index, one can attempt to use the following idea of \textit{chopping}. 
Take a cube $Q$ (which is easier to chop than balls) and repeatedly divide it into smaller subcubes $\{Q_k\}$. At any point in this process if we have a subcube with bounded doubling index, we stop chopping, and if a subcube has large doubling index we continue chopping. Finally, in subcubes with small doubling index and a zero of $u$, one can use a version of Claim \ref{claim_sec2_2.4} for cubes. 

\vspace{2mm}

There are two problems to implement this idea. The first one is to show that for most of the subcubes the doubling index drops down. The second problem is to get some lower bound for the sum of scaled doubling indices 
\begin{align}\label{total_sdi}
	\sum \N(Q_k) (\text{diam $Q_k$})^{n-1},
\end{align}
where the sum is over subcubes $Q_k$ containing a zero of $u$. We choose to work with \eqref{total_sdi} as it imitates the nodal volume.

\vspace{2mm}

In order to obtain quantitative bounds for the nodal volume, one has to understand the distribution of the doubling index on many scales. More precisely, given a partition of cube $Q \subset \real^n$ into equal subcubes and a harmonic function in $2Q$, what can we say about the doubling indices in these subcubes compared to the doubling indices of $Q$ and $\frac{1}{4}Q$?  

\vspace{2mm}

In the next section we discuss two simple examples of the distribution of doubling index.  Old and new results about the distribution of doubling index of harmonic functions will be presented in \S \ref{sec4}

\subsection{Examples} 

\begin{exmp}
	We consider the harmonic function $u(x,y)= \exp (Nx)\sin (Ny)$, for some large positive integer $N$.  Partition $Q=[0,1]^2$ into $A^2$ many equal subcubes $\{Q_i\}_i$ of side length $A^{-1}$. If $1 \ll A \ll N$, then 
	\begin{align*}
		\mathcal{N}^{*}(Q)\asymp N \text{ and }\mathcal{N}^{*}(Q_i)\asymp \frac{N}{A}.
	\end{align*}
	In this case, the doubling index is a linearly decreasing function of the size of the subcubes. The zero set of $u$ consists of parallel lines, see Figure \ref{figure1} below. 
\end{exmp}

\begin{figure}
	\includegraphics[scale=0.4]{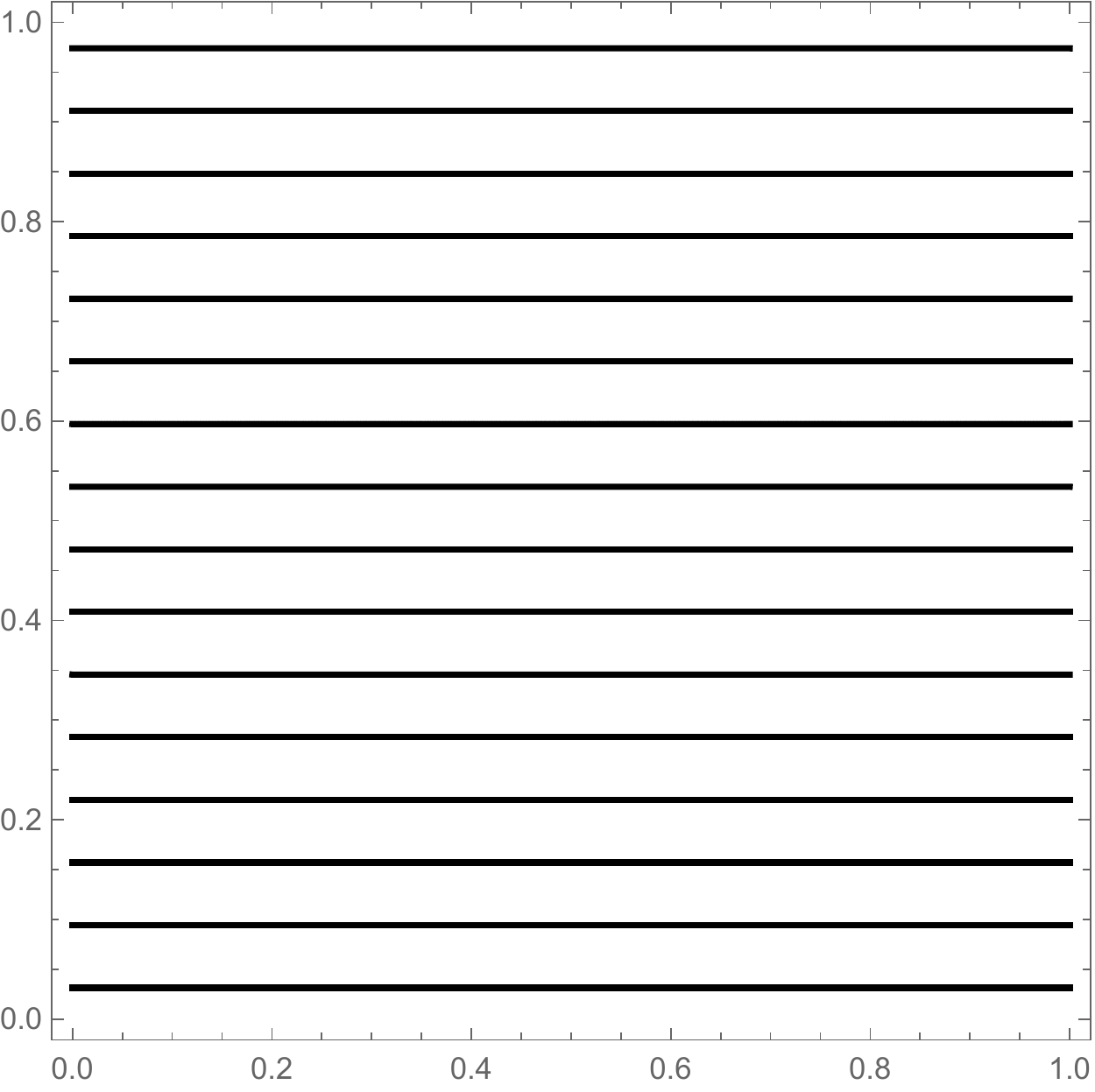}
	\centering
	\caption{Nodal set of  $u(x,y)= \exp(100x)\cos(100y)$.}\label{figure1}
\end{figure}

\begin{exmp}
	Consider $u(x,y)= \re((x+iy)^N)$ for some large integer $N>1$. 
	The zero set of the function $u$ is the union of $2N$ rays emanating from the origin, see Figure \ref{figure2} below. We consider $Q=[-2,2]^2$ and partition it into $A^2$ equal subcubes $\{Q_i\}_i$. 
	
	We have $\mathcal{N}^{*}(Q)\asymp N$. 
	Unlike the previous example, the doubling index of the subcubes depend on the proximity of the subcube to the origin, which is a singular zero of $u$. Consider $z\in Q$ with $|z| = r>0$, let us now get an estimate for $\N(z,1/A)$. In polar coordinates, $u$ can be written as $u(r,\theta) = r^N \cos N\theta$. Hence, for $1 \ll A\ll N$, we have 
	\begin{align*}
		\log	\frac{\sup_{B(z,2/A)}|u|}{\sup_{B(z,1/A)}|u|} \asymp \log \frac{(r+\tfrac{2}{A})^N}{(r+\tfrac{1}{A})^N} = \log \frac{(1+\tfrac{2}{rA})^N}{(1+\tfrac{1}{rA})^N}.
	\end{align*}
	If $(1/rA) \ll 1$, or equivalently $r\gg 1/A$, then we have
	\begin{align*}
		\N(z,1/A) \asymp \frac{N}{rA}. 
	\end{align*}
	Thus, for subcubes $Q_i$ close to the origin, we have $\N^{*}(Q_i) \asymp N$. And for subcubes away from the origin, e.g. the ones in $B(0,1)\backslash B(0,1/2)$, we have $\N^{*}(Q_i) \asymp N/A$.

\end{exmp}

\begin{figure}[H]
	\includegraphics[scale=0.4]{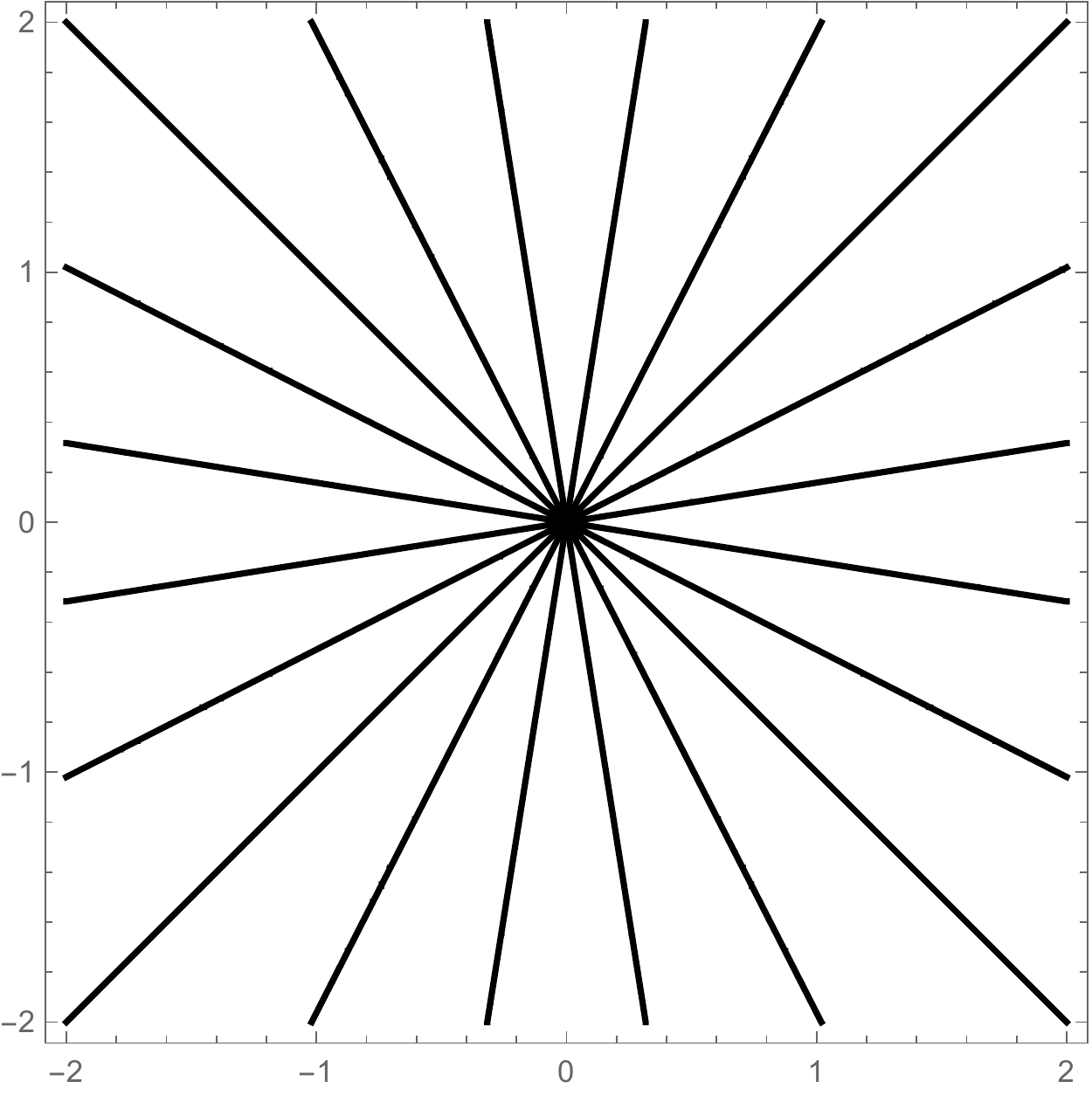}
	\centering
	\caption{Nodal set of $u(x,y)= \text{Re}(x+iy)^{10}$.}	\label{figure2}
\end{figure}

\section{Doubling index and the frequency function }
\label{sec3}
The frequency of a harmonic function is a well-studied close cousin of the doubling index. Due to its analytic nature, it is sometimes easier to study the frequency than the doubling index. In this section, we first define the frequency of a harmonic function and then present some of its well-known properties. Later, we discuss the relation between the frequency and the doubling index. All the results presented in this section are well-known; for completeness, we give their proofs in Appendix \ref{sec: appendix}.

\subsection{Frequency of a harmonic function}\label{sec_3.1}

Given  a harmonic function $u$ in $\mathbb{R}^n$, $x\in \mathbb{R}^n$  and $r>0$,  we define 
\begin{align}
	\label{def: harmonic average}
	H(x,r):= \int_{\partial B(x,r)} u^2 dS,
\end{align}
where $dS$ is the surface measure on $\partial B(x,r)$. Moreover, we also let 
 \begin{align}
 	\label{def: harmonic average derivative}
 	G(x,r):= \int_{B(x,r)} |\nabla u|^2 d\vol.
 \end{align}
The frequency function is defined by 
\begin{align}
	\label{def: frequency function}
	\beta(x,r):= \frac{r G(x,r)}{H(x,r)}.
\end{align} 
 Agmon \cite{AG66} and  Almgren \cite{AL79} proved that $\beta(\cdot)$ is a monotonic function. 

\begin{thm}[Monotonicity of the frequency]
	\label{thm: monotonicity frequency}
	For any harmonic function in $\mathbb{R}^n$, its frequency function $\beta(r):=\beta(0,r)$ is a non-decreasing function of $r$. 
\end{thm}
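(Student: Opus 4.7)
The plan is to compute the logarithmic derivative $\beta'(r)/\beta(r)$ explicitly and recognize it as a non-negative quantity via Cauchy--Schwarz. Throughout I will write $H(r):=H(0,r)$ and $G(r):=G(0,r)$ and exploit harmonicity heavily. Two standard integration by parts identities will do most of the work: one for $H'(r)$ and a Rellich/Pohozaev-type one for $G'(r)$.

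First I would rescale to a fixed sphere. Writing $H(r)=r^{n-1}\int_{S^{n-1}}u(r\theta)^2\,d\sigma(\theta)$ and differentiating under the integral gives
\begin{align*}
H'(r)=\frac{n-1}{r}H(r)+2\int_{\partial B(0,r)} u\,\partial_\nu u\,dS.
\end{align*}
Since $u$ is harmonic, the divergence theorem applied to $u\nabla u$ yields $\int_{\partial B(0,r)} u\,\partial_\nu u\,dS=\int_{B(0,r)}|\nabla u|^2\,dx=G(r)$, so $H'(r)=\tfrac{n-1}{r}H(r)+2G(r)$. For $G(r)$ itself the coarea formula gives the trivial identity $G'(r)=\int_{\partial B(0,r)}|\nabla u|^2\,dS$, which on its own is not yet useful.

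The crucial step is a Pohozaev-type identity obtained by integrating the vanishing quantity $(x\cdot\nabla u)\Delta u$ over $B(0,r)$. Integration by parts on each side, using $\nu=x/r$ on $\partial B(0,r)$ and the algebraic identity $x_i u_{ij} u_j=\tfrac12 x\cdot\nabla(|\nabla u|^2)$, yields after one more integration by parts
\begin{align*}
rG'(r)=(n-2)\,G(r)+2r\int_{\partial B(0,r)}(\partial_\nu u)^2\,dS.
\end{align*}
This is the bookkeeping step I expect to be the main obstacle: getting the constants right and keeping track of which surface terms are inner/outer. Plugging the two derivative formulas into the logarithmic derivative of $\beta(r)=rG(r)/H(r)$ produces cancellation of the $\tfrac{n-1}{r}$ terms, leaving
\begin{align*}
\frac{\beta'(r)}{\beta(r)}=\frac{2\int_{\partial B(0,r)}(\partial_\nu u)^2\,dS}{G(r)}-\frac{2\int_{\partial B(0,r)}u\,\partial_\nu u\,dS}{H(r)}.
\end{align*}

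To finish, I would clear denominators and use $G(r)=\int_{\partial B(0,r)}u\,\partial_\nu u\,dS$ once more, which turns the numerator of $\beta'(r)/\beta(r)$ (after multiplying by $\tfrac12 H(r)G(r)$) into
\begin{align*}
H(r)\int_{\partial B(0,r)}(\partial_\nu u)^2\,dS-\left(\int_{\partial B(0,r)} u\,\partial_\nu u\,dS\right)^{\!2}.
\end{align*}
The Cauchy--Schwarz inequality on $\partial B(0,r)$ applied to the functions $u$ and $\partial_\nu u$ shows this expression is $\geq 0$, and therefore $\beta'(r)\geq 0$ wherever $H(r)\neq 0$. Since $H$ vanishes only if $u\equiv 0$ on that sphere (and then, by unique continuation for harmonic functions, identically), the monotonicity holds on all of $(0,\infty)$ with the convention that $\beta$ is extended by its limit at any removable zero of $H$. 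That completes the sketch.
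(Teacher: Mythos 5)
Your proof is correct and follows essentially the same route as the paper's: both compute $\beta'/\beta$ via logarithmic differentiation, use the identity $H'=\tfrac{n-1}{r}H+2G$ together with the Rellich--Pohozaev identity $rG'=(n-2)G+2r\int_{\partial B_r}(\partial_\nu u)^2\,dS$, and then conclude by Cauchy--Schwarz on the sphere applied to $u$ and $\partial_\nu u$. The only difference is cosmetic --- you clear denominators to the form $H\int(\partial_\nu u)^2 - (\int u\,\partial_\nu u)^2\ge 0$, whereas the paper leaves it as a difference of two ratios --- and your treatment of the degenerate case $H(r)=0$ via unique continuation is a sensible addition the paper leaves implicit.
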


Garofalo and Lin  \cite{GL86} generalized it to elliptic operators with variable coefficients, see also the work of Kukavica \cite{K98} and Mangoubi \cite{M13}. We would like to mention an elegant work of Colding and Minicozzi \cite{CM22}, which proves monotonicity of the frequency function for parabolic operators in a suprisingly elementary way and in great generality.

\vspace{2mm}

An equivalent way to define the frequency function is 
$$\beta(x,r):= \frac{rH'(x,r)}{2H(x,r)} - \frac{n-1}{2},$$
where the derivative is taken with respect to $r$, see Appendix \ref{sec: appendix}. That is the frequency is the logarithmic derivative of $H$, up to the factor of $r$ and constants. The following is an immediate corollary of this definition and the monotonicity of $\beta$.
 \begin{cor}
 \label{cor: formulas for the growth}
	For any harmonic function in $\mathbb{R}^n$, let $\beta(r)= \beta(0,r)$ be its frequency function. For any $0<r_1<r_2$,  we have 
 $$\frac{H(r_2)}{r_2^{n-1}}= \frac{H(r_1)}{r_1^{n-1}}\exp \left( 2 \int_{r_1}^{r_2}\frac{\beta(r)}{r}dr\right).$$
 Moreover, we also have 
 $$\left(\frac{r_2}{r_1}\right)^{2\beta(r_1) + n-1}\leq \frac{H(r_2)}{H(r_1)}\leq \left(\frac{r_2}{r_1}\right)^{2\beta(r_2) + n-1}.$$
 \end{cor}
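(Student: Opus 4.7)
The plan is to read the corollary off directly from the alternative definition of the frequency stated in the paragraph preceding it, combined with the monotonicity theorem (Theorem 3.1).

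First, I would rewrite the identity
$$\beta(r) = \frac{r H'(r)}{2 H(r)} - \frac{n-1}{2}$$
as a separated first-order ODE for $H$:
$$\frac{H'(r)}{H(r)} = \frac{n-1}{r} + \frac{2\beta(r)}{r}.$$
Integrating both sides from $r_1$ to $r_2$ (both $H$ and the right-hand side are smooth and $H>0$ away from where $u\equiv 0$ on $\partial B(0,r)$, a case one can dispose of separately) yields
$$\log \frac{H(r_2)}{H(r_1)} = (n-1)\log\frac{r_2}{r_1} + 2\int_{r_1}^{r_2}\frac{\beta(r)}{r}\,dr,$$
and exponentiating and dividing by $r_2^{n-1}/r_1^{n-1}$ produces the first displayed formula. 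The only nontrivial ingredient is the alternative expression for $\beta$, which is proved in the appendix and which I am assuming.

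For the second formula, Theorem 3.1 (monotonicity of $\beta$) gives $\beta(r_1)\leq \beta(r)\leq \beta(r_2)$ for every $r\in[r_1,r_2]$. Therefore
$$2\beta(r_1)\log\frac{r_2}{r_1} \;\leq\; 2\int_{r_1}^{r_2}\frac{\beta(r)}{r}\,dr \;\leq\; 2\beta(r_2)\log\frac{r_2}{r_1}.$$
Adding $(n-1)\log(r_2/r_1)$ to all three sides and exponentiating, then combining with the identity just derived, we obtain
$$\left(\frac{r_2}{r_1}\right)^{2\beta(r_1)+n-1} \;\leq\; \frac{H(r_2)}{H(r_1)} \;\leq\; \left(\frac{r_2}{r_1}\right)^{2\beta(r_2)+n-1},$$
which is the second displayed inequality.

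There is essentially no obstacle here: the proof is a one-line ODE integration followed by the monotonicity bound on $\beta$. The only care needed is to justify differentiating $H$ and using the formula for $\beta$ as a logarithmic derivative; this is exactly the content of the computation cited to the appendix, and in the harmonic setting it follows from differentiating $H(r)=r^{n-1}\int_{\partial B(0,1)} u(r\omega)^2\,dS(\omega)$ and applying the divergence theorem together with $\Delta u=0$.
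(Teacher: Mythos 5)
Your proof is correct and follows essentially the same route as the paper: write the frequency as a logarithmic derivative of $H$, integrate the resulting ODE to obtain the exact formula, and then sandwich the integral using the monotonicity of $\beta$ from Theorem \ref{thm: monotonicity frequency}. The paper derives the needed identity $H'(r)/H(r) = (n-1)/r + 2\beta(r)/r$ by combining $G(r)=\int_{\partial(rB)} u(\nabla u\cdot\hat n)\,dS$ with equation \eqref{eq: expression H'/H}, which is exactly the identity you cite as the alternative expression for $\beta$.
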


\begin{rem} 
	To define the frequency, it is not necessary for the harmonic function to be defined in the whole of $\real^n$. For any $R>1$ and any harmonic function defined in a neighbourhood of the closed ball $\overline{B(x,R)} \subset \real^n$, the frequency is well defined by \eqref{def: frequency function} for $r\in [0,R]$. All the results about the frequency (namely Theorem \ref{thm: monotonicity frequency}, Corollary \ref{cor: formulas for the growth}, and Lemmas \ref{lem: comparability di and frequency} $\&$ \ref{lem: monotonicity doubling index}) hold in any ball where the harmonic function is defined. 
\end{rem}

 \subsection{Relation between the frequency and the doubling index}\label{sec_3.2} Thanks to the standard elliptic estimates (see the proofs in Appendix \ref{sec: appendix}), we have the following comparison between the frequency in a ball and the doubling index in a slightly larger ball. 
 
\begin{lem}
	\label{lem: comparability di and frequency}  Let $\delta>0$ be a sufficiently small parameter and let $\mathcal{N}(r):= \mathcal{N}(B(0,r))$, for $r>0$. There exists a constant $C=C(\delta,n)\geq 1$ such that 
	$$\beta (r(1+\delta))(1-100\delta) -C\leq \mathcal{N}(r) \leq \beta (2r(1+\delta))(1+100\delta) +C$$
uniformly	for all $0<r\leq 4^{-1}$ 	and all harmonic functions $u$ defined in $B(0,2)$. 
\end{lem}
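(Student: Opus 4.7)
\emph{Strategy.} The lemma compares an $L^\infty$ growth functional ($\mathcal{N}$) with an $L^2$ one ($\beta$), so the plan is to sandwich $\sup_{B(0,r)} u^2$ between two $L^2$ quantities that differ from $H(0,r)/r^{n-1}$ only by an infinitesimal radial dilation, and then read off the two claimed inequalities from Corollary \ref{cor: formulas for the growth}.

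\emph{Key sandwich.} The first step is to prove
\begin{align*}
c_n\,\frac{H(0,r)}{r^{n-1}}\ \leq\ \sup_{B(0,r)} u^2\ \leq\ C(\delta,n)\,\frac{H(0,r(1+\delta))}{r^{n-1}}.
\end{align*}
The left inequality is immediate: the spherical average of $u^2$ is at most its supremum on $\partial B(0,r)$, which equals $\sup_{B(0,r)} u^2$ since $u^2$ is subharmonic. For the right inequality, fix $x\in B(0,r)$; the sub-mean-value inequality applied to the subharmonic $u^2$ on the ball $B(x,r\delta)\subset B(0,r(1+\delta))$ gives
\begin{align*}
u(x)^2\ \leq\ \frac{c_n'}{(r\delta)^n}\int_{B(0,r(1+\delta))} u^2\,dy\ =\ \frac{c_n'}{(r\delta)^n}\int_0^{r(1+\delta)} H(0,s)\,ds,
\end{align*}
and this last integral is bounded by $\tfrac{r(1+\delta)}{n}\,H(0,r(1+\delta))$ using that $s\mapsto H(0,s)/s^{n-1}$ is non-decreasing — equivalently, $\beta\geq 0$, which is visible from the definition.

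\emph{Finish.} To obtain the upper bound $\mathcal{N}(r)\leq \beta(2r(1+\delta))(1+100\delta)+C$, apply the upper sandwich (with $r$ replaced by $2r$) to $\sup_{B(0,2r)} u^2$ and the lower sandwich to $\sup_{B(0,r)} u^2$. The $r^{n-1}$ factors cancel up to a harmless $2^{n-1}$, yielding $2^{2\mathcal{N}(r)}\leq C'(\delta,n)\,H(0,2r(1+\delta))/H(0,r)$. Corollary \ref{cor: formulas for the growth} with $r_1=r$, $r_2=2r(1+\delta)$ bounds the $H$-ratio by $(2(1+\delta))^{2\beta(2r(1+\delta))+n-1}$; taking $\log_2$ and using $\log_2(2(1+\delta))\leq 1+2\delta$ for $\delta$ small gives the desired estimate (in fact with $2\delta$ in place of $100\delta$). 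The lower bound is entirely symmetric: use the lower sandwich on $\sup_{B(0,2r)} u^2$, the upper sandwich on $\sup_{B(0,r)} u^2$, and Corollary \ref{cor: formulas for the growth} with $r_1=r(1+\delta)<2r=r_2$, noting that $\log_2(2/(1+\delta))\geq 1-2\delta$. The hypothesis $r\leq 1/4$ keeps $2r(1+\delta)\leq 2$, so that $H$ and $\beta$ are only evaluated at radii where $u$ is defined.

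\emph{Main obstacle.} The substance is entirely in the sandwich: the upper $L^\infty$-to-$L^2$ bound necessarily costs a dilation $r\mapsto r(1+\delta)$, and this dilation is the sole source of the $\delta$-loss that forces the factors $(1\pm 100\delta)$ into the statement. Everything else is algebra with the two inequalities of Corollary \ref{cor: formulas for the growth}. The gap between the proved $2\delta$ and the stated $100\delta$ is intentional slack, which absorbs any sloppiness in the multiplicative constants during later applications.
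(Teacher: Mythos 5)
Your argument is correct and follows essentially the same route as the paper: sandwich $\sup_B u^2$ between $H(r)/r^{n-1}$ (subharmonicity of $u^2$) and $H(r(1+\delta))/r^{n-1}$ (sub-mean-value, equivalently the paper's ``elliptic estimates'' lemma), then convert the $H$-ratio to a power of $\beta$ via Corollary~\ref{cor: formulas for the growth}. The only cosmetic differences are that you rederive the $L^\infty$-to-$L^2$ bound from scratch rather than quoting the lemma, and you organize the two inequalities around a single sandwich statement.
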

The above lemma together with the monotonicity of the frequency (Theorem \ref{thm: monotonicity frequency}) immediately imply the following {almost} monotonicity property of the doubling index.  

\begin{lem}[Almost monotonicity of the doubling index]
	\label{lem: monotonicity doubling index}
		Let $\delta>0$ be a sufficiently small parameter and let $u$ be a harmonic function in $B(0,2)$. {Let $\mathcal{N}(r):= \mathcal{N}(B(0,r))$, for $r>0$.} There exists a constant $C=C(\delta,n)>0$, independent of $u$, such that 
	$$\left(\frac{r_2}{r_1}\right)^{\mathcal{N}(r_1)(1-\delta)-C}\leq \frac{\sup \limits_{B(0,r_2)}|u|}{\sup \limits_{B(0,r_1)}|u|}\leq \left(\frac{r_2}{r_1}\right)^{\mathcal{N}(r_2)(1+\delta)+C},$$
	 for all $r_1\leq r_2/2\leq 4^{-1}$. In particular, we have 
	$$\mathcal{N}(r_1)(1-\delta) - C\leq \mathcal{N}(r_2)(1+\delta) + C.$$
\end{lem}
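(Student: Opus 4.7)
The plan is to reduce everything to statements about the frequency $\beta$. I will convert $\sup_{B(0,r)}|u|$ to $H(r)$ at slightly perturbed radii using standard harmonic estimates, then use Corollary \ref{cor: formulas for the growth} to express ratios of $H$ as integrals of $\beta$, then use monotonicity of $\beta$ (Theorem \ref{thm: monotonicity frequency}) to estimate the integral, and finally use Lemma \ref{lem: comparability di and frequency} to convert back to $\mathcal{N}$. The ``in particular'' statement follows immediately from the two main inequalities by dividing exponents by $\log(r_2/r_1) > 0$.

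For any small $\delta_0 > 0$, harmonic functions satisfy
$$\frac{H(r)}{c_n r^{n-1}} \leq \sup_{B(0,r)} u^2 \leq \frac{C_{n,\delta_0}}{r^{n-1}} H(r(1+\delta_0)),$$
the lower bound by ``sup $\geq$ average'' on $\partial B(0,r)$, and the upper bound by the sub--mean-value inequality applied to the subharmonic function $u^2$ on balls of radius $r\delta_0$ contained in $B(0, r(1+\delta_0))$. For the upper bound of the lemma I will combine these with Corollary \ref{cor: formulas for the growth} and estimate the resulting integral of $\beta/s$ over $[r_1, r_2(1+\delta_0)]$ from above by $\beta(r_2(1+\delta_0))\log(r_2(1+\delta_0)/r_1)$ using monotonicity; then Lemma \ref{lem: comparability di and frequency} applied at $r = r_2$ yields $\beta(r_2(1+\delta_0)) \leq \mathcal{N}(r_2)(1+C\delta_0) + C$. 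For the lower bound I will split the integral $\int_{r_1(1+\delta_0)}^{r_2} \beta(s)/s\,ds$ at $s = 2r_1(1+\delta_0)$, discard the nonnegative lower piece, and on the upper piece use monotonicity of $\beta$ together with Lemma \ref{lem: comparability di and frequency} at $r = r_1$ to obtain $\beta(s) \geq \mathcal{N}(r_1)(1-C\delta_0) - C$.

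The main obstacle is purely bookkeeping: every conversion introduces a multiplicative $1 + C\delta_0$ and an additive constant $C$ in the exponent, and the integration endpoints are shifted by factors of $1+\delta_0$ and $2(1+\delta_0)$. These are absorbed in two steps, namely (i) choosing $\delta_0 \ll \delta / C$ so that the compounded multiplicative factor is at most $1+\delta$, and (ii) using $\log(r_2/r_1) \geq \log 2$ to turn the remaining multiplicative constants (including the factor $(1+\delta_0)^{\beta}$ coming from the shifted endpoints) into an additive constant in the exponent. The borderline case $r_2/r_1 \in [2, 2(1+\delta_0)]$, where the integration domain in the lower bound collapses, is handled separately using the crude monotonicity $\sup_{B(0,r_2)}|u|/\sup_{B(0,r_1)}|u| \geq 2^{\mathcal{N}(r_1)}$ (valid since $r_2 \geq 2r_1$) together with the boundedness of $\log(r_2/r_1)$ on this range.
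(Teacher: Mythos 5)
Your upper bound is correct and matches the paper's route (elliptic estimate at $r_2(1+\delta_0)$, Corollary \ref{cor: formulas for the growth}, monotonicity of $\beta$, Lemma \ref{lem: comparability di and frequency} at $r_2$). The treatment of the borderline case $r_2/r_1 \in [2, 2(1+\delta_0)]$ is also the same as the paper's.

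There is, however, a genuine gap in your lower bound. After applying the elliptic estimate to get $\sup_{r_1B}u^2 \lesssim_{\delta_0} H(r_1(1+\delta_0))/r_1^{n-1}$, and then Corollary \ref{cor: formulas for the growth} between $r_1(1+\delta_0)$ and $r_2$, the entire point is to recover $\bigl(r_2/r_1\bigr)^{2\mathcal{N}(r_1)(1-\delta)}$ up to an absolute constant. You split the integral at $2r_1(1+\delta_0)$ and \emph{discard} the piece $\int_{r_1(1+\delta_0)}^{2r_1(1+\delta_0)}\beta/s\,ds$. That piece is not negligible: by Corollary \ref{cor: formulas for the growth} again it equals $\tfrac12\log\bigl(H(2r_1(1+\delta_0))/H(r_1(1+\delta_0))\bigr) - \tfrac{n-1}{2}\log 2$, which is $\asymp \mathcal{N}(r_1)\log 2$. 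After discarding it, the remaining integral gives at best $\bigl(\tfrac{r_2}{2r_1(1+\delta_0)}\bigr)^{\beta(2r_1(1+\delta_0))}$, and the loss relative to $\bigl(\tfrac{r_2}{r_1}\bigr)^{\mathcal{N}(r_1)}$ is the factor $\bigl(2(1+\delta_0)\bigr)^{-\mathcal{N}(r_1)}$. This is not a ``multiplicative constant'': the exponent is off by $\mathcal{N}(r_1)\log\bigl(2(1+\delta_0)\bigr)$, which cannot be absorbed into either $\delta\,\mathcal{N}(r_1)\log(r_2/r_1)$ (since $\log\bigl(2(1+\delta_0)\bigr)\approx \log 2$ is not small) or into an additive constant $C$ (since it grows with $\mathcal{N}(r_1)$). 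Concretely, for $r_2 = 4r_1$ your chain gives roughly $(2)^{\mathcal{N}(r_1)}$ where $4^{\mathcal{N}(r_1)(1-\delta)}$ is needed, and the ratio tends to $0$ as $\mathcal{N}(r_1)\to\infty$. Your step (ii) only addresses the $(1+\delta_0)^{\beta}$ endpoint factor, which is harmless; the missing $2^{\mathcal{N}(r_1)}$ is a separate, larger loss.

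The paper avoids this by never touching $H$ below radius $2r_1(1+\delta_1)$. Instead of passing through $H(r_1(1+\delta_0))$, it applies the elliptic estimate at $2r_1(1+\delta_1)$ to get
$H(2r_1(1+\delta_1))/(2r_1(1+\delta_1))^{n-1} \gtrsim_{\delta_1}\sup_{2r_1B}|u|^2$, and then uses the definition of the doubling index to write $\sup_{2r_1B}|u|^2 = 2^{2\mathcal{N}(r_1)}\sup_{r_1B}|u|^2$. The explicit $2^{2\mathcal{N}(r_1)}$ exactly cancels the $2^{-2\mathcal{N}(r_1)}$ loss hidden in $\bigl(r_2/(2r_1(1+\delta_1))\bigr)^{2\mathcal{N}(r_1)}$, leaving only the harmless $(1+\delta_1)^{-2\mathcal{N}(r_1)}$ factor that your step (ii) handles. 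To repair your proof, replace the chain $\sup_{r_1B} \to H(r_1(1+\delta_0))$ by the two-step chain $\sup_{r_1B}\to\sup_{2r_1B}\to H(2r_1(1+\delta_0))$, exactly as in the paper, and then you no longer need to discard any part of the integral.
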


\begin{lem}[Almost monotonicity for non-concentric balls]
\label{lem: almost monotonicity} 
Let $B\subset \mathbb{R}^n$ be any ball and let $u$ be a harmonic function in $2B$. There exists some constant $C=C(n)>1$ such that 
$$\mathcal{N}_u(b) \leq C \mathcal{N}_u(B)$$
for all balls $b$ with  $2b\subset B$.  
\end{lem}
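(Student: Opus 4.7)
\emph{Proof plan.} By translation and rescaling we may assume $B = B(0,1)$ and $u$ is harmonic in $2B = B(0,2)$; write $b = B(y,r)$ with $|y| + 2r \leq 1$. Set $R := 1 - |y|$ and introduce the maximal ball concentric with $b$ contained in $B$, namely $B^{*} := B(y,R)$. Since $2b \subseteq B^{*}$ and $2B^{*} \subseteq B(0, 2-|y|) \subseteq 2B$, the doubling index $\mathcal{N}_u(B^{*})$ is well-defined.

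The first move is a concentric reduction at $y$: after rescaling coordinates so that $B^{*}$ corresponds to $B(0,1)$, Lemma \ref{lem: monotonicity doubling index} applied to the rescaled function yields
\[
\mathcal{N}_u(b) \;\leq\; C_1\, \mathcal{N}_u(B^{*}) + C_1
\]
(for $r \leq R/4$; the range $r \in (R/4, R/2]$ is handled directly since $b$ and $B^{*}$ then have comparable radii). It remains to bound $\mathcal{N}_u(B^{*})$ by a multiple of $N := \mathcal{N}_u(B)$. For this I would apply Hadamard's three-sphere inequality for the subharmonic function $|u|$ at center $y$ with the radii $R < 2R < 2-|y|$ (the outermost being the largest radius keeping $B(y,\cdot)$ inside $2B$): setting $t := (2-|y|)/R$ and $1 - \alpha := \log 2/\log t$, this gives
\[
\mathcal{N}_u(B^{*}) \;=\; \log_2 \frac{\sup_{B(y, 2R)}|u|}{\sup_{B^{*}}|u|}
\;\leq\; (1-\alpha)\, \log_2 \frac{\sup_{B(y,\, 2-|y|)}|u|}{\sup_{B^{*}}|u|}.
\]
The numerator is trivially $\leq \sup_{2B}|u| = 2^N \sup_B |u|$. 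For the denominator I would establish the non-concentric lower bound $\sup_{B^{*}}|u| \geq R^{C_2 N}\sup_B|u|$ by chaining concentric growth estimates (Lemma \ref{lem: monotonicity doubling index}) along the radial segment $[0,y]$: starting from $\sup_{B(0,\rho)}|u| \geq \rho^{C_2 N}\sup_B|u|$ at the origin, propagate through $O(\log(1/R))$ intermediate centers $y_i$ with geometrically shrinking radii, each step costing at most a factor $2^{C_2 N}$. Substituting, the $\log(1/R)$ factor from the chain cancels the Hadamard weight $1-\alpha \asymp \log 2/\log(1/R)$, giving $\mathcal{N}_u(B^{*}) \leq CN + C$ and hence $\mathcal{N}_u(b) \leq C' \mathcal{N}_u(B)$ after absorbing the additive constant (legitimate in the non-trivial regime $\mathcal{N}_u(B) \gtrsim 1$; the complementary bounded case follows from Harnack).

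The main obstacle is the chain argument for the non-concentric sup lower bound: the intermediate centers $y_i$ and radii must be chosen so that the doubling index at each intermediate scale is uniformly bounded by a fixed multiple of $N$ (via iterated concentric almost-monotonicity), keeping the per-step loss at exactly $2^{C_2 N}$; the delicate cancellation of the accumulated loss $(1/R)^{C_2 N}$ against the Hadamard weight $(1-\alpha) \asymp \log 2/\log(1/R)$ is the quantitative heart of the argument, converting the apparent $N \log(1/R)$-type estimate into the desired bound linear in $N$.
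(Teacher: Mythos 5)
Your overall strategy — reduce to the concentric maximal ball $B^{*}=B(y,R)$ and then control $\mathcal{N}(B^{*})$ by Hadamard's three-sphere inequality at $y$ — is reasonable, and the Hadamard arithmetic does close up numerically. But the plan has a genuine gap at exactly the place you flag, and that gap is the whole lemma. To make each step of the chain cost only $2^{C_2 N}$ you must know $\mathcal{N}(y_i,\rho_i)\lesssim N$ uniformly along a chain of length $\asymp\log(1/R)$, which is a family of instances of the very non-concentric almost-monotonicity you are trying to prove. ``Iterated concentric almost-monotonicity'' cannot supply it, because the concentric statements (Claim~\ref{claim: lower bound small di}, Lemma~\ref{lem: monotonicity doubling index}) only relate doubling indices of balls sharing a center; moving from $y_i$ to $y_{i+1}$ is itself a non-concentric step. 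If one instead propagates with a local comparison of the type in Claim~\ref{claim:di_comparison}, each step loses a fixed multiplicative factor $C>1$ in the doubling index, so after $k\asymp\log(1/R)$ steps the intermediate index is only controlled by $C^{k}N\asymp R^{-\log C}N$, which destroys the per-step cost estimate $2^{C_2 N}$. Conversely, if the chain can somehow be run with uniformly bounded intermediate doubling indices, you have already established what the Hadamard step was supposed to deliver, so the three-sphere argument becomes redundant.

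The paper's route avoids this trap. It first proves a targeted non-concentric statement, Claim~\ref{cl_ap_1}: for $z\in\tfrac12\overline{B}$ one has $\mathcal{N}(z,1-\tfrac{|z|}{2})\leq C\,\mathcal{N}(B)$. Its proof is a pure inclusion argument — the fixed ball $B(0,\tfrac14)$ sits inside $B(z,1-\tfrac{|z|}{2})$ for every such $z$, so the non-concentric ratio of suprema is dominated by a concentric one at the origin — and uses only the already-established concentric Claim~\ref{claim: lower bound small di}. The general lemma is then a chain of at most \emph{three} applications of Claim~\ref{cl_ap_1}, along the ray through $x$, passing through $\tfrac12\theta$ and $\tfrac78\theta$; each application covers a fixed fraction of the remaining distance to $\partial B$, so the chain has constant length and costs only a constant factor $C^{3}$, with no $\log(1/R)$ accumulation. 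To make your Hadamard plan rigorous you would, in effect, have to first establish something equivalent to Claim~\ref{cl_ap_1} to seed the chain — at which point the finite inclusion-chain finishes the proof directly and the three-sphere detour is no longer needed.
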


\section{Distribution of the doubling index and how to use it}\label{sec4}
In this section we review the known results from \cite{DF88}, \cite{Llemmas} about the distribution of the doubling index and formulate a new bound (Proposition \ref{prop: lemma 2}) , which is a crucial ingredient of the paper for proving a lower bound for the nodal volume. 
\subsection{Upper bounds for the distribution of doubling index}
\label{sec: distribution}

The following result is borrowed from \cite[Theorem 5.3]{Lproof}. A sketch of the proof is provided in Appendix \ref{sec: proof upper bound distribution}. The proposition below states that if we chop a big cube into smaller subcubes, then most of the subcubes have smaller doubling index than that of the big cube.

\begin{prop}
	\label{prop: lemma 1}
	Let a cube $Q$ in $\mathbb{R}^n$ be partitioned into $A^n$ equal subcubes $Q_i$. Let $u$ be a harmonic function in $4\sqrt{n} \cdot Q$. There exist numerical (depending only on the dimension $n$) constants $A_0,C > 1$ and $c>0$  such that  if $A>A_0$, then the number of $Q_i$ with 
	$$\N^*(Q_i)>\max(\N^*(Q)\exp(- c \log A/ \log \log A ), C)$$
	is smaller than $A^{n-1-c}$. 
\end{prop}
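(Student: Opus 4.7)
My plan is to combine a single-scale \emph{simplex lemma} with a multi-scale induction, following the approach pioneered in \cite{Lproof}. The simplex lemma I would take as a black box: there exist dimensional constants $c_0, C_0, K_0 > 0$ such that whenever $B_1, \ldots, B_{n+1}$ are disjoint balls of equal radius $r$ whose centers form a simplex with the distance from each vertex to the opposite face at least $\mathrm{diam}/K_0$, and $B^*$ is a concentric ball of radius $\asymp K_0 \cdot \mathrm{diam}$ containing them all, then
\[
\mathcal{N}_u(B^*) \;\geq\; (1 + c_0)\, \min_j \mathcal{N}_u(B_j) \;-\; C_0 .
\]
This follows from frequency monotonicity (Lemma \ref{lem: monotonicity doubling index}) and a harmonic-polynomial comparison ruling out a harmonic function that vanishes to high order near each vertex without substantial growth across the circumscribed ball.

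Set $N_1 := \mathcal{N}^*(Q)\exp(-c\log A/\log\log A)$ (the case $N_1 \leq C$ is trivial). Call a subcube $Q_i$ \emph{bad} if $\mathcal{N}^*(Q_i) > N_1$, and suppose for contradiction that the number of bad $Q_i$ exceeds $A^{n-1-c}$. The strategy is to construct, across $k$ nested scales, a chain of balls whose doubling indices grow by the factor $(1+c_0)$ at each step. I would choose $k := \lfloor c_1 \log A/\log\log A \rfloor$ for a small $c_1 = c_1(n, c_0)$, together with intermediate chopping factors $A_1, \ldots, A_k$ satisfying $\prod_j A_j = A$ and each $A_j \asymp A^{1/k} \asymp \log A$. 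At level $j$, I partition a cube from the previous level into $A_j^n$ sub-subcubes and apply a pigeonhole argument: if a sufficient density of sub-subcubes inherit badness from the previous scale, one can extract $n+1$ of them whose centers form a $K_0$-non-degenerate simplex. The simplex lemma then upgrades the doubling index of a ball of radius comparable to the current cube from $N_{(j)}$ to $N_{(j+1)} := (1+c_0) N_{(j)} - C_0$.

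After $k$ levels, the resulting ball has radius $\asymp \mathrm{side}(Q)$ and doubling index at least $(1+c_0)^k N_1 - O(k)$. By almost monotonicity (Lemma \ref{lem: almost monotonicity}), this must be bounded above by $\mathcal{N}^*(Q)$ up to a multiplicative constant. Since $(1+c_0)^k = \exp\!\big(c_1 \log(1+c_0) \cdot \log A/\log\log A\big)$, taking $c < c_1\log(1+c_0)$ makes $(1+c_0)^k N_1 > \mathcal{N}^*(Q)$, which is the desired contradiction.

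The main obstacle will be the combinatorial extraction of a non-degenerate simplex at \emph{every} scale. The assumption of $A^{n-1-c}$ bad subcubes is weaker than the $A^{n-1}$ cubes that could fit inside a single hyperplane-shaped slab, so the bad cubes could a priori concentrate near thin neighborhoods of hyperplanes. The crucial step is a ``non-concentration" inequality showing that, after a controlled loss per scale, a positive fraction of the bad cubes sits in a $K_0$-non-degenerate configuration, and that this property propagates through all $k$ scales without depleting the bad set. Calibrating the per-scale loss against the compounded factor $(1+c_0)^k$, and choosing $A_0$ large enough so that the extraction is robust with respect to the fixed threshold $K_0$, is the point at which the exponent $c$ and the subpolynomial rate $\log A/\log\log A$ are determined.
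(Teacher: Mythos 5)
Your proposal correctly identifies the simplex lemma and the multi-scale iteration structure, which are indeed at the heart of the paper's argument (see Appendix~\ref{sec: proof upper bound distribution}). However, there is a genuine gap at exactly the point you flag as the ``main obstacle.'' You hope to resolve the case where bad subcubes concentrate near a thin hyperplane slab by a combinatorial ``non-concentration inequality.'' No such inequality can hold: it is entirely consistent, combinatorially, for all the bad subcubes to lie within distance $O(1/A)$ of a single hyperplane, in which case no $K_0$-non-degenerate simplex can be extracted from them and the simplex lemma is powerless. The simplex lemma, being a consequence only of frequency monotonicity, simply cannot rule out hyperplane concentration.

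What the actual proof uses to dispose of this case is a second, qualitatively different analytic ingredient that your proposal omits: a \emph{hyperplane lemma}, derived from the quantitative Cauchy uniqueness property of harmonic functions (not from frequency monotonicity). Roughly, if one has an $(n-1)$-dimensional lattice of points near a hyperplane, each with doubling index at least $A$, then the doubling index on a larger concentric ball is at least $2A$. This is the tool that converts hyperplane concentration of bad cubes into a contradiction against $\N^*(Q)$. The single-scale estimate (Theorem~5.1 of~\cite{Llemmas}) is proved by a case analysis: either the bad cubes spread out enough to form a non-degenerate simplex, in which case the simplex lemma applies, or they concentrate near a hyperplane, in which case the hyperplane lemma applies. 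Only after that single-scale bound is in hand does the iterative chopping ($A = A_0^k$, applying the base case at each scale) give the stated $\exp(-c\log A/\log\log A)$ drop and the $A^{n-1-c}$ count. Without the hyperplane lemma your induction cannot close, and no amount of calibrating $A_j$, $c_1$, or per-scale losses will repair this, because the obstruction is not quantitative but structural.
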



The work of Donnelly--Fefferman \cite{DF88} gives a different kind of information about the distribution of the doubling index.

\begin{prop}
	\label{lem: half of the doubling index} Let $Q$ be a unit cube in $\real^n$ and let $u$ be a harmonic function in $4\sqrt n \cdot Q$. Let $\varepsilon>0$ and 	let  $Q$  be partitioned into  equal subcubes $\{Q_i\}_i$ of side length  $\asymp_n\tfrac{c}{\N_u^*(Q)}$ for some sufficiently small $c=c(n,\varepsilon)>0$.  There exists a constant $C = C(n,\varepsilon) > 1$ such that 
	$$\N^*(Q_i)\leq C$$
	for at least $(1-\varepsilon)$ portion of the $Q_i$. 
\end{prop}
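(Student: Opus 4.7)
The plan is to establish Proposition \ref{lem: half of the doubling index} via a single-scale packing estimate coming from the frequency function. Set $N := \N^*_u(Q)$ and $s := 1/A \asymp c/N$ for the side length of each $Q_i$, so there are $A^n$ sub-cubes. If $Q_i$ is \emph{bad} in the sense $\N^*(Q_i) > C$, then by the almost monotonicity of the doubling index (Lemma \ref{lem: almost monotonicity}) the ball $B_i := B(\mathrm{centre}(Q_i), C_1 s)$, for a suitable dimensional constant $C_1$, satisfies $\N_u(B_i) \gtrsim C$. A standard Vitali extraction then produces a sub-family whose doubled balls $\{2 B_i\}$ are pairwise disjoint and still contain a fixed dimensional fraction of the bad sub-cubes.

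The core of the argument is the packing bound
\begin{equation*}
\# \{i : Q_i \text{ bad}\} \cdot s^{n-1} \;\lesssim_n\; \frac{s\,(N+1)}{C},
\end{equation*}
for any such disjoint collection. I would prove it through the frequency function. By Lemma \ref{lem: comparability di and frequency}, $\N_u(B_i) \gtrsim C$ translates (for small fixed $\delta$ and $C$ large enough to absorb the additive constant) into
\begin{equation*}
C \cdot H\bigl(\mathrm{centre}(Q_i),\, 2C_1 s(1+\delta)\bigr) \;\lesssim\; s \int_{B(\mathrm{centre}(Q_i),\, 2 C_1 s(1+\delta))} |\nabla u|^2.
\end{equation*}
Summing over the disjoint family and using both the containment in $2Q$ and the ambient-scale frequency identity (Lemma \ref{lem: comparability di and frequency} applied at scale $O(1)$), the right-hand side totals at most $s(N+1) H_0$ where $H_0 := H(\mathrm{centre}(Q), O(1))$. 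Granting the lower bound $H(\mathrm{centre}(Q_i), r) \gtrsim s^{n-1} H_0$ on a $(1-\varepsilon/2)$-fraction of bad $i$, the packing bound follows. Dividing the resulting bad count by the total $A^n$ yields a bad fraction that is $< \varepsilon$ once $c$ is chosen small and $C$ large, with the precise trade-off dictated by the dimensional constants (and the statement being vacuous when $N$ is bounded, as then no $Q_i$ can be bad for $C$ sufficiently large).

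The main technical obstacle is the lower bound $H(\mathrm{centre}(Q_i), r) \gtrsim s^{n-1} H_0$ for most $i$: it can fail precisely on the \emph{small set} where $|u|$ is exponentially smaller than $\sup_{2Q} |u|$, and controlling the measure of this set quantitatively in $N$ is the key input. I expect the right tool to be a Remez-type inequality for harmonic functions with prescribed frequency, saying that the level set $\{|u| < e^{-\kappa N}\sup_{2Q} |u|\}$ has small measure; or alternatively a dyadic stratification of sub-cubes by $\log H(\mathrm{centre}(Q_i), r)$, handled iteratively using the almost monotonicity of the doubling index combined with the global bound on $H_0$.
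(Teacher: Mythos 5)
Your approach is genuinely different from the paper's. The paper proves Proposition \ref{lem: half of the doubling index} by holomorphic extension (Claim \ref{claim: bound on complexification}) and then applies a Donnelly--Fefferman estimate (Proposition \ref{prop: DF}) on $\log G$ for $G=(u^{\mathbb C})^2$; the key comparison there is \emph{local}, namely that $\log G(x)$ is within $O(1)$ of $\log\bigl(\mathrm{avg}_{R_i}G\bigr)$ off a small exceptional set, from which $\int_{R_i}G\asymp\int_{\Gamma R_i}G$ and hence the doubling bound follow. You instead propose an $L^2$ packing argument: sum the frequency inequality $C\,H(x_i,r)\lesssim r\,G(x_i,r)$ over a disjoint family of bad balls, bound $\sum_i r\,G(x_i,r)\lesssim r\int_{2Q}|\nabla u|^2\lesssim s N H_0$, and then try to convert the sum into a count of bad cubes via the lower bound
\begin{equation*}
	H(\mathrm{centre}(Q_i),r)\gtrsim s^{n-1}H_0\qquad\text{on most bad $i$.}
\end{equation*}

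This lower bound is the fatal gap, and it cannot be repaired by a Remez-type inequality. The ratio $H(x_i,r)/H_0$ compares the $L^2$ mass of $u$ on a small sphere to that on a unit-scale sphere, and for a harmonic function of doubling index $N$ this ratio can genuinely be as small as $e^{-cN}$ on a set of \emph{definite measure}. Consider $u(x,y)=e^{Nx}\sin(Ny)$ on $[0,1]^2$: for a subcube with centre at horizontal coordinate $x_i$ one has $H(x_i,r)/H_0\asymp e^{2N(x_i-1)}$, which is $e^{-2\kappa N}$ for all $x_i<1-\kappa$, a region of measure $\approx 1-\kappa$. In particular the Remez-type statement ``$\{|u|<e^{-\kappa N}\sup|u|\}$ has small measure'' is false for any fixed small $\kappa>0$; a version of it does hold only for $\kappa$ larger than some dimensional constant, but then the best one can say is $H(x_i,r)\gtrsim e^{-2\kappa N}s^{n-1}H_0$, which, fed back into your packing computation, gives $\#\{\mathrm{bad}\}\lesssim e^{2\kappa N} N/(Cs^{n-2})$: the exponential factor $e^{2\kappa N}$ swamps the target bound $\varepsilon A^n\asymp\varepsilon(N/c)^n$. (The dyadic-stratification alternative you suggest runs into the same problem: summing over strata where $H\asymp 2^{-j}H_0$, the frequency inequality controls only the $H$-weighted count, and to convert to an unweighted count you pay a factor $2^{j}$ that is uncontrolled since $j$ ranges up to $\asymp N$.) The Donnelly--Fefferman argument sidesteps this precisely because it compares $u$ to its own \emph{local} average rather than to a global scale; this is a BMO-type control on $\log|u^{\mathbb C}|$ and is simply a different, and much sharper, piece of information than any $L^2$ bookkeeping against the global $H_0$. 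As written, your proof does not close; to make it work you would need to replace the global normalization $H_0$ by a local one, and at that point you would essentially be reproving the Donnelly--Fefferman proposition by other means.
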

Although Proposition \ref{lem: half of the doubling index} is known to experts and is attributed to Donnelly and Fefferman, we could not find a precise reference. A version of it appeared in the study of the doubling index for Laplace eigenfunctions \cite{DF88}. Roughly speaking, if we consider a Laplace eigenfunction on a compact Riemannian manifold with eigenvalue $\lambda$, and cover the manifold with balls of radius $\asymp \lambda^{-1/2}$, the doubling index for most of these balls is bounded. 

\vspace{2mm}

We will not use Proposition \ref{lem: half of the doubling index}  by Donnelly and Fefferman in 
our present work. Nevertheless,  it follows in a straightforward way from their results in \cite{DF88} about holomorphic functions and the method of holomorphic extension. A formal deduction of Proposition \ref{lem: half of the doubling index} is contained in Appendix \ref{sec: Appendix DF}. The result is formulated here for the sake of completeness and in connection to Conjecture \ref{Conj1} (almost additivity). 
To explain the connection we have to formulate another conjecture.

\begin{conj}
	\label{conj2}
	Assume that a harmonic function $u$ has stable growth in a unit ball $B$ and $\mathcal{N}_u(B)=N \gg 1$.
	Let us cover $2B$ by a lattice of cubes $Q_i$ of size $1/N$. Then the number of $Q_i$ in $2B$, which contain a zero of $u$, is at least 
	$cN^n$ for some $c>0$ depending on the dimension $n$.
\end{conj}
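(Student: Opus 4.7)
The plan is to combine Proposition \ref{lem: half of the doubling index} (Donnelly--Fefferman distribution at scale $1/N$) with the lower bound on the total nodal volume provided by Theorem \ref{thm: main} or Conjecture \ref{Conj1}. First I would apply Proposition \ref{lem: half of the doubling index} to a unit cube containing $2B$: since stable growth forces $\N^*$ of this cube to be $\asymp N$, at most $\varepsilon N^n$ of the subcubes $Q_i$ of side $1/N$ have $\N^*(Q_i) > C_\varepsilon$. Call the remaining $(1-\varepsilon)N^n$ cubes \emph{good} and the rest \emph{bad}.

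Next I would show that each good cube containing a zero $p$ of $u$ contributes nodal volume $\gtrsim N^{1-n}$. Indeed, by almost monotonicity (Lemma \ref{lem: almost monotonicity}), a ball $B(p,c/N)$ sitting inside a slight enlargement of $Q_i$ still has bounded doubling index, so Claim \ref{claim_sec2_2.4} yields nodal volume $\asymp N^{1-n}$ in that ball. Letting $K$ denote the number of good cubes with a zero, summing gives a good-cube nodal-volume contribution $\gtrsim K N^{1-n}$. Bounding the bad-cube contribution from above via Theorem \ref{Thm.Han} by $\sum_{\text{bad}}\N^*(Q_i)N^{1-n}$, and comparing with the total $\mathcal{H}^{n-1}(\{u=0\}\cap 2B) \gtrsim N^{1-\delta}$ supplied by Theorem \ref{thm: main} (or $\gtrsim N$ under Conjecture \ref{Conj1}), would yield the desired lower bound $K \gtrsim N^{n-\delta}$ (respectively $K\gtrsim N^n$).

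The main obstacle is handling the bad cubes. By almost monotonicity each bad cube can have $\N^*(Q_i)$ as large as $\asymp N$, so a priori the $\varepsilon N^n$ bad cubes could carry nodal volume as large as $\varepsilon N^n \cdot N \cdot N^{1-n} = \varepsilon N^2$, dwarfing the $\sim N$ total one wants to distribute. To overcome this I would attempt to iterate Proposition \ref{prop: lemma 1}: it shows that the \emph{very} bad cubes (those with $\N^*(Q_i) > N\exp(-c\log N/\log\log N)$) number at most $N^{n-1-c}$, so they contribute only a subpolynomial fraction of the nodal volume; and the moderately bad cubes could be re-partitioned at successively finer scales until their doubling indices become uniformly bounded, bookkeeping the zero-count back at the original scale $1/N$. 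Making this iteration work is essentially proving the almost-additivity \eqref{eq: additivity} of the scaled doubling index, which is itself the content of Conjecture \ref{Conj1} and appears to be the genuinely hard core of the problem.
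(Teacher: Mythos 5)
This statement is labeled \textbf{Conjecture} \ref{conj2} in the paper, and the paper does \emph{not} prove it: it is stated as an open problem, closely related to the almost-additivity Conjecture \ref{Conj1}. So there is no paper proof to compare against; the only question is whether your sketch closes the gap, and it does not---a point you yourself concede in the last paragraph, which is an honest and largely accurate assessment.

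To make the difficulty precise. After invoking Proposition \ref{lem: half of the doubling index} you want to infer a lower bound for $K$ (the number of good cubes with a zero) from the inequality
\[
\mathcal{H}^{n-1}(\{u=0\}\cap 2B)\ \le\ \underbrace{C_\varepsilon\, K\, N^{1-n}}_{\text{good, by Thm.\ \ref{Thm.Han}}}\ +\ \underbrace{\sum_{\text{bad}} \N^*(Q_i)\, N^{1-n}}_{\text{bad, by Thm.\ \ref{Thm.Han}}},
\]
together with a lower bound on the left-hand side. But the bad term is a priori uncontrolled: Proposition \ref{lem: half of the doubling index} permits $\varepsilon N^n$ bad cubes, each with $\N^*$ as large as $\asymp N$, so the bad upper bound can be as large as $\varepsilon N^2$, and nothing prevents essentially all of the nodal volume (which by Theorem \ref{Thm.Han} is only $\asymp N$) from being concentrated in bad cubes with $K=0$. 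Proposition \ref{prop: lemma 1} only tames the ``very bad'' cubes (there are $\le N^{n-1-c}$ of them, total Han bound $\lesssim N^{1-c}$); the moderately bad ones---up to $\varepsilon N^n$ cubes with doubling index between $C_\varepsilon$ and $N\exp(-c\log N/\log\log N)$---are left untouched, and the proposed re-partitioning does not obviously terminate or translate into a zero count at the original scale $1/N$. You correctly diagnose that filling this hole amounts to proving almost-additivity of the scaled doubling index.

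There is also a directional issue worth flagging. The paper's intended logic is $\text{Conjecture \ref{conj2}}+\text{Proposition \ref{lem: half of the doubling index}}\Rightarrow \text{Conjecture \ref{Conj1}}$ (under stable growth). Using Conjecture \ref{Conj1} as the source of the lower bound $\mathcal{H}^{n-1}\gtrsim N$ to prove Conjecture \ref{conj2} is therefore circular. Substituting the proven Theorem \ref{thm: main} avoids circularity, but it only supplies $\gtrsim N^{1-\delta}$ and so---even if the bad-cube contribution could be controlled---would give $K\gtrsim N^{n-\delta}$, short of the $K\gtrsim N^n$ the conjecture asserts. So the proposal does not establish the conjecture, but it does correctly locate where the obstruction lies.
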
  

Let us make a few remarks and speculations. 

The constant $c$ in Conjecture \ref{conj2} might become worse as we change the constant in the definition of stable growth. 


\vspace{2mm}

Conjecture \ref{conj2} about many separated zeroes and Proposition \ref{lem: half of the doubling index} imply that Conjecture \ref{Conj1} is true under the assumption of stable growth. 
Roughly speaking they imply that one can find $\asymp N^n$ cubes $Q_i$ of size $1/N$  ($N=\mathcal{N}_u(B)$), which contain zeroes of u and have a bounded doubling index. Claim \ref{claim_sec2_2.4} implies that $2Q_i$ must have nodal volume $\gtrsim 1/N^{n-1}$. So 
\begin{align*}
	\mathcal{H}^{n-1}(\{u=0\}\cap 2B) \gtrsim N 
\end{align*}
in the case of stable growth (this additional assumption can be earned for free).


\subsection{ New result on the distribution of doubling index} 

Let us remind the definition of \textit{scaled doubling index}.

\begin{defn} \label{defn_nod_cap} For a harmonic function $u$ in a ball $2B \subset \real^n$, we define the \textit{scaled doubling index} (s.d.i.) of $B$, denoted by $\mathcal{SN}(B)$, to be 
	\begin{align*}
		\mathcal{SN}(B) := \N(B) r^{n-1}, 
	\end{align*} 
	where $r$ is the radius of $B$.
\end{defn}

For a harmonic function vanishing at the center of a ball $B$,  $\mathcal{SN}(\thalf B)$ is conjectured (Conjecture \ref{Conj1}) to give a lower bound for its nodal volume in $2B$. 



\vspace{2mm}

Complementary to the known results (\S \ref{sec: distribution}), our main new result also gives a lower bound for the distribution of the doubling index.  Given a ball $B$ and a harmonic function on $4B$, we show that there is a  collection of disjoint balls with doubling index smaller than $\N(2B)$, but whose total s.d.i. is almost as large as $\N(\thalf B)$.
\begin{prop}
	\label{prop: lemma 2}
	Let $B\subset \mathbb{R}^n$ be a unit ball, $u: 4B \ra \real$ a harmonic function, and $A>1$  a sufficiently large parameter. Let $N_1:= \N(\frac{1}{2}B)$ and $N_2:= \mathcal{N}(2B)$. Then there 
	exists a constant $C_A>1$ such that whenever $N_1\geq C_A$,  there exists a collection of disjoint balls $\{B_i = B(x_i,r_i)\}_{i \in\mathcal{I}}$ such that 
	\begin{enumerate}
		\item \label{pnt1}We have an upper bound on the doubling index:
		$$\max \N(2B_i) \lesssim_n N_2/A.$$ 
		\item \label{pnt2} We have a lower bound on the sum of the s.d.i.'s:
		$$ \sum 	\mathcal{SN}(\tfrac{1}{2}B_i)=\sum \N\left(\tfrac{1}{2}B_i\right) r_{i}^{n-1} \gtrsim_n \frac{N_1}{(\log A\log\log A)^{n-1}}. $$
		
		\item \label{pnt3}Vanishing and non-intersection properties:
		$$u(x_i) = 0,\text{ } \text{ } 4B_i \cap 4 B_j=\emptyset,\text{ } \text{ and } \text{ }4B_i\subset 2B,$$
		for all $i\neq j$. 
	\end{enumerate}
\end{prop}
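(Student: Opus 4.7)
The plan is to prove Proposition~\ref{prop: lemma 2} by iterating Proposition~\ref{prop: lemma 1} across multiple scales and locating zeros via the spherical-layer argument underlying Lemma~\ref{claim: bounded doubling index}.

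I would begin by choosing an auxiliary chopping parameter $A_0 = A_0(A)$ together with a number of iterations $k = k(A)$ so that the cumulative doubling-index reduction $\exp(c k \log A_0 / \log\log A_0)$ exceeds $A$. Starting from a cube inscribed in $2B$, I iteratively apply Proposition~\ref{prop: lemma 1}: at each step, partition each surviving cube into $A_0^n$ equal subcubes and discard the at-most $A_0^{n-1-c}$ ``bad'' subcubes on which the doubling index fails to drop. After $k$ iterations, each surviving good cube $Q$ has side length $A_0^{-k}$ and maximal doubling index $\mathcal{N}^*(Q) \le N_2/A$.

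Next I would locate zeros of $u$ inside many of these good cubes. The spherical-layer construction from the proof of Lemma~\ref{claim: bounded doubling index} produces $\gtrsim N_1$ zeros of $u$ in an annulus inside $2B$, distributed across $\sim N_1$ radial shells of width $\sim 1/N_1$. Provided the chopping scale is compatible with $1/N_1$, these zeros lie in many distinct good cubes, and bad intersections are controlled by Proposition~\ref{prop: lemma 1}. For each good cube $Q$ containing such a zero $z$, I set $B_i := B(z, \tfrac{1}{8}\,\text{side}(Q))$; by construction the $\{4B_i\}$ are disjoint and contained in $2B$, and almost monotonicity (Lemma~\ref{lem: almost monotonicity}) gives $\mathcal{N}(2B_i) \lesssim \mathcal{N}^*(Q) \lesssim N_2/A$. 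The sum of scaled doubling indices is then bounded below by the count of selected cubes times the $(n-1)$st power of the common radius, using $\mathcal{N}(\tfrac{1}{2}B_i) \gtrsim 1$ which follows from $u(x_i) = 0$ (Claim~\ref{claim: lower bound doubling index}).

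The main obstacle is the joint optimization of (i) the cumulative DI drop by the full factor $A$, (ii) a lower bound on how many good cubes contain zeros of $u$, and (iii) the scaling in the SDI sum. The tension is that a larger $A_0$ or more iterations push the scale down, improving the DI drop but shrinking each ball's SDI contribution; meanwhile, the spherical-layer argument delivers only $\sim N_1$ zeros, so below a certain scale the zero-per-cube count saturates. The polylog factor $(\log A \log\log A)^{n-1}$ in the target reflects this balance: it is the sharp cost of driving the DI down by $A$ using Proposition~\ref{prop: lemma 1}'s sub-polynomial rate $\exp(c \log A_0/\log\log A_0)$ and then summing contributions of order $(N_2/A)\cdot(\text{scale})^{n-1}$ over the surviving cubes. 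A careful multi-scale induction—potentially chopping at several nested scales, each time calling the spherical-layer argument on the current cubes before refining further—is likely what produces the polylog rather than polynomial loss.
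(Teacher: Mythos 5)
Your proposal diverges substantially from the paper's route, and the key gap lies precisely in the step you acknowledge as the ``main obstacle.'' You propose to prove the scaled-doubling-index lower bound by counting zeros and using $\mathcal{N}(\tfrac{1}{2}B_i)\gtrsim 1$ (Claim \ref{claim: lower bound doubling index}). But with that mechanism the sum $\sum \mathcal{N}(\tfrac{1}{2}B_i) r_i^{n-1}$ is at best $\#\{\text{zeros found}\}\cdot(\text{scale})^{n-1}$. To drive the doubling index down by a factor $A$ via iterating Proposition \ref{prop: lemma 1} you must take $k$ iterations with $k\log A_0/\log\log A_0 \gtrsim \log A$, so the scale is $A_0^{-k}\lesssim e^{-c\log A\log\log A}$, super-polynomially small in $A$. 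Since the spherical-layer argument supplies only $\sim N_1$ zeros, your sum comes out around $N_1\cdot e^{-c(n-1)\log A\log\log A}$, which is far smaller than the target $N_1/(\log A\log\log A)^{n-1}$. Your closing sentence (``a careful multi-scale induction \dots is likely what produces the polylog'') flags this problem but does not supply a mechanism to resolve it.

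The paper's actual mechanism is qualitatively different, and it is what buys the polylog rather than sub-polynomial loss. After reducing to stable growth (Claim \ref{claim: existence of stable region} and Lemma \ref{lem:stability_constant}), the proof builds long thin tunnels $\mathcal{T}_i$ pointing toward a maximum of $|u|$, each with logarithmic multiplicative increment $Z\gtrsim N$ (Lemma \ref{lem: multiplicative increment}, resting on Lemma \ref{lemma: behaviour_around_max 2.0}). Then the tunnel lemma (Lemma \ref{lem: general lemma}) shows, via a telescoping product across the tunnel's subcubes together with Harnack, that the increment $Z$ must be paid for by the doubling indices at the zeros inside the tunnel: one obtains $\sum_k \mathcal{N}(\tfrac{1}{2}B_k)\gtrsim Z \gtrsim N$ \emph{per tunnel}, not merely $\gtrsim 1$ per ball. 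With $\gtrsim K^{n-1}$ good tunnels (good by Proposition \ref{prop: lemma 1}), each of width $\asymp (K\log K)^{-1}$, the total s.d.i.\ is $\gtrsim K^{n-1}(K\log K)^{-(n-1)}N = N/(\log K)^{n-1}$, and the choice $\log A\asymp\log K/\log\log K$ delivers the $(\log A\log\log A)^{-(n-1)}$ factor. Without the tunnel telescoping argument — which converts growth into a sum of doubling indices rather than a mere count of zeros — your approach cannot close the gap, so this is a genuine missing idea rather than a detail to be filled in.
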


We now briefly sketch how one can use Proposition \ref{prop: lemma 2} to implement the naive idea discussed in \S\ref{sec2.2}  in order to prove a lower bound for the zero set. 
One can recursively apply Proposition \ref{prop: lemma 2} in order to obtain a collection of balls with bounded doubling indices. Roughly speaking, in each step we apply Proposition \ref{prop: lemma 2} to each of the balls with large doubling index and replace it by a collection of sub-balls with smaller doubling index. Conditions \eqref{pnt1} and \eqref{pnt3} guarantee that in a finite number of steps all balls will have bounded doubling index  and  $u$ must vanish at the center of each ball. The number of steps can be estimated in terms of $N_2$ and $A$.

\vspace{2mm}

Condition \eqref{pnt2} should be interpreted as a condition ensuring that after recursive applications of Proposition \ref{prop: lemma 2}, we don't lose too much of the nodal set. Here is an informal explanation. The sum  $\sum \mathcal{SN}(\thalf B_i)$ imitates the nodal volume. If we believe in Conjecture \ref{Conj1}, then 
\begin{align*}
\sum \mathcal{H}^{n-1}(\{u=0\} \cap 2B_i) \gtrsim \sum \mathcal{SN}(\thalf B_i) \gtrsim \frac{N_1}{(\log A\log\log A)^{n-1}},
\end{align*}
and the loss of the nodal volume in each step is subpolynomial in $A$. 

In the last step all balls have bounded doubling index. Recall that if $u$ has a bounded doubling index in a ball $B$ and vanishes at its  center, then Claim \ref{claim_sec2_2.4} gives the following estimate for the nodal volume 
\begin{align*}
\mathcal{SN}(\thalf B) \asymp \mathcal{H}^{n-1}(\{u=0\} \cap B) \asymp (\text{radius}(B))^{n-1}.
\end{align*}
We can apply Claim \ref{claim_sec2_2.4} to each of the balls in the last step and the control \eqref{pnt2} of the sum of s.d.i. provides a lower bound for the nodal volume. In the next section we formalize the idea and quantify what kind of a lower bound we get for the  nodal volume.


	\subsection{How to use Proposition \ref{prop: lemma 2}} We will start proving 
	Proposition \ref{prop: lemma 2} only in \S \ref{sec: reduction}. Now, we show how Proposition \ref{prop: lemma 2} implies the following lower bound for the nodal volume. 
	
	\begin{lem}
		\label{lem: induction on scales}
		Let $B\subset \mathbb{R}^n$ be a unit ball, and $u: 4B \ra \real$ a harmonic function. Assume that $\varepsilon>0$,  $N_1  := \mathcal{N}(\frac{1}{2}B) \gg_{n,\varepsilon} 1$, and $N_2 := \mathcal{N}(2B)$. Then we have  
		$$\mathcal{H}^{n-1}\left( \{u=0\}\cap 2B\right)\gtrsim_{n,\varepsilon}  N_1^{1-\varepsilon} \left(\frac{N_1}{N_2}\right)^{\varepsilon}.$$ 
	\end{lem}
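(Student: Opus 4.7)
The plan is to iterate Proposition \ref{prop: lemma 2} to build a tree of balls whose leaves have bounded doubling index, and then invoke Claim \ref{claim_sec2_2.4} on each leaf. First I would fix a large parameter $A = A(\varepsilon,n) \gg 1$, to be specified at the end, and let $C_A > 1$ be the threshold constant from Proposition \ref{prop: lemma 2}; the hypothesis $N_1 \gg_{n,\varepsilon} 1$ lets us assume $N_1 \geq C_A$. Build a rooted tree $\mathcal{T}$ inductively: the root is $B$, and for any node $v$ whose ball $B_v$ satisfies $\mathcal{N}(\tfrac{1}{2}B_v) \geq C_A$ we apply (a rescaled copy of) Proposition \ref{prop: lemma 2} to $B_v$ and take the resulting sub-balls as the children of $v$; otherwise $v$ is declared a leaf. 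By property \eqref{pnt3}, $u$ vanishes at the center of every $B_v$, and the iterated sibling-disjointness (each generation satisfies $4B_{v_k} \cap 4B_{v_l}=\emptyset$ for $k\neq l$, and $4B_{v_k}\subset 2B_v$) implies that the collection $\{\tfrac{1}{2}B_v\}_{v\,\text{leaf}}$ is pairwise disjoint and lies inside $2B$.

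Next I would bound the depth of $\mathcal{T}$ and telescope the s.d.i.\ inequality along the tree. Property \eqref{pnt1} gives $\mathcal{N}(2B_{v_k}) \leq C_n\, \mathcal{N}(2B_v)/A$ at each branching, so after $k$ generations $\mathcal{N}(2B^{(k)}) \leq (C_n/A)^k N_2$; combined with Lemma \ref{lem: almost monotonicity} (which lets us pass from $\mathcal{N}(2B_v)$ to $\mathcal{N}(\tfrac{1}{2}B_v)$), this forces the tree to have depth $k_{\max} \lesssim_n \log N_2/\log A$ once $A$ is large enough. Setting $L := (\log A \log\log A)^{n-1}$, property \eqref{pnt2} rescaled to the ball $B_v$ reads
\begin{align*}
\sum_{k} \mathcal{SN}\!\left(\tfrac{1}{2}B_{v_k}\right) \;\gtrsim_n\; \frac{\mathcal{SN}(\tfrac{1}{2}B_v)}{L},
\end{align*}
and a simple bottom-up induction on subtree depth yields
\begin{align*}
\sum_{v\,\text{leaf}} \mathcal{SN}\!\left(\tfrac{1}{2}B_v\right) \;\gtrsim_n\; \frac{\mathcal{SN}(\tfrac{1}{2}B)}{L^{k_{\max}}} \;\asymp_n\; \frac{N_1}{L^{k_{\max}}}.
\end{align*}
At each leaf $v$, one has $\mathcal{N}(\tfrac{1}{2}B_v) < C_A$ and $u$ vanishes at the center of $\tfrac{1}{2}B_v$, so Claim \ref{claim_sec2_2.4} (applied to $\tfrac{1}{2}B_v$) gives $\mathcal{H}^{n-1}(\{u=0\}\cap\tfrac{1}{2}B_v) \asymp_{A,n} r_v^{n-1} \asymp_{A,n} \mathcal{SN}(\tfrac{1}{2}B_v)$. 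Summing over the disjoint leaf collection,
\begin{align*}
\mathcal{H}^{n-1}(\{u=0\}\cap 2B) \;\gtrsim_{\varepsilon,n}\; \frac{N_1}{L^{k_{\max}}}.
\end{align*}

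Finally I would tune $A$. From the depth bound,
\begin{align*}
L^{k_{\max}} \;\leq\; \exp\!\left(c_n(n-1)\,\frac{\log N_2\,\log(\log A \log\log A)}{\log A}\right) \;=\; N_2^{\varepsilon(A)},
\end{align*}
where $\varepsilon(A) := c_n(n-1)\log(\log A \log\log A)/\log A \to 0$ as $A\to\infty$. Choosing $A = A(\varepsilon,n)$ so that $\varepsilon(A)\leq \varepsilon$ gives
\begin{align*}
\mathcal{H}^{n-1}(\{u=0\}\cap 2B) \;\gtrsim_{\varepsilon,n}\; \frac{N_1}{N_2^{\varepsilon}} \;=\; N_1^{1-\varepsilon}(N_1/N_2)^{\varepsilon},
\end{align*}
as required. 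The main technical point I anticipate is the careful bookkeeping that Proposition \ref{prop: lemma 2} remains valid at every internal node (verifying $\mathcal{N}(\tfrac{1}{2}B_v) \geq C_A$ there) and that the s.d.i.\ telescoping propagates cleanly across a tree with leaves at varying depths; both are routine inductions, but they require using Lemma \ref{lem: almost monotonicity} to pass freely between the doubling indices of $\tfrac{1}{2}B_v$, $B_v$ and $2B_v$.
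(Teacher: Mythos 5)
Your proposal follows essentially the same strategy as the paper's proof: iterate Proposition~\ref{prop: lemma 2} to build a collection (your tree is the same object as the paper's sequence $S_0, S_1, \dots, S_K$, with the leaves at step $K$), bound the depth by $\lesssim \log N_2 / \log A$ via property~\eqref{pnt1} and almost monotonicity, telescope property~\eqref{pnt2}, apply the bounded-doubling-index estimate on the leaves, and tune $A$.

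One bookkeeping point deserves care, and the paper is explicit about it. Property~\eqref{pnt2} reads
\begin{align*}
\sum_k \mathcal{SN}\!\left(\tfrac{1}{2}B_{v_k}\right) \geq c\,\frac{\mathcal{SN}(\tfrac{1}{2}B_v)}{(\log A \log\log A)^{n-1}}
\end{align*}
with a \emph{fixed} dimensional constant $c<1$, so iterating down the tree gives
\begin{align*}
\sum_{v\,\text{leaf}} \mathcal{SN}\!\left(\tfrac{1}{2}B_v\right) \geq \frac{c^{k_{\max}}\,N_1}{(\log A \log\log A)^{(n-1)k_{\max}}},
\end{align*}
and the factor $c^{k_{\max}}$ is not a dimensional constant — it depends on $N_2$ through $k_{\max}$. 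Your displayed lower bound $\gtrsim_n N_1 / L^{k_{\max}}$ silently drops this factor, and consequently your formula for $\varepsilon(A)$ omits the $\log(1/c)/\log A$ contribution. This does not wreck the argument: since $\log(1/c)/\log A \to 0$ as $A \to \infty$, choosing $A = A(\varepsilon,n)$ large still makes the combined exponent at most $\varepsilon$. But as written, the inductive telescoping is slightly off, and the final tuning of $A$ must control both the $c^{k_{\max}}$ and the $(\log A \log\log A)^{(n-1)k_{\max}}$ contributions simultaneously; the paper does this by requiring $c^K \geq N_2^{-\varepsilon/2}$ and $(\log A \log\log A)^{K(n-1)} \leq N_2^{\varepsilon/2}$. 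The rest of your outline — disjointness of leaves across depths, the use of Claim~\ref{claim_sec2_2.4} (or equivalently Lemma~\ref{claim: bounded doubling index}) on each leaf — matches the paper.
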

	We observe that if $u$ has stable growth in a unit ball $B$, i.e. the ratio $N_2/N_1$ is bounded by some numerical constant (possibly depending on the dimension $n$), then Lemma \ref{lem: induction on scales} immediately implies Theorem \ref{thm: main} {under the assumption of stable growth}. Unfortunately, the bound in Lemma \ref{lem: induction on scales} gets worse as the ratio $N_2/N_1$ gets larger. However, in \S\ref{sec: reduction to stable growth} we will show that Lemma \ref{lem: induction on scales} implies Theorem \ref{thm: main} by reducing the general case to the stable growth case.
	
	\begin{proof}[Proof of Lemma \ref{lem: induction on scales}] Given $\varepsilon>0$, we will choose a large constant $A=A(\varepsilon)\geq 2$ later.  Let $C_A\geq 1$ be given by Proposition \ref{prop: lemma 2}, and assume that $C_A\gg_n 1$. Starting with $S_0 := \{B\}$, for every $k \in \nat$, we will recursively use Proposition \ref{prop: lemma 2} to construct a collection of balls $S_k = \{B_{\alpha}\}$. This sequence will eventually stabilize, that is, there exists $K\geq 0$ such that $S_k = S_K$ for every $k \geq K$. Moreover, the final collection $S_K$ will be such that  for every ball $B_{\alpha} \in S_K$,  $u$ vanishes at its center and $\N(\tfrac{1}{2}B_{\alpha}) \leq C_A$.

		Let $S_0 := \{B\}$. Given $S_k = \{B_{\alpha}\}$, we will define $S_{k+1}$. Suppose $B_{\alpha} \in S_k$ is such that $\N(\frac{1}{2} B_{\alpha}) \leq C_A$, then we retain $B_{\alpha}$ in $S_{k+1}$. If $\N(\frac{1}{2} B_{\alpha}) > C_A$, then we apply Proposition \ref{prop: lemma 2} to $B_{\alpha}$ to obtain a collection of balls $\{B_{\alpha,i}\}_{i}$ satisfying the three conditions of  Proposition \ref{prop: lemma 2}, and this collection will replace $B_{\alpha}$ in $S_{k+1}$, i.e.,
		\begin{align*}
			S_{k+1} := \{B_{\alpha}: B_{\alpha} \in S_k\text{ and }\N\left(\tfrac{1}{2} B_{\alpha}\right) \leq C_A\} \cup \{B_{\alpha,i}: B_{\alpha} \in S_k\text{ and }\N\left(\tfrac{1}{2} B_{\alpha}\right) > C_A\}.
		\end{align*}
		
		For $B_{\alpha } \in S_k$, let $x_{\alpha}$ and $r_{\alpha}$ be its center and radius respectively.  We claim that the collection $S_k$ satisfies the following properties:
		\begin{enumerate}[label=(\roman*)]
			\item\label{concon1} For some (large) $C=C(n)\geq 1$, we have for every $B_{\alpha} \in S_k$: 
			\begin{align*} 
				\N(2B_{\alpha}) \leq  C^k N_2/A^k \text{ or }\N
				\left(\tfrac{1}{2} B_{\alpha}\right) \leq C_A.
			\end{align*}
			\item \label{concon2} For some (small) $c=c(n)>0$, we have 
			$$	\sum_{B_{\alpha} \in S_k} \N(\tfrac{1}{2}B_\alpha) r_{\alpha}^{n-1} \geq c^k \frac{N_1}{(\log A\log\log A)^{k(n-1)}}. $$
			
			\item \label{concon3}Vanishing  and non-intersection properties: 
			$$u(x_{\alpha}) = 0 \text{, }\text{ }4B_{\alpha} \cap 4B_{\gamma}=\emptyset \text{, }\text{ and }\text{ }4B_{\alpha}\subset 2B,$$
			for all $B_\alpha,B_\gamma\in S_{k} $ with $B_\alpha \neq B_\gamma.$
		\end{enumerate}
		We prove by induction that the above properties \ref{concon1}-\ref{concon3} hold for every $S_k$. Properties \ref{concon1}-\ref{concon3} hold for $S_1$ by the statement of Proposition \ref{prop: lemma 2}. Assuming that all the above properties hold for $S_k$, let us show that they hold for $S_{k+1}$ as well. It is an immediate consequence of Proposition \ref{prop: lemma 2} and the way $S_k$ is constructed that \ref{concon1} and \ref{concon3} hold. We now show that \ref{concon2} also holds. We partition $S_k$ into $S_k^{(1)}$ and $S_{k}^{(2)}$ where
		\begin{align*}
			S_{k}^{(1)} = \{B_\alpha \in S_k: \N(\tfrac{1}{2} B_{\alpha}) \leq C_A\}\text{ and }S_{k}^{(2)} = S_k \setminus S_{k}^{(1)}. 
		\end{align*}
		For $B_\alpha \in S_k^{(2)}$, we denote by $\{B_{\alpha,i}\}_i$ the collection of balls obtained by applying Proposition \ref{prop: lemma 2} to $B_{\alpha}$. Let $r_{\alpha,i}$ be the radius of $B_{\alpha,i}$, then we have
		\begin{align*}
			\sum_{B_j \in S_{k+1}} \N(\tfrac{1}{2} B_j) r_{j}^{n-1} = \sum_{B_\alpha \in  S_k^{(1)}} \N(\tfrac{1}{2} B_{\alpha}) r_{\alpha}^{n-1} + \sum_{B_\alpha \in  S_k^{(2)}} \sum_{i} \N(\tfrac{1}{2} B_{\alpha,i}) r_{\alpha,i}^{n-1}. 
		\end{align*}
		It follows from the second condition of Proposition \ref{prop: lemma 2} that 
		\begin{align*}
			\sum_{i} \N(\tfrac{1}{2} B_{\alpha,i}) r_{\alpha,i}^{n-1} \geq \frac{c}{(\log A \log \log A)^{n-1}} \cdot \N(\tfrac{1}{2} B_{\alpha}) r_{\alpha}^{n-1}. 
		\end{align*}
		We thus have
		\begin{align*}
			\sum_{B_j \in S_{k+1}} \N(\tfrac{1}{2} B_j) r_{j}^{n-1} &\geq  \sum_{B_\alpha \in  S_k^{(1)}} \N(\tfrac{1}{2} B_{\alpha}) r_{\alpha}^{n-1} + \frac{c}{(\log A \log \log A)^{n-1}} \sum_{B_\alpha \in  S_k^{(2)}} \N(\tfrac{1}{2} B_{\alpha}) r_{\alpha}^{n-1},\\
			& \geq \frac{c}{(\log A \log \log A)^{n-1}} \sum_{B_\alpha \in S_k} \N(\tfrac{1}{2} B_{\alpha}) r_{\alpha}^{n-1},\\
			& \geq \frac{c^{k+1}}{(\log A \log \log A)^{(k+1)(n-1)}} \cdot N_1, 
		\end{align*}
		where the last inequality follows by our induction hypothesis that \ref{concon2} holds for $S_k$.
		
		We now show that $S_k$ stabilizes  in $\asymp \log N_2/\log A$ steps. Because of \ref{concon1}, at every step and for every ball $B_{\alpha} \in S_k$ either $\N(\tfrac{1}{2} B_{\alpha}) \leq C_A$ (and then $B_{\alpha}$ is frozen forever) or the doubling index of $2B_{\alpha}$ drops by a factor of $A/C$, which can be crudely estimated by $\sqrt{A}$ (assuming that $A$ is sufficiently large). The latter scenario can happen only $\asymp \log N_2/\log A$ times as we start with $N(2B)=N_2$. Indeed, otherwise doing $k \geq 2 \log N_2/\log A$ steps, we have a ball $B_{\alpha} \in S_k$ with $$\N(2B_{\alpha}) \leq N_2/ A^{k/2} \leq 1 $$ 
		and by almost monotonicity of the doubling index it implies
		$$ \N(\thalf B_{\alpha}) \leq C \N(2B_{\alpha}) \leq C \leq C_A.$$
		So after $K := \lceil 2 \log N_2/\log A \rceil$ steps all balls have bounded doubling index. 
		By Lemma \ref{claim: bounded doubling index} we have
		\begin{align}
			\mathcal{H}^{n-1}(\{u=0\}\cap 2B) &\geq \sum_{B_\alpha \in S_K} \mathcal{H}^{n-1}(\{u=0\}\cap B_{\alpha})  \nonumber \\
			& \geq c \sum_{B_\alpha \in S_K} \N\left(\tfrac{1}{2} B_{\alpha}\right)^{1-n} r_{\alpha}^{n-1} \nonumber\\
			& \geq c \cdot C_A^{1-n} \sum_{B_{\alpha} \in S_k}  r_{\alpha}^{n-1}. \label{eq:bound.on.nod.vol}
		\end{align}
		It is time to use \ref{concon2} to get a lower bound \eqref{eq:bound.on.nod.vol}:
		\begin{align}
			C_A \sum_{B_{\alpha} \in S_k}  r_{\alpha}^{n-1}  \geq \sum_{B_{\alpha} \in S_k} \N(\tfrac{1}{2} B_{\alpha}) r_{\alpha}^{n-1} \geq \frac{c^K}{(\log A \log \log A)^{K(n-1)}} N_1, \nonumber
		\end{align}
		thus,
		\begin{align}
			\mathcal{H}^{n-1}(\{u=0\}\cap 2B) \gtrsim C_A^{-n} \frac{c^{K}}{(\log A \log \log A)^{K(n-1)}} N_1. \nonumber
		\end{align}
		We may assume that $N_2\gg 1$ because $N_1 \gg 1$.	Since $K = \lceil 2 \log N_2/\log A \rceil$, we can always choose $A=A(\varepsilon)\geq 2$  so that
		\begin{align*} 
			c^K\geq  N_2^{-\varepsilon/2}\hspace{3mm}\text{ and } \hspace{3mm} (\log A\log\log A)^{K(n-1)} \leq  N_2^{\varepsilon/2}.
		\end{align*}
		Hence, absorbing the factor $C_A^{-n}$, which now depends only on $\varepsilon$, in the $\lesssim_{\varepsilon}$ notation, we have 
		$$ C_A^{-n}\cdot  c^K \frac{N_1}{(\log A\log\log A)^{K(n-1)}}\gtrsim_{\varepsilon}  N_1 \cdot N_2^{-\varepsilon }= N_1^{1-\varepsilon} \left(\frac{N_1}{N_2}\right)^{\varepsilon}, $$
		concluding the proof of Lemma \ref{lem: induction on scales}. 
	\end{proof}

\section{Structure of the proofs of Theorem \ref{thm: main} and Proposition \ref{prop: lemma 2}}

\tikzset{block/.style={rectangle split, draw, rectangle split parts=2,
		text width=14em, text centered, rounded corners, minimum height=4em},
	grnblock/.style={rectangle, draw, fill=white, text width=14em, text centered, rounded corners, minimum height=4em}, 
	sgrnblock/.style={rectangle, draw, fill=white, text width=7em, text centered, rounded corners, minimum height=2em}, 
newsgrnblock/.style={rectangle, draw, fill=white, text width=9em, text centered, rounded corners, minimum height=2em},
slgrnblock/.style={rectangle, draw, fill=white, text width=13em, text centered, rounded corners, minimum height=2em}, 
whtblock/.style={rectangle, draw, fill=white!20, text width=14em, text centered, minimum height=4em},    
line/.style={draw, -{Latex[length=2mm,width=1mm]}},
cloud/.style={draw, ellipse,fill=white!20, node distance=3cm,    minimum height=4em},  
container/.style={draw, rectangle,dashed,inner sep=0.28cm, rounded
	corners,fill=white,minimum height=4cm}}

\vspace{2cm}
\begin{figure}
\centering
\begin{tikzpicture}[node distance = .8cm, auto]
	
	\node [grnblock,font=\fontsize{10}{0}\selectfont] (lem81) {\\[0.4em] In a neighbourhood of a \textit{good tunnel}, \\[0.3em] we show existence  of many balls  with \spc zeros of $u$ and large total sum of d.i.};

	\node at (lem81.north) [fill=white,draw,font=\fontsize{11}{0}\selectfont] (name) {\textbf{Lemma \ref{lem: general lemma}}};

	\node [slgrnblock,node distance=7cm,font=\fontsize{10}{0}\selectfont] [below left=0.5cm and 0.5cm of lem81] (prop41) { \\[0.7em] Provides an estimate for the \spc distribution of the d.i. of a \spc harmonic function in a cube. \\[0.8em]  Follows from monotonicity of \spc  the d.i. and Cauchy-uniqueness \spc property of harmonic functions.};   
	
	\node at (prop41.north)  [fill=white,draw,font=\fontsize{11}{0}\selectfont] {\textbf{Proposition \ref{prop: lemma 1}}};
	
	
	\node [newsgrnblock = Lemma,node distance=7cm,font=\fontsize{10}{0}\selectfont] [below right= 1.2cm and -0.8cm of lem81] (lem64)  {\\[.5em] Gives growth estimates \spc of a harmonic function \spc in a spherical layer.};    
	
	\node at (lem64.north)  [fill=white,draw,font=\fontsize{10}{0}\selectfont] {\textbf{Lemma \ref{lemma: behaviour_around_max 2.0}}};
	

	\node [slgrnblock, node distance=4cm,font=\fontsize{10}{0}\selectfont] [below left=1.1cm and -0.5cm of lem64] (lem71) {\\[0.6em] Shows existence of many good \spc tunnels, and many balls  with \spc zeros of $u$ and large total s.d.i.};

	\node at (lem71.north) [fill=white,draw,font=\fontsize{11}{0}\selectfont] (name) {\textbf{Lemma \ref{lem: extra assumption lemma 2}}};
	\node [slgrnblock, node distance=2.5cm,font=\fontsize{10}{0}\selectfont] [below=2cm of lem71] (prop43) { \\[0.6em]Analogous version of  Lemma  \ref{lem: extra assumption lemma 2}, \spc without the stable  growth \spc assumption.};
	
	\node at (prop43.north) [fill=white,draw,font=\fontsize{11}{0}\selectfont] (name2) {\textbf{Proposition \ref{prop: lemma 2}}};
	
	\node [slgrnblock, node distance=2.5cm,font=\fontsize{10}{0}\selectfont] [below=1.5cm of prop43] (lem44) { \\[0.6em] Involves repeated \spc applications of Proposition \ref{prop: lemma 2} \spc at multiple scales to get a lower \spc bound for the nodal volume.};
	
	\node at (lem44.north) [fill=white,draw,font=\fontsize{11}{0}\selectfont] (name3) {\textbf{Lemma \ref{lem: induction on scales}}};

	\node [newsgrnblock, node distance=1cm,font=\fontsize{10}{0}\selectfont]  [below left=1cm and 0cm of lem44] (stable) { \\[0.6em] under stable \spc growth assumption. };
	
	\node at (stable.north) [fill=white,draw,font=\fontsize{11}{0}\selectfont] (namestable) {\textbf{Theorem \ref{thm: main}}};
	
	\node [sgrnblock, node distance=1cm,font=\fontsize{10}{0}\selectfont]  [below right=1cm and 0cm  of lem44] (general) {\\[0.6em] General case.  };
	
	\node at (general.north) [fill=white,draw,font=\fontsize{11}{0}\selectfont] (namegeneral) {\textbf{Theorem  \ref{thm: main}}};

	\path [line] (lem81) -- (name);
	\path [line] (prop43) -- (name3);
	\path [line] (lem44) -- (namestable) node[midway,above left] {{Easy consequence}};
	\path [line] (lem44) -- (namegeneral) node[midway] {$\boxed{\text{\textbf{Claim \ref{claim: existence of stable region}}}}$};
	
	\path [line] (lem71) -- (name2) node[midway] {$\boxed{\text{\textbf{Claim \ref{claim: existence of stable region}}}}$};
	\begin{scope}[on background layer]
		\coordinate (aux1) at ([xshift=0mm, yshift=-10mm]lem81.south);
		\node [container,fit=(aux1) (lem64)(lem71)] (MICRO) {};
		\node [below right=1.6cm and -3cm of lem64] [fill=white,draw,font=\fontsize{12}{0}\selectfont] {\textbf{Stable growth}};
		
	\end{scope}
	
	\path [line] (prop41) -- ($(lem81)!0.5!(lem71)$);
	\path [line] (lem64) -- ($(lem81)!0.5!(lem71)$);
\end{tikzpicture}\\[.5cm]
\caption{A schematic outline of the main results used to prove Theorem \ref{thm: main}.}
\label{fig:schematic_outline}
\end{figure}
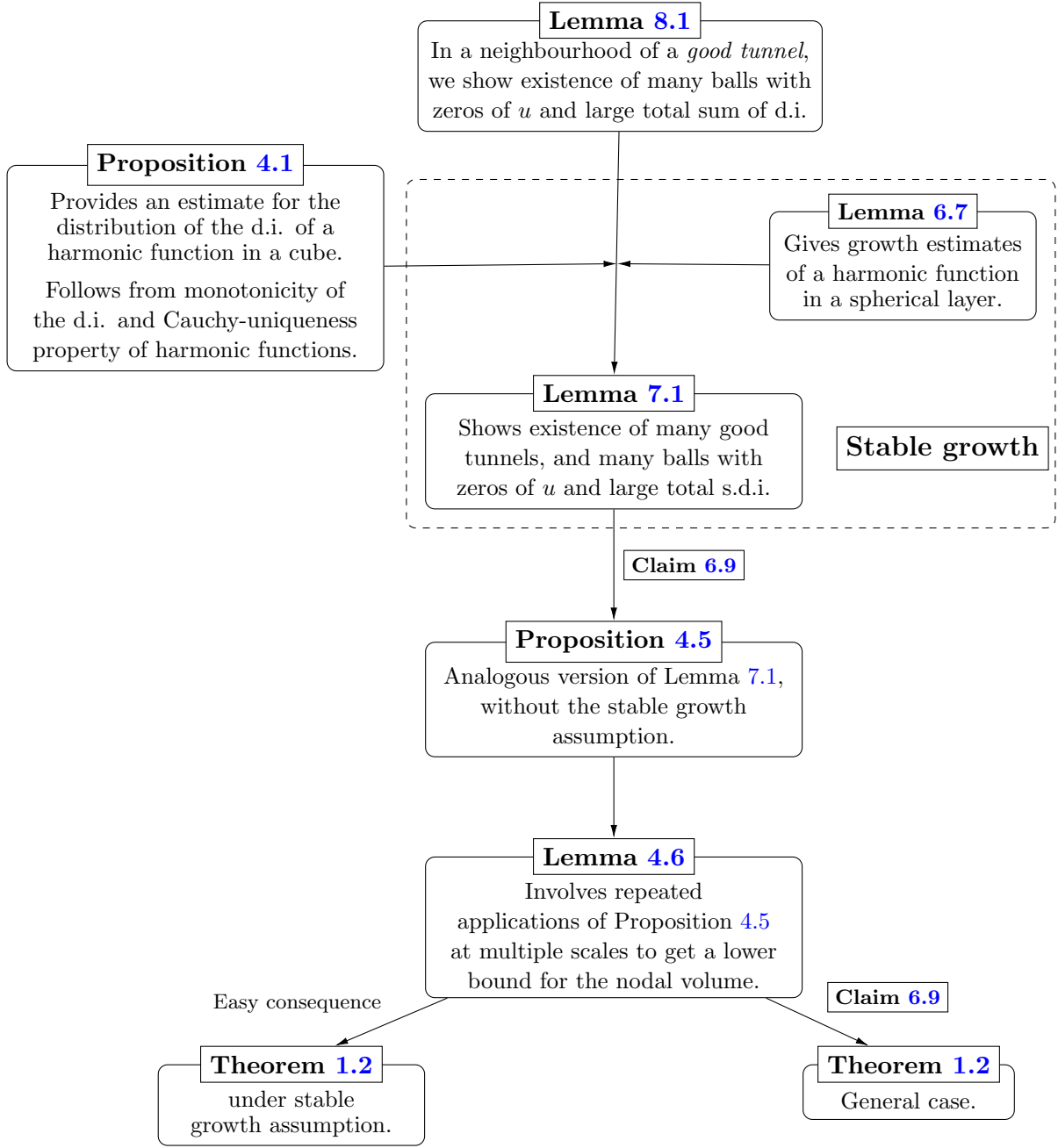

In Figure \ref{fig:schematic_outline}, we present a logical progression of ideas/results leading to our main result Theorem \ref{thm: main}. We  now indicate where to find these results, and also what we do in the remaining sections. 
We advise the reader to keep Figure \ref{fig:schematic_outline} and the following as a guide while navigating through the rest of the paper.

\vspace{2mm}

\textbf{\S \ref{sec4} Distribution of the doubling index and how to use it:} In this section, we present old and new results on the distribution of the doubling index of a harmonic function. Given a cube $Q \subset \real^n$ which is partitioned into equal subcubes $Q_i$, Proposition \ref{prop: lemma 1}, which was established in \cite{Lproof}, gives an upper bound on the number of cubes with large doubling index.

\vspace{2mm} 

Proposition \ref{prop: lemma 2} is a new result about the distribution of doubling index. For a harmonic function $u$ defined in a ball $B$, Proposition \ref{prop: lemma 2} gives a collection of disjoint balls in $B$ satisfying the following properties: $u$ vanishes at their centers, their total s.d.i. is large, and their doubling index is much smaller than $\N(B)$. We explain how to use Proposition \ref{prop: lemma 2} in Lemma \ref{lem: induction on scales}, which gives a lower bound for the nodal volume.


\vspace{2mm}

\textbf{\S \ref{sec: reduction to stable growth} Stable growth: }
In this section, we study the  behaviour of a harmonic function in a ball with stable growth (Definition \ref{def: stable growth}). In \S \ref{sec6.1}, we prove Lemma \ref{lemma: behaviour_around_max 2.0},
which gives estimates for the growth of a harmonic function in a spherical layer where the frequency is almost constant. 

\vspace{2mm}

In some instances, we will prove  results about the nodal set of a harmonic function in a ball $B$, under the simplified assumption of {stable growth}. Claim \ref{claim: existence of stable region}, proved in \S\ref{sec6.2}, serves as a bridge between the stable growth case and the general case by implying that for any harmonic function defined in ball $B$, there are several balls $B_k \subset B$ of \textit{significant} size with stable growth.  Lemma \ref{lem: induction on scales} implies Theorem \ref{thm: main} under the additional assumption of stable growth. We show that this along with Claim \ref{claim: existence of stable region} implies the general case of Theorem \ref{thm: main}.

\vspace{2mm}

\textbf{\S \ref{sec: reduction} Reducing Proposition \ref{prop: lemma 2} to the stable growth case:} We state Lemma \ref{lem: extra assumption lemma 2}, which is a  version of Proposition \ref{prop: lemma 2} with the additional  assumption of stable growth. Using Claim  \ref{claim: existence of stable region} again, we reduce Proposition \ref{prop: lemma 2} to Lemma \ref{lem: extra assumption lemma 2}.

\vspace{2mm}

\textbf{\S \ref{sec: general lemma} Zeros of harmonic functions in a tunnel:} In this section, we study harmonic functions defined in a \textit{tunnel}, that is a hyper-rectangle in $\real^n$ with all sides but one of equal length, while the length of the remaining side is much larger than the others. In Lemma \ref{lem: general lemma}, we show that for a harmonic function $u$ defined in a \textit{good tunnel} (which is a tunnel where $u$ has significant growth {along the longer side of the tunnel}, and there is some control on the doubling index at all points in the tunnel), there are many disjoint balls where $u$ vanishes and the sum of their s.d.i.'s is large. 

\vspace{2mm}

\textbf{\S \ref{sec: conencting the dots} Connecting the dots: Proof of Lemma \ref{lem: extra assumption lemma 2}:} In this final section, equipped with Proposition \ref{prop: lemma 1}, Lemma \ref{lemma: behaviour_around_max 2.0} and Lemma \ref{lem: general lemma}, we finish the proof of Lemma \ref{lem: extra assumption lemma 2}. 
Recall that Lemma \ref{lem: extra assumption lemma 2} is Proposition \ref{prop: lemma 2} with the additional assumption of stable growth. 

\vspace{2mm}

We first use Lemma \ref{lemma: behaviour_around_max 2.0} to get several tunnels where the harmonic function has significant growth {along their longer sides}. We then use Proposition \ref{prop: lemma 1} to control the doubling index at all points in at least half of these tunnels, thus giving us several good tunnels. 
Finally in these good tunnels, we apply Lemma \ref{lem: general lemma} to conclude that there are several disjoint balls satisfying the conclusions of Lemma \ref{lem: extra assumption lemma 2}.

\section{Stable growth}\label{sec: reduction to stable growth}
The notion of stable growth has already appeared a few times in the earlier sections; we are finally ready to formally introduce it.

\subsection{Definition of stable growth and some consequences}\label{sec6.1}

\begin{defn}[Stable growth]
	\label{def: stable growth}
	
	Let $C>1$ be a  constant depending only on the dimension $n$ (which will be chosen in Remark \ref{rem: choice of constant}).
A harmonic function $u:\mathbb{R}^n\ra \real$ is said to have \textit{stable growth}  in a ball $B\subseteq \real^n$ if 
		 \begin{align}
	\label{stablegrowthdefn2}
	C^{-1}\N(\tfrac{1}{2}B) \leq \N(2B) \leq   C \N(\tfrac{1}{2}B).
	\end{align}
\end{defn}
In \eqref{stablegrowthdefn2}, the first inequality $C^{-1} \N(\thalf B) \leq \N(2B)$ automatically holds as a consequence of the almost monotonicity of the doubling index (Lemma \ref{lem: almost monotonicity}). The main point of the stable growth condition \eqref{stablegrowthdefn2} is the other inequality, namely, $\N(2B) \leq C\N(\thalf B)$.
\begin{exmp}  Any homogeneous harmonic polynomial $p$ of degree $N \geq 1$ has stable growth in every ball $B(0,r) \subseteq \real^n$. Such $p$ can be written as
		\begin{align*}
		p(r,\theta) = r^{N} q(\theta),\text{ for }r\geq 0, \text{ }\theta \in \mathbb{S}^{n-1},
		\end{align*}
	where $q: \mathbb{S}^{n-1} \ra \real$ is a spherical harmonic of degree $N$. Then 
	\begin{align*}
\sup_{B(0,r)}|p| = r^{N} \sup_{\mathbb{S}^{n-1}}|q|,
\end{align*}
and the doubling index $\N(0,r) \equiv N$, for every $r>0$. 
\end{exmp}
\begin{exmp}
	Any harmonic polynomial $P$ has the following representation:
\begin{align*}
P(x) = \sum_{k=m}^{M} p_k(x),
\end{align*} 
where $p_k$ is a homogeneous harmonic polynomial in $\real^n$, of degree $k$. For $0<r\ll 1$ and $R \gg 1$, we have 
\begin{align*}
P(r,\theta) & = p_{m}(r,\theta) + O(r^{m +1}),\\
P(R,\theta) & = p_{M}(R,\theta) + O(R^{M-1}). 
\end{align*}
Hence for small $r$, we have $\N(0,r) \asymp m$; for  large  $R$, we have $\N(0,R) \asymp M$. Hence $P$ has stable growth in balls centered at the origin, whose radius is either small or large.   
	
In the case $M=2m$, for the polynomial $P(x) = \sum_{k=m}^{2m} p_k(x) $ we have $\N(0,r) \asymp m$ for small $r$, and $\N(0,R) \asymp M \asymp m$ for large $R$. So by the monotonicity of the doubling index, $\N(0,t) \asymp m$ for all $t>0$. Hence $P$ has stable growth in every ball $B(0,r)$. 

\end{exmp}

\begin{rem}
	\label{rem: important remark}
We observe that if a harmonic function $u$ has stable growth in a ball $B=B(x,r) \subset \real^n$ with $\mathcal{N}(\frac{1}{2}B)=: N\geq 1$, then because of the close relation between the doubling index and the frequency (Lemma \ref{lem: comparability di and frequency}), the frequency $\beta$ satisfies 
$$
\beta_u(x,t)\in \left[c_1 N, C_1 N\right], \hspace{5mm}\text{ }\forall t\in [1.1r,1.9r]
$$
where constants $c_1,C_1>0$ depend only on $n$. 
\end{rem}

\subsection*{Choosing the stability constant}
\begin{lem} \label{lem:stability_constant} Let $B=B(0,1) \subset \real^n$ be the unit ball and let $u$ be a harmonic function in $4B$.
	Consider the spherical layer
	$$ \mathcal{S}_{\rho,w}:=\{x\in \mathbb{R}^n : \rho-10w\leq |x|\leq \rho + 10 w\}\subset B(2)\backslash B(1),$$ with width $20w \in (0,\rho)$.	 Suppose that the frequency  of $u$  in this layer satisfies 
	\begin{align*}
	\beta_u(0,t) \in [N,10N] \text{, for $t\in [\rho- 10w, \rho + 10w]$}.
	\end{align*}
	Also assume that the layer is not too narrow in terms of $N$, in the sense that
	$$N w(\log \tfrac{1}{w})^{-1} \underset{n}{\gg} 1.$$
	Then there exists a constant $\mathrm{C} >0$ and a ball $D$ of radius $w$ such that $4D \subset \mathcal{S}_{\rho,w}$ satisfying 
	\begin{align*}
	\mathrm{C}^{-1}\N(\thalf D) \leq \N(2D) \leq \mathrm{C} \N(\thalf D), 
	\end{align*}
	and 
	\begin{align*}
	\N(2D) \asymp_n wN. 
	\end{align*}
\end{lem}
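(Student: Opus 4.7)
The plan is to locate a ball $D = B(x_0, w)$ inside the layer $\mathcal{S}_{\rho,w}$ whose doubling indices at scales $w/2$, $w$, and $2w$ are all comparable to $wN$. The central tools will be the frequency--doubling index comparison (Lemma \ref{lem: comparability di and frequency}), the monotonicity of frequency (Theorem \ref{thm: monotonicity frequency}), and the integral representation from Corollary \ref{cor: formulas for the growth}. As a first step, I would convert $\beta_u(0,t) \in [N, 10N]$ on the given range into a sharp radial growth estimate for $S(t) := \sup_{|x|\le t}|u|$, namely $\log_2 (S(t_2)/S(t_1)) \asymp N(t_2 - t_1)$ for $t_1 < t_2$ in $[\rho - 10w, \rho + 10w]$, using $\rho \in (1,2)$ to linearize $\log(t_2/t_1)$. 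I would then pick $x_0$ with $|x_0|=\rho$ and $|u(x_0)| \asymp S(\rho)$, e.g.\ a maximum point on the sphere $|x|=\rho$, and set $D := B(x_0, w)$; the inclusion $4D \subset \mathcal{S}_{\rho,w}$ is then automatic.

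The upper bound $\N(2D) \lesssim wN$ is immediate from $\sup_{B(x_0, 2w)}|u| \ge |u(x_0)| \asymp S(\rho)$ together with $\sup_{B(x_0, 4w)}|u| \le S(\rho+4w) \le 2^{cwN}S(\rho)$. The main obstacle is the lower bound $\N(\thalf D) \gtrsim wN$, since $\sup_{B(x_0, w/2)}|u|$ is not directly controlled by the radial growth around the origin. For this I would study $\beta_u(x_0, r)$, the frequency centered at $x_0$ itself: by Theorem \ref{thm: monotonicity frequency} it is nondecreasing in $r$, it is $\asymp N$ at $r \asymp 1$ (comparable to the global frequency at a large scale), and it tends to the order of vanishing of $u$ at $x_0$ as $r \to 0$. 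Applying Corollary \ref{cor: formulas for the growth} at center $x_0$, and transferring between $H(x_0, \cdot)$ and $\sup$-norm growth via standard elliptic estimates, one obtains
\begin{equation*}
\int_{w}^{1} \beta_u(x_0, r)\,\frac{dr}{r} \;\gtrsim\; wN.
\end{equation*}
This integral runs over $\asymp \log(1/w)$ dyadic scales, and the hypothesis $wN(\log 1/w)^{-1} \gg_n 1$ forces by pigeonhole the existence of a dyadic scale $r \in [w/4, 4w]$ at which $\beta_u(x_0, r) \gtrsim wN$.

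From here, Lemma \ref{lem: comparability di and frequency} converts the frequency bound into $\N(\thalf D), \N(2D) \asymp wN$, yielding both the size conclusion and stability for a sufficiently large dimensional constant $\mathrm{C}$. The subtle point is ensuring that the scale $r$ witnessed by the pigeonhole actually lands in the window $[w/4, 4w]$ attached to $D$; if the initial choice of $x_0$ concentrates its local frequency growth outside this window, I would iterate by varying $x_0$ over near-maxima on the nearby spheres $|x| = \rho + jw$ for bounded $|j|$ and use that the total radial growth across the layer is $\asymp wN$, which must be realized somewhere in the layer. At least one such candidate center will then yield both $\N(2D) \asymp wN$ and the stability inequality, completing the proof.
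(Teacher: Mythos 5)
Your choice $D = B(x_0,w)$ with $x_0$ at the \emph{center} creates the very obstacle you then struggle with, and the escape route you sketch does not close the gap. Because $x_0\in\tfrac12D$, you automatically have $\sup_{\frac12 D}|u|\ge |u(x_0)| = M(\rho)$; there is then no structural reason for a multiplicative jump from $\tfrac12 D$ to $D$, since $D$ sticks out to radius $\rho+w$ where $|u|$ can be \emph{larger}. Your rescue via the frequency $\beta_u(x_0,\cdot)$ is not sound: the pigeonhole on $\int_w^1 \beta_u(x_0,r)\,dr/r$ over $\asymp\log\frac1w$ dyadic scales only produces \emph{some} scale $r\in[w,1]$ with $\beta_u(x_0,r)\gtrsim wN/\log\frac1w$, not a scale in $[w/4,4w]$ with $\beta_u(x_0,r)\gtrsim wN$; monotonicity of $\beta_u(x_0,\cdot)$ gives no lower bound at small scales, so nothing prevents $\beta_u(x_0,r)$ from being $O(1)$ for all $r\lesssim w$. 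The ``iterate over nearby centers'' step that would be needed to repair this is left entirely unargued and is not clearly repairable.

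The paper avoids all of this by a small but decisive geometric twist: take $D\subset\rho B$ with $x_0\in\partial D$ (so $D$ is tangent to the sphere $\partial(\rho B)$ at the maximum point, sitting \emph{inside}). Then $\sup_D|u| = M(\rho)$ exactly, while $\tfrac12 D\subset(\rho-\tfrac{w}{2})B$, so Lemma \ref{lemma: behaviour_around_max 2.0} gives $\sup_{\frac12 D}|u|\le M(\rho-\tfrac{w}{2})\le M(\rho)e^{-\lambda wN/2}$; combined with $\sup_{2D}|u|\ge M(\rho)$ and $\sup_{4D}|u|\le M(\rho+3w)\le M(\rho)e^{3\Lambda wN}$ this yields both the lower bound $\N(\tfrac12 D)\ge\lambda wN/2$ and the upper bound $\N(2D)\le 3\Lambda wN$ in three lines. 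No re-centering of the frequency, no pigeonhole, no iteration. I'd suggest replacing your construction with this one and discarding the $\beta_u(x_0,\cdot)$ analysis.
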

\begin{rem}
	\label{rem: choice of constant}
	From now on, we fix the stability constant in  Definition \ref{def: stable growth} to be $\mathrm{C}$, as given in the statement of Lemma \ref{lem:stability_constant}. 
\end{rem}

 The following result captures the growth of a harmonic function with stable growth.

\begin{lem}
	\label{lemma: behaviour_around_max 2.0}Let $B=B(0,1) \subset \real^n$ be the unit ball and let $u$ be a harmonic function in $4B$.
 Consider the spherical layer
	$$ \mathcal{S}_{\rho,w}:=\{x\in \mathbb{R}^n : \rho-10w\leq |x|\leq \rho + 10 w\}\subset B(2)\backslash B(1),$$ with width $20w \in (0,\rho)$.	 Suppose that the frequency  of $u$ is comparable to $N>1$ in this layer, that is, $\beta_u(0,t) \asymp_n N$  for $t\in [\rho- 10w, \rho + 10w]$. Also assume that the layer is not too narrow in terms of $N$, in the sense that
	 $$N w(\log \tfrac{1}{w})^{-1} \underset{n}{\gg} 1$$  (the larger $N$, the smaller $w$ can be).   Define the function 
	$$M(t):= \sup_{tB}|u|.$$
 	Then, for all  $t_1<t_2\in [\rho- 3w,\rho+ 3w]$ and $|t_2-t_1|\geq w/2$, we have  
\begin{align}
 	\label{eqn:stab.growth.max.comp.}
 	\exp(\lambda(t_2-t_1)N)<\frac{M(t_2)}{M(t_1)}<\exp (\Lambda (t_2-t_1)N),
 \end{align}
 for some constants $\lambda=\lambda(n)>0$ and $\Lambda=\Lambda(n)>1$.
\end{lem}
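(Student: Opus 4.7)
The plan is to translate the claim about $M(t)=\sup_{tB}|u|$ into a claim about the spherical $L^{2}$-means $H(t)=\int_{\partial B(0,t)}u^{2}\,dS$, which we control explicitly via the frequency formula in Corollary \ref{cor: formulas for the growth}. Applying that corollary on $[t_{1},t_{2}]\subset[\rho-3w,\rho+3w]$ gives
\[
\log\frac{H(t_{2})}{H(t_{1})} \;=\; (n-1)\log\frac{t_{2}}{t_{1}} \;+\; 2\int_{t_{1}}^{t_{2}}\frac{\beta(r)}{r}\,dr.
\]
Since $\beta(r)\asymp_{n} N$ on $[\rho-10w,\rho+10w]$ and $r\asymp 1$, the integral is $\asymp_{n} N(t_{2}-t_{1})$, while the geometric term is $O(t_{2}-t_{1})$ and is dwarfed by it once $N$ exceeds a dimensional constant. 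This yields two-sided exponential bounds $H(t_{2})/H(t_{1})\asymp_{n}\exp\!\bigl(cN(t_{2}-t_{1})\bigr)$.

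Next I compare $M(t)^{2}$ with $H(t)$ up to a factor polynomial in $N$: the goal is $c_{n}H(t)\leq M(t)^{2}\leq C_{n}N^{n}H(t)$. The lower bound is immediate from $H(t)\leq|\partial B(0,t)|\,M(t)^{2}$. For the upper bound, pick $x_{\max}\in\partial B(0,t)$ with $|u(x_{\max})|=M(t)$ and apply the subharmonic mean value inequality to $u^{2}$ on $B(x_{\max},\eta)\subset B(0,t+\eta)$, obtaining
\[
M(t)^{2} \;\leq\; \tfrac{C_{n}}{\eta^{n}}\int_{B(0,t+\eta)}u^{2} \;=\; \tfrac{C_{n}}{\eta^{n}}\int_{0}^{t+\eta}H(r)\,dr \;\leq\; \tfrac{C_{n}'}{\eta^{n}}H(t+\eta),
\]
where I use that $H$ is non-decreasing (since $\beta\geq 0$). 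Plugging in $H(t+\eta)\leq H(t)\,e^{CN\eta}$ from Step 1 and optimising $\eta\sim 1/N$ produces the factor $N^{n}$.

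Combining the two steps, $\log(M(t_{2})/M(t_{1}))=\tfrac{1}{2}\log(H(t_{2})/H(t_{1}))+O(\log N)\asymp_{n}N(t_{2}-t_{1})+O(\log N)$. The quantitative hypothesis $Nw/\log(1/w)\gg_{n}1$ is used precisely to absorb the $\log N$ error: a short case split (if $w\geq N^{-1/2}$ then $Nw\geq\sqrt{N}\gg\log N$; if $w<N^{-1/2}$ then $\log(1/w)\geq\tfrac{1}{2}\log N$, and the hypothesis gives $Nw\gg\log(1/w)\gtrsim\log N$) shows $Nw\gtrsim_{n}\log N$, so that $N(t_{2}-t_{1})\geq Nw/2\gg\log N$ and the claimed bounds \eqref{eqn:stab.growth.max.comp.} follow by exponentiating. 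The main obstacle is the $M^{2}\asymp H$ comparison in Step 2: the polynomial-in-$N$ loss there is unavoidable (a pure spherical harmonic of degree $N$ can have $\|\cdot\|_{\infty}^{2}/\|\cdot\|_{2}^{2}\asymp N^{n-1}$), and the hypothesis on $w$ is calibrated precisely to ensure that this loss is swallowed by the exponential factor $e^{cN(t_{2}-t_{1})}$.
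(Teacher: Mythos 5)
Your proof is correct and takes essentially the same route as the paper's: both rely on Corollary \ref{cor: formulas for the growth} for the growth of $H$, the subharmonic mean-value inequality and monotonicity of $H$ to relate $M(t)^2$ to a nearby value of $H$, and the hypothesis $Nw(\log\frac1w)^{-1}\gg_n 1$ to absorb the resulting logarithmic error. The only (cosmetic) difference is the choice of ball radius in the mean-value step: you optimize $\eta\sim 1/N$ and package the estimate as a self-contained two-sided comparison $M(t)^2\asymp_{\mathrm{poly}(N)}H(t)$ with error $O(\log N)$, whereas the paper takes the radius $s=t_2-t_1\asymp w$ directly and absorbs the error $n\log\frac1s$; the two errors are absorbed by the same hypothesis via the same reasoning.
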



Lemma \ref{lemma: behaviour_around_max 2.0} will be used to prove Lemma \ref{lem:stability_constant} and also Lemma \ref{lem: scaling around the maximum}.
\begin{lem}
	\label{lem: scaling around the maximum}

	Under the assumptions of Lemma \ref{lemma: behaviour_around_max 2.0} and maintaining the same notation, we have the following. Let $x_0 \in \partial(\rho B)$ be a point where $|u|$ attains its maximum on $\rho B$, that is, $|u(x_0)| = M(\rho)$. If ${D}$ is any ball of radius $w$ such that $x_0 \in \partial {D}$, then 
	\begin{align*}
	\N({D}) \lesssim_n wN.
	\end{align*}
\end{lem}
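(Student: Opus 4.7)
The strategy is to read off an upper bound on $\N(\tilde D)$ directly from the growth of the maxima $M(t)$ supplied by Lemma \ref{lemma: behaviour_around_max 2.0}, by sandwiching $\tilde D$ and $2\tilde D$ between two concentric spheres whose radii differ by $O(w)$.

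First, I would use the geometric location of $x_{0}$ to locate $2\tilde D$. Since the center of $\tilde D$ lies at distance exactly $w$ from $x_{0}$, every point of $2\tilde D$ is within distance $3w$ of $x_{0}$, and hence has norm in the interval $[\rho-3w,\rho+3w]$ by the triangle inequality. In particular $2\tilde D\subset(\rho+3w)B\subset 4B$ (this last inclusion is safe because $20w<\rho\le 2$), so the sup-norms are well defined, and
\[
\sup_{2\tilde D}|u|\;\le\;\sup_{(\rho+3w)B}|u|\;=\;M(\rho+3w).
\]

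Next, I would bound $\sup_{\tilde D}|u|$ from below. Since $x_{0}\in\partial\tilde D$ and $u$ is continuous,
\[
\sup_{\tilde D}|u|\;\ge\;|u(x_{0})|\;=\;M(\rho).
\]
Combining these two inequalities,
\[
\N(\tilde D)\;=\;\log_{2}\frac{\sup_{2\tilde D}|u|}{\sup_{\tilde D}|u|}\;\le\;\log_{2}\frac{M(\rho+3w)}{M(\rho)}.
\]

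Finally, I would apply the upper bound in \eqref{eqn:stab.growth.max.comp.} of Lemma \ref{lemma: behaviour_around_max 2.0} with $t_{1}=\rho$ and $t_{2}=\rho+3w$: both lie in $[\rho-3w,\rho+3w]$ and $t_{2}-t_{1}=3w\ge w/2$, so the lemma applies and yields $M(\rho+3w)/M(\rho)\le\exp(3\Lambda wN)$. This gives $\N(\tilde D)\le 3\Lambda wN/\ln 2\lesssim_{n} wN$, as required.

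There is no real obstacle here; the whole argument is a two-line sandwich once one checks the geometric inclusion $2\tilde D\subset(\rho+3w)B$ and verifies that the radii involved fall inside the range $[\rho-3w,\rho+3w]$ on which Lemma \ref{lemma: behaviour_around_max 2.0} is valid. Note that only the upper half of \eqref{eqn:stab.growth.max.comp.} is needed; the lower half (and thus a matching lower bound $\N(\tilde D)\gtrsim_{n}wN$) would require $\tilde D$ itself to be separated from $(\rho+cw)B$ for some $c$, which need not hold for an arbitrary ball with $x_{0}$ on its boundary—hence the statement only asserts an upper bound.
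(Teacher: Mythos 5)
Your proof is correct and follows essentially the same route as the paper's: bound $\sup_{D}|u|\ge M(\rho)$ using $x_0\in\partial D$, bound $\sup_{2D}|u|\le M(\rho+3w)$ via the inclusion $2D\subset(\rho+3w)B$, and apply the upper bound in \eqref{eqn:stab.growth.max.comp.}. The paper's proof is just a terser version of exactly this two-line sandwich.
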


We are now ready to prove Lemma \ref{lemma: behaviour_around_max 2.0}.
\begin{proof}[Proof of Lemma \ref{lemma: behaviour_around_max 2.0}]
	To simplify the notation, we write
		\begin{align*}
	H(r) = \int_{\partial(rB)}u^2 dS,
	\end{align*}
	and  $\beta(r)$ instead of the  frequency $\beta(0,r)$. 
	We know that $H$ is a non-decreasing function, and thanks to Corollary \ref{cor: formulas for the growth}, we also know a great deal about the growth of $H$ in an interval where $\beta$ is nearly constant. For harmonic functions, we can compare the $L^2$ norm $\sqrt{H(\cdot)}$ with the $L^{\infty}$ norm $M(\cdot)$. Hence we should also be able to get information about the growth of $M(\cdot)$ in an interval where $\beta$ is nearly constant. 

	We now establish the upper bound in \eqref{eqn:stab.growth.max.comp.}, the lower bound can be shown using a similar argument. Let $x_2 \in \partial (t_2 B)$ be such that 
$$M(t_2)= |u(x_2)|,$$
and let us write $s= (t_2-t_1)$. We have 
\begin{align}
	\label{eq: 4.1}
	u(x_2)^2 \lesssim s^{-2n} \left(\int_{B(x_2,s)}|u| \right)^2 \lesssim s^{-n} \int_{B(x_2,s)}u^2\leq s^{-n} \int_{B(0,t_2+s)}u^2.
\end{align}
Since  $H(\cdot)$ is non-decreasing, we have  
\begin{align}\label{eq_sec6_eq1}
\int_{rB} u^2= \int_{0}^{r} \int_{\partial(tB)} u^2(x) dS(x) dt = \int_{0}^r H(t)dt\leq r H(r).
\end{align}
Since $(t_2 + s)<4$, we have the following comparison between $H$ and $M$:
\begin{align}\label{eq_hm_comparison}
	M(t_2)^2 \mathop{\lesssim} \limits_{\eqref{eq: 4.1}} s^{-n}\int_{B(0,t_2 +s)}u^2 \mathop{\lesssim} \limits_{\eqref{eq_sec6_eq1}} s^{-n}  H(t_2 +s). 
\end{align}
Since $(t_2+s)  \in (\rho-10w, \rho + 10w)$,  we have by our  assumption on $\beta$ that  $\beta(t_2+s) \asymp N$.   Corollary \ref{cor: formulas for the growth} implies that for some $C=C(n)>0$, we have
\begin{align*}
	\frac{H(t_2+s)}{H(t_1)}\leq \left(\frac{t_2+s}{t_1}\right)^{C N}= \lb \frac{t_1 +2s}{t_1} \rb^{C N} \leq (1+2s)^{C N} \leq  \exp(2C sN),
\end{align*}
since $t_1 >1$. Hence 
\begin{align}\label{hm_eq_comparison}
	H(t_1) \geq H(t_2 +s) \exp({-2Cs N}) \mathop{\gtrsim} \limits_{\eqref{eq_hm_comparison}} s^n \exp({-2Cs N}) M(t_2)^2.
\end{align}
From the trivial comparison $H(t_1) \lesssim M(t_1)^2$ and \eqref{hm_eq_comparison}, we have
\begin{align*}
	M(t_2)^2 \lesssim M(t_1)^2 \exp(2Cs N + n \log\tfrac{1}{s}). 
\end{align*}
Since $s \asymp w$, and $Nw \gg\log \tfrac{1}{w}$, we also have $Ns \gg \log \tfrac{1}{s}$. Hence there is a constant $\Lambda = \Lambda(n)$ such that 
\begin{align*}
	2Cs N + n \log \tfrac{1}{s} \leq 2\Lambda s N = 2\Lambda (t_2 - t_1)N, 
\end{align*}
and this establishes the upper bound in \eqref{eqn:stab.growth.max.comp.}. 
\end{proof}

We now prove Lemma \ref{lem:stability_constant} and Lemma \ref{lem: scaling around the maximum}.

\begin{proof}[Proof of Lemma \ref{lem:stability_constant}] Let $x_0$ be a point where $|u|$ attains its maximum in $\rho B$. Let $D \subset \rho B$ be a ball of radius $w$ such that $x_0\in \partial D$. Then we have $4D \subset (\rho+3w)B$ and by Lemma \ref{lemma: behaviour_around_max 2.0}
	\begin{align*}
	\sup_{4D}|u|\leq M(\rho+3w) \leq  M(\rho) \exp(3\Lambda w N).
	\end{align*}
	Since $\thalf D \subset (\rho-\thalf w)B$, by Lemma \ref{lemma: behaviour_around_max 2.0} we have
	\begin{align*}
	\sup_{\frac{1}{2}D} |u| \leq M(\rho - \thalf w) \leq M(\rho) \exp(-\lambda w N/2).
	\end{align*}
	Using $\sup_{D}|u|=M(\rho)$ and $\sup_{2D}|u|\geq M(\rho)$, we have
	\begin{align*}
	\frac{\lambda w N}{2} \leq \log \frac{\sup_{D} |u|}{\sup_{\frac{1}{2}D}|u|}  = \N(\tfrac{1}{2}D)\text{ }\text{ and }\text{ } \N(2D) = \log \frac{\sup_{4D}|u|}{\sup_{2D}|u|} \leq 3\Lambda w N,
	\end{align*}
	which concludes the proof of Lemma \ref{lem:stability_constant}.
	
	\end{proof}

\begin{proof}[Proof of Lemma \ref{lem: scaling around the maximum}]
  The proof is similar to the proof of Lemma \ref{lem:stability_constant} and  follows  by observing that 
\begin{align*}
\sup_{{D}}|u| \geq M(\rho)  \text{ } \text{ and } \text{ } \sup_{2{D}}|u| \leq M(\rho) \exp(3\Lambda wN),
\end{align*} 
where the second inequality holds because  $2{D} \subset (\rho+3w)B$.  This completes the proof of Lemma \ref{lem: scaling around the maximum}. 
	\end{proof}

\subsection{Proof of Theorem \ref{thm: main} assuming Proposition \ref{prop: lemma 2}.} \label{sec6.2}

We now state a useful claim about the frequency. Although we state it specifically for the frequency, 
an analogous statement is true for more general monotone functions $\varphi:[a,b] \ra \real$.
 It essentially says that for any  monotone function, it is possible to find a subinterval of \textit{significant} length where the function is almost constant. 
\begin{claim}
\label{claim: existence of stable region} 
   Let $u$ be a harmonic function defined in $4B$, with $B =B(0,1) \subset \mathbb{R}^n$. Suppose that $\mathcal{N}(\frac{1}{2}B)=N_1\gg_{n} 1$ and let $\mathcal{N}(2B)=N_2$. Then there exists some $N$ satisfying  $ N_1 \lesssim_{n} N \lesssim_{n} N_2$ such that 
   \begin{align*}
   \beta(0,t)\in [N, eN]\text{,  }\text{$\forall t \in \mathcal{I} $}
   \end{align*}
   where $\mathcal{I} \subset [1.1,1.9]$ is some interval of length $|\mathcal{I}|\gtrsim_n (\log N)^{-2}$.
\end{claim}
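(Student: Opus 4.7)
The plan is a monotonicity-driven pigeonhole argument on a logarithmic scale, where the key upgrade from a naive pigeonhole (which would give $1/\log$) to the claimed $(\log N)^{-2}$ bound comes from the summability of $\sum k^{-2}$.

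First I would set up the range. By the monotonicity of the frequency (Theorem \ref{thm: monotonicity frequency}), $\beta(0, \cdot)$ is non-decreasing on $[1.1, 1.9]$. Using Lemma \ref{lem: comparability di and frequency} after a routine rescaling (since $u$ is harmonic on $B(0,4)$), one obtains dimensional constants such that
\begin{align*}
\beta(0, 1.1) \gtrsim_n N_1 \quad \text{and} \quad \beta(0, 1.9) \lesssim_n N_2,
\end{align*}
using also that $N_1 \gg_n 1$ to absorb the additive constant from Lemma \ref{lem: comparability di and frequency}. In particular $\beta(0, t) \in [cN_1, CN_2]$ for all $t \in [1.1,1.9]$, for some dimensional $c,C>0$.

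Next I would partition the range of $\beta$ logarithmically. For each integer $k$, define
\begin{align*}
\mathcal{I}_k := \{t \in [1.1, 1.9] : \beta(0, t) \in [e^k, e^{k+1}]\}.
\end{align*}
By monotonicity of $\beta$, each $\mathcal{I}_k$ is an interval (possibly empty); the $\mathcal{I}_k$'s have disjoint interiors and cover $[1.1, 1.9]$, so $\sum_k |\mathcal{I}_k| = 0.8$. The nonempty $\mathcal{I}_k$'s correspond to $k$ in some integer range $[k_0, k_1]$ with $k_0 \gtrsim_n \log N_1$ and $k_1 \lesssim_n \log N_2$; crucially $k_0 \geq 1$ because $N_1 \gg_n 1$.

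The central step is the pigeonhole trick: suppose toward contradiction that $|\mathcal{I}_k| < c_0/k^2$ for every $k \in [k_0, k_1]$, with $c_0 > 0$ to be chosen. Then
\begin{align*}
0.8 \;=\; \sum_{k=k_0}^{k_1} |\mathcal{I}_k| \;<\; c_0 \sum_{k \geq 1} \frac{1}{k^2} \;=\; c_0 \frac{\pi^2}{6},
\end{align*}
which fails for $c_0 < 0.8 \cdot 6/\pi^2$. Hence some $k^\ast \in [k_0, k_1]$ satisfies $|\mathcal{I}_{k^\ast}| \geq c_0/(k^\ast)^2$. Setting $N := e^{k^\ast}$ and $\mathcal{I} := \mathcal{I}_{k^\ast}$, we get $\beta(0, t) \in [N, eN]$ on $\mathcal{I}$, and $|\mathcal{I}| \gtrsim_n 1/(\log N)^2$; the containment $e^{k_0} \leq N \leq e^{k_1}$ gives $N_1 \lesssim_n N \lesssim_n N_2$.

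There is no serious analytic obstacle here — the claim is essentially a one-dimensional statement about monotone functions, as the remark following it already hints. The only subtlety worth flagging is that naive pigeonhole (some $\mathcal{I}_k$ absorbs at least an average share) only delivers $|\mathcal{I}| \gtrsim 1/(k_1 - k_0) \asymp 1/\log(N_2/N_1)$, which can be arbitrarily worse than $(\log N)^{-2}$ when $N \sim N_1$ and $N_2 \gg N_1$. Exploiting the convergence of $\sum k^{-2}$ instead of merely its partial sums is precisely what converts the problem into an $N$-local estimate and yields the claimed rate.
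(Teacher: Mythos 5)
Your proof is correct and takes essentially the same route as the paper: both rely on the monotonicity of $\beta$ together with the convergence of $\sum k^{-2}$ to isolate a level interval of length $\gtrsim (\log N)^{-2}$. Your absolute decomposition into $\{t : \beta(t)\in[e^k,e^{k+1}]\}$ with a single contradiction step is a mild cosmetic streamlining of the paper's hitting-time partition $a_j=\inf\{t:\beta(t)\geq e^j\beta(1.1)\}$ and its two-case analysis (either some increment $a_{j+1}-a_j$ is large, or the tail $[a_k,1.9]$ is).
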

\begin{proof}[Proof of Claim \ref{claim: existence of stable region}] In this proof, we write $\beta(r)$ instead of $\beta(0,r)$. We first observe that as a consequence of Lemma \ref{lem: comparability di and frequency} and assumption $N_1 \gg_{n} 1$, we have
	\begin{align*}
	1 \ll_{n} N_1 \lesssim_n \beta(1.1) \leq \beta(1.9) \lesssim_n N_2.
	\end{align*}
	Depending on the growth of the frequency $\beta$, we partition the interval $[1.1,1.9]$ as follows. Let $k \in \nat$ be such that 
	\begin{align*}
	k:=\lfloor \log \tfrac{\beta(1.9)}{\beta(1.1)} \rfloor.
	\end{align*}
	For $0 \leq j \leq k$, let $a_j \in [1.1,1.9]$ be defined by 
	\begin{align*} 
	a_j :=  \inf\{t\in [1.1,1.9]: \beta(t)\geq 10^{j}\beta(1.1)\}.
	\end{align*}
	If there is some $j$ such that $0\leq j \leq k-1$ and
	\begin{align}\label{eq:diff_of_aj}
	a_{j+1} - a_{j} \geq \frac{1}{100} (\log \beta(a_j))^{-2},
	\end{align}
then we can take $\mathcal{I} = [a_j,a_{j+1}]$. 
If \eqref{eq:diff_of_aj} does not hold for any $j$, then we have
\begin{align*}
\sum_{j=0}^{k-1} a_{j+1} - a_{j} &\leq \frac{1}{100} \sum_{j=0}^{k-1}(\log \beta(a_j))^{-2} \leq \frac{1}{100} \cdot \sum_{j=0}^{k-1} \frac{1}{(\log \beta(1.1) + j)^2},\\
a_k - 1.1 & \leq \frac{1}{100} \cdot \sum_{j=0}^{\infty} \frac{1}{j^2} \leq 0.02,
\end{align*}
	and hence $a_k \leq 1.12$. By our definition of $k$, we also have $1\leq \beta(1.9)/\beta(a_k) \leq e$. Hence 
	\begin{align*}
	\beta(t) \in [\beta(a_k),e\beta(a_k)],\text{ }\forall t\in [a_k,1.9].
	\end{align*}
	Also notice that $1.9 - a_k \geq 0.7 \gg (\log \beta(1.1))^{-2} \geq (\log \beta(a_k))^{-2}$. This shows that at least one of the  intervals $[a_j,a_{j+1}]$ or $[a_k,1.9]$ satisfies the required condition, and this completes the proof of Claim \ref{claim: existence of stable region}. 
\end{proof}
Lemma \ref{lem: induction on scales}  implies Theorem \ref{thm: main}, under the stable growth assumption. We now show that Theorem \ref{thm: main} also follows in the general case.
Let us recall the setting of Theorem \ref{thm: main}. We have the unit ball $B=B(0,1)$, and a harmonic function $u$ in $4B$, which vanishes at the center of $B$. We wish to get a lower bound for the nodal volume of $u$ in $2B$, in terms of $\N(\thalf B)$.

\begin{proof}[Proof of Theorem \ref{thm: main}]
Define $N_1:= \N(\thalf B)$ and fix $\varepsilon >0$. We may assume that $N_1 \gg_{\varepsilon, n} 1$, otherwise we may employ the elementary Lemma \ref{claim: bounded doubling index}.

 We first use Claim \ref{claim: existence of stable region} to conclude that there is $N \gtrsim_n N_1$, and an interval $\mathcal{I}$ of length $|\mathcal{I}| \asymp (\log N)^{-2}$ on which $\beta \in [N,eN]$.  Then  we apply Lemma \ref{lem:stability_constant} to $\mathcal{I}$ and conclude that there is a ball $D$ of radius $\asymp (\log N)^{-2}$ with stable growth satisfying: $\N(D) \asymp N(\log N)^{-2}$ and $4D \subset 2B$. Finally, we use Lemma \ref{lem: induction on scales} for this ball to get the following lower bound: 
\begin{align*}
\mathcal{H}^{n-1}(\{u=0\} \cap 2D) &\gtrsim_{n,\ep} ({N}(\log N)^{-2})^{1-\ep} \cdot (\log N)^{-2(n-1)}
 \geq N^{1-\ep}(\log N)^{-2n}
 \geq N^{1-2\ep},
\end{align*}
where the last inequality follows because $N \gg 1$. Now since $4D \subset 2B$, we can also conculde that 
\begin{align*}
\mathcal{H}^{n-1}(\{u=0\} \cap 2B) &\gtrsim_{n,\ep} N^{1-2\varep},
\end{align*}
and this completes the reduction of Theorem \ref{thm: main} to Lemma \ref{lem: induction on scales}. 
\end{proof}
Lemma \ref{lem: induction on scales} was already reduced to Proposition \ref{prop: lemma 2}, which is the only thing left to prove. 
\section{Reducing Proposition \ref{prop: lemma 2} to the stable growth case}

\label{sec: reduction}

The aim of this section is to show that it suffices to prove Proposition \ref{prop: lemma 2} under the additional assumption that $u$ has stable growth in $B$. The following result is exactly Proposition \ref{prop: lemma 2} with the extra assumption of stable growth.

\begin{lem}
	\label{lem: extra assumption lemma 2}  Let $B$ be a unit ball, $u: 4B\subset \mathbb{R}^n \ra \real$ be a harmonic function and $A>1$ be a sufficiently large parameter. There exists a constant $C_A>1$ such that the following holds. Assume that $u$ has stable growth in $B$ and $\N(\frac{1}{2}B)=N\geq C_A$. Then there exists a collection of disjoint balls  $B_i = B(x_i,r_i)$ such that:
	\begin{enumerate}
	\item We have an upper bound on the doubling index:
$$\max_i \N(2B_i) \lesssim_n N/A.$$ 
\item  We have a lower bound on the total s.d.i.'s:
$$	\sum \N (\tfrac{1}{2}B_i) r_{i}^{n-1} \gtrsim_n \frac{N_1}{(\log A\log\log A)^{n-1}}. $$

\item Vanishing at the center and non-intersection properties:
$$u(x_i) = 0, \text{ }\text{ }4B_i \cap 4 B_j=\emptyset,\text{ } \text{ and } \text{ }4B_i\subset 2B,\text{ }\forall i\neq j.$$
\end{enumerate}
\end{lem}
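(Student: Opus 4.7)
The plan is to execute the schematic in Figure~\ref{fig:schematic_outline}: tile a spherical shell inside $2B$ by many narrow tunnels pointing radially inward, use Lemma~\ref{lemma: behaviour_around_max 2.0} to show that $u$ has significant growth along the long direction of each tunnel, use Proposition~\ref{prop: lemma 1} to control the doubling index throughout most of the tunnels, and finally invoke Lemma~\ref{lem: general lemma} in each good tunnel to extract disjoint zero-centered balls whose total s.d.i.\ adds up to the required amount.

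First I would fix a width $w\asymp 1/A$ and a radius $\rho\in[1.3,1.5]$, and consider the spherical layer $\mathcal{S}_{\rho,w}\subset 2B\setminus B$. Stable growth together with Lemma~\ref{lem: comparability di and frequency} implies $\beta_u(0,t)\asymp_n N$ throughout a slightly thicker layer, so the hypotheses of Lemma~\ref{lemma: behaviour_around_max 2.0} are met. I partition $\partial(\rho B)$ into $\asymp A^{n-1}$ spherical caps of diameter $\asymp w$ and associate to each cap $\omega_\alpha$ a tunnel $T_\alpha$: a hyper-rectangle with short sides $\asymp w$ and long radial side of fixed length. Taking $x_\alpha$ to be a point where $|u|$ is maximal on the outer face of $T_\alpha$, Lemma~\ref{lemma: behaviour_around_max 2.0} yields a multiplicative gap $\sup_{\text{outer face}}|u|\gtrsim \exp(cN)\sup_{\text{inner face}}|u|$, i.e.\ significant growth along the long side of $T_\alpha$.

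Next I would cover the shell by a cube $Q$ of side $\asymp 1$ and chop it into $\asymp A^n$ subcubes of side $\asymp w$. By Proposition~\ref{prop: lemma 1}, at most $A^{n-1-c}$ of these subcubes have maximal doubling index larger than $N\exp(-c\log A/\log\log A)$. A tunnel meets a chain of $O(A)$ subcubes, so a union bound shows that a positive proportion of the tunnels---the \emph{good tunnels}---avoid every bad subcube; inside these the doubling index at scales $\asymp w$ is bounded by $N\exp(-c\log A/\log\log A)$, and combined with the previous step this is exactly the input required by Lemma~\ref{lem: general lemma}. Applying that lemma inside each good tunnel $T_\alpha$ produces a disjoint family $\{B_{\alpha,j}=B(x_{\alpha,j},r_{\alpha,j})\}_j$ of zero-centered balls with $4B_{\alpha,j}\subset T_\alpha$ and a local lower bound on $\sum_j \N(\tfrac12 B_{\alpha,j})\, r_{\alpha,j}^{n-1}$ proportional to the scaled doubling index of the tunnel, up to a factor $(\log A\log\log A)^{n-1}$. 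Summing over the $\gtrsim A^{n-1}$ good tunnels and using $w\asymp 1/A$ gives the target bound in~(2); conclusion~(1) follows because $r_{\alpha,j}\leq w$ and in the stable-growth regime Lemma~\ref{lem: scaling around the maximum} forces $\N(2B_{\alpha,j})\asymp r_{\alpha,j}N\lesssim N/A$.

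The main obstacle is the quantitative calibration. Proposition~\ref{prop: lemma 1} alone is too weak to deliver conclusion~(1) directly, since its drop factor $\exp(-c\log A/\log\log A)$ is nowhere near the polynomial $1/A$ we ultimately need; the bound is only recovered because the extracted balls live at the small scale $w\asymp 1/A$, where stable growth pins the doubling index at $rN$. Simultaneously, Proposition~\ref{prop: lemma 1} is responsible for the unavoidable loss $(\log A\log\log A)^{n-1}$ in conclusion~(2), so one must pick the tunnel width $w$ and the subcube scale so that these logarithmic losses assemble cleanly into the stated bound without contaminating the polynomial bound in~(1). A careful choice of the threshold $C_A$ absorbs the many dimension-dependent constants accumulated along the way.
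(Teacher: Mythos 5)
Your high-level plan matches Figure~\ref{fig:schematic_outline}, but two of the quantitative/geometric decisions you make are not right and would cause the argument to fail.

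First, the geometry of the tunnels. You tile the \emph{entire} sphere $\partial(\rho B)$ by $\asymp A^{n-1}$ caps and claim that on the outer face of \emph{each} associated radial tunnel one has $\sup|u|\gtrsim e^{cN}\sup_{\text{inner face}}|u|$ via Lemma~\ref{lemma: behaviour_around_max 2.0}. That lemma controls only the global quantity $M(t)=\sup_{tB}|u|$; it says nothing about the sup of $|u|$ on a tiny outer face of a tunnel far from the global maximum of $u$ on $\partial(\rho B)$. For a harmonic function of large doubling index, $|u|$ on most of the sphere is exponentially smaller than $M(\rho)$, so most of your tunnels will have no l.m.i.\ at all. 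The argument that saves the day (Claim~\ref{claim: off-setting formula} plus Lemma~\ref{lem: scaling around the maximum}, packaged in Lemma~\ref{lem: multiplicative increment}) requires the end cube of the tunnel to sit within distance $\lesssim w\ll(\log(1/h))^{-1}$ of the \emph{global} max point $x_0$. This is why the paper erects a \emph{single} thick tunnel $\mathcal{R}$ of width $w$ whose end face is centered at $x_0$ and subdivides $\mathcal{R}$ into $K^{n-1}$ subtunnels $\tun_i$ of width $h=w/K$; all the subtunnels terminate near $x_0$, so each gets l.m.i.\ $Z\gtrsim_n N$. Tiling the whole sphere gives you many tunnels but almost none with significant growth.

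Second, the calibration for conclusion~(1). You assert that $r_{\alpha,j}\leq w\asymp 1/A$ together with Lemma~\ref{lem: scaling around the maximum} ``pins'' $\N(2B_{\alpha,j})\asymp r_{\alpha,j}N\lesssim N/A$. This does not hold: Lemma~\ref{lem: scaling around the maximum} applies only to a ball tangent at the max point $x_0$, and the balls extracted by Lemma~\ref{lem: general lemma} sit at the zeros of $u$ all along the tunnel, most of them far from $x_0$. There the relation $\N(B(x,r))\asymp rN$ is simply false (e.g.\ near a singular zero the doubling index stays $\asymp N$ at all scales). In the paper, conclusion~(1) is obtained purely from the doubling-index drop supplied by Proposition~\ref{prop: lemma 1}/Lemma~\ref{lem: good tunnels}, which gives $\N^*(\tun_i)\leq N/C_K$ with $C_K=e^{c'\log K/\log\log K}$. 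To force $C_K>A$ one must take $K$ \emph{superpolynomial} in $A$, namely $\log K\asymp\log A\log\log A$, and accordingly $w=c_0/\log K$ (not $1/A$) and $h=w/K$ (far smaller than $1/A$). This is also exactly where the $(\log A\log\log A)^{n-1}$ loss in conclusion~(2) originates. Your choice $w\asymp 1/A$ is off the mark for both conclusions, and there is no way to avoid the subpolynomial loss: there is no scale-based shortcut to the $N/A$ bound in~(1).

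To repair the proposal: replace the cap-tiling of $\partial(\rho B)$ by the single hyper-rectangular tunnel $\mathcal{R}$ of width $w\asymp(\log K)^{-1}$ anchored at $x_0$, subdivide $\mathcal{R}$ into $K^{n-1}$ subtunnels and into $(4w)^{-1}$ cubes $Q_j$ of side $w$, run Proposition~\ref{prop: lemma 1} (in the form of Corollary~\ref{cor_of_prop4.1}) cube by cube to find $\gtrsim K^{n-1}/2$ good subtunnels, check the hypotheses of Lemma~\ref{lem: multiplicative increment} for all subtunnels (this is where $w\ll(\log(1/h))^{-1}$ and $N\gg K(\log K)^2$ are needed), apply Lemma~\ref{lem: general lemma}, and only at the very end choose $K$ so that $C_K>A$.

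\begin{comment}
(Reviewer-side note, not for compilation: leaving this empty so the comment environment is balanced.)
\end{comment}
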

\begin{proof}[Proof of Proposition \ref{prop: lemma 2} assuming Lemma \ref{lem: extra assumption lemma 2}] Let $x$ be the center of $B$; we will write $\beta(\cdot)$ for $\beta(x,\cdot)$.
	Depending on the growth of the frequency $\beta$, we divide the interval $[1.1,1.9]$ as follows. Let $k \in \nat$ be such that $k:=\lfloor \log \tfrac{\beta(1.9)}{\beta(1.1)} \rfloor$. For $0 \leq j \leq k$, let $a_j \in [1.1,1.9]$ be such that $\beta(a_j) = 10^{j} \beta(1.1)$. Define the interval $I_j:= [a_j, a_{j+1}]$ of length $20w_j := a_{j+1} - a_j$. Observe that on each of the intervals $I_j$, the frequency $\beta$ does not grow more than by a factor of $10$. If $w_j$ satisfies 
	\begin{align}
	\label{goodj}
	10^j \beta(1.1) \geq C_n \cdot  \tfrac{1}{w_j} \log \tfrac{1}{w_j},
	\end{align}
	for some sufficiently large constant $C_n>0$, then we can use Lemma \ref{lem:stability_constant} to get a ball $B_j$ of stable growth in $B(a_{j+1})\setminus B(a_j)$.
	But some of these intervals $I_j$ can be too short and \eqref{goodj} may not hold for these intervals, and we will discard them. 
We will show that the sum of the $w_j$ satisfying \eqref{goodj} is at least $1/2$ if $\beta(1.1)$ is large enough (which holds because $\beta(1.1)\gtrsim \mathcal{N}(\frac{1}{2}B) \geq C_A \gg 1$).

Since $w_j \in (0,1)$, and $x\geq \log x$ for $x>1$,  we have 
\begin{align*}
\frac{1}{w_{j}^{2}} \geq \frac{1}{w_j} \log \frac{1}{w_j}.
\end{align*}
 Hence if $w_{j}^{2} \cdot 10^j \beta(1.1) \geq C_n$, then \eqref{goodj} holds. Thus, letting $\mathcal{J}$ be the collection of indices $j$ for which 
$w_{j}^{2} \cdot 10^j \beta(1.1) \leq C_n$, we have
\begin{align*}
	\sum_{j \in \mathcal{J}} w_j \leq \sqrt{\frac{C_n}{\beta(1.1)}}~\lb  \sum_{j \in \nat} \frac{1}{{10}^{j/2}} \rb \leq \sqrt{\frac{C_n}{\beta(1.1)}}. 
\end{align*}
Hence the sum $\sum_{j \in \mathcal{J}} w_j < 1/3$ for $\beta(1.1)\gg 1$. Define 
$${G} = \{0,1,\ldots,k\} \setminus \mathcal{J}.$$
We have
\begin{align}\label{sum_lbound}
	\sum_{j \in {G}} w_j \geq 1/2,
\end{align}
and for each $I_j$, $j \in G$, we can apply Lemma \ref{lem:stability_constant} to find a ball $B_j$ such that:
\begin{itemize}
	\item $u$ has stable growth in $B_j$, and $\N(B_j) \asymp w_j 10^j \beta(1.1)$.
	\item The radius of $B_j$ is $w_j$, and $4B_j \subset B(0,a_{j+1})\setminus B(0,a_j)$.
	\end{itemize}
The latter property implies that $4B_j$ are disjoint. 


In order to simplify the notation, let us write $\Phi(A)= (\log A \log \log A)^{n-1}$ until the end of the proof. For each $B_j$, $j \in {G}$, we use Lemma \ref{lem: extra assumption lemma 2} to get a collection of disjoint sub-balls $\{B_{i,j}\}_{i}$, with radii $r_{i,j}$ and  centers $x_{i,j}$, satisfying the following conditions:
\begin{enumerate}[label=(\roman*),align=left,leftmargin=*,widest={7},itemsep=.2cm]
	\item \label{one11}  Upper bound for the doubling index:
	$$\max_i \N(2B_{i,j}) \lesssim_n \N(B_j)/A \asymp_n w_j 10^j \beta(1.1)/A.$$ 
	\item \label{two22} Lower bound for the total s.d.i.:
	\begin{align}\label{eqtwo22}
		\sum_{i } \N\left(\tfrac{1}{2}B_{i,j}\right) r_{i,j}^{n-1} \gtrsim_n \frac{\N(B_j)}{\Phi(A)} \cdot w_{j}^{n-1} \asymp \frac{\beta(1.1)}{\Phi(A)} \cdot 10^j w_{j}^{n}. 
	\end{align}
	\item \label{three33} Vanishing at the centers and disjointness: $$u(x_{i,j}) = 0, \text{ } \text{ } 4B_{i,j} \cap 4 B_{k,j}=\emptyset,  \text{ } \text{and} \text{ } 4B_{i,j}\subset 2B_j,\text{ }\forall i\neq k.$$
\end{enumerate}
Since $w_j\leq 1$, \ref{one11} and Lemma \ref{lem: comparability di and frequency}  imply that
$$\max_i \N(2B_{i,j}) \lesssim_n w_j 10^j \beta(1.1)/A  \lesssim_n \beta(1.9)/A\lesssim_n \mathcal{N}(2B)/A. $$
We observe that \ref{three33} implies $(3)$ in Proposition \ref{prop: lemma 2}. Therefore, we are left to show $(2)$.   It suffices to show that, for $j \in G$, we have 
\begin{align}
	\label{eq: left to prove}
	\sum_{i,j} \N(\tfrac{1}{2}B_{i,j})~ r_{i,j}^{n-1} \gtrsim_n  \frac{\N(\thalf B)}{\Phi(A)}.
\end{align}
Indeed, summing \eqref{eqtwo22} over all $j \in G$ and using the bound $\beta(1.1)\gtrsim \mathcal{N}(\frac{1}{2}B)$, which follows from Lemma \ref{lem: comparability di and frequency}, we get
\begin{align}
	\label{eq: 4.3}
	\sum_{j \in G}\sum_{i} \N(\tfrac{1}{2}B_{i,j})~ r_{i,j}^{n-1} \gtrsim_n  \frac{\N(\thalf B)}{\Phi(A)} \sum_{j \in G} 10^j w_{j}^{n}. 
\end{align}
We will get a lower bound for $\sum_{j \in G} 10^j w_{j}^{n}$ by using  Holder's inequality which says that, for $\alpha_m, \gamma_m >0$, and $p,q \geq 1$ such that $p^{-1} + q^{-1} = 1$, we have
\begin{align}\label{holder}
	\sum_{m} \alpha_m \gamma_m \leq \lb \sum_m \alpha_{m}^p\rb^{1/p} \lb \sum_m \gamma_{m}^q\rb^{1/q}. 
\end{align}
Choosing $p=n$, $q=\tfrac{n}{n-1}$, $\alpha_j = w_{j} 10^{j/n}$, $\gamma_{j} = 10^{-j/n}$ in \eqref{holder}, we get
\begin{align}\label{ineq100}
	\sum_{j \in G} w_j \leq \left(\sum_{j \in G} 10^j w_{j}^{n}\right)^{\frac{1}{n}} \cdot  \left( \sum_{j \in \nat} 10^{-j/n-1}\right)^{\frac{n-1}{n}}.
\end{align}
By  \eqref{sum_lbound},  $\sum_{j \in G}w_j \geq 1/2$. We deduce that there exists constant $c_n>0$ such that 
$$ \sum_{j \in G} 10^j w_{j}^{n} \geq c_n.$$
Using this in \eqref{eq: 4.3}, we have shown that \eqref{eq: left to prove} holds. This concludes the proof of Proposition \ref{prop: lemma 2} (assuming Lemma \ref{lem: extra assumption lemma 2}). 
	\end{proof}

\section{Zeros of a harmonic function in a tunnel}

\label{sec: general lemma}

In this section we introduce the notion of a \textit{tunnel}. A \textit{tunnel} $\mathcal{T} \subset \real^n$ is a closed hyper-rectangle with all sides  but one of length $h>0$, while  the length $\ell$ of the remaining side is much larger than $h$. That is, $\ell \gg_{n} h$. We call $h$ and $\ell$ the \textit{width} and the \textit{length} of the tunnel respectively. We will always tacitly assume that $m := \ell/ h \in \nat$. A tunnel $\tun$ has two faces whose sides are all of length $h$. We designate one of these to be the \textit{beginning face} and the other one to be the \textit{end face}, and we  will denote them by  $F_{\text{beg}}$ and $F_{\text{end}}$ respectively. We can find a unit vector $v \in \real^n$ (which is parallel to the longer side of $\tun$) and $a\in \real$ such that 
\begin{align*}
F_{\text{beg}} = \tun \cap \{x \in \real^n: v \cdot x = a\}\text{ and }F_{\text{end}} = \tun \cap \{x\in \real^n: v \cdot x = a+\ell\},
\end{align*}
where $\cdot$ denotes the usual inner product in $\real^n$. We partition $\tun$ into equal hyper-cubes of side length $h$, and denote this collection by $\{q_i\}_{i=1}^{m}$. More precisely, for $1\leq k \leq m$: 
\begin{align*}
q_k := \tun \cap \{x\in \real^n : a+ (k-1)h \leq v\cdot x \leq a + kh\}. 
\end{align*}
Note that $F_{\text{beg}} \subset q_1$ and $F_{\text{end}} \subset q_m$. We will call subcubes $q_1$ and $q_m$ the \textit{beginning} and the \textit{end} of $\tun$ respectively. Henceforth, whenever we talk about a tunnel, we implicitly assume that it comes with a choice of the beginning and the end.  

With $\tun$ as above, a harmonic function $u$ defined in a neighbourhood of $\tun$ is said to have \textit{logarithmic multiplicative increment} (l.m.i.) $Z>0$ in $\tun$ if 
\begin{align}
\label{def: multiplicative increment}
\log\frac{\sup_{q_m}|u|}{\sup_{q_1}|u|}= Z.
\end{align}

\vspace{-3mm}

\begin{figure}[h]
	\includegraphics[scale=0.3]{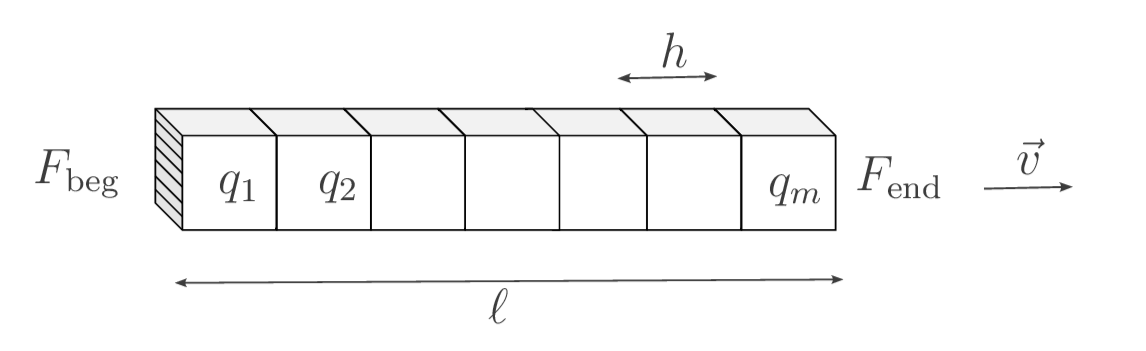}
	\centering
	\caption{A pictorial representation of tunnel $\tun$ described above.} 
\label{fig:fig_tun}
\end{figure}

We now recall the notion of  the {doubling index} of a cube introduced in 
\eqref{defn_di_cube}: given a cube $Q$ with diameter $r>0$, and a harmonic function on $\sqrt{n}Q$,  we write
\begin{align*}
\mathcal{N}^*(Q)= \sup_{x \in Q, \hskip .05cm \rho \in (0,r) } \N(x,\rho).
\end{align*}
We recall the notion of $\ell$-neighbourhood of a set $S \subset \real^n$:
\begin{align*}
S_{+\ell} := \{x \in \real^n : x= s+b\text{, where }s\in S\text{ and }b\in B(0,\ell) \}. 
\end{align*}

In the following lemma, we show that if a harmonic function $u$ has  a large enough multiplicative increment $Z$ in a tunnel $\tun$, and the doubling indicies in all of the subcubes $q_k$ are much smaller than $Z$, then the zero set of $u$ in a neighbourhood of $\tun$ is non-empty and is well spread out. That is, there are many disjoint balls (of radius $\asymp h$) where $u$ vanishes. 

\begin{lem}
	\label{lem: general lemma}
	Let $B \subset \real^n$ be a unit ball and let  $u$ be a harmonic function in $4B $. Let $\tun \subset {1.6B}$ be a tunnel of length $\ell$ and width $h$.  Suppose that  $u$ has l.m.i. $Z>1$ in $\tun$, and that  $m= \ell/ h\ll_n Z$. Moreover, suppose that
	$$\max_k \mathcal{N}^*(100\sqrt{n}q_k)\leq Z/V,$$
	for some positive number $V \ll_n Z$. Then there exist $\gtrsim_n V$ many disjoint balls $B_{k} = B(y_k,r_k)$ such that $B_{k} \subset \tun_{+50\sqrt{n}h}$  and the following holds.
	\begin{enumerate}
		\item \label{cond1} 
		Vanishing and non-intersection properties:
		$$u(y_i) = 0\text{, } \text{ } 4B_i \cap 4 B_j=\emptyset \text{, and }\text{ } 4B_i\subset 2B,$$ 
		for all $i \neq j$.
		\item \label{cond2} 
		A lower bound for the sum of the doubling indices: 
		$$ Z \lesssim_n \sum_k \mathcal{N}(\tfrac{1}{2}B_k).$$
	\end{enumerate}
	
\end{lem}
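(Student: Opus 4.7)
The plan is to chop the tunnel $\tun$ into its $m$ constituent subcubes $q_1,\ldots,q_m$ of side $h$ and study the log-sup sequence $S_k := \log \sup_{q_k}|u|$, so that the l.m.i.\ hypothesis reads $S_m - S_1 = Z$. The first step is to establish the per-subcube bound $|S_{k+1}-S_k| \lesssim_n Z/V$: both $q_k$ and $q_{k+1}$ fit inside a ball of radius $\lesssim_n h$ sitting well inside $100\sqrt{n}\,q_k$, so by the doubling-index hypothesis and almost monotonicity (Lemma \ref{lem: monotonicity doubling index}) their sups differ by at most a factor $2^{C_n Z/V}$.

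Second, I would count growth jumps. Because $\sum_k(S_{k+1}-S_k)^+ \geq Z$ while each summand is at most $C_n Z/V$, thresholding at $T := Z/(2m)$ yields $\gtrsim_n V$ indices $k$ with $S_{k+1}-S_k \geq T$; the assumption $m \ll_n Z$ guarantees $T \gg_n 1$, larger than any dimensional Harnack constant. At each such jump index, I would locate a zero by a sign-change alternative: were $u$ of a definite sign on a Harnack ball $B(x_0,C_n h)$ containing $q_k \cup q_{k+1}$ (with $x_0$ on their common face), Harnack would force $\sup_{q_{k+1}}|u| \leq C_n \sup_{q_k}|u|$, contradicting the large jump. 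Hence $u$ admits a zero $y_k \in \tun_{+50\sqrt{n}h}$ inside that ball.

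The technical heart — and the main obstacle — is to produce, for each selected jump index, a radius $r_k \asymp_n h$ realizing $\N(\tfrac12 B_k) \gtrsim_n S_{k+1}-S_k$, after which summing over the $\gtrsim V$ indices yields $\sum_k \N(\tfrac12 B_k) \gtrsim_n \sum_k (S_{k+1}-S_k) \geq Z$. I would extract such a scale via a dyadic telescoping
\[
\log\!\frac{\sup_{B(y_k, r_J)}|u|}{\sup_{B(y_k, r_0)}|u|} \;=\; \log 2 \cdot \sum_{j=0}^{J-1}\N(y_k, r_j),
\]
along a bounded-length ladder $r_0<r_1<\ldots<r_J$ with $r_0,r_J \asymp_n h$, since then pigeonhole forces some $\N(y_k,r_j) \gtrsim_n S_{k+1}-S_k$. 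The numerator is $\geq e^{S_{k+1}}$ provided $r_J$ is taken large enough to contain $q_{k+1}$; the genuine difficulty is to bound the denominator by $\lesssim e^{S_k}$ without letting $r_0$ shrink below scale $h$ when $y_k$ happens to lie close to $q_{k+1}$, because otherwise a small ball around $y_k$ may already see values $\asymp e^{S_{k+1}}$. My plan to circumvent this is to restrict attention to \emph{rising} jump indices (those at which the running maximum $\tilde{S}_k := \max_{j\leq k} S_j$ strictly increases), to pick $y_k$ inside the ``$q_k$-side'' half of the Harnack ball, and to tune $r_0$ to the distance from $y_k$ to $q_{k+1}$; the Harnack-type loss $2^{O(Z/V)}$ incurred when passing between sups on partial and full cubes can be absorbed after a final thresholding, since $S_{k+1}-S_k \geq T$. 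Disjointness of $\{4B_k\}$ and $4B_k \subset 2B$ will then follow by spacing the chosen jump indices by a bounded-in-$n$ number of subcubes and using $\tun \subset 1.6B$ together with the smallness of $h$ implicit in the hypothesis $m \ll_n Z$.
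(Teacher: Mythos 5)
Your skeleton is the same as the paper's: telescope the l.m.i.\ across the subcubes $q_k$, use Harnack to locate a zero near each large jump, and sum the doubling indices. But the step you yourself flag as ``the technical heart'' is a genuine gap, and the workaround you sketch does not close it. You try to read a doubling index off a dyadic ladder of balls \emph{centered at the zero $y_k$}, but if $y_k$ happens to sit near $q_{k+1}$ (and you cannot control which of $q_k$, $q_{k+1}$, or their margins contains the zero), then already $\sup_{B(y_k,r_0)}|u| \asymp e^{S_{k+1}}$ for every $r_0 \gtrsim h$, and the ladder detects nothing. Your proposed absorption of the resulting $2^{O(Z/V)}$ loss into the threshold $T=Z/(2m)$ cannot work quantitatively: for your jump-counting step to be nonvacuous you must have $T \leq C_n Z/V$ (else no index exceeds the threshold), which forces $V \lesssim_n m$, hence $Z/V \gtrsim_n T$ — the loss is at least as large as the quantity you are trying to detect. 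Restricting to ``rising'' indices does not help either, since it constrains the sequence $S_k$, not the location of the zero.

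The fix, and what the paper actually does, is to never center the ladder at $y_k$. Instead, observe that at the \emph{center} $z_k$ of $q_k$ the geometry is clean: $B(z_k,h/2)\subset q_k$ and $q_{k+1}\subset B(z_k,2\sqrt{n}h)$, so the concentric almost-monotonicity (Lemma \ref{lem: monotonicity doubling index}) gives
\begin{align}
\log\frac{\sup_{q_{k+1}}|u|}{\sup_{q_k}|u|} \;\leq\; C\,\N(z_k,4\sqrt{n}h) + C,
\end{align}
with only a \emph{bounded additive} error per subcube, which upon summation costs $\lesssim m \ll Z$ rather than a multiplicative $2^{O(Z/V)}$. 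One then transfers from $z_k$ to the nearby zero $y_k$ (which lies within $3\sqrt{n}h$ of $z_k$) by the non-concentric almost-monotonicity Lemma \ref{lem: almost monotonicity}: $B(z_k,8\sqrt{n}h)\subset B(y_k,16\sqrt{n}h)$ gives $\N(z_k,4\sqrt{n}h)\lesssim_n \N(y_k,16\sqrt{n}h)$, at the cost of a dimensional constant and a moderate enlargement of radius. You never invoke Lemma \ref{lem: almost monotonicity} at this step, and that is exactly the missing ingredient; with it in hand the dyadic ladder is unnecessary. A smaller remark: the paper's thresholding is the cleaner ``no zero $\Rightarrow$ jump $\leq C_0$'' dichotomy, which avoids introducing $T=Z/(2m)$ altogether, and the disjointness of the final collection is obtained by a bounded-degree coloring of the balls $4B_k$ rather than by spacing the selected indices.
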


\begin{proof}[Proof of Lemma \ref{lem: general lemma}]
	Let $z_k$ be the center of $q_k$. We start with the following claim. 
	

	\begin{claim}
		\label{claim: there are zeros}
		Let $S \subset \{1,2,\ldots,m-1\}$ be the collection of of $k$'s such that $(2q_k \cup 2q_{k+1})$ contains a zero of $u$. Then we have
		\begin{align*}
		|S| \gtrsim_n V \text{ }\text{ and } \text{ }
		\sum_{k \in S} \N(z_k, 4\sqrt{n}h) \gtrsim_n Z.
		\end{align*}
	\end{claim}
	\begin{proof}[Proof of Claim \ref{claim: there are zeros}]
		We first observe that 
		\begin{align*}
		Z = \log\frac{\sup_{q_m}|u|}{\sup_{q_1}|u|}= \sum_{k=1}^{m-1}  \log\frac{\sup_{q_{k+1}}|u|}{\sup_{q_k}|u|}.
		\end{align*}
		If $u$ does not vanish on  $(2q_{k} \cup 2q_{k+1})$, then by Harnack's inequality,  there exists some constant $C_0=C_0(n)>0$ such that 
		\begin{align*}
		\log \frac{\sup_{q_{k+1}}|u|}{\sup_{q_{k}}|u|} \leq C_0.
		\end{align*}
		Therefore, writing $S^{c}=\{1,2,\ldots,m-1\} \backslash S$, we have
		\begin{align*}
		Z &= \sum_{k \in S^{c}}  \log\frac{\sup_{q_{k+1}}|u|}{\sup_{q_k}|u|} + \sum_{k \in S}  \log\frac{\sup_{q_{k+1}}|u|}{\sup_{q_k}|u|},\\
		& \leq C_0 m + \sum_{k \in S}  \log\frac{\sup_{q_{k+1}}|u|}{\sup_{q_k}|u|}.
		\end{align*}
		Since  $m\ll Z$, we have $C_0 m \leq Z/2$ and hence
		\begin{align}
		\label{ineqq1}
		\frac{Z}{2} & \leq  \sum_{k \in S} \log  \frac{\sup_{q_{k+1}}|u|}{\sup_{q_{k}}|u|}.
		\end{align}
		Since $B(z_k, \tfrac{h}{2}) \subset q_k$, and $q_{k+1} \subset B(z_k, 2\sqrt{n}h)$, we conclude from Lemma \ref{lem: monotonicity doubling index}  that 
		\begin{align}\label{ineqq2new}
		\log   \frac{\sup_{q_{k+1}}|u|}{\sup_{q_{k}}|u|} &\leq  \log \frac{\sup_{B(z_k, 2\sqrt{n}h)}|u|}{\sup_{B(z_k, \tfrac{h}{2})}|u|}, \nonumber \\
		& \leq \log [(4\sqrt{n})^{2\N(z_k, 4\sqrt{n}h) + C_1}], \nonumber \\ 
		&\leq C_2 \N(z_k, 4\sqrt{n}h) +C_2,
		\end{align}
		where $C_1, C_2>0$ are dimensional constants. 
		 From \eqref{ineqq1} and  \eqref{ineqq2new}, we have
			\begin{gather*} 
		\frac{Z}{2} \leq \sum_{k \in S} \left[C_2 \N(z_k, 4\sqrt{n}h) + C_2 \right] \leq C_2 \sum_{k \in S} \N(z_k, 4\sqrt{n}h) + C_2 m. 
		\end{gather*}
		 Since  $m\ll Z$, we have
		\begin{gather*} 
		{Z}\lesssim  \sum_{k \in S} \N(z_k, 4\sqrt{n}h). 
		\end{gather*}
		By our assumption, each $\N^{*}(4\sqrt{n} q_k) \leq Z/V$, and hence $\N(z_k,4\rootn h) \leq Z/V$.  Hence	we have $|S| \gtrsim_n V$. This completes the proof of Claim \ref{claim: there are zeros}. 
	\end{proof} 
	
	
		The collection of balls obtained in Claim \ref{claim: there are zeros} satisfy \eqref{cond2}, but the balls  might intersect each other and the zeros are not necessarily at their centers. It is easy to modify this collection to also satisfy \eqref{cond1}. 
	
	For $k \in S$, let $y_k \in (2q_k \cup 2q_{k+1})$ be a point where $u$ vanishes.  The distance between $y_k$ and $z_k$ is at most $3\sqrt{n}h$. So
	\begin{align*}
	B(z_k, 8\sqrt{n}h)  \subset B(y_k, 16\sqrt{n}h).
	\end{align*}
	By almost monotonicity of the doubling index for non-concentric balls (Lemma \ref{lem: almost monotonicity})
	\begin{align*}
	\N(z_k, 4\sqrt{n}h) \lesssim_n \N(y_k, 16\sqrt{n}h). 
	\end{align*}
	Thus by Claim \ref{claim: there are zeros} 
	\begin{align*}
	\sum_{k \in S} \N(y_k, 16\sqrt{n}h)  \gtrsim_{n} \sum_{k \in S} \N(z_k, 4\sqrt{n}h)  \gtrsim_{n} Z. 
	\end{align*}
	Define $B_k := B(y_k, 32\sqrt{n}h)$.  
	Then $$\sum_{k \in  S} \N(\tfrac{1}{2} B_k) \gtrsim_{n} Z .$$ 
	Since each of the balls $4B_k$ intersects at most $C=C(n)$ of other balls $4B_l$,
	we can split the set $\{ 4B_k\}_{k \in S}$ into at most $C$ collections of balls such the balls in each collection do not intersect.
	Let  $\{4B_k\}_{k  \in \tilde S}$ be the collection of disjoint balls with the maximal sum $\sum_{k \in  \tilde S} \N(\tfrac{1}{2} B_k)$. Then
	$$\sum_{k \in  \tilde S} \N(\tfrac{1}{2} B_k) \geq \frac{1}{C} \sum_{k \in S} \N(\tfrac{1}{2} B_k) \gtrsim_{n} Z.$$ 
	The collection of balls $\{B_k\}_{k \in \tilde S}$ satisfies \eqref{cond1} and \eqref{cond2}. This completes the proof of Lemma \ref{lem: general lemma}.

\end{proof}

\section{Connecting the dots: Proof of Lemma \ref{lem: extra assumption lemma 2}}
\label{sec: conencting the dots}

Throughout this section, we will use the notation for tunnels introduced in \S \ref{sec: general lemma}. 


The proof of Lemma \ref{lem: extra assumption lemma 2} is split into three parts.
In \S \ref{subsec9.1} we  show how to construct tunnels in $2B= B(0,2)$, which have l.m.i. at least $N=\mathcal{N}(\frac{1}{2}B)$. In \S \ref{subsec9.2} we use information about the distribution of doubling index from Proposition \ref{prop: lemma 1} to prove that many of these  tunnels  are \textit{good}. Good tunnels are those for which the l.m.i. is at least $N$, and the doubling index in each of its subcubes is much smaller than $N$. 
Finally in \S \ref{subsec9.3} we use Lemma \ref{lem: general lemma} in good tunnels to conclude the proof of Lemma \ref{lem: extra assumption lemma 2}. 

\subsection{Step 1: Constructing tunnels with large multiplicative increment}\label{subsec9.1}








We first start with a lemma about the growth of harmonic functions. 
\begin{claim}\label{claim: off-setting formula} Let $B=B(0,1) \subset \real^n$ and let $u$ be a harmonic function in $B$. There exists $C=C(n)>0$ such that the following holds. For every $r_1, r_2$ satisfying $0< 2r_1 < r_2 < 4^{-1}$, and $y \in \partial(r_2B)$, we have
	\begin{align*} 
	\sup_{B(0,r_1)}|u| \geq |u(y)|\left(\frac{r_2}{r_1}\right)^{-2\mathcal{N}(0,r_2) - C}.
	\end{align*}
\end{claim}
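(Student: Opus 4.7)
The plan is to derive the claim as a one-line consequence of the almost monotonicity of the doubling index (Lemma \ref{lem: monotonicity doubling index}), combined with the trivial pointwise bound $|u(y)| \leq \sup_{B(0,r_2)}|u|$ that holds because $y$ lies on the sphere $\partial(r_2 B)$.

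More precisely, I will first observe that since $y \in \partial(r_2 B) \subset \overline{B(0,r_2)}$, one has
$$|u(y)| \leq \sup_{B(0,r_2)}|u|.$$
The hypothesis $0 < 2r_1 < r_2 < 4^{-1}$ satisfies $r_1 \leq r_2/2 \leq 4^{-1}$, which is precisely the range of validity of Lemma \ref{lem: monotonicity doubling index} (and the harmonicity of $u$ on $B(0,1)$ is more than enough, in view of the Remark after Theorem \ref{thm: monotonicity frequency}). Fixing a small $\delta$, say $\delta = 1/10$, the upper bound in Lemma \ref{lem: monotonicity doubling index} yields
$$\sup_{B(0,r_2)}|u| \leq \left(\frac{r_2}{r_1}\right)^{(1+\delta)\mathcal{N}(0,r_2) + C'} \sup_{B(0,r_1)}|u|,$$
for some $C' = C'(n) > 0$.

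Combining the two displays, and since $(1+\delta) < 2$, I obtain
$$|u(y)| \leq \left(\frac{r_2}{r_1}\right)^{2\mathcal{N}(0,r_2) + C'} \sup_{B(0,r_1)}|u|,$$
which is exactly the claimed inequality after rearrangement (with $C = C'$). I do not foresee any obstacle here; the entire argument is essentially a repackaging of Lemma \ref{lem: monotonicity doubling index} with the trivial observation that a point value is dominated by the supremum on a ball containing it.
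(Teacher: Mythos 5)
Your argument is correct and is essentially the same one-line deduction the paper gives: dominate $|u(y)|$ by $\sup_{B(0,r_2)}|u|$ and then invoke the almost monotonicity of the doubling index (Lemma~\ref{lem: monotonicity doubling index}), with $(1+\delta)$ absorbed into the factor $2$. The only difference is that you spell out the fixed choice $\delta=1/10$, which the paper leaves implicit.
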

\begin{proof}[Proof of Lemma \ref{claim: off-setting formula}]
	We use monotonicity of doubling index (Lemma \ref{lem: monotonicity doubling index}) to see that 
	$$\frac{|u(y)|}{\sup_{B(0,r_1)}|u|}\leq \frac{\sup_{B(0,r_2)}|u|}{\sup_{B(0,r_1)}|u|}\leq \left(\frac{r_2}{r_1}\right)^{2\mathcal{N}(0, r_2) +C},$$
	as required. 
\end{proof}

Consider a  harmonic function $u$ with stable growth in the unit ball $B=B(0,1)$, with $\N(\frac{1}{2}B) =N$. In the following claim, we consider a tunnel $\tun$ whose end is near a point where $|u|$ attains its maximum on $\partial(1.5 B)$, and whose beginning is contained well within $1.5B$ (for example in $1.3B$). We show that if $\mathcal{T}$ is not too narrow, then  $u$ has l.m.i. $Z \gtrsim N$ in $\tun$. 

\begin{lem} 	\label{lem: multiplicative increment} Let $B=B(0,1) \subset \real^n$. Assume that  a harmonic function $u$ in $4B$ has stable growth in $B$. Let $x_0 \in \partial (1.5B)$ be a point where $|u|$ attains its maximum in $1.5B$. 	Let $\mathcal{T}$ be any tunnel of length ${1}/{4}$ and width $h$ such that the beginning $q_1$ satisfies
	\begin{align*}
	q_1 \subset 1.3B.
	\end{align*}
	Denote by $z_m$ the center of the ending $q_m$ of $\tun$ and put $r=|z_m -x_0|$. Assume that
	\begin{align*}
	h \ll_{n} 1\text{ }\text{ and }\text{ }\frac{h}{2}\leq r \ll_{n} (\log\tfrac{1}{h})^{-1}
	\end{align*}
	and  
	$$\N(\thalf B) = N \gg_n \tfrac{1}{h} \log \tfrac{1}{h}.$$ Then $u$ has multiplicative increment $Z \gtrsim_n N$ in $\mathcal{T}$.
\end{lem}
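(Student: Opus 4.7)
The plan is to bound $\sup_{q_m}|u|$ from below and $\sup_{q_1}|u|$ from above, then subtract the logarithms. Writing $M(t) := \sup_{tB}|u|$, the stable growth hypothesis together with Remark \ref{rem: important remark} gives $\beta_u(0, t) \asymp_n N$ on $[1.1, 1.9]$, so Lemma \ref{lemma: behaviour_around_max 2.0} and Lemma \ref{lem: scaling around the maximum} are both available in spherical layers around any $\rho \in [1.2,1.8]$ of suitable width (the width hypothesis $Nw \gg \log(1/w)$ will be verified where needed, using $N \gg (1/h)\log(1/h)$).

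The upper bound on $\sup_{q_1}|u|$ is immediate from $q_1 \subset 1.3B$ and Lemma \ref{lemma: behaviour_around_max 2.0} applied at $t_1 = 1.3$, $t_2 = 1.5$ (inside a layer around $\rho=1.4$ of width $w\asymp 1$): we obtain $\sup_{q_1}|u| \leq M(1.3) \leq M(1.5)\exp(-c_1 N)$ for some $c_1 = c_1(n) > 0$. For the lower bound on $\sup_{q_m}|u|$, observe that $B(z_m, h/10) \subset q_m$ and $x_0 \in \partial B(z_m, r)$. Translating Claim \ref{claim: off-setting formula} to be centered at $z_m$ and taking $r_1 = h/10$, $r_2 = r$ (the hypothesis $2r_1 < r_2$ becomes $h/5 < r$, which holds strictly since $r\geq h/2$) yields
\[
\sup_{q_m}|u| \geq \sup_{B(z_m, h/10)}|u| \geq |u(x_0)|\left(\tfrac{10r}{h}\right)^{-2\N(z_m, r) - C} = M(1.5)\left(\tfrac{10r}{h}\right)^{-2\N(z_m, r) - C}.
\]
To control $\N(z_m, r)$ I would apply Lemma \ref{lem: scaling around the maximum} to the ball $D=B(z_m,r)$, which has $x_0\in\partial D$ and radius $w=r$; this gives $\N(z_m, r) \lesssim_n rN$. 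The width hypothesis $Nr \gg \log(1/r)$ follows from $Nr \geq Nh/2 \gg \log(1/h) \geq \log(1/r)$, where we used $r \leq (\log(1/h))^{-1}$ to pass from $\log(1/h)$ to $\log(1/r)$.

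Combining the two estimates, the logarithmic increment satisfies
\[
Z \geq c_1 N - (CrN + C)\log(10r/h)
\]
for some dimensional constant $C$. It remains to absorb the error. The first piece obeys $CrN\log(10r/h) \lesssim rN\log(1/h) \ll N$ since $r \ll (\log(1/h))^{-1}$; the second piece obeys $C\log(10r/h) \lesssim \log(1/h) \ll N$ since $N \gg (1/h)\log(1/h) \geq \log(1/h)$. Both error terms can be made $\leq c_1 N/4$, so $Z \geq c_1 N/2 \gtrsim_n N$. The main technical input is the bound $\N(z_m, r) \lesssim_n rN$ supplied by Lemma \ref{lem: scaling around the maximum}; everything else is bookkeeping with the off-setting formula and the stable-growth comparison of maxima.
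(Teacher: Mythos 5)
Your proof is correct and follows essentially the same route as the paper: you bound $\sup_{q_1}|u|$ from above via Lemma~\ref{lemma: behaviour_around_max 2.0}, bound $\sup_{q_m}|u|$ from below via the off-setting formula (Claim~\ref{claim: off-setting formula}) combined with the doubling-index estimate $\N(z_m,r)\lesssim_n rN$ from Lemma~\ref{lem: scaling around the maximum}, and then show the error terms are $o(N)$ using $r\ll(\log\tfrac{1}{h})^{-1}$ and $N\gg\tfrac{1}{h}\log\tfrac{1}{h}$. The only differences are immaterial constant choices (you take $r_1=h/10$ instead of $h/4$ in the off-setting formula and split the error bookkeeping slightly differently).
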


\begin{proof}[Proof of Lemma\ref{lem: multiplicative increment}]
	
	Let $M=|u({x_0})|$. We can apply Lemma \ref{lemma: behaviour_around_max 2.0} to $u$ in the spherical layer $\{x\in \mathbb{R}^n: 1.1\leq |x|\leq 1.9\}$ to conclude that 
	\begin{align}\label{eq_q1}
	\sup_{q_{1}} |u|\leq \sup_{B(0,1.3)} |u| \leq \sup_{B(0,1.5)} |u|\exp(-c_1 N) = M \exp(-c_1 N)
	\end{align}
	for some  constant $c_1=c_1(n)>0$.  We conclude from Claim \ref{claim: off-setting formula}  that
	\begin{align} \label{eq: tunnelend}
	\sup_{q_m} |u| \geq \sup_{B(z_m, {h}/{4})} |u| \geq |u(x_0)| \lb \frac{4r}{h}\rb^{-2\N(z_m,r)-C},
	\end{align}
	for some $C=C(n)>0$. Since  $$Nr(\log\tfrac{1}{r})^{-1} \geq Nh ( \log \tfrac{1}{h})^{-1} \gg_n 1$$ 
	we can use  Lemma \ref{lem: scaling around the maximum} to get $$\N(z_m, r) \lesssim_{n} rN.$$ Let's plug the latter inequality in \eqref{eq: tunnelend} and use $rN \gg 1$ :
	\begin{align}\label{eq_qm}
	\sup_{q_m} |u| \geq M (2r/h)^{-C_1rN} = M\exp({-C_1rN \log(4r/h)}) \nonumber \\
	\geq M\exp({-C_1rN \log (1/h)})
	\end{align}
	for some  $C_1=C_1(n)>1$.
	Hence, we have the following bound for the multiplicative increment of $u$ in $\tun$:
	\begin{align*}
	Z = \log \frac{\sup_{q_m}|u|}{\sup_{q_1}|u|} \mathop{\gtrsim}\limits_{\eqref{eq_q1},\eqref{eq_qm}} N(c_1 - C_1r\log (1/h)) \gtrsim N.
	\end{align*}
	In the last inequality we use the assumption $r \ll_n (\log \tfrac{1}{h})^{-1}$. The proof of Lemma \ref{lem: multiplicative increment} is finished.
\end{proof}

\subsection{Step 2: Seeking for many {good} tunnels}\label{subsec9.2}
A randomly chosen tunnel from Lemma \ref{lem: multiplicative increment} may have a subcube with doubling index comparable to $N$. In order to apply Lemma \ref{lem: general lemma}, we need to ensure that all the subcubes in the tunnel have doubling index $\ll_{n} N$. Such a tunnel will be called a good tunnel. The goal of this section is to construct many good tunnels in Lemma \ref{lem: good tunnels}. 

\vspace{2mm}
We are proving Lemma \ref{lem: extra assumption lemma 2} and we keep the notation introduced in its statement. Namely, $u$ is harmonic on $4B$ and has stable growth in $B=B(0,1)$, with $\N(\tfrac{1}{2}B) =N$. 

Let $x_0 \in \partial (1.5B)$ be such that $|u(x_0)| = \sup_{1.5B}|u|$.
Denote by $\mathcal{R}$ a tunnel of length $1/4$ and width $w \ll 1$, whose longer side is parallel to the vector $x_0$, and $x_0$ is the center of its end face. We will assume that  $(4w)^{-1} \in \nat$ so that we could chop $\mathcal{R}$ into equal subcubes. 
Then $\mathcal{R} \subset 1.6B$ and its beginning is contained in $1.3B$. See Figure \ref{multiple_tunnels_location} 

\begin{figure}[H]
	\includegraphics[scale=0.7]{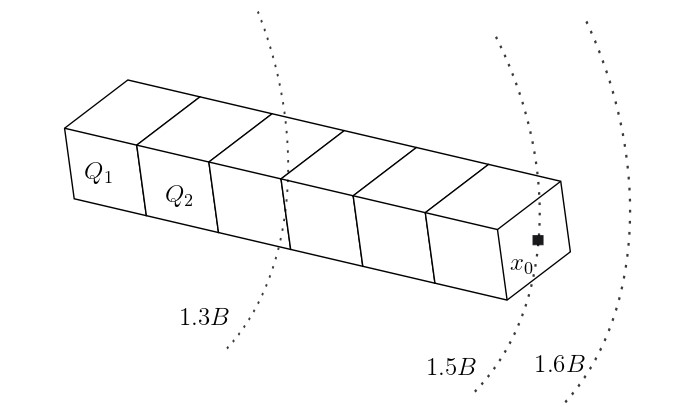}
	\centering
	\caption{Location of the tunnel $\mathcal{R}$}\label{multiple_tunnels_location}
\end{figure}

We will start chopping $\mathcal{R}$ into subcubes and we will need an integer (chopping) parameter $K \gg 1$. 
$K$ and $w$ will be chosen later, depending on $A$ in the statement of Lemma \ref{lem: extra assumption lemma 2}. 
Partition $\mathcal{R}$ into equal subcubes $\{Q_j: 1\leq j \leq (4w)^{-1}\}$ of side length $w$. Also partition $\mathcal{R}$ into equal subtunnels $\{\tun_i: 1 \leq i \leq K^{n-1}\}$ of length $1/4$ and width $w/K$. 

The tunnels $\tun_i$ are such that all their beginning faces and end faces  are contained in the beginning face and the end face of $\mathcal{R}$ respectively. We denote by $\{q_{i,t}: 1\leq t \leq K(4w)^{-1}\}$ the subcubes of tunnel $\tun_i$ with side length $w/K$. See Figure \ref{fig:fig_mult_tun} for a pictorial explanation for this partitioning. 

\begin{figure}[H]
	\includegraphics[scale=0.8]{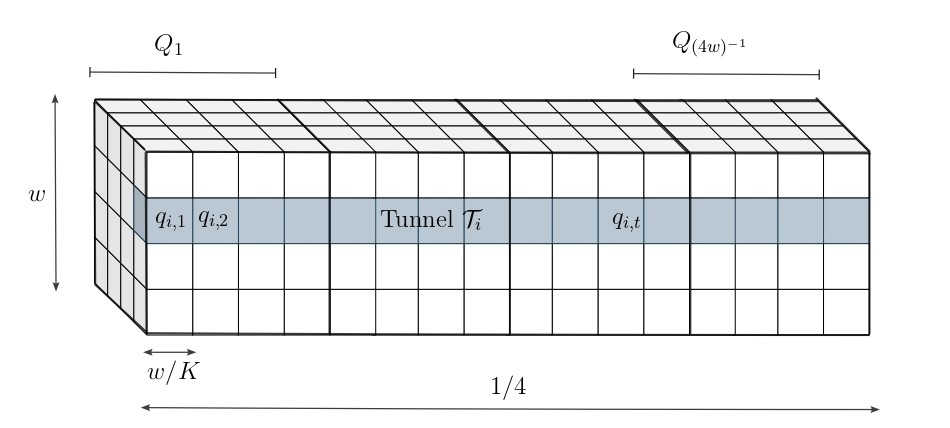}
	\centering
	\caption{A pictorial representation of $\mathcal{R}$ and its partition into tunnels $\{\mathcal{T}_i\}_i$.}\label{fig:fig_mult_tun}
\end{figure}

\noindent It is easy to see that $Q_j$ consists of $K^n$ equal subcubes $q_{i,t}$:
\begin{align*}
Q_j = \cup \{q_{i,t}: 1+ (j-1)K \leq t \leq jK\text{ and }1 \leq i \leq K^{n-1}\}.
\end{align*}

We will show  that in at least half of the tunnels $\tun_i$, the doubling index of all the scaled subcubes $100\sqrt{n}q_{i,t}$ is much smaller than $N$, by using a modified version of Proposition \ref{prop: lemma 1}, which is proved in Appendix \ref{sec: appendix}. 
\begin{cor}[A modified version of Proposition \ref{prop: lemma 1}]\label{cor_of_prop4.1} Let a unit cube $Q \subset \real^n$ be  partitioned into $K^n$ equal subcubes $\{q_{\alpha}: 1\leq \alpha \leq K^n\}$. Let $u$ be a harmonic function in $4\sqrt{n} Q$.  Let $\N^{*}(2Q) = N$. If $K \gg_n 1$, then we have
	\begin{align*}
	\#\{\alpha :\N^{*}(100\sqrt{n}q_{\alpha}) \geq \max(N\cdot 2^{-c\log K/\log \log K},C)\} \leq K^{n-1-c},
	\end{align*}
	where $c,C >0$ are dimensional constants. 
\end{cor}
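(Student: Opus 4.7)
The plan is to deduce this corollary from Proposition~\ref{prop: lemma 1} by applying it to $Q$ with a coarser partition, chosen so that each coarser subcube has diameter larger than that of $100\sqrt{n}\,q_\alpha$. Concretely, set $A:=\lfloor K/(400\sqrt{n})\rfloor$; since $K\gg_n 1$, one has $A\geq A_0$, the threshold in Proposition~\ref{prop: lemma 1}. Partition $Q$ into $A^n$ equal subcubes $\{\tilde{q}_\beta\}$, each of diameter $\sqrt{n}/A\geq 400n/K$, which comfortably exceeds $\mathrm{diam}(100\sqrt{n}\,q_\alpha)=100n/K$. Since $\N^*(Q)\leq\N^*(2Q)=N$, Proposition~\ref{prop: lemma 1} yields dimensional constants $c_0,C_0>0$ and the bound
\[\#\{\beta:\N^*(\tilde{q}_\beta)>\max(N\cdot 2^{-c_0\log A/\log\log A},C_0)\}\leq A^{n-1-c_0}.\]

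The key pointwise estimate I would then establish is: for every $q_\alpha$, every $x\in 100\sqrt{n}\,q_\alpha$, and every $\rho\leq 100n/K$, there exists $\tilde{q}_\beta$ meeting $100\sqrt{n}\,q_\alpha$ with $\N(B(x,\rho))\leq C_n\,\N^*(\tilde{q}_\beta)$. If $x\in Q$, pick $\tilde{q}_\beta\ni x$: since $\rho\leq 100n/K\leq\mathrm{diam}(\tilde{q}_\beta)$, the definition of $\N^*$ gives $\N(B(x,\rho))\leq\N^*(\tilde{q}_\beta)$ directly. If $x\notin Q$ (which can happen when $q_\alpha$ is near $\partial Q$), let $y\in Q$ be nearest to $x$, so $|x-y|\leq 50n/K$, and pick $\tilde{q}_\beta\ni y$. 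Then $2B(x,\rho)\subset B(y,250n/K)$, and Lemma~\ref{lem: almost monotonicity} (almost monotonicity for non-concentric balls) gives $\N(B(x,\rho))\leq C_n\,\N(B(y,250n/K))\leq C_n\,\N^*(\tilde{q}_\beta)$, where the last step uses $250n/K\leq\mathrm{diam}(\tilde{q}_\beta)$.

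Taking contrapositives and choosing $c=c_0/2$ with $C$ large, a $q_\alpha$ violating the corollary's threshold forces some $\tilde{q}_\beta$ meeting $100\sqrt{n}\,q_\alpha$ to violate Proposition~\ref{prop: lemma 1}'s threshold for $A$ (using that $\log A/\log\log A\geq \tfrac{1}{2}\log K/\log\log K$ for $K\gg_n 1$). Since both $100\sqrt{n}\,q_\alpha$ and $\tilde{q}_\beta$ have sides of order $1/K$, a fixed $\tilde{q}_\beta$ can meet only $O_n(1)$ of the expansions $100\sqrt{n}\,q_\alpha$ by a straightforward grid count, so the total number of bad $q_\alpha$'s is at most $O_n(A^{n-1-c_0})\leq K^{n-1-c}$ once $K\gg_n 1$, absorbing the dimensional constant into $K^{c_0/2}$. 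The one genuine subtlety is the boundary case $x\notin Q$, which is why Lemma~\ref{lem: almost monotonicity} enters; everything else is bookkeeping and rescaling of constants.
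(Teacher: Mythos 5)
Your proof is correct and reaches the same conclusion, but it takes a slightly different route from the paper at one structural point. The paper partitions the \emph{doubled} cube $2Q$ into $L^n$ subcubes $\{s_j\}$ with $L \asymp K/(100\sqrt{n})$; since $100\sqrt{n}\,q_\alpha \subset 2Q$ for $K\gg_n 1$, every point $x \in 100\sqrt{n}\,q_\alpha$ automatically lies in some $s_j$, so $\N(x,\rho)\leq \N^*(s_j)$ is immediate and no boundary case arises. You instead partition $Q$ itself into the $\tilde{q}_\beta$'s, which forces you to treat separately the points $x\in 100\sqrt{n}\,q_\alpha\setminus Q$ (occurring when $q_\alpha$ is near $\partial Q$) by projecting to the nearest $y\in Q$ and invoking Lemma \ref{lem: almost monotonicity}. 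This is a genuine extra step not present in the paper, and it buys you a multiplicative constant $C_n$ in the comparison $\N(B(x,\rho))\leq C_n\N^*(\tilde{q}_\beta)$ that the paper does not incur; you correctly absorb it by shrinking $c_0$ to $c=c_0/2$ and enlarging $C$. The paper's choice of partitioning $2Q$ is the cleaner of the two (it keeps the comparison constant-free), though it implicitly requires Proposition \ref{prop: lemma 1} to be applicable to $2Q$ with the given domain of harmonicity. Your bookkeeping — the $O_n(1)$ overlap bound and absorbing the constant into $K^{c_0/2}$ once $K\gg_n 1$ — matches the paper's and is fine.
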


The next claim is a straightforward consequence of the almost monotonicity of doubling index (Lemma \ref{lem: almost monotonicity}).
\begin{claim}
	\label{claim: upper bound doubling index cube}
	Let $B=B(0,1) \subset \real^n$. Then for every harmonic function $u$ defined in $4B$, and every cube $Q\subset B(0, 1.6)$ of side length at most $(10\sqrt{n})^{-1}$, we have 
	\begin{align*}
	\mathcal{N}^{*}(Q)\lesssim_n \mathcal{N}(2B). 
	\end{align*}
\end{claim}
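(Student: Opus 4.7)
The plan is to observe that Claim \ref{claim: upper bound doubling index cube} is essentially a direct packaging of Lemma \ref{lem: almost monotonicity} (almost monotonicity of the doubling index for non-concentric balls), once the relevant inclusions are checked. So the main task is just to verify that every ball appearing in the definition of $\mathcal{N}^{*}(Q)$ is concentrically small enough that its double still fits inside $2B$.

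First I would unwrap the definition in \eqref{defn_di_cube}: it suffices to prove $\mathcal{N}(B(x,\rho)) \lesssim_n \mathcal{N}(2B)$ uniformly over $x \in Q$ and $0 < \rho \leq \diam(Q)$, and then take the supremum over these parameters. Fix such $(x,\rho)$ and set $b := B(x,\rho)$. Since $Q$ has side length at most $(10\sqrt{n})^{-1}$, its diameter satisfies $\diam(Q) \leq \sqrt{n}\cdot (10\sqrt{n})^{-1} = 1/10$, so $\rho \leq 1/10$.

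Next I would check the inclusion $2b \subset 2B$ needed to apply Lemma \ref{lem: almost monotonicity} with the outer ball taken to be $2B = B(0,2)$. Any point $y \in 2b = B(x,2\rho)$ satisfies
\begin{align*}
|y| \leq |x| + 2\rho \leq 1.6 + \tfrac{2}{10} = 1.8 < 2,
\end{align*}
where I used $x \in Q \subset B(0,1.6)$ and $\rho \leq 1/10$. The harmonicity hypothesis for Lemma \ref{lem: almost monotonicity} is also met, since $u$ is harmonic on $4B = 2 \cdot (2B)$.

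Applying Lemma \ref{lem: almost monotonicity} to the pair $(b, 2B)$ then gives $\mathcal{N}_u(b) \leq C \mathcal{N}_u(2B)$ with a dimensional constant $C = C(n)$. Taking the supremum over all admissible $(x,\rho)$ yields $\mathcal{N}^{*}(Q) \leq C \mathcal{N}(2B)$, which is the desired bound. There is no real obstacle here; the only thing to watch is that the constants $1.6$ (for the containment of $Q$) and $(10\sqrt{n})^{-1}$ (for the side length) are chosen precisely so that the doubled ball $2b$ leaves a positive buffer inside $B(0,2)$, which is exactly what the hypothesis of Lemma \ref{lem: almost monotonicity} requires.
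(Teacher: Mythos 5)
Your argument is correct and is exactly what the paper has in mind: the paper presents Claim \ref{claim: upper bound doubling index cube} as "a straightforward consequence of the almost monotonicity of doubling index (Lemma \ref{lem: almost monotonicity})," and you have carried out precisely that verification — unwinding the definition of $\mathcal{N}^{*}(Q)$, checking that $\diam(Q)\leq 1/10$ so that $2b\subset B(0,1.8)\subset 2B$ for every admissible $b=B(x,\rho)$, and then invoking Lemma \ref{lem: almost monotonicity} with outer ball $2B$ (whose hypothesis requires $u$ harmonic in $4B$, as given). Nothing is missing.
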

We are now ready to show the existence of many good tunnels in $\mathcal{R}$.
\begin{lem}
	\label{lem: good tunnels} With the  notation as above, we define $$ N^*(\mathcal{T}_i) = \max_{t=1,2,\ldots, K(4w)^{-1}} \mathcal{N}^{*}(100\rootn q_{i,t}).$$
	If  {$K \gg_n 1$} and $w\asymp_{n} (\log K)^{-1}$, then 
	\begin{align*}
	\# \{i: 	N^*(\mathcal{T}_i) \leq  N/ C_K \} \geq \frac{K^{n-1}}{2},
	\end{align*}
	where $C_K = \exp({c' \log K/\log \log K})$ for some constant $c' = c'(n)>0$.
\end{lem}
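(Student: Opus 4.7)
The plan is to apply Corollary \ref{cor_of_prop4.1} separately to each of the $\tfrac{1}{4w}$ macroscopic cubes $Q_j$ of side $w$ in the partition of $\mathcal{R}$, then sum the resulting bounds and observe that a tunnel $\tun_i$ can be bad only if one of its subcubes $q_{i,t}$ is. The first step is to bound $\N^*(2Q_j)$ uniformly in $j$: since $Q_j \subset \mathcal{R} \subset 1.6B$ and $w \asymp_n (\log K)^{-1}$ is small once $K \gg_n 1$, the cube $2Q_j$ still lies inside $B(0,1.7)$, so Claim \ref{claim: upper bound doubling index cube} combined with the stable growth of $u$ in $B$ (which forces $\N(2B) \leq C \N(\tfrac{1}{2}B) = CN$) yields $\N^*(2Q_j) \leq C_1 N$ for some dimensional constant $C_1$.

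Next, I identify the subcubes $q_{i,t} \subset Q_j$ as exactly the $K^n$ equal subcubes arising from the two partitionings: along the long direction of $\mathcal{R}$, $Q_j$ contains $K$ consecutive slices $q_{i,t}$ from each of the $K^{n-1}$ tunnels. Thus Corollary \ref{cor_of_prop4.1}, applied to a rescaled copy of $Q_j$ with chopping parameter $K$ and using the bound $C_1 N$, produces a dimensional $c > 0$ such that in each $Q_j$ at most $K^{n-1-c}$ of the subcubes $q_{i,t}$ satisfy
\[
\N^*(100\sqrt{n}\, q_{i,t}) \;\geq\; \max\!\left( C_1 N \cdot 2^{-c \log K / \log\log K},\ C \right).
\]
Choose $c' < c \log 2$ and set $C_K = \exp(c' \log K / \log \log K)$; for $K \gg_n 1$ and $N \gg_n 1$, the threshold $N / C_K$ dominates the right-hand side, so every $q_{i,t}$ bad in the sense of the lemma is already counted above.

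Summing over $j = 1, \ldots, \tfrac{1}{4w}$, the total number of bad subcubes in $\mathcal{R}$ is at most $\tfrac{1}{4w} K^{n-1-c}$, and since each $q_{i,t}$ lies in a unique tunnel $\tun_i$, the number of bad tunnels is bounded by the same quantity. With $w \asymp_n (\log K)^{-1}$, this is at most $C_2 (\log K)\, K^{n-1-c}$, which is smaller than $K^{n-1}/2$ once $K$ is chosen large enough since $K^c$ dominates $\log K$. The main bookkeeping obstacle is matching thresholds: the corollary outputs $C_1 N \cdot 2^{-c\log K/\log\log K}$, and one must pick the exponent $c'$ defining $C_K$ strictly below $c \log 2$ so the multiplicative constant $C_1$ is absorbed for $K$ large. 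The choice $w \asymp (\log K)^{-1}$ is precisely loose enough for the factor $\tfrac{1}{4w} \asymp \log K$ in the sum to be swamped by the polynomial savings $K^{-c}$.
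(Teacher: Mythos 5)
Your argument is correct and follows the same route as the paper: apply Corollary \ref{cor_of_prop4.1} to each of the $(4w)^{-1}$ macro-cubes $Q_j$ (using Claim \ref{claim: upper bound doubling index cube} plus stable growth to bound $\N^*(2Q_j)\lesssim_n N$), observe that a bad tunnel must contain a bad subcube $q_{i,t}$, and then sum the per-$Q_j$ bound $K^{n-1-c}$ over the $\asymp\log K$ values of $j$ to conclude with $w^{-1}K^{n-1-c}\ll K^{n-1}/2$. The only cosmetic difference is the choice of exponent $c'$ (the paper takes $c'=c/2$, you take any $c'<c\log 2$, both of which leave room to absorb the constant in front of $N$); note also that the containment $2Q_j\subset B(0,1.6)$ is what Claim \ref{claim: upper bound doubling index cube} literally requires, though, as you say, it holds once $w$ is small.
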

In total there are $K^{n-1}$  tunnels $\tun_i$. The above lemma implies that  in a neighborhood  of at least  half of these tunnels the doubling index significantly drops down.

\begin{proof}[Proof of Lemma \ref{lem: good tunnels}]

	If $w \ll 1$, we can apply Claim \ref{claim: upper bound doubling index cube} to get
	$$\N^*(2Q_j) \leq C N,$$ for some constant $C=C(n)>1$.	
	
	By Corollary \ref{cor_of_prop4.1} applied to $Q_j$ we have
	\begin{align*}
	\# \{q_{i,t} \subset Q_j : \N^{*}(100\sqrt{n}q_{i,t}) \geq NC2^{-c\log K/\log \log K}\} \leq K^{n-1-c}, 
	\end{align*} 
	Put $c':=c/2$ and $C_K := \exp({c' \log K/\log \log K})$.
	If $K\gg 1$,then 	$$ N C 2^{-c\log K/\log \log K} \leq N/C_K.$$  
	There are $(4w)^{-1}$ subcubes $Q_j$. Using the assumptions $w^{-1} \asymp_n \log K$  and $K \gg 1$ we have
	\begin{align*}
	\# \{q_{i,t} \subset \mathcal{R} : \N^{*}(100\sqrt{n}q_{i,t}) \geq N/C_K\} \lesssim w^{-1} K^{n-1-c}  \ll \frac{K^{n-1}}{2}, 
	\end{align*}
	Since the total number of tunnels $\tun_i$ is $K^{n-1}$, we conclude that at least half of them satisfy $N^*(\mathcal{T}_i) \leq N/C_K$.
	
\end{proof}

\subsection{Step 3: Concluding the proof of Lemma \ref{lem: extra assumption lemma 2}}\label{subsec9.3}
In order to finish the proof we need to carefully choose the parameters $K,w$ in the construction and chopping of the tunnel $\mathcal{R}$.
Recall that $\mathcal{R}$ has  length $1/4$ and width $w$.  Its subtunnels $\tun_i$ have length $1/4$ and width $h=w/K$.

\begin{claim}\label{all_tunnels_have_m.i._N}  We can choose a dimensional constant  $c_0=c_0(n)>0$ and put  $w=c_0/ \log K$ so that the the following holds.   If $K\gg_n 1$ and  $N \gg_{n} K(\log K)^2$, then every tunnel $\tun_i \subset \mathcal{R}$ has a multiplicative increment $Z \gtrsim_n N$.  
\end{claim}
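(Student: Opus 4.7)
The plan is to apply Lemma \ref{lem: multiplicative increment} to each subtunnel $\tun_i$ after choosing $c_0$ small enough that its hypotheses hold uniformly in $i$, with a short direct argument to handle the possibly degenerate central subtunnel when $K$ is odd.

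Set $w = c_0/\log K$, so each $\tun_i$ has length $1/4$ and width $h = w/K = c_0/(K\log K)$, giving $\log(1/h) \asymp_n \log K$. Let $z_m$ denote the center of the end face of $\tun_i$ and put $r = |z_m - x_0|$; since the end face of $\tun_i$ sits inside the end face of $\mathcal{R}$, which is an $(n-1)$-cube of side $w$ centered at $x_0$, we have $r \leq \tfrac{\sqrt{n-1}}{2}\,w = O_n(c_0/\log K)$. A direct distance computation shows that for $c_0 \ll_n 1$ the beginning $q_1$ of every $\tun_i$ lies in $1.3B$ (here we use that $\mathcal{R}$ runs from roughly $1.25\,x_0/|x_0|$ to $1.5\,x_0/|x_0|$ with transverse spread bounded by $w$). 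Likewise, for $c_0$ small enough (depending only on $n$) we secure $r \ll_n (\log 1/h)^{-1}$, while the standing assumption $N \gg_n K(\log K)^2$ translates directly into $N \gg_n h^{-1}\log(1/h)$.

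If $r \geq h/2$, all the hypotheses of Lemma \ref{lem: multiplicative increment} are now in force and it yields $Z \gtrsim_n N$. If $r < h/2$ --- which can occur only for the central subtunnel when $K$ is odd --- then the closed cube $q_m$ contains the inscribed ball $B(z_m, h/2)$ and hence $x_0$, so $\sup_{q_m}|u| \geq |u(x_0)| = \sup_{1.5B}|u| =: M$. On the other side, since $q_1 \subset 1.3B$ and stable growth of $u$ in $B$ gives $\beta(0,t) \asymp_n N$ on the layer $\{1.1 \leq |t| \leq 1.9\}$ by Remark \ref{rem: important remark}, Lemma \ref{lemma: behaviour_around_max 2.0} applied exactly as in the proof of Lemma \ref{lem: multiplicative increment} yields $\sup_{q_1}|u| \leq \sup_{1.3B}|u| \leq M \exp(-c_1 N)$. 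Combining, $Z = \log(\sup_{q_m}|u|/\sup_{q_1}|u|) \geq c_1 N$ in this case as well.

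The main technical point is the uniform choice of $c_0$: one must check that a single dimensional constant makes the three conditions $q_1 \subset 1.3B$, $r \ll_n (\log 1/h)^{-1}$, and $N \gg_n h^{-1}\log(1/h)$ hold simultaneously for every subtunnel and every $K \gg_n 1$. Once $c_0$ is fixed, these reduce to explicit algebraic inequalities, and the only mildly subtle step is the Case 2 bookkeeping that lets us bypass the apparent failure of $r \geq h/2$ at the central subtunnel.
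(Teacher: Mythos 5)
Your proof takes essentially the same route as the paper: reduce to verifying the three hypotheses of Lemma \ref{lem: multiplicative increment} for each $\tun_i$, and check that the choice $w = c_0/\log K$ with $c_0 \ll_n 1$ makes them hold once $K\gg_n 1$ and $N \gg_n K(\log K)^2$, via $\log(1/h) \asymp_n \log K$, $r \lesssim_n w$, and $h^{-1}\log(1/h) \asymp_n K(\log K)^2/c_0$. The one deviation is a misreading of notation: in Lemma \ref{lem: multiplicative increment} the point $z_m$ is the center of the full $n$-dimensional cube $q_m$, not the center of the end face, so $z_m$ sits at perpendicular distance exactly $h/2$ from the hyperplane through $x_0$ and $r=|z_m-x_0|\ge h/2$ holds automatically for every $\tun_i$; your ``Case 2'' ($r<h/2$) therefore never occurs. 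Your Case 2 argument is nonetheless sound (since $x_0\in \overline{q_m}$ for the central subtunnel, $\sup_{q_m}|u|\ge M$ directly), and it incidentally patches the borderline situation $r=h/2$ exactly, where the strict inequality $2r_1<r_2$ needed by Claim \ref{claim: off-setting formula} with $r_1=h/4$ inside the proof of Lemma \ref{lem: multiplicative increment} would technically fail.
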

\begin{proof}[Proof of Claim \ref{all_tunnels_have_m.i._N}]
	Formally,  we just need to check that the assumptions of  Lemma \ref{lem: multiplicative increment} hold to conclude that the multiplicative increment $ Z \gtrsim_n N$.
	
	For that we need to consider the end cube $q$ of $\tun_i$ and its center $z$. 
	Recall that $x_0$ is the maximum point of $u$ on $\partial(1.5 B)$ and $ r:=|z - x_0|$.  
	
	The assumptions needed for  Lemma \ref{lem: multiplicative increment} are the following:

	\begin{align} \label{3 inequalities}
		h/2 \leq r,  \quad r \ll_n \left (\log \frac{1}{h}\right )^{-1} \quad \text{ and }  \quad \frac{1}{h} \log \frac{1}{h}\ll_n N .
	\end{align}
	
	The first inequality holds because $x_0$ is on the end face of $\mathcal{R}$ and $z$ is the center of the end cube $q$, which is of side-length $h$. 
Moreover,  the diameter of the end face of $\mathcal{R}$ is $\asymp_n w$.   See figure \ref{fig:fig_mult_tun}.
So $r \lesssim_{n} w $ and for the second inequality in \eqref{3 inequalities} it is sufficeint to have 
$$ w \ll_n \left( \log \frac{1}{h}\right )^{-1}.$$
In view of  $w=c_0 / \log  K$ and $h=K/w$, it is equivalent to
\begin{align*}
	\frac{c_0}{\log K} \cdot \log \frac{K \log K}{c_0} \ll_n 1. 
\end{align*}
This is the place where we choose the dimensional constant $c_0>0$, keeping in mind that $K\gg_n 1$.

The third inequality in \eqref{3 inequalities}  holds because we assume $N \gg_{n} K(\log K)^2$ and
$$ \frac{1}{h} \log \frac{1}{h} = \frac{K \log K}{c_0} \log \frac{K \log K}{c_0} \lesssim_n K (\log K)^2 \ll_n N.$$

\end{proof}

\textbf{Connecting the dots and choosing the parameters.}

We have defined $w=c_0 / \log K$.  The constant $K\gg_n 1$ is not chosen yet and we  assume $N \gg_{n} K(\log K)^2$ .   By the previous Claim \ref{all_tunnels_have_m.i._N} every tunnel $\tun_i \subset \mathcal{R}$ has a multiplicative increment $Z \gtrsim_n N$.  

Lemma \ref{lem: multiplicative increment} ensures that half of $\{\tun_i\}_i$  satisfy $N^*(\tun_i) \leq N/ C_K$.  
We can find a  positive portion of the good tunnels $\{\tun_i\}_i$,  which are well-separated.  That is, there is $S \subset \{1,\ldots, K^{n-1}\}$ such that $|S|\gtrsim_n K^{n-1}$,  $N^*(\tun_i) \leq N/ C_K$ for each $i \in S$, and for distinct  $i,j \in S$:
\begin{align*}
(\tun_i)_{+50\rootn w/K} \cap (\tun_j)_{+50\rootn w/K} = \emptyset.
\end{align*}

Finally for every $i \in S$, we can use Lemma \ref{lem: general lemma} in $\tun_i$ to obtain a collection of disjoint balls $\{B_{i,k}\}_k$ with center $y_{i,k}$ and radius $r_{i,k} \asymp w/K$ satisfying the following properties.
\begin{enumerate}[label=(\roman*)]
\item \label{co1} The doubling index drops down:
$$\max_{k}\mathcal{N} (2B_{i,k})\leq N/C_K.$$
\item\label{co2} From condition \eqref{cond2} of Lemma \ref{lem: general lemma}, we have the following lower bound on the sum of the doubling indices:
\begin{align} 
	\label{eq: 5.1}
	N \lesssim_n \sum_k \mathcal{N}( \tfrac{1}{2}B_{i,k}).
\end{align}
\item \label{co3} From condition \eqref{cond1} of Lemma \ref{lem: general lemma},  we have
$$u(x_{i,j}) = 0,\text{ } 4B_{i,j} \cap 4 B_{i,k}=\emptyset,   \text{ and } 4B_{i,k} \subset (\tun_i)_{+50\rootn w/K} \subset 2B,$$
for all $i\in S$ and all $j\neq k.$
\end{enumerate}

Finally, we will show that the collection $\bigcup_{i\in S}\{B_{i,k}\}$ satisfies conditions $(1)$, $(2)$ and $(3)$ of Lemma \ref{lem: extra assumption lemma 2}. 

Condition $(3)$ of Lemma \ref{lem: extra assumption lemma 2}.  almost follows  from \ref{co3}. We need also to show that for two different tunnels $\mathcal{T}_i$ and $\mathcal{T}_j$ with $i,j \in S$  any balls $B_{i,k}$ and $ B_{j,l}$ are disjoint.  This is why we constructed the tunnels to be well-separated.  Namely, $(\tun_i)_{+50\rootn w/K}, i 
\in S$ are disjoint while the balls $4B_{i,k}$ and $ 4B_{j,l}$ must lie in $50\rootn w/K$ neighbourhood of the corresponding tunnels. 

To get $(1)$ of Lemma \ref{lem: extra assumption lemma 2} we will choose $K$ depending on $A$ and $n$.  Namely,  we choose $K$ so that 
\begin{align*}
C_K = e^{c'\log K/\log \log K} > A
\end{align*}
Then  condition \ref{co1} translates to condition (1) in Lemma \ref{lem: extra assumption lemma 2}.    We also assumed many times that $K\gg_n 1$ which follows from the assumption $A \gg_n 1$, in Lemma \ref{lem: extra assumption lemma 2}. Finally we make our choice of $K$ so that

$$\log A \asymp_n \log K/\log \log K \hspace{5mm} \text{and} \hspace{5mm} 	C_K = e^{c'\log K/\log \log K} > A .$$

Now the constants $w$ and $K$ are chosen, and we can specify the constant $C(A)>1$ in Lemma \ref{lem: extra assumption lemma 2}. The condition $$N\gg K (\log K)^2$$ translates to 
$$N\gg C(A) \hspace{5mm} \text{with} \hspace{5mm} \log C(A) \asymp \log A \log\log A.$$

We now need to check $(2)$ of Lemma \ref{lem: extra assumption lemma 2} and we do this below.  Recall that for every $i\in S$, each $r_{i,k} \asymp w/K$ with $w\asymp (\log K)^{-1}$. Hence from \eqref{eq: 5.1} and the fact that $|S| \gtrsim_n K^{n-1}$, we have
\begin{align}\label{eq: 9.5}
\sum_{i\in S} \sum_k \N ( \tfrac{1}{2}B_{i,k} )~ r_{i,k}^{n-1} \gtrsim_n |S| \lb \frac{w}{K}\rb^{n-1} N \gtrsim_n w^{n-1} N \asymp_n  \frac{N}{(\log K)^{n-1}}.
\end{align} 
Since $\log A \asymp_n \log K/\log \log K$, we have $\log K \lesssim_n  \log A \cdot \log \log A$, for all large enough $K$. Hence we get the following lower bound from \eqref{eq: 9.5}:
\begin{align*}
\sum_{i\in S} \sum_k \N (\tfrac{1}{2}B_{i,k})~ r_{i,k}^{n-1} \gtrsim_n \frac{N}{(\log A \cdot \log \log A)^{n-1}}.
\end{align*}
as required. This completes the proof of Lemma \ref{lem: extra assumption lemma 2}.

\appendix
\section{proof of classical results}
\label{sec: appendix}
In this first part of this section, we prove several known results  about the frequency function which was presented in \S\ref{sec3}. 
Recall that the frequency  of a harmonic function $u$ in $\mathbb{R}^n$ is defined by
$$
\beta(x,r)= \frac{r G(x,r)}{H(x,r)},
$$
where $x\in \mathbb{R}^n$, $r>0$ and 
$$
H(x,r)= \int_{\partial B(x,r)} u^2 dS, \qquad G(x,r)= \int_{B(x,r)} |\nabla u|^2 d\vol.
$$ 
The following well-known identity will be useful in proving that $\beta(r):=\beta(0,r)$ is a non-decreasing function of $r$, see \cite{GL86}.
\begin{lem}
	\label{lem: proof of monotonicity}
	With the above notation, we have 
	\[
	\int_{\partial (rB)} |\nabla u|^2 = \frac{n-2}{r} \int_{rB} |\nabla u|^2d \vol + 2\int_{\partial (rB)}(\nabla u\cdot \hat{n})^2 dS.
	\]
	where  $\hat{n}$ is then unit normal vector to the sphere $\partial (rB)$ and $B=B(0,1)$.
\end{lem}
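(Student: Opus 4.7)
The plan is to derive this as a Rellich/Pohozaev-type identity obtained by testing the equation $\Delta u = 0$ against the radial vector field $X(x) = x$. Pointwise, I will establish the identity
\[
(n-2)|\nabla u|^2 = \mathrm{div}\bigl(|\nabla u|^2 X\bigr) - 2\,\mathrm{div}\bigl((X\cdot\nabla u)\nabla u\bigr),
\]
valid whenever $u$ is harmonic, and then apply the divergence theorem on $rB$.

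To get the pointwise identity, I would compute each divergence separately. For the first, since $\mathrm{div}(X) = n$,
\[
\mathrm{div}\bigl(|\nabla u|^2 X\bigr) = n|\nabla u|^2 + X\cdot\nabla\bigl(|\nabla u|^2\bigr).
\]
For the second, expand
\[
\mathrm{div}\bigl((X\cdot\nabla u)\nabla u\bigr) = \nabla(X\cdot\nabla u)\cdot\nabla u + (X\cdot\nabla u)\Delta u,
\]
kill the last term using $\Delta u = 0$, and compute $\nabla(X\cdot\nabla u) = \nabla u + \sum_i x_i\nabla u_i$, which gives
\[
\nabla(X\cdot\nabla u)\cdot\nabla u = |\nabla u|^2 + \tfrac{1}{2} X\cdot\nabla\bigl(|\nabla u|^2\bigr).
\]
Subtracting twice this from the first divergence makes the gradient-of-squared-norm terms cancel, leaving exactly $(n-2)|\nabla u|^2$.

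Now I would integrate over $rB$ and apply the divergence theorem to each term. On $\partial(rB)$ the outward unit normal is $\hat n = x/|x|$, so $X\cdot\hat n = r$ and $X\cdot\nabla u = r\,(\hat n\cdot\nabla u)$. Substituting,
\[
(n-2)\int_{rB}|\nabla u|^2 d\vol = r\int_{\partial(rB)}|\nabla u|^2 \, dS - 2r\int_{\partial(rB)}(\nabla u\cdot\hat n)^2 \, dS.
\]
Dividing by $r$ yields exactly the claimed formula.

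The proof is essentially bookkeeping; no step is a real obstacle. The only place requiring care is verifying the pointwise identity, in particular correctly expanding $\nabla(X\cdot\nabla u)\cdot\nabla u$ and noticing that $X\cdot\nabla(|\nabla u|^2)$ cancels between the two divergences, so that harmonicity enters in exactly one place. Everything else is a direct application of the divergence theorem together with the geometry of $X$ on the boundary sphere.
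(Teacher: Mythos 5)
Your proof is correct and is essentially the same computation as the paper's: both are the standard Rellich/Pohozaev identity obtained by pairing $\Delta u = 0$ with the radial vector field $X=x$ (the paper's $\hat r$), and both rely on the same two expansions — $\mathrm{div}(|\nabla u|^2 X) = n|\nabla u|^2 + X\cdot\nabla(|\nabla u|^2)$ and $\nabla(X\cdot\nabla u)\cdot\nabla u = |\nabla u|^2 + \tfrac12 X\cdot\nabla(|\nabla u|^2)$. The only difference is organizational: you collect everything into a single pointwise divergence identity and then apply the divergence theorem once, whereas the paper starts from the boundary integral, converts to a volume integral, and uses a second integration by parts (Green's identity with $\Delta u=0$) to return to the boundary. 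Your packaging is slightly cleaner, but there is no new idea; it is the same argument.
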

\begin{proof}[Proof of Lemma \ref{lem: proof of monotonicity}]
	Writing $1= \hat{r} \cdot \hat{n} / r$, where  $\hat{n}= \hat{r}r^{-1}$ is the unit normal to the sphere $\partial(rB)$, we have
	\[
	\int_{\partial (rB)} |\nabla u|^2= r^{-1}\int_{\partial (rB)} |\nabla u|^2 \hat{r}\cdot \hat{n} dS= r^{-1}\int_{rB} \Div \left( |\nabla u|^2 \cdot \hat{r}\right) d\vol,
	\]
	where the second equality follows by the divergence theorem. Expanding the divergence operator and using the fact that $\Div (\hat{r})=n$, we obtain 
	\begin{align}
		\label{eq: appendix 1} \int_{rB} \Div \left( |\nabla u|^2 \cdot \hat{r}\right) d\vol&= n\int_{rB} |\nabla u|^2 d\vol + 2\int_{rB} (\textbf{H} (u)  \hat{r})\cdot \nabla u  \hspace{2mm}d\vol \nonumber \\
		&= n G(r) +  2\int_{rB} (\textbf{H} (u)  \hat{r})\cdot \nabla u  \hspace{2mm}d\vol,
	\end{align}
	where $\textbf{H}(u)$ is the Hessian of $u$. Using the identity 
	\[
	\nabla(\nabla u \cdot \hat{r}) = \textbf{H}(u) \hat{r} + \nabla u,
	\]
	the second term in \eqref{eq: appendix 1} can be re-written as 
	\begin{align}
		\label{eq: appendix 2}	\int_{rB} (\textbf{H} (u)  \hat{r})\cdot \nabla u  \hspace{2mm}d\vol &= \int_{rB} \nabla (\nabla u\cdot \hat{r}) \cdot   \nabla u  \hspace{2mm}d\vol - \int_{rB} |\nabla u|^2 d\vol  \nonumber \\
		&= \int_{\partial (rB)} (\nabla u \cdot \hat{r}) (\nabla u \cdot \hat{n}) dS - \int_{rB} |\nabla u|^2 d\vol.
	\end{align}
	Therefore, inserting \eqref{eq: appendix 2} into \eqref{eq: appendix 1} and using the fact that $\hat{n}= \hat{r} r^{-1}$, we conclude 
	\[
	\int_{\partial (rB)} |\nabla u|^2=\frac{n-2}{r} G(r) + 2\int_{\partial (rB)}(\nabla u\cdot \hat{n})^2 dS,
	\]
	as required. 
\end{proof}

\begin{repthm}{thm: monotonicity frequency} [Monotonicity of the frequency]
	For any harmonic function in $\mathbb{R}^n$, its frequency function $\beta(r):=\beta(0,r)$ is a non-decreasing function of $r$. 
	
\end{repthm}

\begin{proof}[Proof of Theorem \ref{thm: monotonicity frequency}]  
	Taking the logarithmic derivative in the definition of the frequency function \eqref{def: frequency function} we have 
	\begin{align}
		\label{eq: derivative} \beta'(r)= \beta(r)\left(\frac{1}{r} + \frac{G'(r)}{G(r)} - \frac{H'(r)}{H(r)}\right).
	\end{align}
	We first  consider the term $H'/H$ on the r.h.s. of \eqref{eq: derivative}. Observe that 
	$$H(r)= \int_{\partial (rB)} |u|^2 dS= r^{n-1}\int_{\partial B} |u(r\cdot)|^2 dS.$$
	Thus, we have
	\begin{align}
		H'(r)&= (n-1)r^{n-2} \int_{\partial B} |u(r\cdot)|^2 dS + r^{n-1} \int_{\partial B}\partial_r |u(r\cdot)|^2 dS, \nonumber \\
		&= \frac{n-1}{r}H(r) + 2 \int_{\partial (rB)} u (  \nabla u \cdot \hat{n} )dS, \nonumber 
	\end{align}
	which gives
	\begin{align}\label{eq: expression H'/H}
		\frac{	H'(r)}{H(r)}=  \frac{n-1}{r} + 2 \frac{\int_{\partial (rB)} u (  \nabla u \cdot \hat{n} )dS}{\int_{\partial (rB)} u^2dS}.
	\end{align}
	
	\vspace{2mm}
	
	We now compute the term $G'/G$ on the r.h.s. of \eqref{eq: derivative}.  Differentiating $G(r)$, we find 
	$$G'(r)=\int_{\partial (rB)} |\nabla u|^2 dS.$$
	Therefore, Lemma \ref{lem: proof of monotonicity} gives
	\[
	G'(r)= \frac{n-2}{r}G(r) + \int_{\partial (rB)}( \nabla u \cdot \hat{n} )^2dS.
	\] 
	Since, for a non-constant harmonic function $u$,
	$$G(r)= \int_{B(x,r)} |\nabla u|^2 d\vol = \int_{\partial (rB)} u( \nabla u \cdot \hat{n} )dS>0,$$
	we deduce
	\begin{align}
		\label{eq: expression for D'/D}
		\frac{G'(r)}{G(r)}= \frac{n-2}{r} + 2 \frac{\int_{\partial (rB)} ( \nabla u \cdot \hat{n} )^2dS}{\int_{\partial (rB)} u( \nabla u \cdot \hat{n} )dS}.
	\end{align} 	
	Finally,	inserting \eqref{eq: expression for D'/D} and \eqref{eq: expression H'/H} into \eqref{eq: derivative}, we obtain 
	
	$$ \beta'(r)= 2\beta(r)\left(\frac{\int_{\partial (rB)} (  u \cdot \hat{n} )^2dS}{\int_{\partial (rB)} u( \nabla u \cdot \hat{n} )dS}- \frac{\int_{\partial (rB)} u ( \nabla u \cdot \hat{n} )dS}{\int_{\partial (rB)} u^2dS}\right),$$
	which is non-negative by the Cauchy-Schwartz inequality. 
\end{proof}

\begin{repcor}{cor: formulas for the growth} For any harmonic function in $\mathbb{R}^n$, let $\beta(r)= \beta(0,r)$ be its frequency function. For any $0<r_1<r_2$,  we have 
	$$\frac{H(r_2)}{r_2^{n-1}}= \frac{H(r_1)}{r_1^{n-1}}\exp \left( 2 \int_{r_1}^{r_2}\frac{\beta(r)}{r}dr\right).$$
	Moreover, we also have 
	$$\left(\frac{r_2}{r_1}\right)^{2\beta(r_1) + n-1}\leq \frac{H(r_2)}{H(r_1)}\leq \left(\frac{r_2}{r_1}\right)^{2\beta(r_2) + n-1}.$$
	
\end{repcor}

\begin{proof}[Proof of Corollary \ref{cor: formulas for the growth}]
	Since we may write
	$$G(r)= \int_{\partial (rB)} u (\nabla u\cdot \hat{n})dS,$$ formula \eqref{eq: expression H'/H} implies the following expression for the frequency function 
	$$ 2\frac{\beta(r)}{r} = \frac{2G(r)}{H(r)}= \frac{H'(r)}{H(r)}- \frac{n-1}{r}$$
	Integrating both sides, we obtain the first claim of Corollary \ref{cor: formulas for the growth}. Since, by Theorem \ref{thm: monotonicity frequency}, we have $\beta(r_1)\leq \beta(r)\leq \beta(r_2)$ for all $r\leq r_2$, the second claim in Theorem \ref{thm: monotonicity frequency} follows from the first. 
\end{proof}

Thanks to the Poisson formula for harmonic functions in $\mathbb{R}^n$, it is possible to bound the $L^{\infty}$ norm of a harmonic function in a ball by its $L^2$ norm on the boundary of a slightly larger ball.
\begin{lem}[Elliptic estimates]
	 Let $\delta>0$ and let $B\subset \mathbb{R}^n$ be a unit ball. There exists some constant $C=C(n,\delta)>1$ such that 
	$$\sup_{B}|u|^2 \leq C\int_{\partial((1+\delta)B)} |u|^2 dS,$$
	for any harmonic function $u$ defined in a neighbourhood of $(1+\delta)B$.  
\end{lem}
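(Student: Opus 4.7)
The plan is to invoke the Poisson integral representation of harmonic functions and then apply Cauchy--Schwarz. By translation invariance of harmonicity, I may assume without loss of generality that $B = B(0,1)$, and write $R = 1 + \delta$. For any $x \in B$, the Poisson formula gives
\[
u(x) \;=\; \frac{R^2 - |x|^2}{R \, \omega_{n-1}} \int_{\partial(RB)} \frac{u(y)}{|x-y|^n} \, dS(y) \;=\; \int_{\partial(RB)} P(x,y) \, u(y)\, dS(y),
\]
where $\omega_{n-1}$ is the surface area of the unit sphere in $\mathbb{R}^n$ and $P(x,y)$ denotes the Poisson kernel.

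The key quantitative input is a uniform bound on $P(x,y)$ for $x \in B$ and $y \in \partial(RB)$. Since $|x-y| \geq R - |x| \geq \delta$ and $R^2 - |x|^2 \leq R^2$, I obtain $P(x,y) \leq C_0$, where $C_0 = C_0(n,\delta) := R \, \delta^{-n}/\omega_{n-1}$. This is the only place where the parameter $\delta$ enters the argument, and it is precisely the reason the enlargement of the ball is needed: taking $\delta = 0$ would make $P$ blow up.

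Next I apply Cauchy--Schwarz to the Poisson formula in the weighted form
\[
|u(x)|^2 \;\leq\; \left( \int_{\partial(RB)} P(x,y) \, dS(y) \right) \left( \int_{\partial(RB)} P(x,y) \, |u(y)|^2 \, dS(y) \right).
\]
The first factor equals $1$ (as is seen by applying the Poisson formula to the constant function $u \equiv 1$), and the second factor is bounded above by $C_0 \int_{\partial(RB)} |u|^2 \, dS$ using the pointwise estimate on $P$. Taking the supremum over $x \in B$ and setting $C = C_0(n,\delta)$ concludes the proof.

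There is no real obstacle here; the only care needed is in the Poisson kernel bound, which requires keeping $x$ at distance at least $\delta$ from $\partial(RB)$. Alternatively, the same statement admits a ``softer'' proof via the mean value property: since $|u|^2$ is subharmonic, one has $|u(x)|^2 \leq \fint_{\partial B(x,r)} |u|^2 \, dS$ for any admissible $r$, and integrating over $r \in (\delta/2, \delta)$ combined with the sub-mean value inequality on $\partial(RB)$ yields the same conclusion. I would prefer the Poisson-kernel argument since it is cleaner and immediately reveals the dimensional dependence of the constant $C(n,\delta)$.
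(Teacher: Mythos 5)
Your proof is correct and follows precisely the route the paper indicates, namely the Poisson integral formula for the ball of radius $1+\delta$; the paper itself gives no details beyond the remark that the estimate ``follows from the Poisson formula,'' and your Cauchy--Schwarz argument (using $\int P(x,\cdot)\,dS = 1$ together with the uniform bound $P(x,y)\lesssim_{n,\delta}1$ for $x\in B$, $y\in\partial((1+\delta)B)$) is a clean and standard way to fill in that gap. The Poisson kernel normalization $P(x,y)=\frac{R^2-|x|^2}{R\,\omega_{n-1}|x-y|^n}$ for $\partial B(0,R)$ is correct, and the constant $C(n,\delta)\asymp R\delta^{-n}/\omega_{n-1}$ you extract is sharp up to dimensional factors.
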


\begin{replem}{lem: comparability di and frequency}  Let $\delta>0$ be a sufficiently small parameter and let $\mathcal{N}(r):= \mathcal{N}(B(0,r))$, for $r>0$. There exists a constant $C=C(\delta,n)\geq 1$ such that 
	$$\beta (r(1+\delta))(1-100\delta) -C\leq \mathcal{N}(r) \leq \beta (2r(1+\delta))(1+100\delta) +C$$
	uniformly	for all $0<r\leq 4^{-1}$ 	and all harmonic functions $u$ defined in $B(0,2)$. 
\end{replem}

\begin{proof}[Proof of Lemma \ref{lem: comparability di and frequency}]
	We first establish the lower bound in Lemma \ref{lem: comparability di and frequency}. Let us write $B=B(0,1)\subset \mathbb{R}^n$ and observe that, by  {elliptic estimates}, we have 
	$$\sup_{rB}|u|^2 \lesssim_{\delta} \frac{1}{r^{n-1}} \int_{\partial (r(1+\delta)B)} |u|^2 dS \lesssim_{\delta}  \frac{H((1+\delta)r)}{r^{n-1}}.$$
	{We also have} 
	$$\sup_{2rB}|u|^2 \gtrsim \frac{1}{r^{n-1}} \int_{\partial (2rB)} |u|^2 dS\gtrsim \frac{H(2r)}{r^{n-1}}.$$
	Therefore, 
	$$2\mathcal{N}(rB) = \log \frac{\sup_{2rB}|u|^2}{\sup_{rB}|u|^2}\geq \log\frac{H(2r)}{H((1+\delta)r)} + C_0,$$
	for some $C_0=C_0(n,\delta)\geq 1$. Using the second part of Corollary \ref{cor: formulas for the growth} and taking $\delta>0$ sufficiently small (so that $\log_2(2/(1+\delta))\geq 1-100\delta$), we obtain 
	$$\log\frac{H(2r)}{H((1+\delta)r)}\geq 2\beta (r(1+\delta))(1-100\delta) + C_1,$$
	for some $C_1=C_1(n,\delta)\geq 1$, and the lower bound in Lemma \ref{lem: comparability di and frequency} follows. 
	
	\vspace{2mm}
	
	We are now going to establish the upper bound in Lemma \ref{lem: comparability di and frequency}. By elliptic estimates, we have 
	$$\sup_{2rB}|u|^2 \lesssim_{\delta} \frac{1}{r^{n-1}} \int_{\partial (2r(1+\delta)B)} |u|^2 dS \lesssim_{\delta}  \frac{H(2(1+\delta)r)}{r^{n-1}},$$
	and we also have
	$$\sup_{rB}|u|^2 \geq \frac{1}{r^{n-1}} \int_{\partial (rB)} |u|^2 dS= \frac{H(r)}{r^{n-1}}.$$
	Therefore, 
	$$2\mathcal{N}(rB) = \log \frac{\sup_{2rB}|u|^2}{\sup_{rB}|u|^2}\leq \log\frac{H(2r(1+\delta))}{H(r)} + C_2,$$
	for some $C_2=C_2(n,\delta)>0$. By Corollary \ref{cor: formulas for the growth} and taking $\delta>0$ sufficiently small (so that $\log_2(2/(1+\delta))\leq 1+100\delta$), we have 
	$$\log\frac{H(2r)}{H((1+\delta)r)}\leq 2\beta (2r(1+\delta))(1+100\delta) + C_3,$$
	for some $C_3=C_3(n,\delta)>0$, and the upper bound in Lemma \ref{lem: comparability di and frequency} follows.
\end{proof}

\begin{replem}{lem: monotonicity doubling index}[Almost monotonicity of the doubling index]
	Let $\delta>0$ be a sufficiently small parameter and let $u$ be a harmonic function in $B(0,2)$. {Let $\mathcal{N}(r):= \mathcal{N}(B(0,r))$, for $r>0$.} There exists a constant $C=C(\delta,n)>0$ such that 
$$\left(\frac{r_2}{r_1}\right)^{\mathcal{N}(r_1)(1-\delta)-C}\leq \frac{\sup \limits_{B(0,r_2)}|u|}{\sup \limits_{B(0,r_1)}|u|}\leq \left(\frac{r_2}{r_1}\right)^{\mathcal{N}(r_2)(1+\delta)+C},$$
for all $r_1\leq r_2/2\leq 4^{-1}$. In particular, we have 
$$\mathcal{N}(r_1)(1-\delta) - C\leq \mathcal{N}(r_2)(1+\delta) + C.$$
\end{replem}

\begin{proof}[Proof of Lemma \ref{lem: monotonicity doubling index}]
	Let $B=B(0,1)\subset \mathbb{R}^n$.	We will first prove the upper bound in Lemma \ref{lem: monotonicity doubling index}. Let $\delta_1>0$ be some, sufficiently small, parameter to be chosen in terms of $\delta>0$. By elliptic estimates, we have 
	$$\sup_{r_2B}|u|^2\lesssim_{\delta_1} \frac{H(r_2(1+\delta_1))}{r_2^{n-1}}.$$
	By Corollary \ref{cor: formulas for the growth}, we have 
	$$\frac{H(r_2(1+\delta_1))}{r_2^{n-1}} \lesssim_{\delta_1} \frac{H(r_1)}{r_1^{n-1}} \left(\frac{r_2(1+\delta_1)}{r_1}\right)^{2\beta(r_2(1+\delta_1)) +n-1}.$$
	By Lemma \ref{lem: comparability di and frequency}, we have 
	$$\beta(r_2(1+\delta_1)) \leq \frac{\mathcal{N}(r_2)}{(1+100\delta_1)} +C_1\leq \mathcal{N}(r_2)(1+200\delta_1) + C_1 ,$$
	for some $C_1=C(\delta_1,n)\geq 1$. Therefore, have shown that 
	$$\sup_{r_2B}|u|^2\lesssim_{\delta_1} \frac{H(r_1)}{r_1^{n-1}} \left(\frac{r_2(1+\delta_1)}{r_1}\right)^{2\mathcal{N}(r_2)(1+200\delta_1) + C_1}.$$
	Choosing $\delta_1=\delta/1000$ (so that $(1+200\delta_1)\log(1+\delta_1)\leq \delta/10$, say), we deduce
	$$\sup_{r_2B}|u|^2\leq \frac{H(r_1)}{r_1^{n-1}}\left(\frac{r_2}{r_1}\right)^{2\mathcal{N}(r_2)(1+\delta) + C_2},$$
where we have absorbed the constant in the $\lesssim$ notation into $C_2$. Hence, the upper bound in Lemma \ref{lem: monotonicity doubling index} follows upon noticing that 
	$$\frac{H(r_1)}{r_1^{n-1}}\leq \sup_{r_1B}|u|^2,$$
	and taking the square-root. 
	
	\vspace{2mm}
	
	We are now going to prove the lower bound in Lemma \ref{lem: monotonicity doubling index}. First, we observe that we may assume $r_2\geq 2(1+\delta)r_1$. Indeed, if $2r_1\leq r_2\leq 2(1+\delta)r_1$, then 
	$$ \left(\frac{r_2}{r_1}\right)^{\mathcal{N}(r_1)(1-\delta)}\leq (2(1+\delta))^{\mathcal{N}(r_1)(1-\delta)}\leq 2^{\mathcal{N}(r_1)},$$
	for all $\delta>0$ sufficiently small so that $(2(1+\delta))^{1-\delta}\leq 2$. Therefore, in this case, we have 
	$$\sup_{r_2B}|u|\geq \sup_{2r_1B}|u|= 2^{\mathcal{N}(r_1)}\sup_{r_1B}|u|\geq \left(\frac{r_2}{r_1}\right)^{\mathcal{N}(r_1)(1-\delta)} \sup_{r_1B}|u|. $$

	From now we will assume that $r_2\geq 2(1+\delta)r_1$. Let $\delta_1 \in (0,\delta/2)$ be a parameter to be chosen later in terms of $\delta>0$. By Corollary \ref{cor: formulas for the growth}, we have 
	$$\sup_{r_2B}|u|^2\geq \frac{H(r_2)}{r_2^{n-1}} \geq \frac{H(2r_1(1+\delta_1))}{(2r_1(1+\delta_1))^{n-1}} \left(\frac{r_2}{2r_1(1+\delta_1)}\right)^{2\beta(2r_1(1+\delta_1))},$$
	and, by Lemma  \ref{lem: comparability di and frequency}, we also have 
	$$ \beta(2r_1(1+\delta_1))\geq \frac{\mathcal{N}(r_1)}{1+100\delta_1} - C_3,$$
	for some $C_3=C_3(\delta_1,n)\geq 1$. Moreover, by elliptic estimates, we have 
	$$\frac{H(2r_1(1+\delta_1))}{(2r_1(1+\delta_1))^{n-1}} \gtrsim_{\delta_1} \sup_{2r_1B}|u|^2\gtrsim_{\delta_1} 2^{2\mathcal{N}(r_1)} \sup_{r_1B}|u|^2.$$
	All in all, we have shown that 
	$$ \sup_{r_2B}|u|^2 \gtrsim_{\delta_1} \sup_{r_1B} |u|^2 \left(\frac{r_2}{2r_1(1+\delta_1)}\right)^{2\mathcal{N}(r_1)(1+100\delta_1)^{-1} -C_3}2^{2\mathcal{N}(r_1)} .$$
	Taking the square root and choosing $\delta_1=\delta/10000>0$ (so that $(1+100\delta_1)^{-1}\geq 1-\delta/10$), we conclude that 
	$$ \sup_{r_2B}|u| \geq  \sup_{r_1B} |u| \left(\frac{r_2}{r_1}\right)^{\mathcal{N}(r_1)(1-\delta) -C_4},$$
	where we have absorbed the constant implied in the $\lesssim$ notation into $C_4$,	as required. 
\end{proof}

\begin{claim}[Gradient estimates for harmonic functions] \label{claim:grad_estimates} Let $B(p,r)\subset \real^n$ be any ball, and let $u$ be a harmonic function defined in a neighbourhood of its closure. Then
	\begin{align}
		\label{der_esti}
		| \nabla u (p)| \lesssim_n \frac{1}{r} \sup_{B(p,r)}|u|.
	\end{align}
\end{claim}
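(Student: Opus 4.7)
The plan is to derive \eqref{der_esti} from the mean value property for harmonic functions, since each partial derivative $\partial_i u$ is itself harmonic whenever $u$ is. The main observation is that one can represent $\partial_i u(p)$ as a boundary integral of $u$ itself (not of its derivatives), which produces exactly the factor of $1/r$ one expects from scaling.

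First, I would invoke the mean value property applied to the harmonic function $\partial_i u$ on the ball $B(p,r)$:
\begin{align*}
\partial_i u (p) = \frac{1}{\vol(B(p,r))} \int_{B(p,r)} \partial_i u(x)\, dx.
\end{align*}
Next, I would convert the volume integral into a surface integral via the divergence theorem, writing $\partial_i u = \Div(u\, e_i)$ where $e_i$ is the $i$-th standard basis vector:
\begin{align*}
\int_{B(p,r)} \partial_i u(x)\, dx = \int_{\partial B(p,r)} u(x)\, n_i(x)\, dS(x),
\end{align*}
where $n_i$ denotes the $i$-th component of the outward unit normal. Taking absolute values and bounding $|n_i| \leq 1$ gives
\begin{align*}
|\partial_i u(p)| \leq \frac{\mathcal{H}^{n-1}(\partial B(p,r))}{\vol(B(p,r))} \sup_{B(p,r)}|u| = \frac{n}{r} \sup_{B(p,r)}|u|.
\end{align*}

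Summing (or taking the maximum) over $i = 1, \ldots, n$ yields $|\nabla u(p)| \lesssim_n \frac{1}{r} \sup_{B(p,r)}|u|$. There is no substantial obstacle here: the only point requiring a moment of care is the justification of the divergence theorem, which is immediate from smoothness of $u$ in a neighbourhood of the closed ball (assured by the hypothesis that $u$ is harmonic in a neighbourhood of $\overline{B(p,r)}$). Alternatively, one could proceed by writing $u$ inside $B(p,r)$ via the Poisson integral formula and differentiating the Poisson kernel at $p$; this yields the same $1/r$ scaling with a dimensional constant, but the mean value/divergence approach is slightly cleaner.
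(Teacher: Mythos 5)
Your proof is correct and coincides with the paper's own argument: both use that $\partial_i u$ is harmonic, apply the mean value property, pass to a boundary integral of $u$ via the divergence theorem, and bound by the surface-to-volume ratio $\asymp n/r$.
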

\begin{proof}[Proof of Claim \ref{claim:grad_estimates}]
	Since the partial derivatives of $u$ are also harmonic functions, the mean value theorem and the divergence theorem imply that 
	$$\nabla u (p) = \frac{1}{\vol B(p, r)}\int_{B(p,r)} \nabla u \hspace{1mm} d\vol = \frac{1}{\vol B(p, r)} \int_{\partial B(p,r)} u(x)\widehat{n}_x dS(x),\nonumber $$
	where $\widehat{n}_x$ is the exterior normal vector to $\partial B(p,r)$ at $x \in \partial B(p,r)$; and $dS$ denotes the surface measure on $\partial B(p,r)$. Hence
	\begin{align*}
		| \nabla u (p)| \lesssim_n \frac{1}{r} \sup_{B(p,r)}|u|,
	\end{align*}
	as claimed.
\end{proof}	

\begin{claim}
	\label{claim: lower bound doubling index}
	Let $B\subset \mathbb{R}^n$ be the unit ball, and let $u$ be a non-zero harmonic function on $2B$ such that $u(0)=0$. Then, there exists some constant $c=c(n)>0$, independent of $u$, such that 
	$$\mathcal{N}(B)\geq c.$$ 
\end{claim}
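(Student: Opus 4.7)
The plan is to argue by contradiction using a normal families compactness argument together with the strong maximum principle. Suppose the claim fails; then there is a sequence of non-constant harmonic functions $u_k$ on $2B$ with $u_k(0) = 0$ and $\mathcal{N}_{u_k}(B) \to 0$. After normalizing so that $\sup_B |u_k| = 1$, this means $\sup_{2B} |u_k| = 2^{\mathcal{N}_{u_k}(B)} \to 1$.

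First I would use that the family $\{u_k\}$ is uniformly bounded on $2B$ (eventually by, say, $2$), hence by the standard interior gradient estimates for harmonic functions (Claim \ref{claim:grad_estimates}) it is equicontinuous on every compact subset of $2B$. Arzelà--Ascoli and a diagonal subsequence extraction then yield a harmonic limit $u_\infty$ on $2B$, with $u_k \to u_\infty$ uniformly on every compact subset of $2B$.

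Next I would pass to the limit in the three relevant quantities. Since $\bar B \subset 2B$ is compact, uniform convergence on $\bar B$ gives $\sup_B |u_\infty| = \lim_k \sup_B |u_k| = 1$. For each fixed $\varepsilon > 0$, uniform convergence on $\overline{B(0, 2-\varepsilon)}$ gives $\sup_{B(0,2-\varepsilon)} |u_\infty| = \lim_k \sup_{B(0, 2-\varepsilon)} |u_k| \leq \lim_k \sup_{2B}|u_k| = 1$, so letting $\varepsilon \to 0$ yields $\sup_{2B}|u_\infty| \leq 1$. Finally $u_\infty(0) = \lim_k u_k(0) = 0$.

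The conclusion comes from the strong maximum principle. Since $\sup_B |u_\infty| = 1$ and $\bar B \subset 2B$, the supremum of $|u_\infty|$ over $2B$ is attained at some interior point $x^{\ast} \in \bar B$, with $u_\infty(x^{\ast}) = \pm 1$. Replacing $u_\infty$ by $-u_\infty$ if necessary, the non-negative harmonic function $1 - u_\infty$ on the connected open set $2B$ attains its minimum value $0$ at the interior point $x^{\ast}$, so by the strong maximum principle $u_\infty \equiv 1$ on $2B$. This contradicts $u_\infty(0) = 0$, completing the proof.

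The only step requiring any care is the limit procedure: one must check that $\sup_B|u_\infty| = 1$ (not merely $\leq 1$), which is why the uniform convergence on the \emph{closed} ball $\bar B$ is used, and this is precisely the reason we chose to consider $\mathcal{N}$ on $B \subset \overline{B} \subset 2B$. Everything else is a direct consequence of standard harmonic function theory, so I do not anticipate a serious obstacle.
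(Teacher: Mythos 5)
Your proof is correct, but it takes a genuinely different route from the paper. The paper argues directly and quantitatively: assuming WLOG that $\sup_B|u|=\sup_B u=:m_1^+$ is attained at some $x\in\partial B$, it applies Harnack's inequality on $\overline{B}\subset 2B$ to the non-negative harmonic function $h:=\sup_{2B}u-u$, compares $h(0)=\sup_{2B}u$ with $h(x)=\sup_{2B}u-m_1^+$, and rearranges to obtain $\sup_B|u|\leq(1-1/C)\sup_{2B}|u|$, giving the explicit bound $c=\log_2(1-1/C)^{-1}$ with $C$ the Harnack constant. Your argument instead proceeds by contradiction via a normal-families/compactness argument: normalize a hypothetical minimizing sequence, extract a locally uniform limit $u_\infty$ via gradient estimates and Arzel\`a--Ascoli, pass the conditions $\sup_B|u_\infty|=1$, $\sup_{2B}|u_\infty|\leq 1$, $u_\infty(0)=0$ to the limit, and invoke the strong maximum principle to conclude $u_\infty\equiv\pm1$, contradicting $u_\infty(0)=0$. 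Both proofs are sound; the compactness argument is softer and shorter to state but does not produce an explicit constant, whereas the paper's Harnack-based argument is constructive and is really the same mechanism you invoke at the very end (the strong maximum principle is itself a consequence of Harnack), just applied directly without the intermediate limiting step.
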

\begin{proof}
	Let $m_1^+:=\sup_B u$, $m_2^+:= \sup_{2B} u$ and   $m_1^-:=-\inf_B u$, $m_2^-:= -\inf_{2B} u$ and observe that, since $u(0)=0$, the Mean Value Theorem implies 
	$$ m_1^+,m_2^+,m_1^-,m_2^->0.$$
	Now, assume that $m_1^+=\sup_{B}|u|$ and consider the harmonic function $h:= m_2^+-u$. Then $h\geq 0$ in $2B$ and Harnack's inequality implies
	$$\sup_B  h\leq C \inf_B h.$$
	Let $x\in B$ be the point such that $u(x)= m_1^+=\sup_{B}|u|$. We obtain 
	$$\frac{1}{C}h(0)\leq h(x) \leq C h(0),$$
	that is 
	$$C^{-1} m_2^+\leq m_2^{+}- m_1^+\leq C m_2^+,$$
	which, upon rearranging, gives 
	$$m_1^+\leq m_2^+ \left(1- \frac{1}{C}\right).$$
	All in all, we have shown that 
	$$\left(1- \frac{1}{C}\right)^{-1}\leq \frac{m_2^+}{m_1^+}\leq \frac{\sup_{2B}|u|}{\sup_B|u|},$$
	as required. If  $-m_1^-=\sup_{B}|u|$, we apply the above argument to the harmonic function $-u$. This concludes the proof of Claim \ref{claim: lower bound doubling index}. 
\end{proof}

\begin{claim}
	\label{lem: monotonicity for cubes}
	Let $Q\subset \mathbb{R}^n$ be a unit cube. There exists some  $\Gamma=\Gamma(n)\geq 1$ such that for any  harmonic function $u$ in $\Gamma Q$, we have 
	$$\mathcal{N}^{*}\left(Q\right)\lesssim_{n} \log \frac{\int_{\Gamma Q}|u|^2 d\vol}{\int_{Q}|u|^2 d \vol}.$$
\end{claim}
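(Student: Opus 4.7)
The strategy is to reduce $\mathcal{N}^{*}(Q)$ to a doubling index centered at $x_0$, the center of $Q$, then express this doubling index as a ratio of $L^{\infty}$ norms by definition, and convert this into the desired $L^{2}$ ratio using the mean value inequality applied to the subharmonic function $u^{2}$. A small additive constant will appear along the way and will be absorbed using a quantitative non-degeneracy lower bound on the $L^{2}$ ratio. Throughout, I take $\Gamma=\Gamma(n)$ sufficiently large so that $B(x_0, 12\sqrt{n})\subset \Gamma Q$.

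For the first reduction, let $B_{0}:=B(x_{0},\tfrac{5\sqrt{n}}{2})$. For every $y\in Q$ and every $0<\rho\leq \diam(Q)=\sqrt{n}$, one has $B(y,2\rho)\subset B_{0}$; Lemma~\ref{lem: almost monotonicity} applied with $b=B(y,\rho)$ and $B=B_{0}$ therefore gives $\mathcal{N}(y,\rho)\lesssim_{n}\mathcal{N}(x_{0},\tfrac{5\sqrt{n}}{2})$, whence $\mathcal{N}^{*}(Q)\lesssim_{n}\mathcal{N}(x_{0},\tfrac{5\sqrt{n}}{2})$. By the definition of the doubling index,
$$\mathcal{N}\big(x_{0},\tfrac{5\sqrt{n}}{2}\big)=\frac{1}{2\log 2}\log\frac{\sup_{B(x_{0},5\sqrt{n})}u^{2}}{\sup_{B(x_{0},5\sqrt{n}/2)}u^{2}}.$$
Since $u^{2}$ is subharmonic, the mean value inequality gives, for each $y\in B(x_{0},5\sqrt{n})$,
$$u(y)^{2}\leq \frac{1}{|B(y,\sqrt{n})|}\int_{B(y,\sqrt{n})}u^{2}\,d\vol \leq C_{n}\int_{\Gamma Q}u^{2}\,d\vol.$$
Conversely, $Q\subset B(x_{0},\sqrt{n})\subset B(x_{0},\tfrac{5\sqrt{n}}{2})$ and $|Q|=1$, so $\sup_{B(x_{0},5\sqrt{n}/2)}u^{2}\geq \sup_{Q}u^{2}\geq \int_{Q}u^{2}\,d\vol$. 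Combining the bounds yields
$$\mathcal{N}^{*}(Q)\leq C_{n}'+C_{n}''\log\frac{\int_{\Gamma Q}u^{2}\,d\vol}{\int_{Q}u^{2}\,d\vol}.$$

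It remains to absorb the additive constant $C_{n}'$, and for this I use the following non-degeneracy statement. Write $I(r):=\int_{B(x_{0},r)}u^{2}\,d\vol$ and $H(r):=\int_{\partial B(x_{0},r)}u^{2}\,dS$, so $I'(r)=H(r)$ and $H$ is non-decreasing (as follows from Theorem~\ref{thm: monotonicity frequency} and Corollary~\ref{cor: formulas for the growth}). The monotonicity of $H$ gives both $I(r_{1})\leq r_{1}H(r_{1})$ and $I(r_{2})-I(r_{1})\geq (r_{2}-r_{1})H(r_{1})$ for $r_{1}<r_{2}$, hence $I(r_{2})/I(r_{1})\geq r_{2}/r_{1}$. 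Applying this with $r_{1}=\sqrt{n}$ and $r_{2}=\Gamma/2$, together with $Q\subset B(x_{0},\sqrt{n})$ and $B(x_{0},\Gamma/2)\subset \Gamma Q$,
$$\frac{\int_{\Gamma Q}u^{2}}{\int_{Q}u^{2}}\;\geq\;\frac{I(\Gamma/2)}{I(\sqrt{n})}\;\geq\;\frac{\Gamma}{2\sqrt{n}}.$$
Choosing $\Gamma$ large enough, the logarithm of this ratio exceeds a positive dimensional constant; the additive constant $C_{n}'$ is therefore dominated by a dimensional multiple of that logarithm, and the desired bound $\mathcal{N}^{*}(Q)\lesssim_{n}\log\bigl(\int_{\Gamma Q}u^{2}/\int_{Q}u^{2}\bigr)$ follows. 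The only step requiring any thought is this last non-degeneracy bound; everything else is a bookkeeping exercise combining Lemma~\ref{lem: almost monotonicity}, subharmonicity of $u^{2}$, and the monotonicity of $H(r)$.
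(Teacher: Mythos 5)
Your proof is correct, and it follows the same overall reduction as the paper's: use Lemma~\ref{lem: almost monotonicity} to replace $\mathcal{N}^{*}(Q)$ by a centered doubling index, then convert the $L^{\infty}$ ratio defining that doubling index into the $L^{2}$ ratio on the right via the mean value inequality for the subharmonic function $u^{2}$ (the paper phrases this as elliptic estimates). The only genuine difference is how the leftover additive constant is handled. In the paper's version the mean value estimate is applied with a ball of radius comparable to $\Gamma$, so the comparison $\sup_{8\sqrt{n}B}|u|^{2}\lesssim \Gamma^{-n}\int_{\Gamma Q}|u|^{2}$ carries a factor $\Gamma^{-n}$; taking $\Gamma$ large then makes the additive constant in the final inequality nonpositive, and nothing else is needed. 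You instead fix the mean-value radius at $\sqrt{n}$, which produces a constant independent of $\Gamma$, and compensate with a separate non-degeneracy lower bound $\int_{\Gamma Q}|u|^{2}/\int_{Q}|u|^{2}\geq \Gamma/(2\sqrt{n})$ derived from the monotonicity of $H(r)=\int_{\partial B(x_0,r)}u^{2}\,dS$. That observation is correct and is a pleasant fact in its own right --- it is essentially the statement that $I(r)/r$ is non-decreasing --- but it is more machinery than required: had you let the mean-value radius grow with $\Gamma$, the $\Gamma^{-n}$ gain would have made the absorption step unnecessary. Both proofs are fine; the paper's is marginally shorter because it exploits the free decay in $\Gamma$, while yours isolates the non-degeneracy of the $L^{2}$ ratio as a standalone lemma.
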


\begin{proof}
	Up to a translation, we may assume that $Q$ is centered at zero. By the monotonicity of the doubling index for non-concentric balls, Lemma \ref{lem: almost monotonicity}, it is enough to prove that 
	$$\mathcal{N}(4\sqrt{n}B)\lesssim_n  \log \frac{\int_{\Gamma Q}|u|^2 d\vol}{\int_{Q}|u|^2 d \vol},$$
	where $B=B(0,1)$. To see this, the elliptic estimates imply 
	$$\sup_{8\sqrt{n}B}|u| \lesssim \frac{1}{\Gamma^n} \int_{\Gamma Q}|u|^2 d\vol,$$
	for all sufficiently large $\Gamma=\Gamma(n)$. On the other hand, we have 
	$$\sup_{4\sqrt{n}B}|u|\gtrsim_n \int_{Q}|u|^2 d \vol,$$
	and Claim \ref{lem: monotonicity for cubes} follows.  
\end{proof}

Most of the remaining section is devoted to proving the following lemma about almost monotonicity of the doubling index for non-concentric balls.

\begin{replem}{lem: almost monotonicity} [Almost monotonicity for non-concentric balls]
	Let $B\subset \mathbb{R}^n$ be any ball and let $u$ be a harmonic function in a neighbourhood of $\overline{2B}$. There exists a constant $C=C(n)>1$ such that 
	$$\mathcal{N}(b) \leq C \mathcal{N}(B)$$
	for all balls $b$ with  $2b\subset B$.  
\end{replem}
We will prove Lemma \ref{lem: almost monotonicity} in several steps. The main step is to prove the following special case of the lemma when $b$ is concentric to $B$. 
\begin{claim}
	\label{claim: lower bound small di}
	Let $B=B(0,1) \subset \real^n$ be the unit ball and let $u$ be a harmonic function in a neighbourhood of  $\overline{2B}$. There exists a constant $C=C(n)>1$ such that  the following holds for every $r \in (0,\thalf]$:
	$$\mathcal{N}\left(rB\right)\leq C\mathcal{N}(B).$$
\end{claim}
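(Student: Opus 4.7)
My plan for Claim \ref{claim: lower bound small di} is to combine the frequency--monotonicity machinery of Section \ref{sec3} with a Harnack argument in the small-doubling-index regime. I will first prove a weaker ``additive'' version $\mathcal{N}(rB) \leq C_1 \mathcal{N}(B) + C_2$, and then absorb the additive constant by a case split on the size of $\mathcal{N}(B)$.

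For the additive version, the plan is to invoke the elliptic estimates underlying Lemma \ref{lem: comparability di and frequency}. For a small fixed $\delta>0$ and any $r \leq \tfrac{1}{2}$, the $L^2$--$L^\infty$ comparison applied on $B(0,1+\delta) \subset B(0,2)$ (which lies strictly inside the domain of $u$) yields
\begin{align*}
\mathcal{N}(rB) &\leq \beta(2r(1+\delta))(1+100\delta) + C,\\
\mathcal{N}(B) &\geq \beta(1+\delta)(1 - 100\delta) - C,
\end{align*}
with $C=C(n,\delta)$. Both estimates remain inside $B(0,2)$, so the neighbourhood assumption on $\overline{2B}$ is only needed to ensure that $H(2)$ and $\sup_{2B}|u|$ are well defined on $\partial(2B)$. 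Since $2r(1+\delta) \leq 1+\delta$ for $r \leq \tfrac{1}{2}$, the monotonicity of the frequency (Theorem \ref{thm: monotonicity frequency}) gives $\beta(2r(1+\delta)) \leq \beta(1+\delta)$; combining the two inequalities then produces the additive bound $\mathcal{N}(rB) \leq C_1\mathcal{N}(B) + C_2$ with constants depending only on $n$ after $\delta$ is fixed.

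To absorb the additive constant, I will choose a threshold $c_0 = c_0(n)>0$ and split into cases. If $\mathcal{N}(B) \geq c_0$, the additive bound immediately gives $\mathcal{N}(rB) \leq (C_1 + C_2/c_0)\mathcal{N}(B)$. For the delicate regime $\mathcal{N}(B) < c_0$, the plan is to use Harnack's inequality. Setting $\tilde\eta := 2^{\mathcal{N}(B)} - 1$ (so $\tilde\eta \asymp \mathcal{N}(B)$ when $\mathcal{N}(B)$ is small) and assuming without loss of generality that $\sup_B u = \sup_B |u| =: M$ (else replace $u$ by $-u$), the function $M(1+\tilde\eta) - u$ is non-negative and harmonic on $2B$. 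Harnack's inequality on $B$ then pinches $u \in [M(1 - C_H\tilde\eta),\,M]$ throughout $B$. Since $2rB \subset B$ for $r \leq \tfrac{1}{2}$, this immediately yields $\sup_{2rB}|u|/\sup_{rB}|u| \leq 1/(1 - C_H\tilde\eta) \leq 1 + O(\tilde\eta)$, whence $\mathcal{N}(rB) \lesssim \tilde\eta \lesssim \mathcal{N}(B)$.

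The main obstacle, as I see it, is precisely this small-$\mathcal{N}(B)$ regime: the frequency-based comparison intrinsically carries additive errors (the $-C$ terms in Lemma \ref{lem: comparability di and frequency}), which become vacuous as $\mathcal{N}(B) \to 0$. The Harnack detour is what lets us handle the ``almost constant'' regime, where the doubling index reflects a small uniform perturbation of $u$ rather than any genuine large-scale oscillation.
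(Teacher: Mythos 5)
Your proof is correct, and it follows a genuinely different — and more economical — route through the delicate small-doubling-index regime. The paper handles $\mathcal{N}(B)\ll 1$ through a chain of auxiliary claims (Claims~\ref{claim:di_comparison}--\ref{claim:last_di}): it first shows that $u$ is non-vanishing near the center, normalizes $u(p)=1$ at a point $p$ of large gradient, and then invokes a ``growth from a zero'' lemma for $u-1$ (Claim~\ref{claim:last_di}) to deduce $\mathcal{N}(\tfrac{1}{16}B)\gtrsim \varepsilon/r$ where $\varepsilon=\mathcal{N}(rB)$, whence $\mathcal{N}(rB)\lesssim\mathcal{N}(B)$. Your Harnack pinching replaces this entire chain: observing $\sup_{2B}|u|=(1+\tilde\eta)M$ with $\tilde\eta=2^{\mathcal{N}(B)}-1\asymp\mathcal{N}(B)$ and $M=\sup_B u$, you apply Harnack to the nonnegative harmonic function $(1+\tilde\eta)M-u$ on $2B$ and directly get $u\in\big[(1-O_n(\tilde\eta))M,\,M\big]$ throughout $B$, which bounds $\mathcal{N}(rB)\lesssim\tilde\eta\lesssim\mathcal{N}(B)$ uniformly for all $r\le\tfrac12$. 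Both routes reduce the complementary case $\mathcal{N}(B)\gtrsim 1$ to the additive almost-monotonicity (Lemma~\ref{lem: monotonicity doubling index}), and your attention to where the hypothesis that $u$ is harmonic in a neighbourhood of $\overline{2B}$ enters (so that $H(2)$ and $\beta(1+\delta)$ are defined) is accurate. The one thing the paper's longer argument buys that yours does not is the sharper intermediate estimate $\mathcal{N}(rB)\lesssim r\,\mathcal{N}(B)$ for small $r$ — reflecting the linear-in-radius decay of the doubling index of a near-constant harmonic function — but since the claim asks only for the uniform comparison, your shorter argument suffices and sidesteps the gradient estimates and Claims~\ref{claim:di_comparison}--\ref{claim:last_di} entirely.
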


To establish Claim \ref{claim: lower bound small di}, we need several auxiliary results and we prove them now.  The following claim proves Claim \ref{claim: lower bound small di} in two cases: when either $\N(B)$ or $r$ is bounded away from $0$.

\begin{claim}\label{claim:part_one}	Let $B=B(0,1) \subset \real^n$ be the unit ball and let $u$ be a harmonic function in a neighbourhood of  $\overline{2B}$. Then the following hold.
	\begin{enumerate}
		\item If $\N(B) \geq c >0$, then 
		\begin{align*}
			\mathcal{N}\left(rB\right) \lesssim_{n,c} \mathcal{N}(B), \qquad~\forall r \in (0,\thalf].
		\end{align*} 
		\item For every $t \in (0,\thalf)$, there is a dimensional constant $C_{t} >0$ such that 
		\begin{align}\label{eq:c01}
			\N(rB) \leq C_{t} \cdot \N(B),\qquad ~\forall r \in [t, \thalf]. 
		\end{align}
	\end{enumerate}
\end{claim}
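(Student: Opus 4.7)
The plan is to deduce both parts from the almost monotonicity of the doubling index for concentric balls (Lemma \ref{lem: monotonicity doubling index}), together with the comparability between the doubling index and the frequency function (Lemma \ref{lem: comparability di and frequency}) and monotonicity of the frequency (Theorem \ref{thm: monotonicity frequency}). Since $u$ is harmonic in a neighborhood of $\overline{2B}$ (not merely in $B(0,2)$), there exists $\eta>0$ such that the frequency $\beta_u(0,s)$ is defined for $s\in(0,2+\eta)$; accordingly the two-sided bound of Lemma \ref{lem: comparability di and frequency} is available for all $r\in(0,\tfrac{1}{2}]$ rather than only for $r\leq \tfrac{1}{4}$. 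Fixing $\delta$ small, I would combine the upper bound of that lemma at radius $r$, the lower bound at $R=1$, and the monotonicity of $\beta$ (noting that $2r(1+\delta)\leq 1+\delta$ for $r\leq\tfrac{1}{2}$) to derive an estimate of the form
\[
\mathcal{N}(rB)\;\leq\; K_n\,\mathcal{N}(B)+C_n\qquad\text{for all }r\in(0,\tfrac{1}{2}],
\]
where $K_n,C_n>0$ depend only on the dimension. This preliminary estimate is the common core of both parts.

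For part (1), the hypothesis $\mathcal{N}(B)\geq c$ allows me to absorb the additive constant using $C_n\leq(C_n/c)\mathcal{N}(B)$, which yields $\mathcal{N}(rB)\leq(K_n+C_n/c)\mathcal{N}(B)\lesssim_{n,c}\mathcal{N}(B)$, as required.

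For part (2) the main obstacle is to get rid of the additive $C_n$ without having a lower bound on $\mathcal{N}(B)$. I would split into two regimes. When $\mathcal{N}(B)\geq 1$, part (1) applied with $c=1$ already gives $\mathcal{N}(rB)\lesssim_n\mathcal{N}(B)$, which is stronger than needed. When $\mathcal{N}(B)<1$, I would argue by contradiction and compactness: were the conclusion to fail for some fixed $t$, one could find sequences $\{u_k\}$ of harmonic functions in neighborhoods of $\overline{2B}$ and $\{r_k\}\subset[t,\tfrac{1}{2}]$ with $\mathcal{N}_{u_k}(r_kB)/\mathcal{N}_{u_k}(B)\to\infty$. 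After a scaling reduction that places all $u_k$ in a common domain $B(0,2+\eta_0)$ and the normalization $\sup_{B(0,2)}|u_k|=1$, standard elliptic compactness extracts a subsequence converging locally uniformly on $B(0,2+\tfrac{\eta_0}{2})$ to a harmonic limit $u_\infty$. Unique continuation forbids $u_\infty\equiv 0$, since otherwise $\sup_{B(0,2)}|u_k|\to 0$ would contradict the normalization; hence $\mathcal{N}_{u_\infty}(B)>0$ is finite, and continuity of the doubling index under locally uniform convergence forces the supposedly divergent ratio $\mathcal{N}_{u_k}(r_kB)/\mathcal{N}_{u_k}(B)$ to converge to the finite number $\mathcal{N}_{u_\infty}(r_\infty B)/\mathcal{N}_{u_\infty}(B)$, yielding the desired contradiction.
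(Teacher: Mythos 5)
Your part (1) matches the paper's approach: apply the almost monotonicity of the doubling index (Lemma \ref{lem: monotonicity doubling index}) to obtain an affine bound $\mathcal{N}(rB)\leq K_n\mathcal{N}(B)+C_n$ and absorb the additive constant using $\mathcal{N}(B)\geq c$. That part is fine.

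Part (2), however, has a genuine gap. You correctly isolate the only nontrivial regime as $\mathcal{N}(B)<1$ and propose a compactness contradiction, normalizing $\sup_{B(0,2)}|u_k|=1$ and extracting a locally uniform limit $u_\infty$. But the only way the ratio $\mathcal{N}_{u_k}(r_kB)/\mathcal{N}_{u_k}(B)$ can diverge while $\mathcal{N}_{u_k}(rB)$ stays bounded (which it must, by your own preliminary affine estimate) is for $\mathcal{N}_{u_k}(B)\to 0$. In that case the normalized limit $u_\infty$ is a nonzero \emph{constant} — indeed $\sup_B|u_\infty|=\sup_{B(0,2)}|u_\infty|$, so the maximum principle forces $u_\infty$ constant. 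Your step ``$u_\infty\not\equiv 0$, hence $\mathcal{N}_{u_\infty}(B)>0$'' is therefore false: a nonzero constant has doubling index $0$, the limiting ratio is the indeterminate form $0/0$, and no contradiction follows. This is precisely the regime the paper handles by a direct quantitative argument: writing $M=\sup_B u=1+\varepsilon$ and $M'=\sup_{2B}|u|$, the mean value theorem on a ball $D=B(p,1)$ centered at a boundary maximum $p$ yields $1+c(t)\tfrac{M-1}{M}\leq\tfrac{M'}{M}$, i.e. a \emph{linear} lower bound $\mathcal{N}(B)\gtrsim_t\varepsilon\gtrsim_t\mathcal{N}(rB)$ that survives as $\varepsilon\to 0$. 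A soft compactness argument cannot produce this first-order information without a blow-up (e.g. rescaling $u_k-u_k(0)$), which you did not perform. There is also a secondary issue: the domains $B(0,2+\eta_k)$ can shrink as $\eta_k\to 0$, and the ``scaling reduction'' you invoke distorts the balls in the doubling index, so even the setup of the compactness argument needs more justification; but the decisive flaw is the constant-limit scenario above.
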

\begin{proof}[Proof of Claim \ref{claim:part_one}] We first show the first part of the claim. 	Recall from Lemma \ref{lem: monotonicity doubling index} that
	\begin{align}\label{eq:c02}
		\mathcal{N}\left(rB\right) \leq 2\mathcal{N}(B) + C,~\forall r \in (0,\thalf]
	\end{align}
	for some constant $C= C(n)>1$. If $\N(B) \geq c >0$, then it follows from \eqref{eq:c02} that
	\begin{align*}
		\mathcal{N}\left(rB\right) \leq (2 + \tfrac{C}{c}) \cdot \mathcal{N}(B),~\forall r \in (0,\thalf],
	\end{align*}
	and we get the desired conclusion. 
	\vskip 2mm
	We now prove the second part of the claim. If $\N(B) \geq C$, then  \eqref{eq:c02} implies
	\begin{align*}
		\N(rB) \leq 3 \N(B),~\forall r \in (0,\thalf].
	\end{align*}
	Hence it suffices to prove \eqref{eq:c01} in the case when $\N( B) <C$, and we do this below. We fix $t \in (0,\thalf)$, and let $r \in  [t, \thalf]$. Again it follows from \eqref{eq:c02} that if $\N( B) <C$, then $\N(rB) \leq 3C$. 
	Multiplying $u$ by a constant, we may assume that 
	$$\sup_{rB}|u|=1, ~\text{ and }~ \sup_{B}|u|=\sup_B u =: M.$$
	Let $p\in \partial B$ be a point such that $u(p)=M$, and put $M'= \sup_{2B}|u|$. With the above notation, since $2r\leq 1$, one has
	\begin{align*}
		\N(rB) \leq \log M	,~\text{ and }~\N(B) = \log \frac{M'}{M}. 
	\end{align*}
	Finally, define $$D:= B(p,1),~ D_1:= D\cap rB,~ D_2:= D\cap (B\backslash rB), \text{ and } D_3:= D\cap (2B\backslash B).$$ By the mean value theorem for harmonic functions, we have 
	\begin{align}
		M=u(p)= \frac{1}{\vol (D)}\int_{D} u \hspace{1mm} d\vol \leq \frac{1}{\vol(D)}\left(\vol(D_1) + M \vol(D_2) +  M'\vol(D_3) \right). \nonumber
	\end{align} 
	Since $\vol(D) = \vol(D_1)  + \vol(D_2)+ \vol(D_3)$, we obtain  
	$$ (M-1)\vol (D_1)\leq (M'-M)\vol(D_3),$$
	that is 
	\begin{align*}
		M +(M-1)\frac{\vol(D_1)}{\vol(D_3)}\leq M'.
	\end{align*}
	Let $c= c(t)= \vol(D \cap t B)/\vol(D_3)$, since $(D \cap t B) \subseteq D_1$, we have 
	\begin{align}
		\label{eq: bound}	1 + c\frac{M-1}{M}\leq \frac{M'}{M}.
	\end{align}
	To conclude \eqref{eq:c01}, we consider two cases: the first being when $M$ is very close to $1$, and when $M$ is not very close to $1$. 
	In the first case $M= 1+\varepsilon$, for some small $\varepsilon\leq \varepsilon_0$. Then
	$$\mathcal{N}\left(rB\right) \leq \log M =  \log (1+\varepsilon)\asymp \varepsilon.$$
	And from \eqref{eq: bound} we get 
	$$\mathcal{N}\left(B\right)\geq \log \left(1+\frac{c\varepsilon}{1+\varepsilon}\right)\asymp c\varepsilon.$$
	Thus \eqref{eq:c01} follows. Let us consider the second case: $M\geq 1+\varepsilon_0$. Then \eqref{eq: bound} implies that there exists some constant $c_1=c_1(n,\varepsilon_0)>0$ such that 
	$$\mathcal{N}\left(B\right)\geq c_1,$$
	which implies \eqref{eq:c01}, in light of the fact that $\mathcal{N}(rB)\leq 3C$. This completes the proof of Claim \ref{claim:part_one}. 
\end{proof}

The following claim gives a comparison of the doubling indices in two balls when their radii are much larger than the distance between their centers.  
\begin{claim} \label{claim:di_comparison} Let $B=B(0,1) \subset \real^n$ be the unit ball, and let $u$ be a harmonic function in a neighbourhood of  $\overline{2B}$. 
	Let $r >0$, and $p,q \in B$ such that $|p-q|< r$. Also assume that $8r <1$, then 
	\begin{align*}
		\N(p,2r) \lesssim_n \N(q,4r).
	\end{align*}
\end{claim}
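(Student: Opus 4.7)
The natural strategy is to first reduce to a concentric comparison of doubling indices, using that the balls $B(p,2r)$ and $B(q,4r)$ are nearly concentric (the offset is much smaller than the radii), and then invoke Claim \ref{claim:part_one} part (2), which is precisely an almost-monotonicity statement for concentric balls. The shift from $p$ to $q$ will only cost constant factors because $|p-q| < r$ is comparable to (indeed, smaller than) the radii involved.

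Concretely, I would proceed as follows. First, observe the chain of inclusions
\begin{align*}
B(q, r) ~\subseteq~ B(p, 2r) ~\subseteq~ B(p, 4r) ~\subseteq~ B(q, 5r) ~\subseteq~ B(q, 8r),
\end{align*}
all of which are contained in $B(0,2)$ because $|p|, |q| \leq 1$ and $8r<1$; in particular $u$ is harmonic on each of them. Taking suprema gives
\begin{align*}
\mathcal{N}(p, 2r) = \log_2 \frac{\sup_{B(p, 4r)}|u|}{\sup_{B(p,2r)}|u|} \leq \log_2 \frac{\sup_{B(q,8r)}|u|}{\sup_{B(q,r)}|u|}.
\end{align*}
Next, split this last ratio telescopically as
\begin{align*}
\frac{\sup_{B(q,8r)}|u|}{\sup_{B(q,r)}|u|} = \frac{\sup_{B(q,8r)}|u|}{\sup_{B(q,4r)}|u|} \cdot \frac{\sup_{B(q,4r)}|u|}{\sup_{B(q,2r)}|u|} \cdot \frac{\sup_{B(q,2r)}|u|}{\sup_{B(q,r)}|u|} = 2^{\mathcal{N}(q,4r) + \mathcal{N}(q,2r) + \mathcal{N}(q,r)}.
\end{align*}
Finally, apply Claim \ref{claim:part_one} part (2) to the harmonic function $u$ on (a neighborhood of) $\overline{B(q, 8r)}$, rescaling so that $B(q, 4r)$ plays the role of the unit ball. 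Taking $t = 1/4$, this claim gives $\mathcal{N}(q, r), \mathcal{N}(q, 2r) \lesssim_n \mathcal{N}(q, 4r)$. Combining these estimates yields $\mathcal{N}(p, 2r) \lesssim_n \mathcal{N}(q, 4r)$ as desired.

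The main point to get right is the rescaling/bookkeeping when invoking Claim \ref{claim:part_one}: one must verify that $u$ really is harmonic in a neighborhood of $\overline{2 \widetilde{B}}$ when $\widetilde{B}$ is taken to be $B(q,4r)$, which is where the hypothesis $8r < 1$ (together with $q \in B$) enters. Beyond that the argument is essentially just inclusions and almost monotonicity, so there is no serious obstacle.
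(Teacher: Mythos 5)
Your proof is correct and is essentially identical to the paper's argument: the same chain of inclusions (the paper skips the intermediate ball $B(q,5r)$, but that is immaterial), the same telescoping of the ratio $\sup_{B(q,8r)}|u|/\sup_{B(q,r)}|u|$ into three concentric doubling indices, and the same appeal to part (2) of Claim~\ref{claim:part_one} to bound $\N(q,r)$ and $\N(q,2r)$ by $\N(q,4r)$. No meaningful divergence from the paper.
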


\begin{proof}[Proof of Claim \ref{claim:di_comparison}]Observe that 
	\begin{align*}
		B(q,r) \subset B(p,2r) \subset B(p,4r) \subset B(q,8r). 
	\end{align*}
	Thus one has
	\begin{align*}
		\frac{\sup_{B(p,4r)}|u|}{\sup_{B(p,2r)}|u|} \leq \frac{\sup_{B(q,8r)}|u|}{\sup_{B(q,r)}|u|} = \frac{\sup_{B(q,8r)}|u|}{\sup_{B(q,4r)}|u|} \cdot \frac{\sup_{B(q,4r)}|u|}{\sup_{B(q,2r)}|u|} \cdot \frac{\sup_{B(q,2r)}|u|}{\sup_{B(q,r)}|u|},
	\end{align*}
	from which one gets
	\begin{align*}
		\N(p,2r) \leq \N(q,4r) + \N(q,2r) + \N(q,r). 
	\end{align*}
	It follows from part two of Claim \ref{claim:part_one} that both $\N(q,r)$, $\N(q,2r) \lesssim_n \N(q,4r)$. So we get the desired conclusion $\N(p,2r) \lesssim_n \N(q,4r)$. 
\end{proof}

With  Claim \ref{claim: lower bound doubling index} and Claim \ref{claim:di_comparison} in hand, we conclude in the following claim that if the doubling index of a harmonic function in a ball is small enough, then it is non-vanishing in a smaller concentric ball.

\begin{claim}\label{newest_claim} Let $B=B(0,1) \subset \real^n$ be the unit ball, and let $u$ be a harmonic function in a neighbourhood of  $\overline{2B}$. If $\N(\tfrac{1}{4}B) \ll_n 1$, then $u$ is non-vanishing in $\tfrac{1}{16}B$. 
\end{claim}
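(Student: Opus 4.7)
I will argue by contrapositive: suppose that $u$ vanishes at some point $p \in \tfrac{1}{16}B$, and show that this forces a dimensional lower bound on $\N(\tfrac{1}{4}B)$, so that $\N(\tfrac{1}{4}B) \ll_n 1$ cannot hold.

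The plan has two ingredients, both available from earlier in the appendix. First, I would apply a rescaled version of Claim \ref{claim: lower bound doubling index} at the point $p$. Since $|p| < \tfrac{1}{16}$, the ball $B(p, \tfrac{1}{4})$ is contained in $B(0, \tfrac{5}{16}) \subset 2B$, so $u$ is harmonic on $B(p, \tfrac{1}{4})$, and $u(p)=0$ by assumption. Claim \ref{claim: lower bound doubling index} (rescaled to a ball of radius $\tfrac{1}{8}$ centered at $p$) then yields a dimensional constant $c=c(n)>0$ such that
\begin{align*}
\N(p,\tfrac{1}{8}) \geq c.
\end{align*}

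Second, I would transport this lower bound from the non-concentric ball $B(p,\tfrac{1}{8})$ to the concentric ball $B(0,\tfrac{1}{4})$ using Claim \ref{claim:di_comparison}. Applying that claim with $q=0$ and $r=\tfrac{1}{16}$, the hypotheses $|p-q|<\tfrac{1}{16}=r$ and $8r=\tfrac{1}{2}<1$ are satisfied, and we conclude
\begin{align*}
\N(p,\tfrac{1}{8}) = \N(p,2r) \lesssim_n \N(q,4r) = \N(0,\tfrac{1}{4}) = \N(\tfrac{1}{4}B).
\end{align*}
Combining the two displays gives $\N(\tfrac{1}{4}B) \geq c'(n) > 0$ for some dimensional constant $c'(n)$, which contradicts the assumption $\N(\tfrac{1}{4}B) \ll_n 1$ provided the implied constant is chosen small enough (smaller than $c'(n)$). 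Hence $u$ has no zero in $\tfrac{1}{16}B$.

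Both steps are essentially bookkeeping: the content is entirely absorbed into the earlier claims. The only thing to be careful about is the compatibility of the radii and inclusions, namely that $B(p,\tfrac{1}{4})$ lies in the domain of $u$, that Claim \ref{claim:di_comparison} applies with the chosen radius, and that the dimensional constant controlling ``$\ll_n$'' in the hypothesis is taken strictly smaller than the lower bound $c'(n)$ produced in the argument. No step of the plan presents a genuine obstacle.
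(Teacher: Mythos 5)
Your proof is correct and takes essentially the same route as the paper: assume a zero $p \in \tfrac{1}{16}B$, invoke the rescaled Claim~\ref{claim: lower bound doubling index} to get $\N(p,\tfrac{1}{8}) \geq c(n)$, then transfer via Claim~\ref{claim:di_comparison} (with $q=0$, $r=\tfrac{1}{16}$) to get $\N(\tfrac{1}{4}B) \gtrsim_n 1$, contradicting the hypothesis. The radius checks and domain inclusions you flagged are exactly the ones that matter and they all go through.
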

\begin{proof}[Proof of Claim \ref{newest_claim}] Assume that there is a point $p \in \tfrac{1}{16}B$ where $u$ vanishes. Then it follows from  Claim \ref{claim: lower bound doubling index} that $\N(p,\tfrac{1}{8}) \geq c$, where $c>0$ is as  in Claim \ref{claim: lower bound doubling index}. Using Claim \ref{claim:di_comparison} for $u$ with $q=0$, $p$ as above, and $r= 1/16$, one can conclude that 
	$$c \leq \N(p,\tfrac{1}{8}) \lesssim_n  \N(0,\tfrac{1}{4})=\N(\tfrac{1}{4}B).$$
	Hence if $u$ vanishes somewhere in $\tfrac{1}{16}B$, then $\N(\tfrac{1}{4}B) \gtrsim_n 1$. This proves Claim \ref{newest_claim}. 
\end{proof}

We now present one final auxiliary  result before proving Claim \ref{claim:part_one}. The following claim will be helpful for comparing doubling indices in two sub-balls when  the harmonic function has very small doubling index in a ball. 

\begin{claim}\label{claim:last_di}
	Let $B=B(0,1) \subset \real^n$ be the unit ball, and let $u$ be a positive harmonic function in a neighbourhood of $\overline{2B}$ with $u(0)=1$.  There exist small dimensional constants $\ell, c_0 >0$ such that {if $\N(B) \leq c_0$,  then $\N(B) \gtrsim_n \sup_{({\ell}/{2}) B} |u-1| $.} 
\end{claim}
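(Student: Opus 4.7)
The plan is to exploit positivity of $u$ by applying Harnack's inequality to the non-negative harmonic function $M_2 - u$ on $2B$, where $M_2 := \sup_{2B} u$. The smallness of $\N(B)$ will force $M_2$ to be close to $1$ (since $u(0)=1$), which in turn will yield two-sided control $|u - 1| \lesssim_n \N(B)$ on all of $B$, hence on the smaller concentric ball $(\ell/2)B$.

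Set $\delta := \N(B)$, $M_1 := \sup_B u$, $M_2 := \sup_{2B} u$, so that $M_2 = 2^\delta M_1$ and $M_1 \geq u(0) = 1$ (since $0 \in B$). The key step would be to apply Harnack's inequality to the non-negative harmonic function $M_2 - u$ on $2B$; since $B$ is the concentric ball of half the radius, this yields a dimensional constant $C_n > 1$ with
$$ M_2 - \inf_B u = \sup_B(M_2 - u) \leq C_n \inf_B(M_2 - u) = C_n(M_2 - M_1). $$
Using $\inf_B u \leq u(0) = 1$ and $M_2 - M_1 = (2^\delta - 1)M_1 \leq 2\delta M_2$ for $\delta \leq 1$, a short algebraic rearrangement (valid once $c_0$ is chosen so that, say, $2C_n c_0 \leq 1/2$) gives $M_2 \leq 1/(1 - 2C_n\delta) \leq 1 + 4C_n \delta$, and hence $\sup_B u = M_1 \leq 1 + 4C_n \delta$.

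Feeding this bound on $M_2 - M_1$ back into the Harnack estimate yields $\inf_B u \geq M_2 - C_n(M_2 - M_1) \geq M_1(1 - 2(C_n-1)\delta) \geq 1 - 2(C_n - 1)\delta$, using $M_1 \geq 1$. Combining the two bounds, $\sup_B |u-1| \lesssim_n \delta = \N(B)$, and a fortiori $\sup_{(\ell/2)B}|u-1| \lesssim_n \N(B)$ for any $\ell < 2$.

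I do not foresee any serious obstacle: the whole argument is a single Harnack application, and the role of smallness of $\N(B)$ is just to linearize the exponential $2^\delta = 1 + O(\delta)$ and to keep the algebraic rearrangement in the upper-bound step well-defined. The parameter $\ell$ appears to play no essential role in the proof itself, but is presumably reserved for convenience in later applications of the claim.
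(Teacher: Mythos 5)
Your argument is correct, and it is a genuinely different and more elementary route than the paper's. The paper's proof iterates three earlier ingredients — Claim \ref{claim: lower bound doubling index} (a uniform lower bound on the doubling index of a harmonic function vanishing at the center), Claim \ref{claim:harnack_consequence}, and the almost-monotonicity from Claim \ref{claim:part_one} — to show that $v:=u-1$ grows by a definite multiplicative factor from $\ell B$ to $2^{k+1}\ell B$, and then extracts a lower bound for $\N(B)$ from a telescoping product; the small radius $\ell$ arises from this iteration and is a genuine part of their conclusion. You instead apply Harnack's inequality once to the non-negative harmonic function $M_2 - u$ on $2B$ (with the concentric $B$ as the inner ball), obtain $M_2 - \inf_B u \leq C_n(M_2 - M_1)$, linearize $2^\delta - 1 \lesssim \delta$ and use $\inf_B u \leq u(0)=1$, $M_1 \geq 1$ to conclude $\sup_B |u-1| \lesssim_n \N(B)$ once $\N(B) \leq c_0 = c_0(n)$. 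This is a strictly stronger conclusion — the supremum is over all of $B$, not merely $(\ell/2)B$ — so the auxiliary constant $\ell$ becomes irrelevant in the statement, and your proof also sidesteps the chain of auxiliary claims. The single application of the claim later in the paper (inside the proof of Claim \ref{claim: lower bound small di}, where it is used on a rescaled ball $B(p,1/32)$ to lower-bound $\N(p,1/32)$ by $\sup_{B(p,\ell/2^6)}|u-1|$) is trivially covered by your stronger estimate. In short: same conclusion (indeed stronger), simpler and more self-contained argument.
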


\begin{proof}[Proof of Claim \ref{claim:last_di}] 
	We start by choosing $\ell$. Claim \ref{claim: lower bound doubling index} and Claim \ref{claim:harnack_consequence} guarantee the existence of constants $C>1$ and $c>0$ respectively such that the following holds. For any non-constant harmonic function $f$ in a neighbourhood of $\overline{2B}$ vanishing at its center, one has
	\begin{align}\label{eq_a_1}
		\N_{f}(B) \geq \log C, ~\text{ and }~ \sup_{2B}f \geq c \sup_{B}|f|. 
	\end{align}	
	{Fix a large $k \in \nat$ such that}  $c\cdot C^{k} \geq 4$. For this  $k$, let $\ell \in (0,1)$ be  such that $\tfrac{1}{8} \leq 2^k \ell \leq \tfrac{1}{4}$. The choice of  $c_0 \in (0,1)$ will be made later.

	For $u$  as in the statement of the claim, define $v:= u-u(0) = u-1$. Then $v$ is harmonic in $2B$ and  vanishes at its center. Using the first conclusion of \eqref{eq_a_1} repeatedly for $v$, one gets
	\begin{align*}
		\sup_{2^{k}\ell B} |v| \geq C^{k} \sup_{\ell B} |v|.
	\end{align*}
	This together with the second conclusion of \eqref{eq_a_1} gives
	\begin{align*}
		\sup_{2^{k+1}\ell B}v \geq c \cdot  \sup_{2^{k}\ell B} |v| \geq c\cdot C^{k} \sup_{\ell B} |v| \geq c\cdot C^{k} \sup_{\ell B} v. 
	\end{align*}
	By the choice of $k$, we have $c \cdot C^{k} \geq 4$, and hence
	\begin{align*}
		\sup_{2^{k+1}\ell B}v \geq 4 \sup_{\ell B} v.
	\end{align*}
	Hence
	\begin{align}\label{eq_a_2}
		\sup_{2^{k+1}\ell B}u \geq 1+ 4 \sup_{\ell B} v.
	\end{align}
	We now get a lower bound for the doubling index of $u$ in $B$. We have
	\begin{align*}
		\frac{1+ 4 \sup_{\ell B} v }{1 + \sup_{\ell B} v} \mathop{\leq}\limits_{\eqref{eq_a_2}}	\frac{\sup_{2^{k+1}\ell B} u }{\sup_{\ell B} u} & = \frac{\sup_{2^{k+1}\ell B} u }{\sup_{2^k\ell B} u} \cdot \frac{\sup_{2^{k}\ell B} u }{\sup_{2^{k-1}\ell B} u} \cdots  \frac{\sup_{2\ell B} u }{\sup_{\ell B} u}.
	\end{align*}
	Thus
	\begin{align}\label{eq_a_5}
		\log \frac{1+ 4 \sup_{\ell B} v }{1 + \sup_{\ell B} v} \leq \sum_{j=0}^{k} \N(2^j \ell B) \lesssim_{n}  \N(B),
	\end{align}
	where the last inequality holds because each term in the sum is $\lesssim \N(B)$ by the second part of Claim \ref{claim:part_one}. 
	
	Since $u$ is positive {in $2B$} and $u(0) = 1$, by Harnack's inequality one has $u \asymp_n 1$ in {$B$}. Hence there is a dimensional constant $c_1 >0$ such that 
	\begin{align}\label{eq_a_6}
		\frac{1+ 4 \sup_{\ell B} v }{1 + \sup_{\ell B} v}  = 1+ \frac{3\sup_{\ell B} v }{\sup_{\ell B} u}\geq 1 + c_1 \sup_{\ell B} v. 
	\end{align}
	Thus from \eqref{eq_a_5} and \eqref{eq_a_6}, one has
	\begin{align}\label{eq_a_7}
		\log (1 + c_1 \sup_{\ell B} v) \lesssim_n \N(B). 
	\end{align}
	Note that if $\N(B) \ll 1$, then 
	\begin{align*}
		1\gg  \N(B) \gtrsim_n \log (1 + c_1 \sup_{\ell B} v) \asymp \sup_{\ell B} v \mathop{\geq} \limits_{\eqref{eq_a_1}} c \sup_{(\ell/2) B}|v|.
	\end{align*}
	 Hence if $c_0 >0$ is small enough, then $\N(B) \leq c_0$ implies $\N(B) \gtrsim_{n} \sup_{({\ell}/{2}) B} |u-1|$.

\end{proof}

We are finally ready to prove Claim \ref{claim: lower bound small di}. 
\begin{proof}[Proof of Claim \ref{claim: lower bound small di}] 
	
	Recall that in Claim \ref{claim:part_one}, it was shown that $\N(rB) \lesssim \N(B)$ holds whenever either $r$ or $\N(B)$ is {separated} from $0$. 
	Hence it suffices to prove that there exist dimensional constants $r_0 \in (0,1)$ and $C_0 >1$ such that 
	\begin{align}\label{eq:c03}
		\N(rB) \leq C_0 \cdot \N(B),~\forall r \in (0,r_0).
	\end{align}
	Let $c_0,\ell >0$ be as in Claim \ref{claim:last_di}. Let $c_1 >0$ be a small dimensional constant, to be chosen later. 
	We now prove \eqref{eq:c03} for {$r_0 := \ell/2^{10}$}. If {$\N(\tfrac{1}{4}B) > c_1$}, then it follows from part two of Claim \ref{claim:part_one} that $\N(B) \gtrsim {\N(\tfrac{1}{4}B}) \geq  c_1$, and from part one of Claim \ref{claim:part_one} we get that 
	\begin{align*}
		\N(rB) \lesssim_{n} \N(B),~\forall r\in (0,\tfrac{1}{2}].
	\end{align*}
	Hence it suffices to consider the case when ${\N(\tfrac{1}{4}B)} \leq c_1$. Claim \ref{newest_claim} guarantees that if $c_1$ is small enough,  then $u$ is non-vanishing in ${\tfrac{1}{16}B}$. We will choose $c_1$ satisfying  this and an additional condition to be specified soon.
	\vskip 2mm
	Let $\ep >0$ and $2r\in (0,r_0)$ be such that $\N(rB) = \ep$. We will now show that { with the above assumption of $u$ being non-vanishing in $\tfrac{1}{16}B$, one has  $\N(\tfrac{1}{16}B) \gtrsim_{n} \ep/r$} and this will prove \eqref{eq:c03}. Let $M = \sup_{rB} u$, then $\sup_{2rB} u = 2^{\ep}M$. Hence there is a point $p \in 2rB {\subset \tfrac{\ell}{2^{10}}B}$ such that 
	\begin{align*}
		|\nabla u(p)| \gtrsim \frac{M(2^{\ep}-1)}{r} \gtrsim \frac{M\ep}{r}.
	\end{align*}
	Multiplying $u$ by a constant, we may assume that $u(p) = 1$, in which case by Harnack's inequality $M \asymp_n 1$. Hence we have the following bound for the gradient at $p$: 
	\begin{align*}
		|\nabla u(p)| \gtrsim \frac{\ep}{r}.
	\end{align*}
	From the standard gradient estimates (Claim \ref{claim:grad_estimates}), we get 
	\begin{align}\label{eq:condition1}
		\sup_{B(p,{\ell}/{2^6})} |u-1| \gtrsim |\nabla (u-1)(p)| = |\nabla u(p)| \gtrsim \frac{\ep}{r}.
	\end{align}
	Recalling that $\ell <1$, we have the following comparison for the doubling indices  from Claim \ref{claim:di_comparison} (applied twice):
	\begin{align*}
		\N(p,\tfrac{1}{32}) \lesssim \N(0,\tfrac{1}{4})=\N(\tfrac{1}{4}B) \leq c_1.
	\end{align*}
	We will choose $c_1$ small enough so that the above inequality implies $\N(p,\tfrac{1}{32}) \leq c_0$. This together with \eqref{eq:condition1} implies that we can use Claim \ref{claim:last_di} to conclude that $\N(p,\tfrac{1}{32}) \gtrsim \tfrac{\ep}{r}$. Once again using Claim \ref{claim:di_comparison}, one can conclude that 
	\begin{align*}
		\N(\tfrac{1}{16}B) = \N(0,\tfrac{1}{16}) \gtrsim \N(p,\tfrac{1}{32}) \gtrsim \frac{\ep}{r} \geq \ep = \N(rB).
	\end{align*}
	This completes the proof of Claim \ref{claim: lower bound small di}. 
\end{proof}

The following claim  gives a comparison between the  doubling indices of a sub-ball and a ball, when the center of the sub-ball is close to the center of the ball.  
\begin{claim}\label{cl_ap_1} Let $B=B(0,1) \subset \real^n$ be the unit ball and let $u$ be a harmonic function in $2B$. Then there is a constant $C=C(n)>1$ such that for every $z \in \tfrac{1}{2}\overline{B}$, the following holds:
	\begin{align}\label{eq_ap_part01}
		\N(z,1-\tfrac{|z|}{2}) \leq C \N(B). 
	\end{align}
	Also, for any ball $b$ which is centered at $z$ and  satisfies $2b \subset B$, we have 
	\begin{align}\label{eq_ap_part02}
		\N(b) \leq C \N(B). 
	\end{align}
\end{claim}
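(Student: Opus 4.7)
The argument naturally splits into the two assertions. The plan is to prove \eqref{eq_ap_part01} by purely geometric inclusions of balls centered at $0$, and then to reduce \eqref{eq_ap_part02} to \eqref{eq_ap_part01} by a rescaling that invokes the concentric almost monotonicity already established in Claim \ref{claim: lower bound small di}.

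\textbf{Sketch for \eqref{eq_ap_part01}.} I would fix $z \in \tfrac{1}{2}\overline{B}$ and observe that the constraint $|z| \leq 1/2$ yields the nested inclusions
\begin{align*}
    B(0,\tfrac{1}{4}) \;\subset\; B\bigl(z,\, 1 - \tfrac{|z|}{2}\bigr) \;\subset\; B(z,\, 2 - |z|) \;\subset\; B(0,2),
\end{align*}
where the first uses $1/4 + |z| \leq 1 - |z|/2$ and the last is a direct triangle inequality. Taking sups of $|u|$ on these sets and unfolding the definition of the doubling index,
\begin{align*}
    \N\bigl(z,\, 1 - \tfrac{|z|}{2}\bigr) \;\leq\; \log_2 \frac{\sup_{B(0,2)}|u|}{\sup_{B(0,1/4)}|u|} \;=\; \N(B) + \N(\tfrac{1}{2}B) + \N(\tfrac{1}{4}B).
\end{align*}
Applying Claim \ref{claim: lower bound small di} to bound $\N(\tfrac{1}{2}B)$ and $\N(\tfrac{1}{4}B)$ by a dimensional multiple of $\N(B)$ then finishes the proof.

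\textbf{Sketch for \eqref{eq_ap_part02}.} Let $b = B(z,r)$ with $2b \subset B$, so that $r \leq (1-|z|)/2$, and set $R := 1 - |z|/2$. A quick calculation shows $r/R \leq (1-|z|)/(2-|z|) \leq 1/2$. Since Claim \ref{claim: lower bound small di} is scale- and translation-invariant, applying it to the ball $B(z,R)$ (treated as the new ``unit ball'') and to the concentric sub-ball $b$ gives
\begin{align*}
    \N(b) \;=\; \N(z, r) \;\leq\; C\,\N(z, R) \;=\; C\,\N\bigl(z,\, 1 - \tfrac{|z|}{2}\bigr) \;\leq\; C'\,\N(B),
\end{align*}
where the last step is exactly \eqref{eq_ap_part01}.

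\textbf{Main obstacle.} The only genuine subtlety is that Claim \ref{claim: lower bound small di} requires harmonicity on an \emph{open} neighbourhood of $\overline{2B}$, whereas the natural ``outer ball'' $B(z, 2R)$ is internally tangent to $\partial(2B)$ at the point $2z/|z|$. This is easily circumvented by applying the claim on the slightly shrunken ball $B(z,\, R(1-\varepsilon))$, on whose $2$-scaling $u$ is genuinely harmonic on an open neighbourhood of the closure, and letting $\varepsilon \to 0^{+}$; the map $r' \mapsto \N(z, r')$ is continuous in $r'$ (the sups of $|u|$ over concentric open balls depend continuously on the radius), so the bound passes to the limit. No further analytic input beyond Claim \ref{claim: lower bound small di} is needed.
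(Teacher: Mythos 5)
Your proof is correct and follows essentially the same route as the paper: both parts rest on the chain of inclusions $B(0,\tfrac14)\subset B(z,1-\tfrac{|z|}{2})\subset B(z,2-|z|)\subset B(0,2)$ followed by unfolding into three concentric doubling indices and invoking concentric almost-monotonicity (the paper cites Claim~\ref{claim:part_one}, you cite the stronger Claim~\ref{claim: lower bound small di}, both proved before this point), and both reduce \eqref{eq_ap_part02} to \eqref{eq_ap_part01} via a rescaled application of Claim~\ref{claim: lower bound small di} on $B(z,1-\tfrac{|z|}{2})$. The tangency of $B(z,2-|z|)$ to $\partial(2B)$ that you flag is indeed passed over silently in the paper; your $\varepsilon$-shrinking-and-continuity remedy is the right way to make that step airtight.
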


\begin{proof}[Proof of Claim \ref{cl_ap_1}]
	We first observe that $D = B(z,1-\tfrac{|z|}{2})$ is the largest ball centered at $z$ such that $2D \subset 2B$. Using $|z|<\tfrac{1}{2}$, one has
	\begin{align*}
		B(0,\tfrac{1}{4}) \subseteq B(z,1-\tfrac{|z|}{2}) \subseteq B(z,2-|z|) \subseteq B(0,2).
	\end{align*}
	Hence 
	\begin{align*}
		\frac{\sup_{B(z,2-|z|)}|u|}{\sup_{B  (z,1-({|z|}/{2})) }|u|} \leq \frac{\sup_{B(0,2)}|u|}{\sup_{B(0,\tfrac{1}{4})}|u|} = \frac{\sup_{B(0,2)}|u|}{\sup_{B(0,1)}|u|} \cdot \frac{\sup_{B(0,1)}|u|}{\sup_{B(0,\tfrac{1}{2})}|u|} \cdot  \frac{\sup_{B(0,\tfrac{1}{2})}|u|}{\sup_{B(0,\tfrac{1}{4})}|u|}.
	\end{align*}
	Therefore
	\begin{align*}
		\N(z,1-\tfrac{|z|}{2}) \leq \N(0,1) + \N(0,\tfrac{1}{2}) + \N(0,\tfrac{1}{4}) \lesssim \N(0,1),
	\end{align*}
	where the last inequality follows from Claim \ref{claim:part_one}, and this proves the first part \eqref{eq_ap_part01} of the claim.  
	
	Let $b=B(z,\ell)$. Since $2b \subset B$, one has 
	$$2\ell \leq 1-|z| \leq 1-\tfrac{|z|}{2},$$
	and hence $2b \subset D$.  Now using Claim \ref{claim: lower bound small di} and \eqref{eq_ap_part01}, one can conclude
	\begin{align*}
		\N(z,\ell) \lesssim \N(z,1-\tfrac{|z|}{2}) \lesssim \N(0,1),
	\end{align*}
	and this proves the last  part \eqref{eq_ap_part02} of the claim.

\end{proof}

We are finally ready to prove Lemma \ref{lem: almost monotonicity}. 

\begin{proof}[Proof of Lemma \ref{lem: almost monotonicity}]
Once again, we may assume w.l.o.g. that $B=B(0,1)$. Let $b = B(x,r)$ be such that $2b \subset B$. If $|x| \leq \thalf$, then \eqref{eq_ap_part02} in Claim \ref{cl_ap_1} immediately implies the conclusion of the lemma.

We now prove the lemma when $|x| \in (\thalf,1)$. Let $x = |x| \theta$, where $\theta \in \partial B(0,1)$. Let $C$ be as in Claim \ref{cl_ap_1}. By \eqref{eq_ap_part01} of Claim \ref{cl_ap_1} one has 
\begin{align}\label{eq:app102}
	\N(\tfrac{1}{2} \theta, \tfrac{3}{4}) \leq C \N(B).
\end{align}
and 
$$\mathcal{N}\left(\tfrac{7}{8} \theta, \tfrac{9}{16}\right)\leq 	C\N(\tfrac{1}{2} \theta, \tfrac{3}{4}) \leq C^2 \N(B).$$
Recall that $2b\subset B(x,1-|x|)$. We observe that, if $|x|\in [\tfrac{1}{2},\tfrac{7}{8}]$, then 
$$2b \subset  B(x,1-|x|)\subset B\left(\tfrac{1}{2} \theta, \tfrac{1}{2}\right)\subset B\left(\tfrac{1}{2} \theta, \tfrac{3}{4}\right),$$
and $x\in \tfrac{1}{2}B(\tfrac{1}{2} \theta, \tfrac{3}{4})$. Using \eqref{eq_ap_part02} of Claim \ref{cl_ap_1} for $B(\tfrac{1}{2} \theta, \tfrac{3}{4})$ gives 
$$\mathcal{N}(b)\leq  C	\N(\tfrac{1}{2} \theta, \tfrac{3}{4}) \leq C^2 \N(B).$$
If $x\in [\tfrac{7}{8},1]$,  as illustrated in Figure \ref{fig: proof Lemma 3.6}, we have 
$$2b \subset B(x,1-|x|)\subset B\left(\tfrac{7}{8} \theta, \tfrac{1}{8}\right)\subset B\left(\tfrac{7}{8} \theta, \tfrac{9}{16}\right),$$
and $x\in \tfrac{1}{2}B(\tfrac{7}{8} \theta, \tfrac{9}{16})$. Using \eqref{eq_ap_part02} of Claim \ref{cl_ap_1} for the ball $B\left(\tfrac{7}{8} \theta, \tfrac{9}{16}\right)$  gives 
$$\mathcal{N}(b)\leq  C	\N(\tfrac{7}{8} \theta, \tfrac{9}{16})\leq C^3 \mathcal{N}(B),$$
as required. 
\end{proof}
\begin{figure}[H]
	\includegraphics[scale=.9]{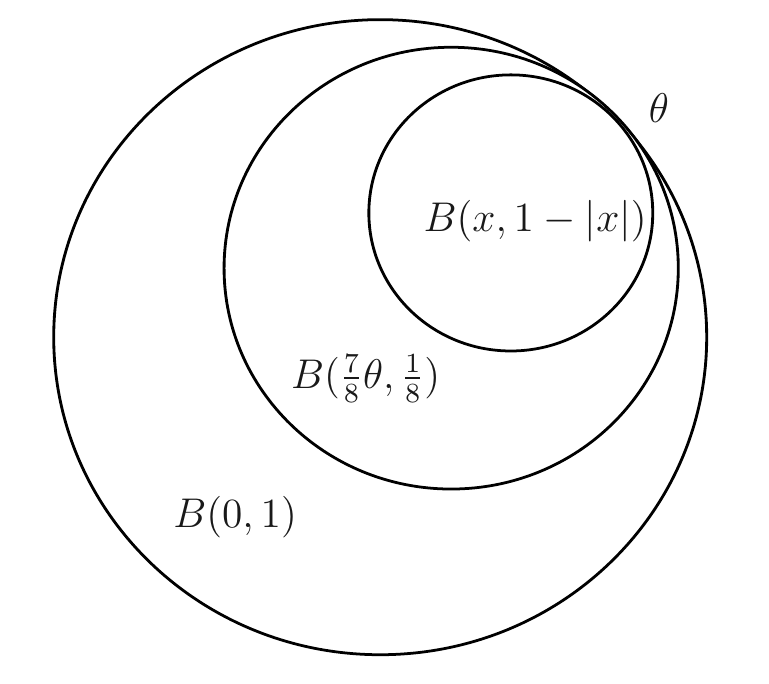}
	\centering
	\caption{Relative position of the balls in the proof of Lemma \ref{lem: almost monotonicity}}	\label{fig: proof Lemma 3.6}
\end{figure}

We end this section by showing how Proposition  \ref{prop: lemma 1} implies the following modified version of it. 
\begin{repcor}{cor_of_prop4.1} [A modified version of Proposition \ref{prop: lemma 1}] Let a unit cube $Q \subset \real^n$ be  partitioned into $K^n$ equal subcubes $\{q_{\alpha}: 1\leq \alpha \leq K^n\}$. Let $u$ be a harmonic function in $4\sqrt{n} Q$.  Let $\N^{*}(2Q) = N$. If $K \gg_n 1$, then we have
\begin{align*}
\#\{\alpha :\N^{*}(100\sqrt{n}q_{\alpha}) \geq \max(N\cdot 2^{-c\log K/\log \log K},C)\} \leq K^{n-1-c},
\end{align*}
where $c,C >0$ are dimensional constants. 
\end{repcor}

\begin{proof}[Proof of Corollary \ref{cor_of_prop4.1}] Partition $2Q$ into $L^n$ many equal subcubes $\{s_{j} : 1\leq j \leq L^n\}$, where 
\begin{align*}
L := \left \lfloor \frac{K}{100\sqrt{n}} \right\rfloor.
\end{align*}
Hence the side length of each $s_j$ is at least $200\sqrt{n}/K$. 
Let $c_0$ and $C_0$ denote the constants given by Proposition \ref{prop: lemma 1}. Using Proposition \ref{prop: lemma 1} for the cube $2Q$ with the above partition, one can conclude that if $L \gg_{n} 1$ (which is equivalent to $K \gg_{n} 1$, since $K \asymp_n L$), then
\begin{align}\label{concl}
\# \{1 \leq j \leq L^n: \N^*(s_j) > \max(N\exp(-c_0 \log L/\log \log L), C_0)\} \leq L^{n-1-c_0}. 
\end{align}
For  cubes $100\sqrt{n}q_{\alpha}$ and $s_j$, one has
\begin{align*}
\N^{*}(100\sqrt{n}q_{\alpha}) &= \sup \{\N(x,r): x\in 100\sqrt{n}q_{\alpha}\text{ and }r\leq 100n/K\},\\
\N^{*}(s_j) & \geq  \sup \{\N(x,r): x\in s_j \text{ and }r\leq 200n/K\}.
\end{align*}
It is easy to see that  every cube $100\sqrt{n}q_{\alpha}$ can be covered by cubes from $\{s_j\}_j$; and there is $M \in \nat$ such that for every $s_j$, there are at most $M$ many cubes in $\{100\sqrt{n}q_\alpha\}_{\alpha}$ which have non-trivial intersection with $s_j$. From the above discussion, it follows that $\N^*(100\sqrt{n}q_{\alpha})$ can be large only when $100\sqrt{n}q_{\alpha}$ non-trivially intersects $s_j$ for which $\N^{*}(s_j)$ is large. Hence we have
\begin{align*}
\# \{\alpha: \N^{*}(100\sqrt{n}&q_{\alpha}) \geq \max(N\exp(-c_0 \log L/\log \log L), C_0)\},\\
& \leq M \cdot  \# \{j : \N^*(s_j) \geq \max(N\exp(-c_0 \log L/\log \log L), C_0)\}.
\end{align*}
The desired conclusion now follows from \eqref{concl} since $K \asymp_n L$.

\end{proof}

\section{Few words about the proof of Proposition \ref{prop: lemma 1}}\label{sec: proof upper bound distribution}

\textbf{Maximal Doubling index for cubes.} For a given cube $Q$ in $\mathbb{R}^n$, we denote by $2Q$ the cube, which is the homothetic copy of $Q$ with same center and with homothety coefficient 2.  

In some of the multiscale arguments it is convenient to work with a maximal version $N^*_u(Q)$ of the doubling index:

$$ N^*_u(Q) = \sup\limits_{x \in Q, r \leq { \textup{diam}}(Q) } \log \frac{\max_{B(x, 2 r)}|u|}{\max_{B(x,r)}|u|} $$
because $N^*_u(Q)$ is a monotone function of a set.

\subsection{Informal guide to the proof of Proposition \ref{prop: lemma 1}}
In the proof of the lower effective bound in Nadriashvili's conjecture we used once Proposition \ref{prop: lemma 1} as a black box. Let us remind the formulation of the Proposition.

\begin{repprop}{prop: lemma 1}.
Let a cube $Q$ in $\mathbb{R}^n$ be partitioned into $A^n$ equal subcubes $Q_i$. 
 Let $u$ be a harmonic function in $2Q$. There exist numerical (depending only on the dimension $n$) constants $A_0,C > 1$ and $c>0$  such that  if $A>A_0$, then the number of $Q_i$ with $$N^*_u(Q_i)>\max(N^*_u(Q)/\exp(- c \log A/ \log \log A ), C)$$
 is smaller than $A^{n-1-c}$
 \end{repprop}

The goal of this section is to provide an informal guide for those who would like to understand the proof of the Proposition \ref{prop: lemma 1}. In order to understand the full details the reader may read pages 221-231 in \cite{Llemmas} and pages 249-254 in \cite{Lproof}, but there are only two facts about harmonic functions, which are used in the proof.
The first fact is the monotonicity property of the frequency, which is a powerful tool.
The second fact is quantitative Cauchy uniqueness property.

 The monotonicity property of the frequency and some considerations from Euclidean geometry imply the following  quasi-additive property of the  doubling index.
 
 \vspace{2mm}

\begin{lem}[Simplex lemma (simplified version)]
There exist a small number $c>0$ and and a large number $C>1$, depending on the dimension $n$, such that the following holds for any harmonic function $u$ in $\mathbb{R}^n$. Suppose that  $n+1$ points $x_1,x_2, ..., x_{n+1}$ in $\mathbb{R}^n$ form an equilateral simplex $S$ with side length $r$. 
Assume that $$\max_{1\leq i \leq n+1 }N_u(B(x_i,\rho)) \geq A,$$ where $A$ is any positive number and $\rho \leq r/2$.
Let $x$ be the center of $S$. 
Then $$ N_u(B(x,C r)) \geq A(1+c) - C.$$
\end{lem}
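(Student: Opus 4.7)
The plan is to combine the almost-monotonicity of the doubling index (Lemma \ref{lem: monotonicity doubling index}), itself a consequence of Theorem \ref{thm: monotonicity frequency}, with the elementary Euclidean geometry of the equilateral simplex $S$. After relabeling, I would assume $N_u(B(x_1,\rho)) \geq A$, and apply Lemma \ref{lem: monotonicity doubling index} at $x_1$ with a small fixed $\delta>0$ to obtain, for every $R \geq 2\rho$, the polynomial growth lower bound
\[
\sup_{B(x_1, R)} |u| \;\geq\; (R/\rho)^{A(1-\delta) - C_0} \,\sup_{B(x_1, \rho)} |u|,
\]
with $C_0 = C_0(n,\delta)$.

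The geometric input is that the circumradius of an equilateral simplex of side $r$ equals $\alpha r$ with $\alpha := \sqrt{n/(2(n+1))} \in (0,1)$, so $|x-x_i| = \alpha r$ for every $i$. This yields the nested inclusions $B(x_1, R - \alpha r) \subseteq B(x, R) \subseteq B(x_1, R + \alpha r)$ for $R \geq \alpha r$, which let me transfer growth estimates between the two centers. In particular, $\sup_{B(x, 2Cr)} |u| \geq \sup_{B(x_1, (2C-\alpha)r)}|u|$ while $\sup_{B(x, Cr)} |u| \leq \sup_{B(x_1, (C+\alpha)r)}|u|$. Taking the ratio of these two estimates and invoking the monotonicity lower bound at $x_1$ between radii $(C+\alpha)r$ and $(2C-\alpha)r$, combined with monotonicity from $\rho$ up to $(C+\alpha)r$, gives
\[
N_u(B(x, Cr)) \;\geq\; \log_2 \!\Bigl(\tfrac{2C-\alpha}{C+\alpha}\Bigr) \cdot \bigl(A(1-\delta)^2 - C_1\bigr).
\]

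The main obstacle is the strict multiplicative gain $(1+c)$. The direct monotonicity route above only delivers the baseline $N_u(B(x,Cr)) \geq A - C'$, since $\log_2((2C-\alpha)/(C+\alpha)) < 1$ for every finite $C$. To extract a strict $c > 0$ one must exploit further that the simplex has $n+1$ vertices in general position; a natural route is to iterate the above argument on a cascade of nested sub-simplices so that the geometric enhancement coming from $\alpha < 1$ compounds at each scale, or to combine it with the quantitative Cauchy uniqueness for harmonic functions flagged in Appendix \ref{sec: proof upper bound distribution}. The dimensional constants $c$ and $C$ are then pinned down by balancing the harvested geometric gain at each stage against the accumulated losses $\delta$ and $C_0$ from the applications of Lemma \ref{lem: monotonicity doubling index}, and for $A$ bounded the statement is trivial by choosing $C$ large enough.
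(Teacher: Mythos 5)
The hypothesis as printed, with a $\max$, cannot be right — and your counter-reasoning should have flagged it, because a $\max$ hypothesis makes the simplex superfluous: only one ball ever enters. With $\max$ the lemma is simply false. Take $u$ a homogeneous harmonic polynomial of degree $N$ with $x_1$ at the origin and the simplex rotated so that its center $x$ lies on a ray where $|u|$ is extremal on the sphere. Then $N_u(B(x_1,\rho))=N$ for every $\rho$, so $\max_i N_u(B(x_i,\rho)) \geq N=:A$, while for the circumradius $\alpha r$ one computes exactly
\begin{align}
N_u(B(x,Cr)) \;=\; N\log_2\frac{2C+\alpha}{C+\alpha} \;<\; N \;=\; A,
\end{align}
since $\tfrac{2C+\alpha}{C+\alpha}<2$ whenever $\alpha>0$. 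Letting $N\to\infty$ rules out every choice of constants $c>0$, $C>1$. The meaningful version of the Simplex Lemma — and the one proved in the cited source \cite{Llemmas} — requires the doubling index to be $\geq A$ at \emph{all} $n+1$ vertices, i.e.\ $\min$ in place of $\max$.

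Independent of the typo, your argument genuinely uses only one vertex, and you diagnose correctly that it then yields at best $N_u(B(x,Cr)) \gtrsim A - C'$: the exponent $\log_2\tfrac{2C-\alpha}{C+\alpha}$ never exceeds $1$. The computation above shows this cannot be cured by a sharper single-vertex estimate; the multiplicative gain must be extracted by playing the $n+1$ vertices against one another, exploiting that they span a simplex of positive width (so they cannot all lie near a single ray from $x$), together with the convexity of $\log H(x,\cdot)$ in $\log(\cdot)$. Your two proposed repairs are not on target: quantitative Cauchy uniqueness is, per the paper's own Appendix \ref{sec: proof upper bound distribution}, the ingredient behind the \emph{hyperplane} lemma, not the simplex lemma (which the paper states follows from ``monotonicity of the frequency and some considerations from Euclidean geometry''); and the iteration on nested sub-simplices is asserted but never carried out. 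Note also that the paper provides no proof of this lemma — it is imported verbatim from \cite{Llemmas} (pages 221--231) — so a complete argument is substantially longer than what Lemma \ref{lem: monotonicity doubling index} plus ball inclusions can deliver, and your attempt does not contain one.
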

 
\begin{rem}
In the statement above one can replace $n+1$ balls with centers at the vertices of the equilateral simplex by any other $n+1$ balls, whose centers form a non-degenerate simplex and the radii are bounded by the diameter of the simplex. But the constants $c>0$ and $C>1$ depend on how degenerate the simplex is. We refer to the formal statement of simplex lemma in \cite{Llemmas}, where the constants depend on the ratio of the width of the simplex and its diameter.
\end{rem}

 The second fact about harmonic functions, which is used in the proof of Proposition \ref{prop: lemma 1}, is quantitative Cauchy uniqueness property.
 
 \vspace{2mm}

\begin{lem}[Quantitative Cauchy uniqueness property]
Consider a half-ball 
$$ B_+ = \{(x,y)\in\mathbb{R}^{n-1}\times\mathbb{R}: |x|^2+y^2< 1, y> 0 \} .$$

There exist  $\alpha \in(0,1)$ and $C>0$ such that if $h$ is harmonic in $B_+$, $h\in C^1(\overline{B}_+)$ and satisfies the inequalities $|h|\le 1, |\nabla h|\le 1$ in $B_+$ and $|h|\le\varepsilon$, $|\partial_y h|\le \varepsilon$ on $\{(x,y)\in \overline{B}_+, y=0\}$, $\varepsilon\le 1$, then 
$$ |h(x)|\le C\varepsilon^\alpha \quad{\text{when}}\quad x\in\frac{1}{3}B_+.$$
\end{lem}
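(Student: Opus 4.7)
The plan is to prove this quantitative Cauchy stability by a Carleman estimate --- the classical route to Hölder stability for Cauchy problems for second-order elliptic operators. The central ingredient is a smooth weight $\varphi\colon \overline{B_+}\to\mathbb{R}$ whose maximum over $\overline{B_+}$ is attained strictly inside $\tfrac{1}{3}B_+$ and which is strictly smaller both on the annular shell $\tfrac{1}{2}B_+\setminus \tfrac{1}{3}B_+$ and on the flat boundary $\{y=0\}\cap B$. A concrete choice is $\varphi(x,y)=e^{-\lambda \rho(x,y)}$ for large $\lambda$, where $\rho$ is a smooth proxy for Euclidean distance to an exterior point $p_0$ placed on the far side of the curved portion of $\partial B_+$. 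One must verify Hörmander's strict pseudoconvexity of $\varphi$ with respect to the Laplacian; this yields the weighted inequality
\begin{align*}
\tau^3 \int_{B_+} e^{2\tau\varphi}u^2 + \tau \int_{B_+} e^{2\tau\varphi}|\nabla u|^2 \;\leq\; C\int_{B_+} e^{2\tau\varphi}(\Delta u)^2 + C\tau\int_{\{y=0\}\cap B} e^{2\tau\varphi}\bigl(u^2 + |\partial_y u|^2\bigr),
\end{align*}
valid for all $u\in C^2(\overline{B_+})$ supported in $B_+\cup(\{y=0\}\cap B)$ and all $\tau\ge \tau_0$. The boundary integral on $\{y=0\}$ is precisely where the Cauchy data will enter.

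I would apply this inequality to $u=\chi h$, where $\chi$ is a smooth cutoff equal to $1$ on $\tfrac{1}{3}B_+$ and vanishing outside $\tfrac{1}{2}B_+$, while not vanishing on $\{y=0\}\cap B$. Since $h$ is harmonic, $\Delta(\chi h)=2\nabla\chi\cdot\nabla h+h\,\Delta\chi$ is supported in $\tfrac{1}{2}B_+\setminus\tfrac{1}{3}B_+$ and bounded by a dimensional constant via $|h|,|\nabla h|\le 1$, while the Cauchy data of $\chi h$ on $\{y=0\}\cap B$ is bounded by $C\varepsilon$. Writing $m=\min_{\frac{1}{3}B_+}\varphi$, $M_1=\max_{\frac{1}{2}B_+\setminus\frac{1}{3}B_+}\varphi$, and $M_2=\max_{\{y=0\}\cap B}\varphi$, the construction of $\varphi$ guarantees $m>\max(M_1,M_2)$, and substitution gives
\begin{align*}
\tau^3 e^{2\tau m}\int_{\frac{1}{3}B_+}h^2 \;\le\; C\, e^{2\tau M_1}+C\tau\varepsilon^2\, e^{2\tau M_2}.
\end{align*}
Optimizing $\tau$ to balance the two right-hand terms yields $\int_{\frac{1}{3}B_+}h^2\le C\varepsilon^{2\alpha}$ for a dimensional $\alpha\in(0,1)$ determined by the gaps $m-M_1$ and $m-M_2$. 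An interior mean-value argument for the harmonic function $h$ then upgrades this $L^2$ bound to the desired $L^\infty$ bound (working with $(\tfrac{1}{3}+\delta)B_+$ in place of $\tfrac{1}{3}B_+$ from the outset).

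The main obstacle will be constructing $\varphi$ and verifying Hörmander's strict pseudoconvexity with respect to $\Delta$ --- a classical but technical Poisson-bracket computation. The geometry near the flat face $\{y=0\}$ is what makes this specifically a Cauchy-problem Carleman estimate (rather than a bulk one), and is what forces the careful positioning of the maximum of $\varphi$ strictly above its values on the relevant reference regions ($\tfrac{1}{2}B_+\setminus\tfrac{1}{3}B_+$, $\{y=0\}\cap B$, and the curved portion of $\partial B_+$, the last of which is rendered irrelevant by the support condition on $\chi$).
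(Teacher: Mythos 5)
The paper does not actually prove this lemma: immediately after the statement it says ``We refer to Appendix of [LMNN21] for elementary proof,'' so its ``proof'' is a citation, and the route taken there is an elementary one rather than a Carleman estimate. Your approach --- a Carleman weight adapted to the Cauchy problem, followed by optimization in $\tau$ --- is a perfectly legitimate classical alternative and would, if carried out correctly, give exactly the Hölder stability stated. So the comparison of strategies is fine.

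However, there is a genuine gap in the geometry of your weight that would make the proposed optimization collapse. You want $\varphi$ to have its maximum strictly inside $\tfrac{1}{3}B_+$ and to be strictly smaller on both the annular shell $\tfrac{1}{2}B_+\setminus\tfrac{1}{3}B_+$ and the flat face $\{y=0\}\cap B$, and then you set $m=\min_{\frac{1}{3}B_+}\varphi$ and claim $m>\max(M_1,M_2)$. This is impossible: the closure of $\tfrac{1}{3}B_+$ meets $\{y=0\}$ along the disc $\{|x|<\tfrac13, y=0\}$, so by continuity $\inf_{\frac{1}{3}B_+}\varphi\le \varphi(x,0)\le M_2$ for any such $x$, hence $m\le M_2$. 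The same continuity argument across the hemisphere $\partial(\tfrac13 B_+)\cap\{y>0\}$ forces $m\le M_1$. Moreover, the configuration is wrong for a Carleman estimate aimed at the Cauchy problem: to have the boundary integral over $\{y=0\}$ land on the \emph{right}-hand side of the estimate, one needs $\partial_\nu\varphi>0$ there, i.e.\ $\partial_y\varphi<0$, so $\varphi$ must be \emph{largest} on the data surface $\{y=0\}$ and decrease into the interior. With that correct configuration one gets $M_1<m<M_2$, and after optimizing $\tau\sim -\log\varepsilon/(M_2-M_1)$ the two right-hand terms balance at $\varepsilon^{2\alpha}$ with $\alpha=(m-M_1)/(M_2-M_1)\in(0,1)$. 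With your claimed ordering $m>\max(M_1,M_2)$ both exponentials decay and letting $\tau\to\infty$ would give $\int_{\frac13 B_+}h^2=0$ unconditionally --- manifestly false. A related internal inconsistency: your concrete choice $\varphi=e^{-\lambda\rho}$ with $\rho$ the distance to a point $p_0$ beyond the curved part of $\partial B_+$ is maximized on that curved boundary, not strictly inside $\tfrac13 B_+$; to get the right geometry $p_0$ should be placed \emph{below} $\{y=0\}$. Finally, even with the correct weight, the sublevel set $\{\varphi>m\}$ is a lens adjacent to $\{y=0\}$, which does not cover all of $\tfrac13 B_+$ in one step; one must either shape and iterate the estimate, or enlarge the working region and shrink the target, and the reduction from $L^2$ to $L^\infty$ (and the boundary layer near $\{y=0\}$) needs a short separate argument using the assumed gradient bound. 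These are standard but not automatic fixes, and as written the argument would not go through.
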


 We refer to Appendix of \cite{LMNN21} for elementary proof. Quantitative Cauchy uniqueness property is used to prove another quasi-additive property of the frequency.
 
 \vspace{2mm}

\begin{lem}[Hyperplane lemma (simplified version)]
 There exists a large integer  $K>100$ and $C>1$ such that the following holds. Let $u$ be a harmonic function in $\mathbb{R}^3$. 
Consider a finite lattice of points $$L_K:=\{ (i,j,0): i=- K, \dots, K, j=-K, \dots, K \}.$$
If  $N(B(x,1)) \geq A$ for each $x\in L_K$ and some number $A>C$, then $$N(B(0,K))\geq 2A.$$
\end{lem}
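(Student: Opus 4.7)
We argue by contradiction. Suppose $\mathcal{N}(B(0,K)) < 2A$ and normalize so that $\sup_{B(0,K)}|u| = 1$; then $M' := \sup_{B(0,2K)}|u| < 2^{2A}$. Since $L_K$ lies inside a hyperplane, the simplices formed by its points are degenerate and the simplex lemma cannot be applied, so the plan is to invoke the Cauchy uniqueness property instead. The strategy has three steps: (i) from the hypothesis and almost monotonicity of the doubling index, deduce that $|u|$ and $|\nabla u|$ are exponentially small in $A$ on a disk in the hyperplane; (ii) apply the quantitative Cauchy uniqueness property in an upper half-ball to propagate this smallness to an honest open ball sitting strictly above the hyperplane; (iii) combine this with a three-ball inequality at the center of that ball to contradict $\sup_{B(0,K)}|u| = 1$.

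For step (i), fix $x\in L_K$ with $|x|\leq K/2$, so that $B(x, K)\subset B(0, 2K)$. Combining $\mathcal{N}(B(x,1))\geq A$ with Lemma~\ref{lem: monotonicity doubling index} between radii $1$ and $K$ yields
\begin{equation*}
\sup_{B(x,1)}|u|\;\leq\; K^{-A(1-\delta)+C_1}\sup_{B(x,K)}|u|\;\leq\; K^{-A(1-\delta)+C_1}\,2^{2A}.
\end{equation*}
Choosing $K$ larger than a fixed dimensional constant absorbs the $2^{2A}$ factor and gives $\sup_{B(x,1)}|u|\leq K^{-c_0 A}$ for some absolute $c_0>0$. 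These unit balls cover a $1$-neighbourhood of $[-K/2,K/2]^2\times\{0\}$, so Claim~\ref{claim:grad_estimates} in turn yields $|\nabla u|\lesssim K^{-c_0 A}$ on the flat disk $\{x_3=0\}\cap B(0, K/3)$.

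For step (ii), rescale the upper half-ball $B_+(0,K/3)$ to the unit half-ball $B_+$. Interior gradient estimates for $u$ (harmonic on $\mathbb{R}^3$) control $|\nabla u|$ on $B_+(0,K/3)$ after a suitable overall normalization, so the hypotheses of the quantitative Cauchy uniqueness property are met; the rescaling inflates the normal-derivative bound by a factor of $K$, giving effective Cauchy data size $\varepsilon\asymp K^{1-c_0 A}$, which is $\leq 1$ for $A$ above a threshold. Cauchy uniqueness then provides $|u|\leq \eta:=C\,K^{\alpha(1-c_0 A)}$ on the rescaled $(1/3)B_+$, and in particular on some ball $B(p_0, r_1)$ with $p_0=(0,0,cK)$ and $r_1\asymp K$ chosen so that $B(p_0, r_1)$ lies strictly inside the open upper half-space.

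For step (iii), pick $r_m, r_2\asymp K$ with $r_1<r_m<r_2$, $B(0,K)\subset B(p_0,r_m)$ and $B(p_0,r_2)\subset B(0,2K)$; e.g.\ $r_m = K+|p_0|$ and $r_2 = 2K-|p_0|$. Monotonicity of the frequency (Theorem~\ref{thm: monotonicity frequency}) implies that $\log H(p_0,\cdot)$ is convex in $\log r$, which together with elliptic estimates yields the three-ball inequality
\begin{equation*}
\log \sup_{B(p_0,r_m)}|u|\;\leq\;\theta\log\sup_{B(p_0,r_1)}|u|+(1-\theta)\log\sup_{B(p_0,r_2)}|u|+O(1),
\end{equation*}
with $\theta:=\log(r_2/r_m)/\log(r_2/r_1)\in(0,1)$ a fixed dimensional constant. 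Substituting the lower bound $\log\sup_{B(p_0,r_m)}|u|\geq\log\sup_{B(0,K)}|u|=0$, the Cauchy bound $\log\sup_{B(p_0,r_1)}|u|\leq\log\eta = \alpha(1-c_0A)\log K+O(1)$, and $\log\sup_{B(p_0,r_2)}|u|\leq\log M'<2A\log 2$, rearrangement gives
\begin{equation*}
\theta\,c_0\alpha\,A\log K\;\leq\;2(1-\theta)\,A\log 2+O(\log K).
\end{equation*}
For $K$ exceeding a constant depending only on $c_0,\alpha,\theta$ and for $A$ above a threshold $C$, this fails, yielding the desired contradiction.

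\textbf{Main obstacle.} The delicate point is calibrating the Cauchy uniqueness step: the half-ball on which it is applied has radius $\asymp K$, while the Cauchy data is only controlled on unit scale, so the rescaling to $B_+$ inflates the normal-derivative bound by a factor of $K$. The argument closes only because the exponential-in-$A$ gain $K^{-c_0A}$ extracted from the doubling hypothesis beats both this $K$ factor and the $2^{2A}$ growth factor once $K$ and $A$ exceed their respective dimensional thresholds; consequently the choice of the three auxiliary radii $r_1,r_m,r_2$ at $p_0$ must be made carefully so that the weight $\theta$ in the three-ball inequality is a \emph{fixed} constant, independent of $K$ and $A$.
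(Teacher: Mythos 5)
Your proposal is correct, and it reconstructs essentially the argument that the paper is pointing at. Note that the paper does not actually prove the hyperplane lemma here: the statement appears only in the ``Informal guide to the proof of Proposition \ref{prop: lemma 1}'' in Appendix~\ref{sec: proof upper bound distribution}, where the reader is sent to pages 221--231 of \cite{Llemmas} for the details, and the paper merely flags that the quantitative Cauchy uniqueness property is the engine behind it. Your three-step scheme is exactly that engine at work: (i) use the hypothesis $\mathcal{N}(B(x,1))\geq A$ together with the bound $\sup_{B(0,2K)}|u|<2^{2A}\sup_{B(0,K)}|u|$ and almost monotonicity on concentric balls $B(x,1)\subset B(x,K)$ to make $u$ and $\nabla u$ exponentially small (in $A$) on a slab of the hyperplane; (ii) feed this as Cauchy data into the quantitative Cauchy uniqueness property on a rescaled half-ball to make $u$ small on an honest ball $B(p_0,r_1)$ in the open upper half-space; (iii) contradict the normalization $\sup_{B(0,K)}|u|=1$ with a log-convexity (three-ball) estimate centered at $p_0$, feeding in the bounds on $B(p_0,r_1)$ and $B(p_0,r_2)\subset B(0,2K)$. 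This matches the structure of Logunov's proof of the hyperplane lemma. The bookkeeping is also sound: the weight $\theta$ depends only on the fixed ratios $r_1:r_m:r_2$, and the final inequality $\theta c_0\alpha A\log K\lesssim (1-\theta)A+O(\log K)$ does fail once $K$ and then $A$ exceed dimensional thresholds. Two small cosmetic points worth tightening if you write this up in full: the unit balls $B((i,j,0),1)$ cover only a $(1/\sqrt{2})$-neighbourhood of the flat lattice, not a full $1$-neighbourhood, which is harmless since you only need to slightly shrink the radius used in Claim~\ref{claim:grad_estimates}; and the normalization constant $M$ in the Cauchy uniqueness step should be explicitly bounded, $M\leq O(1)$ from $\sup_{B(0,K)}|u|=1$ and the interior gradient estimate, so that $\eta\lesssim K^{\alpha(1-c_0A)}$ really holds.
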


\begin{rem}
In the statement of the hyperplane lemma 
 one can replace the finite lattice of balls $B(x,1)$, $x \in L_K $  by any collection of balls $B(p_x,r)$,$x \in L_K $, with the centers $p_x \in B(x,1)$ and any radius $r\leq 1$, and the statement will remain true.
 In the $n$-dimensional version of hyperplane lemma one should take $(n-1)$-dimensional lattice of balls.
 \end{rem}
 
 Hyperplane lemma and simplex lemma can be combined together to get the following useful fact about distribution of doubling indices.
 
\begin{thm}[Simplified version of Theorem 5.1 from \cite{Llemmas}]
  Let $u$ be a harmonic function in $\mathbb{R}^n$.   There exist numerical  (depending only on the dimension $n$) constants $A_0 \in \mathbb{N}$, $C > 1$ and $c_1>0$  such that  if we partition a cube $Q$ in $\mathbb{R}^n$  into $A_0^n$ equal subcubes $Q_i$, then the number of $Q_i$ with $$N^*_u(Q_i)>\max(N^*_u(Q)/(1+c_1), C)$$
 is smaller than $\frac{1}{2} A_0^{n-1}$.
\end{thm}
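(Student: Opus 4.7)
The plan is to argue by contradiction using the simplex and hyperplane lemmas recorded above. Set $A := \max(N^*_u(Q)/(1+c_1), C)$, call a subcube $Q_i$ \emph{bad} if $N^*_u(Q_i) > A$, and suppose for contradiction that more than $\tfrac{1}{2}A_0^{n-1}$ of the $Q_i$ are bad. By definition of the maximal doubling index, each bad $Q_i$ contains a ball $b_i$ of radius at most $\mathrm{diam}(Q)/A_0$ with $N_u(b_i) \geq A$. I view the bad subcubes as a subset of the $A_0^n$-grid indexing the partition, and set up a geometric dichotomy on this subset.

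\textbf{Dichotomy.} Either (a) there exist $n+1$ bad cubes whose centers form a simplex of width $\gtrsim \mathrm{diam}(Q)$ (hence comparable to its diameter), or (b) no such fat simplex exists, in which case an elementary Euclidean argument shows that all bad cubes lie in a slab of width $o(\mathrm{diam}(Q))$ around some affine hyperplane $H \subset \mathbb{R}^n$.

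\textbf{Case (a): simplex branch.} Applying the simplex lemma (in the version indicated by the remark, which allows the ball radii to be bounded by the diameter of the simplex rather than being exactly the side length) to the balls $b_{i_1}, \dots, b_{i_{n+1}}$, I obtain a ball $B' \subset 2Q$ centered near the centroid with
\[
N_u(B') \geq A(1+c) - C,
\]
for the dimensional $c>0$ from the lemma. Choosing $c_1 < c/2$ and taking the constant $C$ in the theorem statement large enough that $A \geq C$, this forces $N^*_u(Q) \geq A(1+c_1)$, contradicting the definition of $A$.

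\textbf{Case (b): hyperplane branch.} The bad cubes — at least $\tfrac{1}{2}A_0^{n-1}$ in number — project essentially onto an $(n-1)$-dimensional hyperplane $H$. By a pigeonhole in the projection, provided $A_0$ is chosen large compared to the constant $K$ from the hyperplane lemma, I can extract among them a configuration of $K^{n-1}$ bad cubes whose centers, after rescaling by $A_0/K$, approximate the standard lattice $L_K$ sitting in $H$. Applying the $n$-dimensional hyperplane lemma (again using the remark, which permits the balls to be perturbations of the lattice balls and of radius up to the lattice spacing) produces a ball $B'' \subset 2Q$ with
\[
N_u(B'') \geq 2A,
\]
once more contradicting $N^*_u(Q) = (1+c_1)A$ as soon as $c_1 < 1$.

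The main obstacle I expect is the dichotomy itself: quantifying ``not too degenerate simplex versus lying near a hyperplane'' in a way that is compatible with the width/diameter ratios required by the simplex lemma, and then executing the combinatorial pigeonhole in case (b) to extract a genuine $K^{n-1}$-lattice of bad cubes (and not merely a set of the right cardinality that happens to lie near $H$). This step is what dictates the choice of $A_0$: it must be large in terms of $K$, the degeneracy tolerance of the simplex lemma, and the additive constants coming from almost-monotonicity of the doubling index when passing between $b_i$, $B'$, $B''$ and $Q$.
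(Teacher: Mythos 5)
Your outline is the same high-level strategy the paper itself gestures at for this theorem: argue by contradiction, use the simplex lemma to eliminate the case of a ``fat'' simplex among the bad cubes, and invoke the hyperplane lemma in the complementary case where they cluster near a hyperplane. This matches the paper's two-sentence informal guide (Appendix~\ref{sec: proof upper bound distribution}); note, however, that the paper does not prove this statement at all --- it states it as a citation from \cite{Llemmas} and explicitly refers the reader there for details.

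The gap you flag in case~(b) is genuine, and it is sharper than you acknowledge. Suppose no $n+1$ bad-cube centers form a simplex whose width is at least $\delta$ times its diameter; then the centers lie in a slab of thickness roughly $\delta A_0$ subcubes around a hyperplane $H$. Crucially, the degeneracy threshold $\delta$ is a \emph{fixed} dimensional constant --- it cannot be pushed to zero, because the constants $c$ and $C$ in the simplex lemma deteriorate as the permitted simplices become flatter (see the remark following the simplex lemma). Such a slab contains of order $\delta A_0^n$ subcubes, while only $\tfrac{1}{2}A_0^{n-1}$ of these are assumed bad: a fraction that tends to zero as $A_0 \to \infty$. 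In particular, the projections of the bad-cube centers onto $H$ need not occupy even a positive proportion of the $(n-1)$-dimensional grid, since each projected position can account for up to $\sim\delta A_0$ bad cubes stacked in the transverse direction. Consequently, a single pigeonhole on the projection cannot produce the required $K^{n-1}$-lattice, no matter how large $A_0$ is taken relative to $K$. The argument in \cite{Llemmas} applies the simplex lemma at a cascade of scales together with a more careful combinatorial count of the bad cubes, and it is precisely this iteration --- not a one-shot pigeonhole --- that forces the bad cubes into a full lattice pattern to which the hyperplane lemma can be applied. That iteration is the actual content your sketch would still need to supply.
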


 In the proof Theorem 5.1 from \cite{Llemmas} the following intuitive principle is used. The simplex lemma implies that ``bad" $Q_i$ with $N^*_u(Q_i) > N^*_u(Q)/(1+c_1)$ should be close to some $(n-1)$-dimensional hyperplane, and then  one can find a finite ``lattice" of ``bad" cubes  and arrive to contradiction with the hyperplane lemma. We refer to \cite{Llemmas} for details.
 
 \vspace{2mm}

Proposition \ref{prop: lemma 1} follows from Theorem 5.1 from \cite{Llemmas} by iterative subdivision of $Q$ into smaller and smaller cubes. One can think that $A=A_0^k$,  and we subsequently chop cubes into $1/A_0$ smaller subcubes and each time we apply Theorem 5.1 to get more information on the distribution of doubling indices.
The formal iterative argument is given on pages 249-254 in \cite{Lproof}.

\section{Donnelly and Fefferman's bound for the doubling index}
\label{sec: Appendix DF}
The aim of this section is to show how Proposition \ref{lem: half of the doubling index} follows from the work of Donnelly and Fefferman on complex extensions of real-analytic functions, more precisely from \cite[Proposition 5.11]{DF88}. We now recall the statement of Proposition \ref{lem: half of the doubling index}:

\begin{repprop}{lem: half of the doubling index}
\textit{Let $Q$ be a unit cube in $\real^n$ and let $u$ be a harmonic function in $4\sqrt n \cdot Q$. Let $\varepsilon>0$ and 	let  $Q$  be partitioned into  equal subcubes $\{Q_i\}_i$ of side length  $\asymp_n\tfrac{c}{\N_u^*(Q)}$ for some sufficiently small $c=c(n,\varepsilon)>0$.  There exists a constant $C = C(n,\varepsilon) > 1$ such that 
	$$\N^*(Q_i)\leq C$$
for at least $(1-\varepsilon)$ portion of the $Q_i$. }
\end{repprop}

Proposition \ref{lem: half of the doubling index} follows with the help of the following fact about holomorphic function \cite[Proposition 5.11]{DF88}:
\begin{prop}
	\label{prop: DF}
	Let $k$ be a sufficiently large integer depending on $n$. Let $G(z)$ be holomorphic in $|z|\leq 3^k$, $z\in \mathbb{C}^n$, and satisfying 
	$$\max_{|z|\leq 2^k} |G(z)|\leq |G(0)| \exp (D),$$
	for some $D\geq 1$ sufficiently large depending on $n$. Assume that $G(z)$ is real and non-negative for real $x\in Q_0:=\{(x_1,...,x_n)\in \mathbb{R}^n: |x_i|\leq 1, \hspace{2mm} 1\leq i\leq n \}$. Suppose that $R\subset \tfrac{1}{2}Q_0$ is a sub-cube and subdivide $R$ into equal sub-cubes $R_i$ of side length $\asymp_n \frac{1}{D}$. Let $\varepsilon>0$ be given. Then there exists a subset $E\subset Q_0$ of measure $\vol(E)\leq \varepsilon$ and some constant $C=C(n,\varepsilon)\geq 1$ such that 
	\begin{align}
		\left|\log G(x)- \log \left(\frac{1}{\vol (R_i)}\int_{R_i} G \hspace{2mm}d\vol \right)\right|\leq C \hspace{5mm} \text{for all} \hspace{3mm} x\in R_i\backslash E. \nonumber
	\end{align}
\end{prop}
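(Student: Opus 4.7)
The plan is to exploit that $\log|G|$ is plurisubharmonic on $\{|z|\le 3^k\}$ and use Riesz's representation theorem to split it into a potential generated by the zero set of $G$ and a pluriharmonic remainder. More precisely, choose an intermediate radius $\rho$ with $2^k\ll \rho\ll 3^k$ and write, for $z$ in the ball $B_\rho:=\{|z|<\rho\}$,
\[
\log|G(z)| \;=\; \int_{B_\rho} K(z,w)\,d\mu(w) \;+\; h(z),
\]
where $\mu$ is the Riesz measure of $\log|G|$ (so $\mu$ is essentially the current of integration on the zero set, counted with multiplicity), $K$ is the Green-type kernel for the complex Laplacian on $B_\rho$, and $h$ is pluriharmonic on $B_\rho$. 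The growth assumption $\max_{|z|\le 2^k}|G|\le |G(0)|e^D$ together with Jensen's formula yields $\mu(B_\rho)\lesssim_n D$, while the same bound forces $\|h\|_{\mathrm{osc},\frac12 Q_0}\lesssim_n D$ as well, and standard interior estimates for pluriharmonic functions give $\|\nabla h\|_{L^\infty(\frac12 Q_0)}\lesssim_n D$.

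Next I would pass from $\log|G|$ to the average-comparison statement. For the pluriharmonic part $h$, the gradient bound above yields $|h(x)-h(y)|\lesssim_n D|x-y|\lesssim_n 1$ whenever $x,y$ lie in a common subcube $R_i$ of side $\asymp_n 1/D$; in particular $h(x)$ and its $R_i$-average agree up to an $O(1)$ additive constant with no exceptional set needed. The whole problem therefore reduces to controlling the logarithmic potential
\[
U(x)\;:=\;\int_{B_\rho}K(x,w)\,d\mu(w), \qquad x\in R_i,
\]
showing that (a) $U(x)$ is close to its $R_i$-average outside a small exceptional set $E$, and (b) the $R_i$-average of $G$ is close to the exponential of the $R_i$-average of $\log G$, again up to an $O(1)$ error.

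For (a), restrict attention to subcubes $R_i$ whose \emph{zero mass} $\mu(10\sqrt n R_i)$ is $\le M$ for a large constant $M=M(n,\varepsilon)$. By Chebyshev and the global bound $\mu(B_\rho)\lesssim_n D$, the total volume of the remaining ``dense'' subcubes is $\lesssim_n D/M$ times the volume of a single subcube, which can be made $\le \varepsilon/2$ by choosing $M$ large. On each good $R_i$, split $\mu=\mu_{\text{near}}+\mu_{\text{far}}$ with $\mu_{\text{near}}$ supported in $10\sqrt n R_i$. The potential of $\mu_{\text{far}}$ is Lipschitz on $R_i$ with constant $\lesssim_n D$ (by differentiation under the integral, since the kernel is smooth away from $w=z$), so it contributes $O(1)$ to the oscillation and to the difference with its mean. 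For $\mu_{\text{near}}$ one uses a weak-type estimate: the set
\[
E_i:=\Bigl\{x\in R_i:\bigl|U_{\mu_{\text{near}}}(x)-\langle U_{\mu_{\text{near}}}\rangle_{R_i}\bigr|>C(n,\varepsilon)\Bigr\}
\]
has volume $\le (\varepsilon/2)\,\vol(R_i)$ provided $C(n,\varepsilon)$ is chosen large, because the logarithmic kernel lies in BMO and its sublevel sets behave by the John--Nirenberg inequality (applied in $\mathbb R^{2n}$ and restricted to the real slice). Summing the bad $R_i$ and the union of $E_i$ produces the required exceptional set $E$ of measure $\le\varepsilon$.

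Finally, for (b), note that Jensen's inequality gives $\exp\langle \log G\rangle_{R_i}\le \langle G\rangle_{R_i}$ automatically, so only the reverse inequality requires work. This follows from the fact that on a good $R_i$ the oscillation of $\log G$ is bounded outside $E_i$: writing $G=e^{\log G}$ and splitting the integral over $R_i\setminus E_i$ and $E_i$, the first part is comparable to $\exp\langle\log G\rangle_{R_i}$ by the uniform oscillation bound, while the second is controlled by a distributional/BMO tail bound because $\log G$ has exponentially integrable tails (John--Nirenberg again). Combining the three steps yields the desired $\bigl|\log G(x)-\log\langle G\rangle_{R_i}\bigr|\le C(n,\varepsilon)$ for $x\in R_i\setminus E$. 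The main obstacle is the last averaging step: $\log G$ can equal $-\infty$ on the zero set of $G$, so one must genuinely use a BMO/John--Nirenberg-type estimate (rather than a pointwise oscillation bound) to control the contribution of small neighborhoods of zeros to $\langle G\rangle_{R_i}$ without spoiling the additive $O(1)$ constant.
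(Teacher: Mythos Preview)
The paper does not prove this proposition: it is quoted verbatim from Donnelly--Fefferman \cite[Proposition~5.11]{DF88} and used as a black box to derive Corollary~\ref{cor: DF} and then Proposition~\ref{lem: half of the doubling index}. So there is no ``paper's own proof'' to compare against; your sketch is an attempt to reprove the Donnelly--Fefferman result itself.

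Your outline has the right architecture (Riesz decomposition of $\log|G|$, trivial control of the harmonic part on scale $1/D$, BMO/John--Nirenberg for the potential part), but there is a genuine gap at the restriction step. You invoke John--Nirenberg ``applied in $\mathbb R^{2n}$ and restricted to the real slice''. This does not work: the real cube $Q_0\subset\mathbb R^n$ has $2n$-dimensional Lebesgue measure zero inside $\mathbb C^n\simeq\mathbb R^{2n}$, so a small-measure exceptional set in $\mathbb R^{2n}$ says nothing about its trace on $Q_0$. Likewise, the Riesz measure $\mu$ is supported on the complex zero divisor (real codimension $2$), and the potential $\int K(x,w)\,d\mu(w)$ with $x$ constrained to $\mathbb R^n$ is not a convolution on $\mathbb R^n$, so the standard ``$\log|\cdot|$ is in BMO'' argument is not directly available there either.

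The way Donnelly--Fefferman get around this is a slicing argument: fix all but one real coordinate, extend that coordinate to a complex disk, and apply one-variable Nevanlinna/Jensen theory on each such complex line. On a single complex line the restriction of $\log|G|$ is genuinely a one-variable subharmonic function, the zero-counting is controlled by $D$, and the resulting one-dimensional BMO estimate lives on a real interval inside $Q_0$. Fubini over the transverse $(n-1)$ real directions then produces an exceptional set of small $n$-dimensional volume in $Q_0$. Your Riesz/BMO strategy can be made to work, but only after this reduction to complex dimension one; as written, the $\mathbb R^{2n}$-to-$\mathbb R^n$ restriction is the missing idea.
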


\begin{cor}
		\label{cor: DF}
		Let $k$ be a sufficiently large integer depending on $n$. Let $G(z)$ be holomorphic in $|z|\leq 3^k$, $z\in \mathbb{C}^n$, and satisfying 
		$$\max_{|z|\leq 2^k} |G(z)|\leq |G(0)| \exp (D),$$
		for some $D\geq 1$ sufficiently large depending on $n$. Assume that $G(z)$ is real and non-negative for real $x\in Q_0:=\{(x_1,...,x_n)\in \mathbb{R}^n: |x_i|\leq 1, \hspace{2mm} 1\leq i\leq n \}$. Let $\delta>0$, suppose that $R\subset \tfrac{1}{2}Q_0$ is a sub-cube and subdivide $R$ into equal sub-cubes $R_i$ of side length $\lesssim_n \frac{c}{D}$ for some sufficiently small $c=c(n,\delta)>0$. Then we have  
		\begin{align}
			\int_{R_i} G \hspace{2mm}d\vol \asymp_{\delta} \int_{2R_i} G \hspace{2mm}d\vol \label{eq: c2.1}
		\end{align}
		for $(1-\delta)$ portion of the $R_i$.
\end{cor}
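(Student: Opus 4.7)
The plan is to apply Proposition~\ref{prop: DF} at a scale slightly coarser than that of the $R_i$'s, using several shifted partitions, and then leverage the pointwise comparability of $G$ to its cube average on each partition cube.

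First I would observe that replacing $D$ by $D':=D/c$ preserves the growth hypothesis $\max_{|z|\leq 2^k}|G|\leq |G(0)|\exp(D')$, so Proposition~\ref{prop: DF} may be applied with sub-cubes of side length $3s\asymp_n 1/D'$, where $s\leq c/D$ is the side length of the $R_i$'s. The key geometric point is that in a single partition at scale $3s$ only the $1/3^n$ fraction of $R_i$'s sitting at the \emph{center} of some partition cube $T$ satisfy $2R_i\subset T$, so I would use $3^n$ shifted partitions: for each shift $\vec v\in\{0,s,2s\}^n$, partition a cube $R^+\subset \tfrac12 Q_0$ containing $R$ (possible for $c$ small enough, since $s\ll 1$) into cubes $\{T_j^{(\vec v)}\}_j$ of side $3s$ with offset $\vec v$. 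A direct check on grid positions shows that for every $R_i$ there is a unique shift $\vec v_i$ making $R_i$ the central sub-cube of some $T_{j(i)}^{(\vec v_i)}$, and in particular $2R_i\subset T_{j(i)}^{(\vec v_i)}$.

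Applying Proposition~\ref{prop: DF} to each of the $3^n$ shifted partitions with a small parameter $\varep>0$ yields exceptional sets $E^{(\vec v)}\subset Q_0$ with $\vol(E^{(\vec v)})\leq \varep$ and a constant $C=C(n,\varep)$ such that, writing $A_T:=\vol(T)^{-1}\int_T G\,d\vol$, one has $e^{-C}A_{T_j^{(\vec v)}}\leq G(x)\leq e^{C}A_{T_j^{(\vec v)}}$ for every $x\in T_j^{(\vec v)}\setminus E^{(\vec v)}$. Set $E:=\bigcup_{\vec v} E^{(\vec v)}$, so $\vol(E)\leq 3^n\varep$. Call $R_i$ \emph{bad} if $\vol(R_i\cap E)>\tfrac12\vol(R_i)$; a Markov-type count shows that the fraction of bad $R_i$'s among the $\vol(R)/\vol(R_i)$ total is at most $2\cdot 3^n\varep/\vol(R)$. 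Choosing $\varep:=\delta\vol(R)/(2\cdot 3^n)$ makes this fraction at most $\delta$, and the resulting $C=C(n,\varep)$ depends only on $n,\delta$ (and the fixed data $R$). For each good $R_i$, setting $T:=T_{j(i)}^{(\vec v_i)}$, the pointwise lower bound on $R_i\setminus E$ gives
\[
\int_{R_i} G\,d\vol\ \geq\ e^{-C}\vol(R_i\setminus E)\,A_T\ \geq\ \tfrac12 e^{-C}\vol(R_i)\,A_T,
\]
whereas the inclusion $2R_i\subset T$ together with $G\geq 0$ yields the crude upper bound $\int_{2R_i}G\leq \int_T G = \vol(T)\,A_T = 3^n\vol(R_i)\,A_T$. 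Combining, $\int_{2R_i}G\leq 2\cdot 3^n e^C\int_{R_i}G$, while the reverse inequality $\int_{R_i}G\leq \int_{2R_i}G$ is immediate from $G\geq 0$. This establishes \eqref{eq: c2.1} on the $(1-\delta)$ portion of good $R_i$'s.

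The main (and only) obstacle here is combinatorial bookkeeping --- arranging the $3^n$ shifts so that every $R_i$ is central in exactly one of them, and tracking the dependence of constants on $n$ and $\delta$. No new analytic input beyond Proposition~\ref{prop: DF} is needed; the smallness of $c$ is used only to accommodate the slight enlargement $R^+\subset\tfrac12 Q_0$ and to validate the scale hypothesis of Proposition~\ref{prop: DF} at the rescaled doubling constant $D'=D/c$.
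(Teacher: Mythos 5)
Your proof is correct, and its overall strategy coincides with the paper's: apply Proposition~\ref{prop: DF} at a scale coarser than that of the $R_i$'s so that $2R_i$ is contained inside a single coarse cube $T$, use the pointwise comparability of $G$ with its average over $T$ to lower-bound $\int_{R_i}G$, and upper-bound $\int_{2R_i}G$ by $\int_T G$, while a Markov/volume count controls the fraction of $R_i$ spoiled by the exceptional set. The implementation differs in two minor ways. The paper invokes Proposition~\ref{prop: DF} at \emph{two} scales ($\asymp 1/D$ and $\asymp a/D$ with a small $a=a(\delta)$), and bridges the two via a common non-exceptional point $x\in R_j'\cap R_i$; the bad $R_j'$ near the boundary of the coarse grid are absorbed by taking $a$ small. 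You instead fix the scale ratio at $3$ and use $3^n$ shifted coarse partitions so that every $R_i$ is central in exactly one coarse cube, which eliminates the boundary issue entirely and requires only a single scale of application of Proposition~\ref{prop: DF} (repeated over the $3^n$ shifts). Your version also replaces the paper's ``has at least one non-exceptional point'' criterion by ``at most half of $R_i$ is exceptional'', since you only have the coarse-scale pointwise bound to integrate over $R_i\setminus E$; this is a valid trade-off. Both routes produce a constant depending on $n$, $\delta$, and implicitly on a lower bound for $\vol(R)$ (through the choice of $\varepsilon$), a dependence the paper also has and which is harmless in the applications where $R$ has dimensional size.

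One small imprecision worth noting: you write $D':=D/c$ as a fixed choice, but the correct statement is that $D'$ should be chosen $\asymp_n 1/(3s)$ where $s$ is the actual side-length of the $R_i$; since the hypothesis only gives $s\lesssim_n c/D$, the value of $s$ (and hence $D'$) is determined by the chosen partition and may be much smaller than $c/D$. As long as $s$ is small enough that $D'\geq D$, the growth hypothesis of Proposition~\ref{prop: DF} is inherited, which is what you in fact use; so the argument is sound, the phrasing is just slightly off.
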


\begin{proof}

We apply Proposition \ref{prop: DF} twice, for $D$ and $D'\gg D$ instead of $D$, to obtain two sets $E$ and $E'$ of volume $\leq \varepsilon$ (to be chosen later) and two collections of sub-rectangles $R_i$ and $R_i'$   of side length $\asymp\tfrac{1}{D}$ and $\asymp\tfrac{a}{D}$, correspondingly, for some $a$ to be chosen later  subject to  $0<a\ll 1$.   For all $x \in R \backslash (E \cup E')$, $x\in R_i \cap R_j'$ for some $i,j$, and  
	$$\int_{R_i} G \hspace{2mm}d\vol \asymp G(x)\vol(R_i)\asymp  G(x)\vol(R_j')\asymp \int_{R_j'} G \hspace{2mm}d\vol,$$
	where the constant implied in the $\asymp$ notation depend on $n,\varepsilon,a$. We will call $R_j'$ \emph{good} if $2R_j'\subset R_i$ for some $R_i$ and 	$R_j'$ has at least one point in $R\backslash (E\cup E')$. The portion of good $R_j'$ is at least $(1-\delta)$ if $\varepsilon$ and $a$ are small enough (depending on $\delta$). For good $R_j'$ we have 
	$$\int_{2R_j'} G \hspace{2mm}d\vol \leq \int_{R_i} G \hspace{2mm}d\vol \asymp \int_{R_j'} G \hspace{2mm}d\vol,$$
	as required. 
\end{proof}

\begin{rem}
\label{rem: cor DF}
One may change conclusion \eqref{eq: c2.1} to
$$	\int_{R_i} G \hspace{2mm}d\vol \asymp \int_{\Gamma R_i} G \hspace{2mm}d\vol,$$
fo any parameter $\Gamma\geq 2$, and the statement of Corollary \ref{cor: DF} will remain true with $c$ and the constant implied in the $\asymp$ notation also depending on $\Gamma$ (and $\delta>0$). 
\end{rem}

In order to use Proposition \ref{prop: DF} in the proof of Lemma \ref{lem: half of the doubling index}, we will need a claim about the complex extension of a harmonic function $u$, denote by $u^{\mathbb{C}}$, to a subset of $\mathbb{C}^n$.
\begin{claim}
	\label{claim: bound on complexification}
 There exists some constant $C=C(n)\geq 1$ such that for any  harmonic function $u$ on a ball $B\subset \mathbb{R}^n$ centered at $0$ the following holds. There exists a holomorphic function $u^{\mathbb{C}}$ defined in
 $$\Omega:=\left\{x+ iy \in \mathbb{C}^n: x\in \tfrac{1}{5}B \hspace{2mm}, y\in \tfrac{1}{5}B\right\}$$
 such that $u^{\mathbb{C}}(x)= u(x)$ for all $x\in \tfrac{1}{5}B$ and 
 $$\sup_{\Omega} |u^{\mathbb{C}}| \leq C \sup_{B} |u|.$$ 
\end{claim}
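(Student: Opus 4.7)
The plan is to construct $u^{\mathbb{C}}$ by complexifying the Poisson integral representation of $u$ on a slightly smaller concentric ball. By rescaling, I may assume $B = B(0,1)$. I fix an intermediate radius, for instance $r = 1/2$. Since $u$ is harmonic on $B(0,1)$, it is continuous on $\overline{B(0,r)}$, and the Poisson formula gives, for real $x \in B(0,r)$,
\[
u(x) = \int_{\partial B(0,r)} \frac{r^2 - |x|^2}{r\,\omega_n\, |x-\zeta|^n}\, u(\zeta)\, d\sigma(\zeta),
\]
where $\omega_n$ is the surface area of the unit sphere. The natural candidate for the complexification is obtained by replacing $|x|^2 = x \cdot x$ with $z \cdot z = \sum_j z_j^2$ (the bilinear, not Hermitian, product), and $|x-\zeta|^n = \bigl((x-\zeta)\cdot(x-\zeta)\bigr)^{n/2}$ with the principal branch of $\bigl((z-\zeta)\cdot(z-\zeta)\bigr)^{n/2}$, where $z = x + iy$.

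The key elementary computation is that for $x, y \in \tfrac{1}{5}B$ and $\zeta \in \partial B(0,r)$,
\[
(z-\zeta) \cdot (z-\zeta) = |x-\zeta|^2 - |y|^2 + 2i\,(x-\zeta) \cdot y,
\]
and the real part is bounded below by $(r - \tfrac{1}{5})^2 - (\tfrac{1}{5})^2 = \tfrac{1}{20}$. So $(z-\zeta)\cdot(z-\zeta)$ stays in the right half-plane, strictly away from zero. Hence the principal branch of the $n/2$-th power is well defined and holomorphic in $z$ on $\Omega$, with modulus bounded below by $(1/20)^{n/2}$. Combined with the trivial bound $|r^2 - z\cdot z| \leq r^2 + 2(\tfrac{1}{5})^2 \lesssim_n 1$, this makes the integrand a holomorphic function of $z$ whose modulus is bounded uniformly in $\zeta \in \partial B(0,r)$ by a dimensional constant times $|u(\zeta)|$. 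Defining
\[
u^{\mathbb{C}}(z) := \int_{\partial B(0,r)} \frac{r^2 - z\cdot z}{r\,\omega_n\, \bigl((z-\zeta)\cdot(z-\zeta)\bigr)^{n/2}}\, u(\zeta)\, d\sigma(\zeta),
\]
and differentiating under the integral sign (justified by the uniform bound on the integrand), one obtains a holomorphic function on $\Omega$ satisfying $\sup_\Omega |u^{\mathbb{C}}| \leq C_n \sup_{\partial B(0,r)} |u| \leq C_n \sup_B |u|$. The identity $u^{\mathbb{C}}(x) = u(x)$ on real points of $\tfrac{1}{5}B$ is immediate: when $y=0$, $(x-\zeta)\cdot(x-\zeta) = |x-\zeta|^2 > 0$, so the principal branch reduces to $|x-\zeta|^n$ and the integrand collapses to the real Poisson kernel.

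The main obstacle, though minor, is the branch-cut issue when $n$ is odd: the $n/2$-th power is genuinely multivalued, so one must check that the argument of $(z-\zeta)\cdot(z-\zeta)$ stays in $(-\pi/2, \pi/2)$ uniformly, which is exactly what the positivity of the real part gives. The choice of the scales $1/5$ for $|x|$ and $|y|$ versus $1/2$ for $r$ is dictated precisely by this positivity requirement, with a fixed margin that provides the uniform lower bound $1/20$.
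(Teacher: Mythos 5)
Your proof is correct and follows essentially the same strategy as the paper's: both complexify the Poisson integral formula and verify that the complexified $|x-\zeta|^n$ factor stays bounded away from zero on $\Omega$, so the kernel is holomorphic and uniformly bounded. The only cosmetic difference is that you integrate over an intermediate sphere $\partial B(0,1/2)$ and justify the branch of $(\cdot)^{n/2}$ via positivity of the real part, whereas the paper rescales to a slightly smaller ball, integrates over $\partial B$, and checks $|1-(z-\zeta)\cdot(z-\zeta)|<1$; both are adequate and give the same uniform lower bound on the modulus.
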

\begin{proof}
By considering a slightly smaller ball and rescaling, we may assume that $B=B(0,1)\subset \mathbb{R}^n$ and $u$ is continuous on $\overline{B}$. Then $u$ has a Poisson representation in terms of its boundary values given by 
\begin{align}\nonumber
u(x) = \int_{\partial B} P(x,\zeta) u(\zeta)dS(\zeta), \qquad \text{where} \qquad P(x,\zeta)=  c_n \frac{1 - |x|^2 }{|x-\zeta|^n}.
\end{align}
The holomorphic extension of the Poisson kernel is 
\begin{align}  \label{eq: C1.1}
	P(x+iy,\zeta)=  c_n \frac{1 - \sum_k (x_k +iy_k)^2 }{\left(\sqrt{\sum_k (x_k +iy_k-\zeta_k)^2}\right)^{n}},
\end{align}
where $x=(x_1,...,x_n)$, $y=(y_1,...,y_n)$ and $\zeta=(\zeta_1,...,\zeta_n).$ Indeed, since $\sqrt{z}$ is a well-defined holomorphic function for $|1-z|<1$, it is enough to check
$$\left| 1-\sum_k  (x_k +iy_k-\zeta_k)^2\right|<1.$$
For all $x+iy \in \Omega$, we have 
$$\left|\sum_k (x_k+iy_k)^2\right|\leq 2 \left(\sum_k x_k^2 +y_k^2\right)\leq \frac{4}{25},$$ 
and, since $\zeta \in \partial B$, 
\begin{align}
	\left|1-\sum_k (x_k +iy_k-\zeta_k)^2\right| = \left| \sum_k(x_k +i y_k)^2 + 2\zeta_k(x_k +iy_k)\right| \nonumber \\
	\leq   \left| \sum_k(x_k +i y_k)^2\right| +2 \sqrt{\sum_k \zeta_k^2} \cdot  \sqrt{\sum_k (x_k+iy_k)^2} \leq  \frac{4}{25} + \frac{4}{5}=1-\frac{1}{25}.
\end{align} 
Therefore, $P(x+iy,\zeta)$ is a well-defined holomorphic function of $x+iy$ in  $\Omega$ and $\zeta \in \partial B$. Moreover, 
$$\left|\sum_k (x_k +iy_k-\zeta_k)^2\right| \geq \frac{1}{25}.$$
Thus, the absolute value of the denominator in \eqref{eq: C1.1} is uniformly bounded from below, and 
$$	\sup_{x+iy \in \Omega}|P(x+iy,\zeta)|\leq C_n.$$
Hence, the Poisson representation gives a holomorphic extension of $u$ onto $\Omega$ 
$$u^{\mathbb{C}}(x+iy) = \int_{\partial B} P(x+iy ,\zeta) u(\zeta)dS(\zeta),$$
satisfying 
 $$\sup_{\Omega} |u^{\mathbb{C}}| \leq C \sup_{B} |u|.$$ 
\end{proof}

\begin{lem}
	\label{lem: DF}
	Let $k$ be a sufficiently large integer depending on $n$. Let $u(x)$ be a harmonic function in $|x|\leq 5\cdot 3^k$, $x\in \mathbb{R}^n$, and satisfying 
	$$\max_{|x|\leq 5\cdot 2^k} |u(x)|\leq |u(0)| \exp (D),$$
	for some $D\gg_{n} 1$. Let $Q_0:=\{(x_1,...,x_n)\in \mathbb{R}^n: |x_i|\leq 1, \hspace{2mm} 1\leq i\leq n \}$ and suppose that $R\subset \tfrac{1}{2}Q_0$ is a sub-cube. Let $\varepsilon>0$. Subdivide $R$ into equal sub-cubes $R_i$ of side length $\lesssim_n \frac{c}{D}$ for some sufficiently small $c=c(n,\varepsilon)>0$. Then there exists some constant $C=C(n,\varepsilon)>1$ such that 
	$$\mathcal{N}^{*}(R_i)\leq C,$$
	for at least $(1-\varepsilon)$ portion of the $R_i.$
\end{lem}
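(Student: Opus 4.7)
\textbf{Proof proposal for Lemma \ref{lem: DF}.}

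The plan is to reduce the statement to Corollary \ref{cor: DF} by building a suitable nonnegative holomorphic function out of $u$, and then to translate the $L^2$ information coming from Corollary \ref{cor: DF} into a pointwise bound on the doubling index via Claim \ref{lem: monotonicity for cubes}.

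\smallskip

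First, I would use Claim \ref{claim: bound on complexification} twice in order to complexify $u$ with good quantitative control. Applying the claim to the ball $B(0, 5 \cdot 3^k)$, I obtain a holomorphic extension $u^{\mathbb{C}}$ defined on $\{x + iy \in \mathbb{C}^n : x,y \in B(0,3^k)\}$, which contains the complex ball $\{|z| \leq 3^k\}$. Applying the claim again to $B(0, 5 \cdot 2^k)$ and using uniqueness of analytic continuation, the hypothesis $\max_{|x| \leq 5 \cdot 2^k} |u(x)| \leq |u(0)|e^D$ gives
\begin{equation*}
\sup_{|z| \leq 2^k} |u^{\mathbb{C}}(z)| \leq C_n |u(0)| e^D.
\end{equation*}
Now set $G(z) := u^{\mathbb{C}}(z)^2$. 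Then $G$ is holomorphic on $\{|z| \leq 3^k\}$, $G(x) = u(x)^2 \geq 0$ for real $x \in Q_0$, $G(0) = u(0)^2$, and
\begin{equation*}
\max_{|z| \leq 2^k} |G(z)| \leq C_n^2 |u(0)|^2 e^{2D} \leq |G(0)| e^{D'}, \qquad D' := 2D + 2\log C_n,
\end{equation*}
which is the growth hypothesis of Proposition \ref{prop: DF} (and hence of Corollary \ref{cor: DF}), with $D' \asymp_n D$ provided $D \gg_n 1$.

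\smallskip

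Next, I would apply Corollary \ref{cor: DF} to $G$, with the parameter $\Gamma$ in Remark \ref{rem: cor DF} chosen to be the dimensional constant $\Gamma = \Gamma(n)$ appearing in Claim \ref{lem: monotonicity for cubes}, and with $\delta = \varepsilon$. Upon subdividing $R$ into equal subcubes $R_i$ of side length $\lesssim_{n} c/D' \asymp_n c/D$ (for $c = c(n,\varepsilon)$ sufficiently small), Corollary \ref{cor: DF} yields
\begin{equation*}
\int_{R_i} G \, d\vol \; \asymp_{n,\varepsilon} \; \int_{\Gamma R_i} G \, d\vol
\end{equation*}
for at least $(1-\varepsilon)$ of the subcubes $R_i$. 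Since $G = u^2$ on $\real^n$, this is precisely
\begin{equation*}
\int_{R_i} u^2 \, d\vol \; \asymp_{n,\varepsilon} \; \int_{\Gamma R_i} u^2 \, d\vol.
\end{equation*}

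\smallskip

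Finally, for each such good subcube $R_i$, Claim \ref{lem: monotonicity for cubes} (applied after rescaling $R_i$ to a unit cube, using that $\mathcal{N}^*$ is scale-invariant) gives
\begin{equation*}
\mathcal{N}^{*}(R_i) \;\lesssim_n\; \log \frac{\int_{\Gamma R_i} u^2}{\int_{R_i} u^2} \;\lesssim_{n,\varepsilon}\; 1,
\end{equation*}
so $\mathcal{N}^{*}(R_i) \leq C(n,\varepsilon)$, which is the required conclusion. The main obstacle is simply bookkeeping: checking that all the nested complexifications, balls, and cubes fit together with the geometry assumed in Claim \ref{claim: bound on complexification} and Corollary \ref{cor: DF}, and that the loss of a factor of $2$ in passing from $u$ to $G = u^2$ only changes $D$ by a multiplicative constant (harmless since the conclusion is already insensitive to such constants through the choice of $c = c(n,\varepsilon)$).
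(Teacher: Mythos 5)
Your proposal is correct and follows essentially the same route as the paper's proof: complexify $u$ via Claim \ref{claim: bound on complexification}, form $G=(u^{\mathbb{C}})^2$, invoke Corollary \ref{cor: DF} with Remark \ref{rem: cor DF} to get the $L^2$ doubling control $\int_{R_i}u^2\asymp\int_{\Gamma R_i}u^2$ on most subcubes, and convert to a bound on $\mathcal{N}^*(R_i)$ via Claim \ref{lem: monotonicity for cubes}. The only difference is that you track the dimensional constant $C_n$ coming from the complexification explicitly (replacing $D$ by $D' = 2D + 2\log C_n \asymp_n D$), whereas the paper silently absorbs it by writing $\exp(2D)$; your version is slightly more careful but yields the same conclusion.
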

\begin{proof}
Thanks to Claim \ref{claim: bound on complexification}, we can extend $u$ holomorphically onto $|z|\leq 3^k$. Therefore, the function $G(z)=(u^{\mathbb{C}}(z))^2$ is holomorphic for $|z|\leq 3^k$, non-negative and real on $Q_0$. Moreover, $G$ satisfies   
	$$\max_{|z|\leq 2^k} |G|\leq |G(0)| \exp (2D).$$
Therefore, we may apply Corollary  \ref{cor: DF} and Remark \ref{rem: cor DF} to see that 
$$\frac{\int_{\Gamma R_i}|u|^2 d\vol}{\int_{R_i}|u|^2 d \vol}\leq C_1,$$
for at least $(1-\varepsilon)$ portion of the $R_i$. Hence, Claim \ref{lem: monotonicity for cubes} finishes the proof of Lemma ~\ref{lem: DF}. 
\end{proof}

We are finally ready to present the proof of Proposition \ref{lem: half of the doubling index}.

\begin{proof}[Proof of Proposition \ref{lem: half of the doubling index}]
	 Let $N=\mathcal{N}^*(Q)$ and partition $Q$ into equal sub-cubes $Q_j$ of side length $\asymp_n\tfrac{c}{N}$ for some $c=c(n,\varepsilon)>0$ to be chosen later. We wish to cover the unit cube $Q$ with at most $C_n$ balls $B_i=B(x_i,r)$ of small radius $0<r=r(n)\ll_n 1$ such that 
	$$\sup_{2^{10k} B_i}|u|\leq u(x_i)\exp(C N),$$
	where $k=k(n)$ is as in Lemma \ref{lem: DF}. We start with any covering of the unit cube by $C_n$ balls $B_i'$ of radius $r/2$. Put $B_i=B(x_i,r)$, and let $x_i$ be the point in $\partial B_i$ such that $\sup_{B_i'}|u| = |u(x_i)|$. For each $B_i'$,  we have  $\mathcal{N}(2^{10k+2} B_i')\leq N$ (by the definition of the doubling index of the cube and assuming that $r$ is sufficiently small). Therefore, the almost monotonicity of the doubling index, Lemma \ref{lem: monotonicity doubling index}, implies  
	$$\sup_{2^{10k} B_i} |u|\leq \sup_{2^{10k+2} B_i'}|u|\leq \sup_{B_i'}|u|\exp(C k N)= |u(x_i)| \exp(2k N).$$
	 We may assume that $k$ is sufficiently large. Consider a cube $R_i$ with the following properties. The faces of $R_i$ are parallel to the faces of $Q$, $R_i$ is a union of a finite number of sub-cubes $Q_j$, and  
	 $$B_i\subset R_i \subset 2^k B_i .$$ 
	Since the $B_i$ cover $Q$, every cube $Q_j$ is contained in at least one of the $R_i$.  We apply rescaled Lemma \ref{lem: DF} to the cube $R_i$ partitioned into the $Q_j$ which lie inside $R_i$. We conclude that, if $c=c(n,\varepsilon)>0$ (the parameter in the side length of the $Q_j$) is sufficiently small, there exists $C=C(n,\varepsilon)>1$ such that the following holds. For at least $(1-\varepsilon)$ portion of the $Q_j$ in $R_i$, we have  
	$$\mathcal{N}^*(Q_j) \leq C.$$
	Finally, considering all the $C_n$ choices of $R_i$, we find that, for	at least $(1-C_n\varepsilon)$ portion of all the $Q_j$, we have  
	$$\mathcal{N}^*(Q_j) \leq C.$$
\end{proof}

\bibliographystyle{siam}
\bibliography{lowerbound.bib}

\end{document}